\documentclass[11pt,reqno]{amsart}
\usepackage[utf8]{inputenc}
\usepackage[margin=1.1in]{geometry}
\usepackage{amsmath,amssymb,amsthm,mathtools}
\usepackage{enumitem}
\usepackage{microtype}
\usepackage{hyperref}
\usepackage{tikz}
\usetikzlibrary{arrows.meta,calc,positioning,decorations.pathmorphing}

\hypersetup{colorlinks=true,linkcolor=blue,citecolor=blue,urlcolor=blue}

\numberwithin{equation}{section}

\newtheorem{theorem}{Theorem}[section]
\newtheorem{proposition}[theorem]{Proposition}
\newtheorem{corollary}[theorem]{Corollary}
\newtheorem{lemma}[theorem]{Lemma}
\newtheorem{definition}[theorem]{Definition}
\newtheorem{remark}[theorem]{Remark}
\newtheorem{convention}[theorem]{Convention}
\newtheorem{question}[theorem]{Question}

\newtheorem*{maintheorem}{Main Theorem}

\newcommand{\Z}{\mathbb Z}
\newcommand{\Ftwo}{\mathbb F_2}
\newcommand{\R}{\mathbb R}
\newcommand{\Out}{\operatorname{Out}}
\newcommand{\Diff}{\operatorname{Diff}}
\newcommand{\Mod}{\operatorname{Mod}}

\title[Image Nonconcordance of \(\pi_1\)-Injective Surfaces]
{Image nonconcordance of positive-genus \(\pi_1\)-injective surfaces}
\author{Weizhe Niu}
\address{Yau Mathematical Sciences Center, Tsinghua University}
\email{weizheniu@mail.tsinghua.edu.cn}

\subjclass[2020]{57K40, 57K45, 57R52}
\keywords{4-manifolds, surface concordance, Freedman--Quinn invariants}

\begin{document}
\begin{abstract}
We construct, for every \(g\ge2\), infinite families of homotopic smooth
embeddings of a closed genus-\(g\) surface whose images are pairwise not
smoothly image-concordant, while each surface is \(\pi_1\)-injective.  The
main closed examples lie in one-fold stabilizations of closed aspherical
mapping tori with torsion-free fundamental group: after stabilization by
\(S^2\times S^2\), the surfaces have a common framed dual sphere and the
inclusion of each complement induces a \(\pi_1\)-isomorphism.  The
image-nonconcordance already occurs before stabilization, in the underlying
closed aspherical mapping torus, and persists after every finite number of
\(S^2\times S^2\)-stabilizations. The obstruction is a computable mod-two coordinate of Freedman--Quinn/Dax-type self-intersection data for
concordance tracks, indexed by self-dual double-cosets of a possibly
non-normal surface subgroup \(H\leq\pi_1X\).  The geometric source of the
relevant labels is a M\"obius-band square-root relation: elements
\(t\notin H\) with \(t^2\in H\) produce self-dual labels in torsion-free
ambient groups.  These square roots are realized naturally in
Klein-bottle \(I\)-bundle pieces and persist in closed graph-manifold
mapping-torus examples.
\end{abstract}
\maketitle


\section{Introduction}\label{sec:introduction}

Dual spheres have long made embedded surfaces in smooth four-manifolds
amenable to Whitney-move and light-bulb arguments.  In the sphere case, Freedman--Quinn developed the relevant mod-two
secondary self-intersection obstruction after the primary intersections
have been paired, and Stong corrected the associated uniqueness statement
\cite{FreedmanQuinn1990,Stong1993}; Dax and Hatcher--Quinn developed closely
related self-intersection and embedding-space invariants
\cite{Dax1972,HatcherQuinn1974}.  Gabai's four-dimensional light bulb theorem
shows that homotopic embedded spheres with a common transverse sphere are
isotopic when the ambient fundamental group has no elements of order two
\cite{Gabai2020}, while Schneiderman--Teichner identify the corresponding
Freedman--Quinn obstruction framework for spheres with duals
\cite{SchneidermanTeichner2022}.

For positive-genus surfaces, Gabai also proved a light-bulb theorem under a
\(G\)-inessentiality hypothesis \cite{Gabai2020}.  Maggie Miller established
a concordance analogue for positive-genus surfaces under corresponding
fundamental-group triviality hypotheses \cite{Miller2021}, and
Klug--Miller classified concordance for homotopic orientable
\(\pi_1\)-negligible surfaces with a framed immersed dual in terms of the
Freedman--Quinn obstruction \cite{KlugMiller2021}.  Sunukjian's theorem gives
broad concordance flexibility for same-genus homologous surfaces in simply
connected four-manifolds \cite{Sunukjian2015}.  These results place positive-genus concordance in the
\(\pi_1\)-negligible setting within the Freedman--Quinn framework.
Closed positive-genus \(\pi_1\)-injective surfaces have also been studied
using Dax methods \cite{LinWuXieZhang}.
The present work treats concordance in this \(\pi_1\)-injective regime.
In the principal closed examples, the homomorphisms induced by the embeddings have a common image subgroup
\[
H\leq \pi_1X,
\]
which is proper and nonnormal.  The natural labels therefore lie in
\(H\backslash\pi_1X/H\), and the obstruction rules out concordance of
embedded images.  After one stabilization, the inclusion of the
complement of each surface induces an isomorphism on fundamental groups.

The equivalence relation is image concordance.  Thus two embeddings
\(F_0,F_1:\Sigma_g\hookrightarrow X\) are smoothly image-concordant if
\(F_0\) is smoothly parametrized concordant to \(F_1\circ\phi\) for some
diffeomorphism \(\phi:\Sigma_g\to\Sigma_g\).  The endpoint
reparametrization \(\phi\) is arbitrary: image nonconcordance means that no
such concordance exists for any \(\phi\).  This forgets the chosen
parametrization of the image at the top end, and is therefore coarser than
parametrized concordance.  The main question is the following.

\begin{question}\label{q:intro-main-question}
Can homotopic positive-genus \(\pi_1\)-injective surfaces with a common
framed dual sphere and complement \(\pi_1\)-isomorphisms fail to be smoothly
image-concordant, even when the ambient fundamental group is torsion-free?
\end{question}

We answer this question first in closed aspherical mapping tori.  For every
\(g\geq2\), we construct a closed orientable aspherical graph manifold
\(N_g^K\), a Dehn twist \(\tau:N_g^K\to N_g^K\) along a nonseparating JSJ
torus, and the closed aspherical mapping torus
\[
   Y_\tau=N_g^K\rtimes_\tau S^1 .
\]
The group \(\pi_1Y_\tau\) is torsion-free, and \(Y_\tau\) contains infinitely
many homotopic \(\pi_1\)-injective genus-\(g\) surfaces whose common image
subgroup is proper and nonnormal and whose embedded images are pairwise not
smoothly image-concordant.  One \(S^2\times S^2\)-stabilization
supplies a common framed dual sphere and complement \(\pi_1\)-isomorphisms,
without removing the image nonconcordance; the common-dual, complement, and
image-nonconcordance conclusions persist under every finite further stabilization.

For the notation below, fix basepoints \(z_0\in\Sigma_g\) and
\(x_0\in X\), write \(\pi=\pi_1(X,x_0)\), and let \(H\leq\pi\).
A \(\rho\)-marked embedding \(F:\Sigma_g\hookrightarrow X\) is equipped
with an isomorphism
\[
   \rho:\pi_1(\Sigma_g,z_0)\xrightarrow{\cong}H
\]
and a basing path from \(x_0\) to \(F(z_0)\), chosen so that the induced
based homomorphism to \(\pi\) is the inclusion of \(H\) composed with
\(\rho\).  Thus \(H\) is fixed as an actual subgroup of \(\pi\), not
merely up to conjugacy.  The formal definitions of marked surfaces and
marked tracks are given in Definitions~\ref{def:marked-surface}
and~\ref{def:tracks-concordance}.

The obstruction groups \(Q^D_\rho(X,F_0)\) and classes
\(FQ^D_\rho(F_0,F_1)\) are defined in
Section~\ref{sec:absolute}. 
\begin{maintheorem}
\label{thm:intro-closed-graph-manifold}
For every \(g\geq2\), there exist a closed orientable aspherical graph
manifold \(N_g^K\), a Dehn twist \(\tau:N_g^K\to N_g^K\) along a
nonseparating JSJ torus, and a closed aspherical mapping torus
\[
   Y_\tau=N_g^K\rtimes_\tau S^1
\]
with torsion-free fundamental group containing infinitely many embedded
surfaces
\[
   F_n^Y:\Sigma_g\hookrightarrow Y_\tau,
   \qquad n=0,1,2,\ldots,
\]
that are all homotopic to \(F_0^Y\), are \(\pi_1\)-injective with common
image subgroup \(H\leq\pi_1Y_\tau\), and whose embedded images are
pairwise not smoothly image-concordant in \(Y_\tau\).  The subgroup \(H\)
is proper and nonnormal.

After one stabilization
\[
   X_\tau=Y_\tau\#(S^2\times S^2),
\]
the corresponding embedded surfaces
\[
   F_n:\Sigma_g\hookrightarrow X_\tau,
   \qquad n=0,1,2,\ldots,
\]
have the following properties:
\begin{enumerate}[label=\textup{(\roman*)}]
\item the surfaces \(F_n\) are all homotopic to \(F_0\);
\item each \(F_n\) is \(\pi_1\)-injective, with common image subgroup
\(H\leq\pi_1X_\tau\);
\item the surfaces have a common framed embedded dual sphere;
\item the complement maps
\[
   \pi_1(X_\tau\setminus\nu F_n)\xrightarrow{\cong}\pi_1X_\tau
\]
are isomorphisms for every \(n\geq0\);
\item the embedded images
\[
   F_0(\Sigma_g),F_1(\Sigma_g),F_2(\Sigma_g),\ldots
\]
are pairwise not smoothly image-concordant in \(X_\tau\).
\end{enumerate}
Moreover, for every \(k\geq0\), after the stabilization
\[
   X_\tau^{(k)}=X_\tau\# k(S^2\times S^2),
\]
with all connected sums taken away from the surfaces, construction tracks,
marking data, and common dual sphere, the common framed dual sphere and the
complement \(\pi_1\)-isomorphisms persist, and the embedded images remain
pairwise not smoothly image-concordant.

Finally, the stabilized nonconcordance is detected by marked self-dual
double-coset coordinates: there are pairwise distinct nontrivial self-dual
double-cosets
\[
   \mathcal D_n=Ht_nH\in H\backslash\pi_1X_\tau/H,
   \qquad n\geq1,
\]
with \(t_n^2\in H\), such that, for every \(i\geq1\),
\[
   Q^{\mathcal D_i}_\rho(X_\tau,F_0)
   \cong
   \mathbb F_2\langle u_{\mathcal D_i}\rangle,
\]
and, for all \(i,n\geq1\),
\[
   FQ^{\mathcal D_i}_\rho(F_0,F_n)
   =
   \begin{cases}
   u_{\mathcal D_i},& i=n,\\
   0,& i\neq n.
   \end{cases}
\]
\end{maintheorem}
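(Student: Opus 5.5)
The proof will run in four stages: build the closed example, construct the family of surfaces in \(Y_\tau\) and their concordance tracks, compute the mod-two self-dual coordinates of the Freedman--Quinn/Dax invariant, and finally pass to the stabilizations.

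\emph{Stage 1: the ambient manifold.} I would first build \(N_g^K\) as a graph manifold obtained by gluing a Klein-bottle twisted \(I\)-bundle piece \(K\tilde\times I\) (whose boundary torus double covers the Klein bottle) to Seifert pieces containing an essential genus-\(g\) surface. The gluing is chosen so that the result is closed, orientable and aspherical, and has a nonseparating JSJ torus \(T\). Asphericity and torsion-freeness of \(\pi_1Y_\tau\) then follow from standard facts: irreducible Haken \(3\)-manifolds are aspherical, and \(\pi_1\) of a mapping torus of an aspherical manifold is an extension of \(\Z\) by a torsion-free group. The surface subgroup \(H\) comes from an incompressible genus-\(g\) surface in \(N_g^K\times\{pt\}\). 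It is \(\pi_1\)-injective because incompressibility and the fibre inclusion are both injective on \(\pi_1\).

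For the square roots: a Möbius-band core \(t\) in the Klein-bottle piece satisfies \(t\notin H\) and \(t^2\in H\). Conjugating this element by powers of the Dehn twist, or equivalently by the stable letter of the mapping torus, should produce infinitely many elements \(t_n\) with \(t_n^2\in H\). Using JSJ/Bass--Serre normal forms, I would check that the double cosets \(\mathcal D_n=Ht_nH\) are pairwise distinct, self-dual (\(\mathcal D_n^{-1}=\mathcal D_n\)) and nontrivial. The same normal-form argument should show that \(H\) is proper and nonnormal.

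\emph{Stage 2: the surfaces.} I would obtain \(F_n^Y\) from \(F_0^Y\) by a finger/self-intersection-changing regular homotopy along \(t_n\) whose net effect is a single Möbius-type pair of double points labelled by \(\mathcal D_n\). This modification keeps the image subgroup equal to \(H\) and makes \(F_n^Y\) homotopic to \(F_0^Y\).

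Stabilizing by \(S^2\times S^2\) and tubing each \(F_n\) into \(S^2\times pt\) gives the common framed dual \(pt\times S^2\). With the dual sphere available, meridians of \(F_n\) are null-homotopic in the complement, so Van Kampen gives the complement \(\pi_1\)-isomorphisms (iv). Items (i)–(iii) are immediate from the construction. Further stabilizations away from all this data clearly preserve (i)–(iv).

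\emph{Stage 3: the invariant.} Using the definitions of Section~\ref{sec:absolute}, I would invoke the earlier result that \(FQ^D_\rho\) is well defined on marked tracks and is an obstruction to marked concordance. I would then compute \(Q^{\mathcal D_i}_\rho(X_\tau,F_0)\cong\Ftwo\langle u_{\mathcal D_i}\rangle\). The key input is that, for self-dual \(\mathcal D\), the indeterminacy coming from \(\pi_3\) and from the \(\omega_2\)/dual-sphere terms vanishes in that coordinate: \(\pi_2Y_\tau=0\) by asphericity, and the stabilization contributes only the trivial double coset. The track from \(F_0\) to \(F_n\) has exactly one \(\mathcal D_n\)-labelled double-point loop, which yields the displayed Kronecker-delta formula.

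\emph{Stage 4: image nonconcordance.} This is the main obstacle, because an endpoint reparametrization \(\phi\) changes the marking to \(\rho\circ\phi_*\) and could permute the double cosets. I would argue that any concordance forces \(\phi_*\) to preserve \(H\) up to conjugation by an element of \(\pi\), and hence induces a bijection of \(H\backslash\pi/H\). I would then show that this bijection fixes the set \(\{\mathcal D_n\}\) setwise with only finitely many possible moves. A cleaner route would be an invariant that does not depend on the marking: the number of self-dual coordinates that are nonzero, or the unordered set of them modulo the normalizer \(N_\pi(H)\) and its action, which distinguishes \(\{\mathcal D_i\}\) from \(\{\mathcal D_n\}\) for \(i\ne n\). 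Pairwise nonconcordance in \(Y_\tau\) itself would follow by the same computation, since \(Y_\tau\) embeds in the stabilization and concordances push forward.
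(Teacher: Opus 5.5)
Your proposal has three gaps: the endpoint-rigidity step (Stage 4) is missing, the geometric source of the labels is not supplied, and the stabilized survival argument is wrong as stated.

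\textbf{Stage 4 (endpoint rigidity).} The class $FQ^{\mathcal D}_\rho$ is defined only for $\rho$-marked tracks with prescribed preferred lifts. To obstruct image concordance you must turn a concordance from $F_0$ to $F_n\circ\phi$ into a $\rho$-marked concordance from $F_0$ to $F_n$. You do not carry out either of your suggestions: an induced bijection of $H\backslash\pi/H$ ``with only finitely many possible moves'', or counting nonzero coordinates modulo the normalizer action. A general image-level quotient of this kind is exactly what the paper leaves open (Question~\ref{q:image-orbit}).

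The paper instead arranges $N_\Gamma(H)=H$ for $\Gamma=\pi_1Y_\tau$. This gives $C_\Gamma(H)\subseteq H$, so the basepoint track of a parametrized concordance ends on the correct sheet. It also makes $\rho\phi_*\rho^{-1}$ inner, since it is induced by an element of $N_\Gamma(H)=H$, so $\phi$ is isotopic to the identity by Dehn--Nielsen--Baer.

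Proving $N_\Gamma(H)=H$ is where the Dehn twist is actually used, and you have misidentified its role.
\begin{itemize}
\item In $G=\pi_1N_g^K$, Heil's theorem gives $N_G(H)=H$, because $\Sigma$ is incompressible, nonseparating, and not a fibre (the surjection $G\to D_\infty$ killing $H$ shows this).
\item An element $gs^k$ with $k\neq0$ cannot normalize $H$. Indeed $\tau_*^k[\Sigma]=[\Sigma]\pm k[T]\neq\pm[\Sigma]$ for the nonseparating torus $T$, so $\tau_*^k(H)$ is not conjugate to $H$.
\item With trivial monodromy the circle direction would centralize $H$, and the hypothesis $C_\Gamma(H)\subseteq H$ of the promotion lemma would fail.
\end{itemize}

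\textbf{Source of the labels.} An element $t_n$ with $t_n^2\in H$ does not by itself give a track with exactly one $\mathcal D_n$-labelled component. The paper needs, for each $n$, an embedded M\"obius band in the complement of the surface. Its boundary must be a push-off of a curve on the surface, and its core, read through the surface, must be $t_n$. This yields a standard square-root neighbourhood in which the cutoff finger--Whitney movie has a single $\iota$-fibre-product circle.

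Conjugating $t$ by $s^n$, that is applying $\tau_*^n$, supplies no such bands. The twist carries a band attached to $\Sigma$ to one attached to $\tau^n(\Sigma)\neq\Sigma$. If the band and base data avoid the twist collar, $\tau_*$ fixes $t$ outright. So you would need a further geometric construction, which you do not give.

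The missing observation is that the Klein-bottle $I$-bundle already contains embedded M\"obius bands $B_n^V$ with a common boundary slope and cores $a^nt$, where $(a^nt)^2=t^2$. Gluing gives bands $B_n$ with $\partial B_n=c^+$. The labels $H(a^nt)H$ are then separated by a homomorphism $q\colon G\to D_\infty$ with $q(H)=1$ and $q(a^nt)=r^{-2n}s$.

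\textbf{Stage 3 after stabilization.} Saying the stabilization ``contributes only the trivial double coset'' is not a valid reason. $\pi_2X_\tau\cong\mathbb Z[\Gamma]^2$ has translates in every sheet, and $\pi_3X_\tau\neq0$. What is needed is:
\begin{itemize}
\item $\operatorname{im}\mu_D=0$ from torsion-freeness, since no $\mathcal D_i$ contains an element of order two;
\item $R_{\Theta,D}=0$ from $H^1(\Sigma_g;(\pi_2X_\tau)_\rho)\cong H^1\bigl(H;\bigoplus\mathbb Z[H]^2\bigr)=0$, using that $H$ is a $PD_2$-group.
\end{itemize}
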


The same examples are not locally flat topologically image-concordant.  By
Daher--Powell, a locally flat topological concordance between smoothly
embedded surfaces in a smooth four-manifold can be homotoped rel boundary to
a smooth concordance \cite{DaherPowell2026}.  Thus a locally flat
topological image concordance between two of the surfaces in the theorem would imply a smooth image concordance,
contradicting the theorem. 

In a recent preprint, Lin--Wu--Xie--Zhang develop a closed-surface Dax
framework and construct homotopic closed positive-genus
\(\pi_1\)-injective surfaces in product settings such as
\(\Sigma\times S^2\), with a common geometric dual, whose embedded images
remain nonisotopic even after reparametrization
\cite{LinWuXieZhang}. The present paper
instead obstructs concordance of embedded images.  Its principal closed examples use a proper
nonnormal surface subgroup and self-dual double-coset labels, and its main
examples lie in closed aspherical mapping tori; after one stabilization they have a common framed dual sphere and complement
\(\pi_1\)-isomorphisms, and their image nonconcordance persists through
every finite further stabilization.

In a separate recent preprint, Lin--Wu construct homotopic embedded tori in
\(T^4\#(S^2\times S^2)\) with a common geometric dual that remain smoothly
nonisotopic after any finite number of stabilizations
\cite{LinWu2026}.  The present result differs in that it proves persistence of image nonconcordance, using a proper nonnormal surface subgroup and self-dual double-coset labels.

Earlier, Auckly--Kim--Melvin--Ruberman--Schwartz proved one-stabilization isotopy
results for homologous same-genus surfaces with simply connected complements
\cite{AucklyKimMelvinRubermanSchwartz2019}; their theorem applies under different hypotheses and provides useful context for persistence under every finite stabilization.

We next describe the obstruction.  For a \(\rho\)-marked surface, let
\(X_H\to X\) be the cover associated to the actual subgroup
\(H\leq\pi\) fixed by the marking. If two lifted points
\(\widehat p,\widehat q\in X_H\) project to the same point of \(X\), their
sheet difference is naturally a double-coset in \(H\backslash\pi/H\).  On
exchanging the two ordered points, the label is inverted.  Hence a single
mod-two coordinate is naturally attached to a nontrivial self-dual
double-coset
\[
   D=HgH=Hg^{-1}H\neq H .
\]
When \(H\) is nonnormal, these labels cannot in general be reduced to labels
in a quotient group, so the double-coset formulation is intrinsic.

For a marked homotopy track
\[
   A:\Sigma_g\times I\to X\times I
\]
from \(F_0\) to \(F_1\), the raw coordinate counts, modulo two, the connected
components of the off-diagonal self-intersection locus labelled by \(D\) in
the \(H\)-cover.  This raw count is taken modulo the standard
non-simply-connected indeterminacies: local \(D\)-labelled
\(\pi_3X\)-insertions and loops in the marked mapping-space component.  The
result is the class
\[
   FQ^D_\rho(F_0,F_1)\in Q^D_\rho(X,F_0),
\]
which vanishes when \(F_0\) and \(F_1\) are joined by a concordance
respecting the chosen marking.  The construction
uses standard Freedman--Quinn/Dax/Hatcher--Quinn-type self-intersection
data. From these data we extract a computable marked mod-two coordinate for
closed \(\pi_1\)-injective surfaces, allowing \(H\) to be nonnormal, and
combine it with square-root realizations and endpoint rigidity to obstruct
image concordance. In the main examples, the ambient group is torsion-free and
\(H^1(\Sigma_g;(\pi_2X)_\rho)=0\), so the relevant target is the
one-dimensional vector space \(\mathbb F_2\langle u_D\rangle\). Related Dax methods for
disks and embedding spaces in four-manifolds were developed by
Kosanovi\'c--Teichner \cite{KosanovicTeichner2024}.

The geometric realization of the self-dual labels is elementary.  Suppose
\(\Sigma_g\subset M\) is a \(\pi_1\)-injective surface in a compact
orientable three-manifold and put \(H=\pi_1\Sigma_g\leq\pi_1M\).  If
\(M\setminus\operatorname{Int}\nu\Sigma_g\) contains an embedded M\"obius
band \(B\) whose boundary is a normal push-off \(c^+\) of an essential curve
\(c\subset\Sigma_g\), then the core of \(B\) represents an element
\(t\in\pi_1M\) with
\[
   t^2=c\in H .
\]
Thus \(t^{-1}=tc^{-1}\), and consequently
\[
   HtH=Ht^{-1}H .
\]
The M\"obius band therefore supplies a self-dual double-coset label even
when the ambient group is torsion-free.  In irreducible surface complements with incompressible boundary, the
Jaco--Shalen--Johannson characteristic-pair theory localizes essential
M\"obius bands in the characteristic submanifold
\cite{JacoShalen1979,Johannson1979,NeumannSwarup1997}.

The infinite supply of such roots comes from the orientable twisted
\(I\)-bundle over the Klein bottle.  In the notation used below, its
fundamental group is
\[
   \langle a,t\mid tat^{-1}=a^{-1}\rangle .
\]
It contains properly embedded M\"obius bands with core elements \(a^nt\),
\(n\in\mathbb Z\), all having the same boundary square:
\[
   (a^nt)^2=t^2 .
\]
Thus one obtains infinitely many square-root elements with common square.
The closed graph-manifold construction closes this local model while
retaining distinct double-coset labels.  A Dehn twist along a nonseparating
JSJ torus is then chosen so that, in the mapping-torus group, the normalizer
of the surface subgroup is exactly \(H\).  This endpoint-normalizer rigidity
turns the marked coordinate into an obstruction to image concordance under
arbitrary endpoint reparametrization.  Stabilizing by
one \(S^2\times S^2\)-summand adds the common framed dual sphere and makes
the complement maps \(\pi_1\)-isomorphisms, without changing the local
double-coset calculation.

There is also a normal index-two specialization.  If
\(p:\Sigma_g\to N_{g+1}\) is the orientation double cover of a closed
nonorientable surface, then
\[
   H=p_*\pi_1\Sigma_g
\]
is normal of index two in \(\pi_1N_{g+1}\).  The unique nontrivial
double-coset label is represented by the deck transformation of the
\(H\)-cover.  Applying the same marked coordinate in this regular-cover
setting gives a pair of homotopic \(\pi_1\)-injective genus-\(g\) surfaces in
\[
   D(L\oplus\varepsilon^1)\#(S^2\times S^2),
\]
with common framed dual sphere and complement \(\pi_1\)-isomorphisms, whose
embedded images are not smoothly image-concordant.  This comparison
illustrates the regular-cover case; the main closed graph-manifold examples
are non-normal.

\medskip
\noindent\textbf{Organization.}
Section~\ref{sec:labels} fixes markings, preferred lifts, and non-normal
double-coset labels.  Sections~\ref{sec:raw-count}--\ref{sec:cellular}
construct the marked double-coset coordinate, including the local
\(\pi_3X\)-insertion and mapping-space-loop indeterminacies, and prove the
survival criterion.  Section~\ref{sec:D-crossed-blocks} gives the local
geometric input: \(D\)-crossed tracks, the annular movie, standard
square-root neighbourhoods, and the M\"obius-band square-root lemma.
Section~\ref{sec:image} proves the endpoint-rigid promotion from marked
nonconcordance to image nonconcordance.  Section~\ref{sec:applications}
applies the obstruction to M\"obius-band square-root families, the compact
Klein-bottle \(I\)-bundle model, and the closed graph-manifold mapping-torus
family, first unstabilized in \(Y_\tau\) and then stabilized in \(X_\tau\).
Section~\ref{sec:orientation-family} gives the orientation-double-cover
comparison, and Section~\ref{sec:questions} records further questions.

\medskip
\noindent\textbf{Acknowledgments.}
The author used Prism, Overleaf AI and Grammarly to assist with improving the LaTeX formatting and English grammar of this paper.

\section{The marked obstruction}\label{sec:marked-obstruction}
\subsection{Marked surfaces, \texorpdfstring{\(H\)}{H}-covers, and double-coset labels}\label{sec:labels}

In this section, we fix basepoints, markings, and preferred lifts.  We then define the sheet-difference label for two lifted points over the same point of \(X\).  The essential facts are that the label is independent of path choices and that exchanging the ordered pair inverts the label.  The same definition is used for tracks in \(X\times I\) and loop-tracks in \(X\times S^1\) by ignoring the parameter coordinate after equality in the target has forced the two parameter values to coincide.

\begin{convention}\label{conv:basepoints-markings}
Throughout the paper, \(X\) is a connected oriented smooth four-manifold with basepoint \(x_0\), and \(\pi=\pi_1(X,x_0).\) The surface \(\Sigma_g\) has basepoint \(z_0.\)  A based embedding is an embedding
\(
f:\Sigma_g\hookrightarrow \operatorname{int}X
\)
together with a path \(\lambda_f\) from \(x_0\) to \(f(z_0)\).  This path determines
\[
f_*^{\lambda_f}:\pi_1(\Sigma_g,z_0)\to \pi,
\qquad
f_*^{\lambda_f}([\alpha])=[\lambda_f\cdot f(\alpha)\cdot \overline{\lambda_f}].
\]
Thus the surface subgroup is fixed as an actual subgroup of \(\pi\), not only up to conjugacy.
\end{convention}

\begin{definition}\label{def:marked-surface}
Let \(H\leq \pi\) be a fixed subgroup.  A \(\rho\)-marked genus-\(g\) surface in \(X\) is an embedding
\(
f:\Sigma_g\hookrightarrow \operatorname{int}X
\)
together with a base path \(\lambda_f\) and an isomorphism
\(
\rho:\pi_1(\Sigma_g,z_0)\xrightarrow{\cong}H
\)
such that
\(
f_*^{\lambda_f}=i_H\circ\rho,
\)
where \(i_H:H\hookrightarrow\pi\) is inclusion.  In particular, \(f\) is \(\pi_1\)-injective.
\end{definition}

Let
\(
q:X_H\to X
\)
be the covering corresponding to \(H\leq\pi\), and fix a lift \(\widehat x_0\in q^{-1}(x_0)\).  A \(\rho\)-marked surface \((f,\lambda_f,\rho)\) has a preferred lift \(\widehat f:\Sigma_g\to X_H\): lift \(\lambda_f\) starting at \(\widehat x_0\), and require \(\widehat f(z_0)\) to be the endpoint of this lifted path.

\begin{definition}\label{def:tracks-concordance}
A track is a proper map
\(
A:\Sigma_g\times I\to X\times I
\)
which is product near \(\Sigma_g\times\{0,1\}\) and satisfies
\(
A^{-1}(X\times\{0,1\})=\Sigma_g\times\{0,1\}.
\)

Let \(F_0,F_1\) be \(\rho\)-marked surfaces with preferred lifts \(\widehat F_0,\widehat F_1\).  A \(\rho\)-marked track from \(F_0\) to \(F_1\) is a track \(A\) with endpoint restrictions \(F_0\times\{0\}\) and \(F_1\times\{1\}\), admitting a lift
\(
\widehat A:\Sigma_g\times I\to X_H\times I
\)
whose endpoint restrictions are \(\widehat F_0\times\{0\}\) and \(\widehat F_1\times\{1\}\).  A \(\rho\)-marked concordance is a \(\rho\)-marked track which is an embedding.

A parametrized concordance is an embedded track preserving the endpoint parametrizations but not necessarily the markings.  The embedded images of \(F_0\) and \(F_1\) are image-concordant if \(F_0\) is parametrized concordant to \(F_1\circ\phi\) for some diffeomorphism \(\phi:\Sigma_g\to\Sigma_g\).  If \(\phi\) is required to preserve orientation, this is oriented image concordance; otherwise it is unoriented image concordance.
\end{definition}

\begin{definition}\label{def:double-coset-label}
Let \(\widehat p,\widehat q\in X_H\) satisfy \(q(\widehat p)=q(\widehat q)\).  Choose paths
\(
\widehat\alpha:[0,1]\to X_H\) and \(\widehat\beta:[0,1]\to X_H
\) from \(\widehat x_0\) to \(\widehat p\) and \(\widehat q\), respectively.  Let \(\alpha=q\widehat\alpha\) and \(\beta=q\widehat\beta\).  Define
\[
\ell(\widehat p,\widehat q)=H[\alpha\overline\beta]H\in H\backslash\pi/H.
\]
If \(\widehat p=(p_H,t)\) and \(\widehat q=(q_H,t)\) are points in \(X_H\times I\) or \(X_H\times S^1\) with the same parameter coordinate, we set
\[
\ell(\widehat p,\widehat q):=\ell(p_H,q_H).
\]
\end{definition}
We call an element of \(H\backslash\pi/H\) arising in this way a
double-coset label, or simply a label.

\begin{lemma}\label{lem:label-well-defined}
The double coset \(\ell(\widehat p,\widehat q)\) is independent of the choices of \(\widehat\alpha\) and \(\widehat\beta\).  Moreover
\(
\ell(\widehat q,\widehat p)=\ell(\widehat p,\widehat q)^{-1}.
\)
\end{lemma}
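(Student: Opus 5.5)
The plan is to use covering-space theory for \(q:X_H\to X\), with \(q_*\pi_1(X_H,\widehat x_0)=H\). The key fact is: a loop \(\gamma\) at \(x_0\) lifts to a loop at \(\widehat x_0\) exactly when \([\gamma]\in H\). First observe that \(\alpha\overline\beta\) is a closed loop at \(x_0\), because \(\alpha(1)=q(\widehat p)=q(\widehat q)=\beta(1)\). So \([\alpha\overline\beta]\in\pi\) makes sense, and the only issue is its dependence on the choices.

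For independence, let \(\widehat\alpha'\) be another path in \(X_H\) from \(\widehat x_0\) to \(\widehat p\), and put \(\alpha'=q\widehat\alpha'\). Then \(\widehat\alpha'\overline{\widehat\alpha}\) is a closed loop in \(X_H\) at \(\widehat x_0\). Its projection \(\alpha'\overline\alpha\) therefore satisfies \(h_1:=[\alpha'\overline\alpha]\in H\). Inserting the null-homotopic loop \(\alpha\overline\alpha\) gives
\[
[\alpha'\overline\beta]=[\alpha'\overline\alpha][\alpha\overline\beta]=h_1[\alpha\overline\beta].
\]
In the same way, if \(\widehat\beta'\) is another path from \(\widehat x_0\) to \(\widehat q\), then \(h_2:=[\beta\overline{\beta'}]\in H\) and \([\alpha\overline{\beta'}]=[\alpha\overline\beta]h_2\). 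Changing both paths at once therefore replaces \([\alpha\overline\beta]\) by \(h_1[\alpha\overline\beta]h_2\), which lies in the same double coset. Hence \(\ell(\widehat p,\widehat q)\) is well defined.

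For the inversion formula, swapping the roles of \(\widehat p\) and \(\widehat q\) lets us reuse the paths \(\widehat\beta\) and \(\widehat\alpha\), so \(\ell(\widehat q,\widehat p)=H[\beta\overline\alpha]H\). Since \([\beta\overline\alpha]=[\alpha\overline\beta]^{-1}\) and \((HgH)^{-1}=Hg^{-1}H\), we get \(\ell(\widehat q,\widehat p)=\ell(\widehat p,\widehat q)^{-1}\).

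The case of points in \(X_H\times I\) or \(X_H\times S^1\) is immediate, because the definition reduces it to the \(X_H\) case. No step is a real obstacle. The only point needing care is the path-concatenation convention, that is, writing \(h_1\) on the left and \(h_2\) on the right, together with the covering-space criterion that a projected loop from \(X_H\) lies in \(H\).
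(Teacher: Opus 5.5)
Your proposal is correct and follows essentially the same route as the paper's proof. Both use the covering criterion to show that changing \(\widehat\alpha\) or \(\widehat\beta\) multiplies \([\alpha\overline\beta]\) on the left or right by an element of \(H\). Both then get the inversion formula from \([\beta\overline\alpha]=[\alpha\overline\beta]^{-1}\).
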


\begin{proof}
Let \(\widehat\alpha'\) be another path from \(\widehat x_0\) to \(\widehat p\), and put \(\alpha'=q\widehat\alpha'\).  The loop \(\alpha'\overline\alpha\) lifts to a loop in \(X_H\) based at \(\widehat x_0\), hence represents an element \(h_L\in H\).  Therefore
\[
[\alpha'\overline\beta]=[\alpha'\overline\alpha]\,[\alpha\overline\beta]=h_L[\alpha\overline\beta],
\]
so changing the first lifted path multiplies the representative on the left by an element of \(H\).

Similarly, if \(\widehat\beta'\) is another path from \(\widehat x_0\) to \(\widehat q\), then \(\beta'\overline\beta\) represents an element \(h_R\in H\), and
\[
[\alpha\overline{\beta'}]=[\alpha\overline\beta]\,[\beta\overline{\beta'}]
=[\alpha\overline\beta]h_R^{-1}.
\]
Thus changing the second lifted path multiplies the representative on the right by an element of \(H\).  The double coset is therefore independent of all path choices.

Exchanging the two lifted points replaces \([\alpha\overline\beta]\) by \([\beta\overline\alpha]=[\alpha\overline\beta]^{-1}\), so the double coset is inverted.
\end{proof}

\begin{lemma}
\label{lem:marking-label-compatibility}
Let \(A:\Sigma_g\times I\to X\times I\) be a \(\rho\)-marked track with
preferred lift \(\widehat A\). Let \(x,y\in\Sigma_g\times I\) satisfy \(A(x)=A(y)\).  Then 
\(
\ell(\widehat A(x),\widehat A(y))\in H\backslash\pi/H
\)
may be computed from any two source paths from \((z_0,0)\) to \(x\) and
\(y\), after prefixing by the fixed bottom base path \(\lambda_0\).  Changing
either source path changes the resulting representative in \(\pi\) only by
left or right multiplication by an element of \(H\).  Hence the resulting
double coset agrees with Definition~\ref{def:double-coset-label}.
\end{lemma}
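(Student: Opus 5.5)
The idea is to turn source paths in \(\Sigma_g\times I\) into lifted paths in \(X_H\times I\) via the preferred lift \(\widehat A\), and then apply Lemma~\ref{lem:label-well-defined}.

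Fix source paths \(\gamma_x,\gamma_y\) in \(\Sigma_g\times I\) from \((z_0,0)\) to \(x\) and \(y\). Let \(\widehat\lambda_0\) be the lift of \(\lambda_0\) starting at \(\widehat x_0\); by the definition of the preferred lift it ends at \(\widehat F_0(z_0)\). Since \(\widehat A\) restricts to \(\widehat F_0\times\{0\}\) on the bottom, \(\widehat A(z_0,0)=(\widehat F_0(z_0),0)\). Hence the paths
\[
\widehat\alpha=\widehat\lambda_0\cdot\widehat A(\gamma_x),\qquad
\widehat\beta=\widehat\lambda_0\cdot\widehat A(\gamma_y)
\]
are paths in \(X_H\times I\) from \((\widehat x_0,0)\) to \(\widehat A(x)\) and \(\widehat A(y)\). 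Here \(\widehat\lambda_0\) is placed at level \(0\); after projecting away the \(I\)-coordinate, which is harmless because \(X_H\times I\simeq X_H\), they are lifted paths in \(X_H\). Their projections to \(X\) are \(\alpha=\lambda_0\cdot A(\gamma_x)\) and \(\beta=\lambda_0\cdot A(\gamma_y)\), again projected to \(X\). Since \(A(x)=A(y)\), the element \([\alpha\overline\beta]\in\pi\) is a loop at \(x_0\), and by Definition~\ref{def:double-coset-label} its double coset is exactly \(\ell(\widehat A(x),\widehat A(y))\). So the recipe in the statement produces a representative of the label.

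For the change-of-path claim, replace \(\gamma_x\) by \(\gamma_x'\). Then \([\alpha'\overline\alpha]=[\lambda_0\cdot A(\gamma_x'\overline{\gamma_x})\cdot\overline{\lambda_0}]\). The loop \(\gamma_x'\overline{\gamma_x}\) is based at \((z_0,0)\) in \(\Sigma_g\times I\). Since \(\Sigma_g\times I\) deformation retracts to \(\Sigma_g\times\{0\}\), this loop is homotopic rel basepoint to a loop \(\delta\times\{0\}\), and \(A\) restricted there is \(F_0\). So
\[
[\alpha'\overline\alpha]=(F_0)_*^{\lambda_0}[\delta]=\rho([\delta])\in H
\]
by the marking condition of Definition~\ref{def:marked-surface}. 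The representative therefore changes by left multiplication by an element of \(H\), exactly as in the proof of Lemma~\ref{lem:label-well-defined}. The same argument applied to \(\gamma_y\) gives right multiplication by an element of \(H\).

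The only point needing care is to check that the concatenated paths really are lifts starting at \(\widehat x_0\). This is where the preferred lift normalization at the bottom end is used. Alternatively, one can see it directly: the loop computation shows the projected loop lies in \(H\), so it lifts to a closed loop. Either way, the agreement with Definition~\ref{def:double-coset-label} then follows from the path-independence already established in Lemma~\ref{lem:label-well-defined}.
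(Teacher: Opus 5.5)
Your proposal is correct and follows essentially the paper's argument: you exhibit the source-path recipe as the projection of the lifted paths \(\widehat\lambda_0\cdot\widehat A(\gamma_x)\) and \(\widehat\lambda_0\cdot\widehat A(\gamma_y)\), and you check that changing a source path multiplies the representative on the left or right by \([\lambda_0\cdot A_X(\omega)\cdot\overline{\lambda_0}]\in H\). The only variation is how you get this membership in \(H\): you retract \(\Sigma_g\times I\) onto \(\Sigma_g\times\{0\}\) and invoke the marking \((F_0)_*^{\lambda_0}=i_H\circ\rho\), whereas the paper lifts the loop through \(\widehat A\) to a closed loop at \(\widehat x_0\); both arguments are valid.
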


\begin{proof}
Write
\[
A_X=\operatorname{pr}_X\circ A:\Sigma_g\times I\to X
\]
for the \(X\)-component of the track. Let \(m_0=(z_0,0)\), and let \(\lambda_0:x_0\to F_0(z_0)\) be the fixed bottom base path used in the marking of \(F_0\).  If \(\omega\) is a loop in \(M=\Sigma_g\times I\) based at \(m_0\), then
\[
[\lambda_0\cdot A_X(\omega)\cdot\overline{\lambda_0}]\in H.
\]
Indeed, the preferred lift of \(A\) starts at the preferred lift of \(F_0\), and the lift of \(\lambda_0\) from \(\widehat x_0\) ends at \(\widehat F_0(z_0)\).  Thus the based loop
\(
\lambda_0\cdot A_X(\omega)\cdot\overline{\lambda_0}
\)
lifts to a loop based at \(\widehat x_0\), hence represents an element of the subgroup \(H\leq\pi_1(X,x_0)\).

Now compute the label of \(\widehat A(x)\) and \(\widehat A(y)\) using source paths \(\alpha:m_0\to x\) and \(\beta:m_0\to y\).  The based representative is
\[
[\lambda_0\cdot A_X(\alpha)\cdot \overline{A_X(\beta)}\cdot\overline{\lambda_0}]\in\pi.
\]
Changing \(\alpha\) by preconcatenating a source loop \(\omega\) changes this representative by left multiplication by
\(
[\lambda_0\cdot A_X(\omega)\cdot\overline{\lambda_0}].
\)
Changing \(\beta\) by preconcatenating a source loop changes the representative by right multiplication by an element of \(H\), with inverse depending on the convention for the changed path.  These are exactly the left and right ambiguities of the double coset.  Therefore the source-path definition agrees with Definition~\ref{def:double-coset-label}.
\end{proof}

\subsection{The raw \texorpdfstring{\(D\)}{D}-labelled cover count}\label{sec:raw-count}

In this section, we define the raw count for a generic marked track.  In the general setting, it is formulated using the off-diagonal double locus downstairs, restricted to ordered pairs whose label is a chosen self-dual double coset \(D\).  We prove compactness, boundarylessness, relative invariance, and level straightening.

\begin{convention}\label{conv:interior-target}
All tracks, homotopies of tracks, loop-tracks, and local insertions are taken to satisfy the following interior-target condition.  After prescribed endpoint collars, the image lies in
\(
\operatorname{int}X\times(0,1).
\) For loop-tracks \(\Sigma_g\times S^1\to X\times S^1\), the image lies in \(\operatorname{int}X\times S^1\).  All perturbations are taken inside this relative class, rel endpoint
collars, and preserve the data which make the track \(\rho\)-marked: the
source basepoint, the endpoint base paths, and the prescribed bottom and
top preferred lifts.
\end{convention}

Fix a nontrivial self-dual double coset
\(
D=HgH\neq H\), \(D=D^{-1}.\) Let \(\Ftwo\langle u_D\rangle\) denote the one-dimensional \(\Ftwo\)-vector space generated by a formal symbol \(u_D\). Let \(M=\Sigma_g\times I\), and let
\(
A:M\to X\times I
\)
be a \(\rho\)-marked track with preferred lift \(\widehat A:M\to X_H\times I\).

\begin{definition}\label{def:PD}
Define the off-diagonal double locus
\[
P^\circ(A)=\{(x,y)\in M\times M\setminus\Delta_M:A(x)=A(y)\}.
\]
For \((x,y)\in P^\circ(A)\), define
\[
\ell_A(x,y)=\ell(\widehat A(x),\widehat A(y))\in H\backslash\pi/H.
\]
Then set
\[
P_D(A)=\{(x,y)\in P^\circ(A):\ell_A(x,y)=D\}.
\]
For a generic track, define the raw count
\[
FQ^{0,D}_\rho(A)=
\bigl(\#\pi_0(P_D(A))\bmod2\bigr)u_D
\in\Ftwo\langle u_D\rangle.
\]
In particular, each connected component of \(P_D(A)\) contributes one
copy of \(u_D\).
The same definition applies to a level-preserving loop-track \(\Sigma_g\times S^1\to X\times S^1\), with \(M=\Sigma_g\times S^1\).
\end{definition}

\begin{lemma}[Compactness and boundarylessness]\label{lem:PD-compact-boundaryless}
For a generic \(\rho\)-marked track \(A\) satisfying Convention~\ref{conv:interior-target}, the space \(P_D(A)\) is a compact smooth \(1\)-manifold without boundary.
\end{lemma}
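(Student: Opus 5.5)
The plan has three parts: use transversality to make the full off-diagonal double locus a \(1\)-manifold, show that the \(D\)-labelled part is open and closed in it, and then check that it stays away from the diagonal and from the ends of \(M\).

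\emph{Manifold structure.} The source \(M=\Sigma_g\times I\) has dimension \(3\) and the target \(X\times I\) has dimension \(5\). For generic \(A\) (generic rel the endpoint collars, in the class of Convention~\ref{conv:interior-target}), the map \(A\times A\colon M\times M\setminus\Delta_M\to (X\times I)^2\) is transverse to the diagonal \(\Delta_{X\times I}\). This is the standard multijet transversality theorem, applied to the open set of pairs of distinct source points. The expected dimension is \(6-5=1\), so \(P^\circ(A)\) is a smooth \(1\)-manifold. Genericity also lets us assume there are no triple points: these have expected dimension \(9-10<0\).

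\emph{Open and closed in the double locus.} The label \(\ell_A\) is locally constant on \(P^\circ(A)\). Take a path \((x_s,y_s)\) in \(P^\circ(A)\). By Lemma~\ref{lem:marking-label-compatibility}, we may extend the source paths used at \((x_0,y_0)\) along \(x_s\) and \(y_s\). The based representative
\[
[\lambda_0\cdot A_X(\alpha_s)\cdot\overline{A_X(\beta_s)}\cdot\overline{\lambda_0}]
\]
then changes by a homotopy. The two appended arcs \(A_X(x_s)\) and \(A_X(y_s)\) coincide, so they cancel. Hence \(P_D(A)\) is a union of components of \(P^\circ(A)\), and in particular a smooth \(1\)-manifold.

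\emph{Compactness and the ends.} It remains to show that \(P_D(A)\) is closed in the compact space \(M\times M\), and that it has no boundary points. There are two possible sources of noncompactness: accumulation at the diagonal, and boundary behaviour at \(\Sigma_g\times\{0,1\}\).

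\emph{No accumulation at the diagonal.} We may assume \(A\) is an immersion: the expected dimension of the singular set is \(3-(5-3+1)=0\), so immersion is generic only in the sense of being after a small perturbation, which is the main technical point. Granting this, \(A\) is locally injective, so pairs \((x,y)\) with \(A(x)=A(y)\) cannot converge to a diagonal point \((z,z)\). A limit of double points is therefore either off-diagonal, and hence in \(P^\circ(A)\) by closedness of the condition \(A(x)=A(y)\), or diagonal, which is excluded.

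If \(A\) cannot be assumed to be an immersion, I would instead use the label directly. Near the diagonal, the lifted points \(\widehat A(x)\) and \(\widehat A(y)\) lie in a single evenly covered neighbourhood, and they are connected by a short source path whose image is a short loop. That loop is trivial in \(\pi\), so the label is \(H\). Since \(D\neq H\), \(D\)-labelled pairs stay a definite distance from \(\Delta_M\). This argument works even in the presence of cusps, and I expect it is the intended one; it is the step needing the most care, namely the uniform neighbourhood.

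\emph{No boundary points at the ends.} Near \(\Sigma_g\times\{0,1\}\) the track is the product of the embeddings \(F_0\) and \(F_1\), so it is injective there. By Convention~\ref{conv:interior-target}, together with \(A^{-1}(X\times\{0,1\})=\Sigma_g\times\{0,1\}\), double points lie in the interior collar-free region.

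Putting these together, \(P_D(A)\) is a closed subset of a compact set, contained in the interior, and a union of components of a boundaryless \(1\)-manifold. It is therefore a compact \(1\)-manifold without boundary, as claimed.
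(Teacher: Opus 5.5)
Your proof follows the paper's route. The ingredients are the same: transversality of $(x,y)\mapsto(A(x),A(y))$ to the diagonal; local constancy of the label, so that $P_D(A)$ is a union of components of $P^\circ(A)$; triviality of the label for double points near $\Delta_M$; and the endpoint argument from $A^{-1}(X\times\{0,1\})=\Sigma_g\times\{0,1\}$ together with embeddedness of $F_0,F_1$. For local constancy you extend the source paths, noting that the appended arcs $A_X\circ x|_{[0,s]}$ and $A_X\circ y|_{[0,s]}$ are literally the same path. That is a fine substitute for the paper's evenly-covered-neighbourhood argument.

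The thing to correct is the ``immersion'' route near the diagonal. It fails, and the label argument you offer as a fallback is in fact the essential step of the lemma.

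\begin{itemize}
\item Your own count says the corank-one locus of a generic map $M^3\to(X\times I)^5$ has expected dimension $0$. This means a generic track has isolated singular points that survive every small perturbation, not that they can be perturbed away.
\item These points are generalized Whitney umbrellas, locally $(x_1,x_2,y)\mapsto(x_1,x_2,x_1y,x_2y,y^2)$. Their double curve $\{((0,0,y),(0,0,-y)):y\neq0\}$ runs into the diagonal.
\item So $P^\circ(A)$ itself is in general not closed in $M\times M$. $P_D(A)$ is compact precisely because such near-diagonal pairs carry the label $H\neq D$.
\end{itemize}

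That is exactly what your second argument proves: a short source path from $x$ to $y$ maps into a contractible ball, so the label is $H$, and a Lebesgue-number bound on compact $M$ makes this uniform. This is the same argument as in the paper.

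A minor overstatement at the ends: you only need, and only get, that no off-diagonal double point has a coordinate on $\Sigma_g\times\{0,1\}$. When the track is not level-preserving, double points need not avoid the whole product collar, but this does not affect the conclusion.
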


\begin{proof}
Write \(A_X=\operatorname{pr}_X\circ A\).  Consider the map
\[
\Phi:M\times M\setminus\Delta_M\to (X\times I)\times(X\times I),
\qquad
\Phi(x,y)=(A(x),A(y)).
\]
After a perturbation rel endpoint collars and marked data, \(\Phi\) is transverse to the diagonal.  Since \(\dim(M\times M)=6\) and the diagonal in \((X\times I)^2\) has codimension \(5\), the off-diagonal double locus
\(
P^\circ(A)=\Phi^{-1}(\Delta)
\)
is a smooth one-manifold.

The label \(\ell_A\) is locally constant on \(P^\circ(A)\).  Indeed, along a small path in \(P^\circ(A)\), the two lifted points move continuously over the same downstairs point.  Over a small evenly covered neighbourhood in \(X\times I\), the sheet set is discrete, so the double-coset sheet difference cannot jump.  Therefore \(P_D(A)\) is a union of connected components of \(P^\circ(A)\).

It remains to prove that no \(D\)-labelled component can accumulate on the deleted source diagonal.  Suppose, for contradiction, that a sequence of off-diagonal double points
approaches the source diagonal.  Thus there are pairs
\(
(x_n,y_n)\in P^\circ(A)
\)
such that
\(
x_n\to x\) and  \(y_n\to x\) for some \(x\in M\). Choose a small coordinate ball \(U\subset M\) around \(x\) such that \(A(U)\) is contained in a contractible coordinate ball \(W\subset X\times I\).  For all sufficiently large \(n\), \(x_n,y_n\in U\).

Let \(m_0=(z_0,0)\).  Choose a path \(\gamma\) in \(M\) from \(m_0\) to \(x\).  Choose paths \(u_n\) from \(x\) to \(x_n\), \(v_n\) from \(x\) to \(y_n\), and \(\delta_n\) from \(x_n\) to \(y_n\), all contained in \(U\).  Since \(A(x_n)=A(y_n)\), the path \(A(\delta_n)\) is a loop in the contractible ball \(W\), based at \(A(x_n)=A(y_n)\), and hence is nullhomotopic rel basepoint in \(X\times I\).

Let \(\lambda_0:x_0\to F_0(z_0)\) be the fixed bottom base path.  Compute the double-coset label using the lifted source paths \(\gamma*u_n\) and \(\gamma*v_n\).  The corresponding based representative is
\[
[\lambda_0\cdot A_X(\gamma*u_n)\cdot
\overline{A_X(\gamma*v_n)}\cdot\overline{\lambda_0}]
\in\pi.
\]
The path \(A(\delta_n)\) is a nullhomotopic loop in the contractible coordinate ball \(W\), based at \(A(x_n)=A(y_n)\).  Inserting this nullhomotopic loop after \(A_X(\gamma*u_n)\) does not change the based element.  Hence the representative equals the class of
\[
L=\lambda_0\cdot
A_X(\gamma*u_n*\delta_n*\overline{v_n}*\overline{\gamma})
\cdot\overline{\lambda_0}.
\]
The source path
\(
\gamma*u_n*\delta_n*\overline{v_n}*\overline{\gamma}
\)
is a loop in \(M\) based at \(m_0\).  Since \(A\) is a \(\rho\)-marked track with preferred lift, the based loop obtained by prefixing and suffixing with \(\lambda_0\), \(L\) lifts to a loop in \(X_H\) based at \(\widehat x_0\).  Therefore its class lies in \(H\).  Hence the double coset of \((x_n,y_n)\) is the trivial double coset \(H\) for all sufficiently large \(n\).  Since \(D\neq H\), no \(D\)-labelled component accumulates on \(\Delta_M\).

Now consider boundary faces of \(M\times M\).  Suppose a point of the closure of \(P_D(A)\) has \(x\in\Sigma_g\times\{0\}\).  Equality \(A(x)=A(y)\) forces the \(I\)-coordinate of \(A(y)\) to be \(0\).  Since the track is proper as a map of pairs, \(y\in\Sigma_g\times\{0\}\).  Endpoint embeddedness gives \(F_0(x)=F_0(y)\) only when \(x=y\).  Thus every bottom-level boundary limit is diagonal and has label \(H\), hence is excluded from \(P_D(A)\).  The top endpoint is identical with \(F_1\) in place of \(F_0\).  There are no target-boundary contributions by Convention~\ref{conv:interior-target}.

Thus \(P_D(A)\) is closed in a compact subset of \(M\times M\setminus\Delta_M\), and the transverse inverse-image structure has no source-boundary, diagonal, or target-boundary endpoints.  Hence \(P_D(A)\) is a compact smooth one-manifold without boundary.
\end{proof}

\begin{lemma}[Relative invariance]\label{lem:relative-invariance-D}
Let \(A^r\), \(r\in[0,1]\), be a generic one-parameter family of \(\rho\)-marked tracks satisfying Convention~\ref{conv:interior-target}, fixed near \(r=0,1\), fixed near endpoint collars, and fixed on marked data.  Then
\[
FQ^{0,D}_\rho(A^0)=FQ^{0,D}_\rho(A^1).
\]
\end{lemma}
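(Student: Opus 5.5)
The plan is the standard cobordism argument, run in the parametrized double locus. Write \(M=\Sigma_g\times I\) and consider the parametrized map
\[
\Phi:[0,1]\times(M\times M\setminus\Delta_M)\to (X\times I)\times(X\times I),\qquad
\Phi(r,x,y)=(A^r(x),A^r(y)).
\]
For a generic family, rel the ends \(r=0,1\), rel endpoint collars and rel marked data, \(\Phi\) is transverse to the diagonal. The source has dimension \(7\) and the diagonal has codimension \(5\), so
\[
\mathcal P^\circ=\Phi^{-1}(\Delta)
\]
is a smooth \(2\)-manifold with boundary. Near \(r=0,1\) the family is constant, so \(\mathcal P^\circ\) is a product there, and its slices at \(r=0,1\) are \(P^\circ(A^0)\) and \(P^\circ(A^1)\).

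The label \(\ell_{A^r}(x,y)\) is defined using the preferred lift \(\widehat A^r\). This lift varies continuously in \(r\), because the family is fixed on marked data and lifting is unique once the base points are fixed. The same evenly-covered argument as in Lemma~\ref{lem:PD-compact-boundaryless} therefore shows that the label is locally constant on \(\mathcal P^\circ\). Let \(\mathcal P_D\) be the union of the components of \(\mathcal P^\circ\) carrying label \(D\).

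Next I would show that \(\mathcal P_D\) is compact and that
\[
\partial\mathcal P_D=P_D(A^0)\sqcup P_D(A^1).
\]
The proof of Lemma~\ref{lem:PD-compact-boundaryless} applies pointwise in \(r\), with one modification: for a sequence \((r_n,x_n,y_n)\) converging to \((r,x,x)\), the contractible-ball argument is run with a uniform ball \(W\) containing \(A^{r'}(U)\) for all \(r'\) near \(r\). The source loop \(\gamma*u_n*\delta_n*\overline{v_n}*\overline\gamma\) is then mapped by \(A^{r_n}\), which is still \(\rho\)-marked, so its class lies in \(H\). Hence the label near the diagonal is \(H\neq D\), and \(\mathcal P_D\) does not accumulate on the diagonal.

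The faces \(\Sigma_g\times\{0,1\}\) contribute nothing. The family is fixed near the endpoint collars, and the endpoint embeddings are \(F_0\) and \(F_1\) for every \(r\), so any limit point on these faces is diagonal. Convention~\ref{conv:interior-target} excludes target-boundary limits. The only remaining boundary is therefore at \(r=0,1\).

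A compact \(2\)-manifold has an even number of boundary circles. Hence
\[
\#\pi_0P_D(A^0)+\#\pi_0P_D(A^1)\equiv 0 \pmod 2,
\]
which is the claim.

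The main obstacle is the genericity step: making \(\Phi\) transverse to the diagonal while keeping the family \(\rho\)-marked, fixed near \(r=0,1\) and near the collars, and with the end slices already transverse. I would handle it with relative parametric transversality. The ends \(A^0\) and \(A^1\) are generic by hypothesis, so \(\Phi\) is already transverse near \(r=0,1\). Near the endpoint collars the only double points are diagonal ones, which have been removed. A perturbation supported in \((0,1)\times\operatorname{int}M\) is therefore enough, and a small perturbation is homotopic to the original and so preserves the preferred lifts. A second point to check is that the \(r\)-coordinate gives no extra boundary from double points that escape the interior, and the interior-target convention rules this out.
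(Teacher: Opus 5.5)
Your setup is sound: the parametrized double locus \(\mathcal P^\circ\), local constancy of the label, compactness, and \(\partial\mathcal P_D=P_D(A^0)\sqcup P_D(A^1)\) all match the paper. The final step, however, is false. A compact surface need not have an even number of boundary circles: a disk has one and a pair of pants has three. You appear to be transposing the fact that a compact \(1\)-manifold has an even number of boundary \emph{points} up one dimension.

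Knowing only that \(P_D(A^0)\) and \(P_D(A^1)\) cobound a compact surface gives no control on the parity of the number of components. In a generic family, a double-point circle can be born at a local minimum of \(r\) on \(\mathcal P^\circ\), and this changes the count by one. A warning sign is that your argument never uses \(D=D^{-1}\). As written it would prove invariance for every label, which is false when \(D\neq D^{-1}\). For example, a finger move followed by the inverse Whitney move along its own finger produces exactly one circle labelled \(HgH\) (its exchange partner is labelled \(Hg^{-1}H\)). Yet that track is homotopic rel ends to the product track, which has no double points.

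The missing ingredient is the exchange involution together with orientability, and this is how the paper concludes.

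\begin{enumerate}
\item Since \(D=D^{-1}\), Lemma~\ref{lem:label-well-defined} shows that \(\iota(x,y,r)=(y,x,r)\) maps \(\mathcal P_D\) to itself. The involution \(\iota\) is free because the source diagonal has been removed. Hence \(\chi(\mathcal P_D)=2\chi(\mathcal P_D/\iota)\) is even.
\item \(\mathcal P_D\) is orientable. It is a transverse preimage, in the oriented manifold \(M\times M\times I\), of the diagonal in \((X\times I)^2\), whose normal bundle is identified with the oriented bundle \(T(X\times I)\).
\item For a compact orientable surface \(S\) whose components have genus \(g_i\) and \(b_i\) boundary circles, \(\chi(S)=\sum_i(2-2g_i-b_i)\equiv\#\pi_0(\partial S)\pmod 2\). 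This congruence genuinely needs orientability: a M\"obius band has \(\chi=0\) and one boundary circle.
\end{enumerate}

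Combining these gives \(\#\pi_0P_D(A^0)+\#\pi_0P_D(A^1)\equiv0\pmod 2\), which is the claim.
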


\begin{proof}
The parametrized off-diagonal double locus is
\[
\mathcal P^\circ=
\{(x,y,r)\in M\times M\setminus\Delta_M\times I:A^r(x)=A^r(y)\}.
\]
For a generic one-parameter family this is a smooth surface, since the domain has dimension \(7\) and the target diagonal has codimension \(5\).  The \(D\)-labelled part
\[
\mathcal P_D=
\{(x,y,r)\in\mathcal P^\circ:\ell_{A^r}(x,y)=D\}
\]
is a union of components.  The diagonal and endpoint arguments from Lemma~\ref{lem:PD-compact-boundaryless}, applied uniformly in the compact parameter interval, show that \(\mathcal P_D\) is compact and has boundary exactly
\[
\partial\mathcal P_D=P_D(A^0)\sqcup P_D(A^1).
\]
There is no additional boundary from \(\partial M\), the source diagonal, or \(\partial X\times I\).

The surface \(\mathcal P_D\) is orientable.  The ambient manifold \(M\times M\times I\) is oriented, and the diagonal in \((X\times I)^2\) has oriented normal bundle canonically identified with \(T(X\times I)\); a transverse inverse image therefore inherits an orientation.

The exchange involution
\(
\iota(x,y,r)=(y,x,r)
\)
is free because \(x=y\) is excluded.  By Lemma~\ref{lem:label-well-defined}, exchange sends label \(D\) to label \(D^{-1}\).  Since \(D=D^{-1}\), it preserves \(\mathcal P_D\).  Thus \(\mathcal P_D\to\mathcal P_D/\iota\) is a twofold covering and
\[
\chi(\mathcal P_D)=2\chi(\mathcal P_D/\iota)\equiv0\pmod2.
\]
For a compact orientable surface \(S\), \(\chi(S)\equiv\#\pi_0(\partial S)\pmod2\).  Hence
\[
0\equiv\chi(\mathcal P_D)
\equiv
\#\pi_0(P_D(A^0))+\#\pi_0(P_D(A^1))
\pmod2,
\]
which proves the equality of raw counts.
\end{proof}

\begin{lemma}[Level straightening]\label{lem:level-straightening-D}
Every \(\rho\)-marked proper track
\(
A:\Sigma_g\times I\to X\times I
\)
which is product near the boundary and has prescribed endpoint lifts is rel-boundary homotopic through \(\rho\)-marked proper tracks satisfying Convention~\ref{conv:interior-target} to a level-preserving track
\[
A^{\operatorname{lev}}(z,s)=(a(z,s),s).
\]
\end{lemma}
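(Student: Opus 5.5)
The plan is to straighten the parameter coordinate by an explicit homotopy. The idea is to interpolate linearly between the given \(I\)-coordinate and the projection onto the source interval, while keeping the \(X\)-coordinate fixed. Write \(A(z,s)=(a(z,s),\sigma(z,s))\) with \(a=\operatorname{pr}_X\circ A\) and \(\sigma=\operatorname{pr}_I\circ A\). Since \(A\) is product near \(\Sigma_g\times\{0,1\}\), we have \(\sigma(z,s)=s\) on the endpoint collars. Define
\[
A^r(z,s)=\bigl(a(z,s),\,(1-r)\sigma(z,s)+rs\bigr),\qquad r\in[0,1].
\]
Then \(A^0=A\), and \(A^1(z,s)=(a(z,s),s)\) is level-preserving. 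Each \(A^r\) agrees with \(A\) on the endpoint collars, so the homotopy is rel boundary.

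Next check that each \(A^r\) is a proper track. The level set condition \((A^r)^{-1}(X\times\{0,1\})=\Sigma_g\times\{0,1\}\) must hold for all \(r\). Take \(s\in(0,1)\). If \(\sigma(z,s)\in(0,1)\), then the convex combination lies in \((0,1)\), because \(rs\in(0,1)\) as well. The hypothesis \(A^{-1}(X\times\{0,1\})=\Sigma_g\times\{0,1\}\) forces \(\sigma(z,s)\in(0,1)\) for interior \(s\), so this case always applies. At \(s\in\{0,1\}\) everything is fixed. Properness is automatic since the source is compact.

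The interior-target condition of Convention~\ref{conv:interior-target} also needs attention. If \(a\) meets \(\partial X\), I first push \(a\) into \(\operatorname{int}X\) using a collar of \(\partial X\). This push is a homotopy supported away from the endpoint collars, where \(a\) equals \(F_0\) or \(F_1\), which already lie in the interior. Precomposing with this push, the whole family then satisfies the convention.

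Finally, the homotopy must preserve the marking. The \(X\)-component of \(A^r\) is exactly \(a\) for every \(r\), or the pushed \(a\), which is homotopic to \(a\) rel endpoints. The preferred lift \(\widehat A^r=(\widehat a,\ (1-r)\sigma+rs)\) therefore exists, using the same lift \(\widehat a\) of the \(X\)-component. Its endpoint restrictions are the prescribed \(\widehat F_0\times\{0\}\) and \(\widehat F_1\times\{1\}\), and the source basepoint and base paths are untouched. Hence each \(A^r\) is \(\rho\)-marked.

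The only delicate point is this marking-preservation step, together with the interior-target condition. I expect both to be routine: the interpolation only changes the \(I\)-factor, where the covering is trivial, so the lift of the \(X\)-part is unchanged throughout.
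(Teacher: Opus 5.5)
Your proof is correct and follows the paper's argument. The paper also interpolates linearly as $(1-u)t(z,s)+us$ in the interval coordinate while keeping the $X$-coordinate $a$ fixed, checks properness as a map of pairs, and observes that the unchanged $X$-coordinate preserves the marking and preferred lift. Your extra collar push toward $\operatorname{int}X$ is harmless, though it should be a post-composition of $a$ with a deformation of $X$ fixed near $F_0(\Sigma_g)\cup F_1(\Sigma_g)$; the paper does not need this step, since under Convention~\ref{conv:interior-target} the original track already has interior target.
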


\begin{proof}
Write \(A(z,s)=(a(z,s),t(z,s))\).  Properness as a map of pairs and the product condition near the boundary give \(t=s\) near \(\Sigma_g\times\{0,1\}\) and
\(
t^{-1}\{0,1\}=\Sigma_g\times\{0,1\}.
\)
Define
\[
A_u(z,s)=\bigl(a(z,s),(1-u)t(z,s)+us\bigr),\qquad u\in[0,1].
\]
If the second coordinate is \(0\), then \((1-u)t(z,s)+us=0\).  Both terms are nonnegative, so either \(s=0\) when \(u>0\), or \(t(z,s)=0\) when \(u=0\), and properness again gives \(s=0\).  The top endpoint is the same after replacing \(t\) and \(s\) by \(1-t\) and \(1-s\).  Thus the homotopy remains proper as a map of pairs.

The \(X\)-coordinate is unchanged throughout the homotopy.  Therefore the marking and the preferred lift are preserved.  If a generic level-preserving representative is needed, perturb only the \(X\)-coordinate, away from endpoint collars and marked data.  This preserves the second coordinate \(s\), the endpoint lifts, and the interior-target convention.
\end{proof}

\subsection{Local \texorpdfstring{\(D\)}{D}-labelled \texorpdfstring{\(\pi_3X\)}{pi3X}-insertions}\label{sec:local-insertions}

In this section, we define the local \(D\)-labelled \(\pi_3X\)-insertion
indeterminacy \(\mu_D\).  A local representative is inserted in a small
prepared ball while the original track is kept fixed outside the support.
The definition separates the local insertion contribution from the mixed
comparison terms needed to compare the inserted track with the original
track. We prove that the resulting value is independent of the preparation choices,
changes the raw count by exactly \(\mu_D\), and vanishes when \(D\) contains
no element of order two.

\begin{definition}[Admissible local \(D\)-insertions]\label{def:muD}
Let \(A:M=\Sigma_g\times I\to X\times I\) be a generic marked track with preferred lift \(\widehat A\).  Choose a point
\(
p\in\operatorname{int}M
\)
away from endpoint collars and marked data, and away from the two projections of \(P_D(A)\).  Choose a coordinate ball
\(
B\subset\operatorname{int}M
\)
centered at \(p\), small enough that
\[
B\cap\operatorname{pr}_1P_D(A)=\varnothing,
\qquad
B\cap\operatorname{pr}_2P_D(A)=\varnothing.
\]
Choose a smaller coordinate ball \(B'\Subset B\).

Choose a collapse map \(c:B\to B\) which is the identity near \(\partial B\), is constant equal to \(p\) on a neighbourhood of \(B'\), satisfies \(c(B)\subset B\), and is homotopic to the identity rel \(\partial B\).  Replacing \(A|_B\) by \(A\circ c\), and leaving \(A\) unchanged outside \(B\), gives the prepared track \(A_{\operatorname{prep}}\).  Its preferred lift is denoted \(\widehat A_{\operatorname{prep}}\).

Choose a based class
\(
\alpha_p\in \pi_3(X\times I,A(p))
\cong
\pi_3(X,\operatorname{pr}_X A(p)).
\)
Equivalently, one may obtain such a class by choosing a path from \(x_0\)
to \(\operatorname{pr}_X A(p)\) and transporting a class of
\(\pi_3(X,x_0)\) along that path.  The subgroup
\(\operatorname{im}\mu_D\) below is generated over all insertion points,
all such transport paths, and all based classes obtained in this way.

Choose a based representative
\[
b:(B',\partial B')\to
\bigl(\operatorname{int}X\times(t_p-\epsilon,t_p+\epsilon),A(p)\bigr),
\]
constant on a closed collar \(C\) of \(\partial B'\), where \(A(p)\in X\times\{t_p\}\) and \(0<t_p-\epsilon<t_p+\epsilon<1\).  Let
\[
B'_{\operatorname{nc}}=\overline{B'\setminus C},
\qquad
B'_{\operatorname{act}}=\operatorname{int}(B'_{\operatorname{nc}}),
\qquad
S=\partial B'_{\operatorname{nc}}.
\]
The set \(B'_{\operatorname{act}}\) is the active, possibly nonconstant, part of the insertion.  The collar \(C\) is the constant collar.  The designed interface is \(S\).  Since \(B'\) is simply connected, the preferred point \(\widehat A(p)\) determines a unique lift
\[
\widehat b:(B',\partial B')\to (X_H\times I,\widehat A(p)),
\]
constant on \(C\).

The associated inserted track \(A^{\operatorname{ins}}\) is obtained by using \(b\) on \(B'\) and \(A_{\operatorname{prep}}\) outside \(\operatorname{int}B'\).  This is well-defined because \(b\) and \(A_{\operatorname{prep}}\) are both constant equal to \(A(p)\) near \(\partial B'\).  For the purpose of counting, decompose the source as
\[
M=B'_{\operatorname{act}}\sqcup M_{\operatorname{prep}},
\qquad
M_{\operatorname{prep}}:=M\setminus B'_{\operatorname{act}}.
\]
On \(M_{\operatorname{prep}}\cap B'\), both \(A^{\operatorname{ins}}\) and \(A_{\operatorname{prep}}\) are the same constant map.

Let
\[
\mathcal L_D(\widehat A(p))=
\{\widehat z\in(q\times\operatorname{id})^{-1}(A(p)):
\ell(\widehat z,\widehat A(p))=D\}.
\]
Since \(D=D^{-1}\), this set also records the opposite ordered collar relation.

The representative is called \emph{admissible} if, after a perturbation supported in \(B'_{\operatorname{act}}\), the following conditions hold.
\begin{enumerate}[label=\textup{(L\arabic*)}]
\item The collar \(C\) maps constantly to \(\widehat A(p)\).
\item No \(D\)-labelled collar--active point occurs: there is no \(x\in B'_{\operatorname{act}}\) such that
\[
b(x)=A(p)
\quad\text{and}\quad
\widehat b(x)\in\mathcal L_D(\widehat A(p)).
\]
Moreover, the same exclusion holds on a neighbourhood of the interface \(S\) inside \(B'_{\operatorname{nc}}\).
\item No \(D\)-labelled insertion--insertion solution occurs on the designed interface:
\[
\begin{aligned}
&\{(x,y)\in S\times B'_{\operatorname{nc}}:
 b(x)=b(y),\ \ell(\widehat b(x),\widehat b(y))=D\}=\varnothing,\\
&\{(x,y)\in B'_{\operatorname{nc}}\times S:
 b(x)=b(y),\ \ell(\widehat b(x),\widehat b(y))=D\}=\varnothing.
\end{aligned}
\]
\item No mixed solution occurs on the designed interface:
\[
\begin{aligned}
&\{(x,y)\in S\times M_{\operatorname{prep}}:
 b(x)=A_{\operatorname{prep}}(y),
 \ell(\widehat b(x),\widehat A_{\operatorname{prep}}(y))=D\}=\varnothing,\\
&\{(x,y)\in M_{\operatorname{prep}}\times S:
 A_{\operatorname{prep}}(x)=b(y),
 \ell(\widehat A_{\operatorname{prep}}(x),\widehat b(y))=D\}=\varnothing.
\end{aligned}
\]
\item The following three ordered maps are transverse to the diagonal in \((X\times I)^2\):
\[
B'_{\operatorname{act}}\times B'_{\operatorname{act}}\setminus\Delta
\longrightarrow (X\times I)^2,
\qquad
(x,y)\longmapsto (b(x),b(y)),
\]
\[
B'_{\operatorname{act}}\times M_{\operatorname{prep}}
\longrightarrow (X\times I)^2,
\qquad
(x,y)\longmapsto (b(x),A_{\operatorname{prep}}(y)),
\]
and
\[
M_{\operatorname{prep}}\times B'_{\operatorname{act}}
\longrightarrow (X\times I)^2,
\qquad
(x,y)\longmapsto (A_{\operatorname{prep}}(x),b(y)).
\]
\end{enumerate}

With these conventions, define
\[
P_{II}(\widehat b)=
\{(x,y)\in B'_{\operatorname{act}}\times B'_{\operatorname{act}}\setminus\Delta:
 b(x)=b(y),\ \ell(\widehat b(x),\widehat b(y))=D\},
\]
\[
P_{IP}=
\{(x,y)\in B'_{\operatorname{act}}\times M_{\operatorname{prep}}:
 b(x)=A_{\operatorname{prep}}(y),
 \ell(\widehat b(x),\widehat A_{\operatorname{prep}}(y))=D\},
\]
\[
P_{PI}=
\{(x,y)\in M_{\operatorname{prep}}\times B'_{\operatorname{act}}:
 A_{\operatorname{prep}}(x)=b(y),
 \ell(\widehat A_{\operatorname{prep}}(x),\widehat b(y))=D\},
\]
and
\[
P_{PP}=
\{(x,y)\in M_{\operatorname{prep}}\times M_{\operatorname{prep}}\setminus\Delta:
 A_{\operatorname{prep}}(x)=A_{\operatorname{prep}}(y),
 \ell(\widehat A_{\operatorname{prep}}(x),\widehat A_{\operatorname{prep}}(y))=D\}.
\]
By Definition~\ref{def:muD}, conditions \textup{(L2)}--\textup{(L4)} exclude collar and interface boundary points, and \textup{(L5)} gives transversality.  Hence \(P_{II}\), \(P_{IP}\), and \(P_{PI}\) are compact smooth one-manifolds without boundary. 

Define the local value 
\[
\mu_D(\alpha_p)=
\bigl(\#\pi_0(P_{II}(\widehat b))\bmod2\bigr)u_D.
\]
Finally, \(\operatorname{im}\mu_D\subset\Ftwo\langle u_D\rangle\) is the subgroup generated by all such values.
\end{definition}
Admissible representatives for every transported based class are constructed in Lemma~\ref{lem:relative-extension-local-D}\textup{(ii)}.

\begin{lemma}\label{lem:muD-well-defined}
For a fixed insertion point, fixed lifted insertion basepoint, and fixed transported based class \(\alpha_p\), the value \(\mu_D(\alpha_p)\) is independent of the admissible representative, collar size, insertion ball \(B'\), larger support ball \(B\), and collapse map.  Consequently \(\operatorname{im}\mu_D\) is a well-defined subgroup of \(\Ftwo\langle u_D\rangle\).
\end{lemma}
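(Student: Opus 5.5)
The key observation is that \(\mu_D(\alpha_p)\) depends only on the map \(b\) restricted to \(B'_{\operatorname{act}}\) together with its lift \(\widehat b\), which is pinned down by the lifted point \(\widehat A(p)\). The prepared track \(A_{\operatorname{prep}}\) does not enter \(P_{II}\) at all. So the plan is to reduce every listed choice to a comparison between two admissible representatives of the same based class, and then to run the cobordism argument of Lemma~\ref{lem:relative-invariance-D} locally.

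First I handle the choices that affect only the domain. These are the larger ball \(B\), the collapse map \(c\), the ball \(B'\) and the collar size. The ball \(B\) and the map \(c\) change only \(A_{\operatorname{prep}}\), so they do not affect \(P_{II}\) at all. Any two choices of \(B'\) and collar are related by a diffeomorphism of pairs isotopic to a standard one, fixing \(p\). Precomposing \(b\) with this diffeomorphism carries \(P_{II}\) diffeomorphically onto the corresponding locus, so the count is unchanged. Enlarging the collar means \(b\) is constant on a larger region. By (L2)--(L3), that region contributes no \(D\)-labelled points, so the count is again unchanged. After these reductions we may assume the domains agree.

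The main step is to compare two admissible representatives \(b_0,b_1\) of \(\alpha_p\) on the same \((B',C)\). Since they represent the same based class, they are homotopic through maps \(b_r\) that are constant on \(C\) with value \(A(p)\). Form the parametrized locus
\[
\mathcal P_{II}=\{(x,y,r): x\neq y\in B'_{\operatorname{act}},\ b_r(x)=b_r(y),\ \ell(\widehat b_r(x),\widehat b_r(y))=D\}.
\]
After a generic perturbation supported in \(B'_{\operatorname{act}}\times(0,1)\), this is a surface. Its boundary is \(P_{II}(\widehat b_0)\sqcup P_{II}(\widehat b_1)\), provided there is no escape to the diagonal, to the collar, or to the interface \(S\).

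The argument of Lemma~\ref{lem:PD-compact-boundaryless} rules out the diagonal: \(B'\) is simply connected, so nearby coincident points have trivial label \(H\neq D\). The collar and interface are the obstacle. Along the homotopy, \(D\)-labelled collar--active points, meaning \(b_r(x)=A(p)\) with \(\widehat b_r(x)\in\mathcal L_D\), may appear even though they are excluded at \(r=0,1\). The set of such \((x,r)\) is generically a finite set of points, since the active domain is \(3\)-dimensional plus one parameter, in a target of dimension \(5\). Generically it is therefore empty. Here I expect the main work: the count \(3+1<5\) shows that, after a perturbation rel \(r=0,1\), the homotopy avoids the discrete set \(\mathcal L_D(\widehat A(p))\) on the relevant strata. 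An analogous count handles limit points on \(S\times B'_{\operatorname{nc}}\).

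With those boundary strata excluded, \(\mathcal P_{II}\) is compact and orientable by the argument of Lemma~\ref{lem:relative-invariance-D}. The exchange involution is free and preserves \(\mathcal P_{II}\) because \(D=D^{-1}\). This gives \(\chi(\mathcal P_{II})\equiv0\), and hence the two boundary counts agree mod \(2\).

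Finally, since \(\mu_D(\alpha_p)\) is independent of all the choices, the subgroup generated by these values over all insertion points, transport paths and based classes is well defined.
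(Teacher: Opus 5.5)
Your proposal is correct and follows essentially the same route as the paper's proof. The choices of \(B\) and \(c\) do not enter \(P_{II}\), and changes of \(B'\) and collar are absorbed by precomposition and by the collar--active exclusions. Two admissible representatives on a common collar are then compared through a based homotopy rel the collar that avoids the discrete set \(\mathcal L_D(\widehat A(p))\), using domain dimension \(4<5\). Finally, the free exchange involution on the compact orientable parametrized surface forces the two boundary counts to agree mod two.

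One small slip: the \(3+1<5\) count shows the relevant preimage is generically empty, not ``a finite set of points''; you do reach the right conclusion. Your explicit exclusion of escape to the source diagonal covers a point the paper leaves implicit.
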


\begin{proof}
First fix \(B'\), the lifted insertion basepoint \(\widehat A(p)\), and a common collar convention.  Let
\[
\widehat b^0,\widehat b^1:(B',\partial B')\to(X_H\times I,\widehat A(p))
\]
be two admissible lifted representatives of the same transported based class.  After shrinking the collar if necessary, assume that both are constant on the same collar \(C\) and that the same active region \(B'_{\operatorname{act}}\) and interface \(S\) are used.

Since the two representatives define the same based relative \(\pi_3\)-class, they are joined by a based homotopy
\[
\widehat b^r:(B',\partial B')\to(X_H\times I,\widehat A(p)),
\qquad
0\le r\le1,
\]
rel the collar \(C\).  By relative general position, the homotopy may be chosen so that its nonconstant part avoids the discrete set of \(D\)-related lifts over \(A(p)\), rel a neighbourhood of the interface, and the parametrized ordered map
\[
B'_{\operatorname{act}}\times B'_{\operatorname{act}}\times I\setminus\Delta
\longrightarrow (X\times I)^2,
\qquad
(x,y,r)\longmapsto (b^r(x),b^r(y))
\]
is transverse to the diagonal.  The avoidance is possible because the homotopy domain has dimension \(4\), while \(X_H\times I\) has dimension \(5\), and the forbidden \(D\)-related lifts over \(A(p)\) are discrete. The parametrized local \(D\)-fiber product
\[
\mathcal P_D^\mu=
\{(x,y,r)\in B'_{\operatorname{act}}\times B'_{\operatorname{act}}\times I\setminus\Delta:
 b^r(x)=b^r(y),
 \ell(\widehat b^r(x),\widehat b^r(y))=D\}
\]
is therefore a compact smooth orientable surface.  The collar is fixed throughout the homotopy, and the interface-avoidance conditions exclude boundary on \(S\).  Thus
\[
\partial\mathcal P_D^\mu
=
P_{II}(\widehat b^0)\sqcup P_{II}(\widehat b^1).
\]
Ordered-pair exchange is free and sends the label to its inverse.  Since \(D=D^{-1}\), exchange preserves \(\mathcal P_D^\mu\).  Therefore
\[
\chi(\mathcal P_D^\mu)\equiv0\pmod2.
\]
The surface is orientable by the same transverse-inverse-image orientation argument used for \(P_D(A)\).  Hence
\[
\#\pi_0(P_{II}(\widehat b^0))
\equiv
\#\pi_0(P_{II}(\widehat b^1))
\pmod2.
\]
Thus \(\mu_D(\alpha_p)\) is independent of the admissible representative.

Changing the collar size only adds or removes a region on which the representative is constant.  By the collar--active and interface-avoidance conditions, no \(D\)-labelled insertion-insertion point lies in the added or removed collar, nor can a component acquire boundary at the new interface.  Identifying the two collar conventions by a rel-boundary collar expansion reduces this case to the preceding parametrized argument.

Changing the insertion ball \(B'\) is handled by choosing a smaller ball contained in the intersection of the two choices, or equivalently by precomposing with an orientation-preserving diffeomorphism of \(3\)-balls fixed near the boundary and then using the preceding homotopy argument.  The quotient \(B'/\partial B'\cong S^3\) and the lifted basepoint are unchanged, so the represented based \(\pi_3\)-class is unchanged.

The larger support ball \(B\) and the collapse map \(c\) do not enter the insertion-insertion fiber product defining \(\mu_D(\alpha_p)\); they only prepare the complement for comparison with a global track.  Any two such preparations are homotopic rel \(\partial B\) through preparations which are constant near the chosen insertion ball.  Therefore changing \(B\) or \(c\) changes neither the based local representative nor the parity of \(P_{II}\).

Since \(\operatorname{im}\mu_D\) is defined as the subgroup generated over all insertion points, transport paths, and transported based classes, the subgroup is well-defined.
\end{proof}

The following lemma describes the effect of an admissible insertion on the raw count.
\begin{lemma}[Effect of an admissible insertion]\label{lem:insertion-effect-D}
Let \(A^{\operatorname{ins}}\) be obtained from a prepared track \(A_{\operatorname{prep}}\) by an admissible insertion representing \(\alpha_p\).  Then
\[
FQ^{0,D}_\rho(A^{\operatorname{ins}})-FQ^{0,D}_\rho(A)
=
\mu_D(\alpha_p)
\]
in \(\Ftwo\langle u_D\rangle\).
\end{lemma}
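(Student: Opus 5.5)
Two steps: first compare \(A\) with the prepared track \(A_{\operatorname{prep}}\), then compare \(A_{\operatorname{prep}}\) with \(A^{\operatorname{ins}}\) by decomposing the \(D\)-labelled double locus of \(A^{\operatorname{ins}}\) according to the source splitting \(M=B'_{\operatorname{act}}\sqcup M_{\operatorname{prep}}\).

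\emph{Step 1: \(A\) versus \(A_{\operatorname{prep}}\).} The collapse map \(c\) is homotopic to the identity rel \(\partial B\). This gives a one-parameter family of \(\rho\)-marked tracks from \(A\) to \(A_{\operatorname{prep}}\), supported in \(B\) and fixed on endpoint collars and marked data. After making the family generic rel its ends, Lemma~\ref{lem:relative-invariance-D} gives
\[
FQ^{0,D}_\rho(A_{\operatorname{prep}})=FQ^{0,D}_\rho(A).
\]
Genericity is an issue, because \(A_{\operatorname{prep}}\) is constant on a neighbourhood of \(B'\). So I would either perturb only the \(D\)-labelled parts, or define the raw count of \(A_{\operatorname{prep}}\) through \(P_{PP}\) directly. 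Two facts make this work.
\begin{itemize}
\item Since \(B\) avoids both projections of \(P_D(A)\), and the label of a pair in which one point lies in the collapsed region is computed from the same lifted point \(\widehat A(p)\), the \(D\)-labelled double points of \(A_{\operatorname{prep}}\) form exactly \(P_{PP}\).
\item Pairs with both points in the constant region \(c^{-1}(p)\) have label \(H\neq D\), so they do not contribute.
\end{itemize}
Hence \(\#\pi_0(P_{PP})\equiv\#\pi_0(P_D(A))\pmod 2\). This identification uses the hypothesis that \(p\) and \(B\) avoid \(\operatorname{pr}_iP_D(A)\).

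\emph{Step 2: decomposition for \(A^{\operatorname{ins}}\).} Split an ordered pair \((x,y)\) according to whether each point lies in \(B'_{\operatorname{act}}\) or in \(M_{\operatorname{prep}}\). This gives the disjoint decomposition
\[
P_D(A^{\operatorname{ins}})=P_{II}\sqcup P_{IP}\sqcup P_{PI}\sqcup P_{PP}.
\]
By (L2)--(L5) each piece is a union of components, with no components crossing the interface \(S\). Therefore
\[
\#\pi_0 P_D(A^{\operatorname{ins}})\equiv \#\pi_0P_{II}+\#\pi_0P_{IP}+\#\pi_0P_{PI}+\#\pi_0P_{PP} \pmod 2.
\]
The exchange involution maps \(P_{IP}\) bijectively onto \(P_{PI}\), since \(D=D^{-1}\) by Lemma~\ref{lem:label-well-defined}. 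So the two mixed terms together contribute an even number of components and cancel mod 2. What remains is \(\mu_D(\alpha_p)\) plus the \(P_{PP}\) term, which by Step~1 equals \(FQ^{0,D}_\rho(A)\). This gives the stated formula.

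\emph{Main obstacle.} The delicate point is justifying that the non-generic constant pieces cause no trouble, i.e.\ that \(P_{PP}\) really computes the raw count of a generic perturbation of \(A\). I would handle this by a small perturbation supported away from \(B'\). Within \(B\), the prepared map only takes values near \(A(p)\), and a \(D\)-labelled pairing with such a point would place a point of \(P_D(A)\) near \(\operatorname{pr}_iP_D(A)\cap B=\varnothing\). This is excluded after shrinking \(B\), so the perturbation can be chosen to change no \(D\)-labelled components.
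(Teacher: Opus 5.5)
Your proposal is correct and follows essentially the paper's argument. The paper also splits \(P_D(A^{\operatorname{ins}})\) into the four pieces, identifies \(P_{II}\) with \(\mu_D(\alpha_p)\), cancels \(P_{IP}\) against \(P_{PI}\) by exchange, and identifies \(P_{PP}\) with \(P_D(A)\) by pulling back through the collapse map. It does not need your Step~1 detour through relative invariance, which as you note does not literally apply to the non-generic \(A_{\operatorname{prep}}\), nor any shrinking of \(B\) or perturbation: since \(c(B)\subset B\) and \(\widehat A_{\operatorname{prep}}=\widehat A\circ c\) on \(B\), any \(D\)-labelled pair of \(A_{\operatorname{prep}}\) with a coordinate in \(B\) pulls back to a \(D\)-labelled pair of \(A\) with a coordinate in \(B\), unless it has label \(H\). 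Hence \(P_{PP}=P_D(A)\) holds exactly, not just in component count mod two.
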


\begin{proof}
Use the source decomposition from Definition~\ref{def:muD}:
\[
M=B'_{\operatorname{act}}\sqcup M_{\operatorname{prep}},
\qquad
M_{\operatorname{prep}}=M\setminus B'_{\operatorname{act}}.
\]
On \(B'_{\operatorname{act}}\), the inserted track is \(b\).  On \(M_{\operatorname{prep}}\), the inserted track agrees with \(A_{\operatorname{prep}}\); on the constant collar this is because both maps are constant equal to \(A(p)\).

Thus every ordered pair in \(P_D(A^{\operatorname{ins}})\) belongs to exactly one of the following four disjoint regions:
\[
B'_{\operatorname{act}}\times B'_{\operatorname{act}},
\quad
B'_{\operatorname{act}}\times M_{\operatorname{prep}},
\quad
M_{\operatorname{prep}}\times B'_{\operatorname{act}},
\quad
M_{\operatorname{prep}}\times M_{\operatorname{prep}}.
\]
Consequently
\[
P_D(A^{\operatorname{ins}})
=
P_{II}\sqcup P_{IP}\sqcup P_{PI}\sqcup P_{PP},
\]
with the four pieces defined in Definition~\ref{def:muD}.  The pieces are disjoint by construction and exhaustive by the source decomposition.

The insertion-insertion piece \(P_{II}\) is exactly the local fiber product used in the definition of \(\mu_D(\alpha_p)\).  Therefore its contribution is
\[
\bigl(\#\pi_0(P_{II})\bmod2\bigr)u_D=\mu_D(\alpha_p).
\]

The prepared-prepared piece is the old fiber product.  Outside \(B\), \(A_{\operatorname{prep}}=A\).  Suppose a prepared-prepared \(D\)-labelled point has at least one source coordinate in \(B\).  Replace each coordinate lying in \(B\) by its image under the collapse map \(c\).  Since \(A_{\operatorname{prep}}=A\circ c\) on \(B\) and \(A_{\operatorname{prep}}=A\) outside \(B\), this gives an equality for the old track \(A\) with at least one source coordinate in \(c(B)\subset B\), unless the two prepared lifted points are actually the same lifted point.  If the two prepared lifted points are the same, the ordered label is the trivial double coset \(H\), not \(D\).  Otherwise one obtains an old \(D\)-labelled point with a source coordinate in \(B\), contradicting the choice of \(B\).  Therefore \(P_{PP}\) has no point with either source coordinate in \(B\).  On the complement of \(B\), the prepared track is literally equal to \(A\), so \(P_{PP}=P_D(A)\) canonically and contributes the old raw count \(FQ^{0,D}_\rho(A)\).

The mixed pieces \(P_{IP}\) and \(P_{PI}\) are compact smooth one-manifolds without boundary.  Smoothness follows from the mixed transversality assumptions.  Compactness follows because any limit point on the artificial interface \(S\) is excluded by the no-interface equations in Definition~\ref{def:muD}; any limit point on the constant collar would be a collar--active \(D\)-labelled equality, also excluded; endpoint source-boundary points are impossible because the insertion is supported in the interior and its image lies in an interior time slab; target-boundary points are excluded by Convention~\ref{conv:interior-target}.

Ordered-pair exchange sends \(P_{IP}\) homeomorphically to \(P_{PI}\).  Downstairs equality is symmetric, and Lemma~\ref{lem:label-well-defined} gives
\[
\ell(y,x)=\ell(x,y)^{-1}.
\]
Since \(D=D^{-1}\), the exchanged pair is again \(D\)-labelled.  Hence
\[
\#\pi_0(P_{IP})=\#\pi_0(P_{PI}),
\]
so the total mixed contribution is zero in \(\Ftwo\langle u_D\rangle\).

Adding the four disjoint contributions gives
\[
FQ^{0,D}_\rho(A^{\operatorname{ins}})
=
FQ^{0,D}_\rho(A)+\mu_D(\alpha_p),
\]
which is the asserted formula over \(\Ftwo\).
\end{proof}

The following lemma shows that this indeterminacy vanishes when \(D\)
contains no element of order two.
\begin{lemma}\label{lem:muD-zero}
If
\(
D\cap\{\gamma\in\pi:\gamma^2=1\}=\varnothing,
\)
then
\(
\operatorname{im}\mu_D=0.
\)
\end{lemma}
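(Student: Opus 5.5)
Since \(\mu_D(\alpha_p)\) is independent of the admissible representative (Lemma~\ref{lem:muD-well-defined}), I will exhibit, for each insertion point, transport path and based class, one admissible representative \(b\) with \(P_{II}(\widehat b)\) carrying an even number of components. The symmetry to exploit is the free exchange involution \(\iota(x,y)=(y,x)\) on \(P_{II}(\widehat b)\). Because \(D=D^{-1}\), \(\iota\) preserves \(P_{II}\), so it acts on the closed one-manifold \(P_{II}\) and permutes its components. Components exchanged in pairs contribute evenly. The only danger is a component \(K\) with \(\iota(K)=K\); then \(\iota|_K\) is a free involution of a circle, i.e.\ rotation by half a turn. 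So it suffices to show that no component is \(\iota\)-invariant.

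To rule out invariant components, I will read off a group element along such a component. Suppose \(\iota(K)=K\), and pick \((x,y)\in K\). Follow an arc \(\gamma\) in \(K\) from \((x,y)\) to \((y,x)\), written \(\gamma(s)=(x_s,y_s)\) with \(b(x_s)=b(y_s)\). Then the path \(x_s\) runs from \(x\) to \(y\), and \(y_s\) runs from \(y\) to \(x\). Their concatenation is a closed loop in \(B'_{\operatorname{act}}\), which lifts through \(\widehat b\) because \(B'\) is simply connected. Downstairs, \(b(x_s)\) and \(b(y_s)\) are the same path \(\delta\) in \(X\times I\) from \(b(x)\) to \(b(y)\), a loop since \(b(x)=b(y)\). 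The lifts \(\widehat b(x_s)\) and \(\widehat b(y_s)\) are two lifts of \(\delta\), starting at \(\widehat b(x)\) and at \(\widehat b(y)\) respectively, and each ends at the other's starting point. Hence the deck-type sheet difference \(g\) with \(\widehat b(y)=g\cdot\widehat b(x)\), read in \(\pi\) via a representative of the label, satisfies \(g^2\in H\)-trivially-conjugate form. More precisely, I will compute \(\ell\) with source paths inside the simply connected ball: a basing path \(\omega\) from \(\widehat A(p)\) to \(\widehat b(x)\), and then \(\omega\cdot\widehat b(x_\bullet)\) to \(\widehat b(y)\). This gives the representative \(g=[\bar\omega\text{-conjugated loop }\delta]\), computed through the transport path, and the label is \(HgH\). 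Applying the same computation to \((y,x)\) and tracking the continuation along the arc shows that \(\delta\cdot\delta\) lifts to a closed loop. Because everything is anchored at the single point \(\widehat A(p)\) by a path inside \(B'\), the same element \(g\) (not merely its double coset) represents both orientations. So \(g^{-1}=g\), i.e.\ \(g^2=1\) with \(g\in D\).

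This contradicts \(D\cap\{\gamma:\gamma^2=1\}=\varnothing\), so no invariant component exists. Therefore \(\#\pi_0(P_{II})\) is even, \(\mu_D(\alpha_p)=0\) for every generator, and \(\operatorname{im}\mu_D=0\).

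The main obstacle is the middle step: showing that the sheet difference along an invariant component is a genuine element \(g\in\pi\) with \(g=g^{-1}\), not just a self-dual double coset. The key point to verify carefully is that, inside the simply connected insertion ball, the label of \((x,y)\) upgrades to a well-defined element of \(\pi\) (up to conjugation, which preserves order two). This holds because both source points lie in \(B'\) and can be joined to \(p\) within \(B'\), so no left or right \(H\)-ambiguity arises. I would first make this explicit by fixing the transport path to \(A(p)\) and working with the element \([\alpha\bar\beta]\) for source paths inside \(B'\), checking that it is locally constant along \(K\) and is inverted by \(\iota\).
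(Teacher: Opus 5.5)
Your proposal is correct and follows essentially the paper's route. Exchange pairs off the components of \(P_{II}\), and an exchange-invariant component is ruled out because simple connectivity of \(B'\) makes the sheet difference an honest element of \(D\) of order two; the paper takes \(\gamma=[\sigma\cdot r\cdot\overline\sigma]\) and notes that \(r*r=f_X(u*v)\) is the image of a loop in the ball, hence nullhomotopic. The operative fact is this nullhomotopy of \(\delta\cdot\delta\), equivalently the constancy along \(K\) of your anchored element \([\alpha\bar\beta]\). It is not enough that \(\delta\cdot\delta\) lifts to a closed loop in \(X_H\), which would only give \(g^2\in H\).
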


\begin{proof}
It is enough to prove that every admissible local representative has zero \(D\)-count.  Let
\(
b_{\operatorname{loc}}:B'\to X\times I
\)
be an admissible local insertion map, and let
\(
\widehat b_{\operatorname{loc}}:B'\to X_H\times I
\)
be its preferred lift.  Put
\[
f_X=\operatorname{pr}_X\circ b_{\operatorname{loc}},
\qquad
\widehat f_X=\operatorname{pr}_{X_H}\circ \widehat b_{\operatorname{loc}}.
\]

The exchange involution sends \(P_{II}(\widehat b_{\operatorname{loc}})\) to itself because \(D=D^{-1}\).  Components exchanged in distinct pairs cancel modulo two.  Suppose, for contradiction, that a connected component
\(
C\subset P_{II}(\widehat b_{\operatorname{loc}})
\)
is invariant under exchange.  Since \(C\) is a compact one-manifold without boundary, it is a circle.  Choose a point
\(
(x_*,y_*)\in C
\)
and a path
\[
\delta(t)=(x(t),y(t)),\qquad 0\le t\le1,
\]
in \(C\) from \((x_*,y_*)\) to \((y_*,x_*)\). Set
\[
z=f_X(x_*)=f_X(y_*),
\qquad
\widehat p=\widehat f_X(x_*),
\qquad
\widehat q=\widehat f_X(y_*).
\]
Let
\(
r(t)=f_X(x(t)).
\) Since \(\delta(t)\in P_{II}\), we have
\(
b_{\operatorname{loc}}(x(t))=b_{\operatorname{loc}}(y(t))
\)
in \(X\times I\), and hence
\(
f_X(x(t))=f_X(y(t)).
\)
Thus \(r\) is a loop in \(X\) based at \(z\).

Choose a base path \(\sigma\) in \(X\) from \(x_0\) to \(z\) whose lift to \(X_H\), starting at \(\widehat x_0\), ends at \(\widehat p\).  Define
\[
\gamma=[\sigma\cdot r\cdot\overline\sigma]\in\pi.
\]
The path \(\widehat f_X(x(t))\) is a lift of \(r(t)\) starting at \(\widehat p\) and ending at \(\widehat q\).  Therefore, if the label of \((\widehat p,\widehat q)\) is computed using the path \(\sigma\) to \(\widehat p\) and the path \(\sigma*r\) to \(\widehat q\), one obtains
\[
\ell(\widehat p,\widehat q)
=
H[\sigma\,\overline{\sigma*r}]H
=
H\gamma^{-1}H.
\]
Since \((x_*,y_*)\in P_{II}\), this label is \(D\).  Because \(D=D^{-1}\), it follows that
\(
\gamma\in D.
\)

Now consider the two factor projection paths
\(
u(t)=x(t)\) and \(v(t)=y(t).
\) The path \(u\) runs from \(x_*\) to \(y_*\), and \(v\) runs from \(y_*\) to \(x_*\).  Hence \(u*v\) is a loop in \(B'\) based at \(x_*\).  The ball \(B'\) is simply connected, so \(u*v\) is nullhomotopic in \(B'\).  Its image under \(f_X\) is
\(
r*r,
\)
because \(f_X(y(t))=f_X(x(t))=r(t)\) for every \(t\).  Therefore \(r*r\) is nullhomotopic in \(X\), and consequently
\[
\gamma^2
=
[\sigma\cdot r\cdot r\cdot\overline\sigma]
=
1.
\]
We have produced an element \(\gamma\in D\) with \(\gamma^2=1\), contradicting the hypothesis.

Thus no component of \(P_{II}(\widehat b_{\operatorname{loc}})\) is invariant under exchange.  All components are paired with distinct exchanged components, so the mod-two local count is zero.  Hence every generator of \(\operatorname{im}\mu_D\) is zero, and \(\operatorname{im}\mu_D=0\).
\end{proof}

\subsection{Marked mapping-space loop indeterminacy}\label{sec:theta}

In this section, we define the second indeterminacy: concatenation with a loop in the marked mapping-space component.

\begin{definition}[Marked mapping-space component]\label{def:M-rho}
Fix \((F_0,\lambda_0,\rho)\) and \(\widehat x_0\in X_H\).  Let \(\mathcal M_\rho(F_0)\) be the path component containing \((F_0,\lambda_0)\) of the space of pairs \((f,\lambda)\), where \(f:\Sigma_g\to X\), \(\lambda:x_0\to f(z_0)\), and
\(
f_*^\lambda=i_H\circ\rho.
\)
Each pair determines a preferred lift \(\widehat f\) by lifting \(\lambda\) from \(\widehat x_0\) and then applying the covering lifting criterion to \(f\).  A loop \(\Lambda\) represented by \((f_s,\lambda_s)\), \(s\in S^1\), gives a level-preserving loop-track
\[
A_\Lambda:\Sigma_g\times S^1\to X\times S^1,
\qquad
A_\Lambda(z,s)=(f_s(z),s),
\]
with preferred lift \(\widehat A_\Lambda(z,s)=(\widehat f_s(z),s)\).  Because the base path is part of the loop, the preferred lift returns to the same sheet after one circuit of \(S^1\).
\end{definition}
We give the map and path spaces in Definition~\ref{def:M-rho} their compactly generated compact-open topologies.  All homotopies used below are taken in these spaces; before a labelled double-locus count is made, relative smooth approximation and general position are applied.

\begin{definition}\label{def:ThetaD}
For a generic loop-track \(A_\Lambda\), define
\[
\Theta^0_{\rho,D}(\Lambda)=
\bigl(\#\pi_0(P_D(A_\Lambda))\bmod2\bigr)u_D
\in\Ftwo\langle u_D\rangle.
\]
Let \(R_{\Theta,D}\subset\Ftwo\langle u_D\rangle\) be the subgroup generated by all raw loop contributions \(\Theta^0_{\rho,D}(\Lambda)\), as \(\Lambda\) ranges over generic marked mapping-space loops.
\end{definition}

\begin{lemma}\label{lem:ThetaD-well-defined}
The raw loop contribution \(\Theta^0_{\rho,D}\) is additive under concatenation and is invariant under homotopy of marked loops modulo \(\operatorname{im}\mu_D\).  Hence there is a well-defined homomorphism
\[
\Theta_{\rho,D}:\pi_1(\mathcal M_\rho(F_0),F_0)
\to
\Ftwo\langle u_D\rangle/\operatorname{im}\mu_D.
\]
The image of \(R_{\Theta,D}\) in the quotient is \(\operatorname{im}\Theta_{\rho,D}\).
\end{lemma}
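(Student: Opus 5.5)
We need to prove three things about \(\Theta^0_{\rho,D}\): it is additive under concatenation, it is invariant under homotopy of marked loops modulo \(\operatorname{im}\mu_D\), and the induced map on \(\pi_1\) is therefore a well-defined homomorphism whose image is \(R_{\Theta,D}\) modulo \(\operatorname{im}\mu_D\).

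\textbf{Additivity.} Let \(\Lambda_1\) and \(\Lambda_2\) be generic marked loops based at \((F_0,\lambda_0)\). We may first reparametrize each so that it is stationary near the basepoint parameter. Then the concatenated loop-track \(A_{\Lambda_1*\Lambda_2}\) is obtained by cutting each \(\Sigma_g\times S^1\) open at a level where the track is the embedding \(F_0\), and gluing the two resulting pieces.

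\begin{itemize}
\item Because every loop-track is level-preserving, a double point \(A(x)=A(y)\) forces \(x\) and \(y\) to have the same \(S^1\)-coordinate.
\item So \(P_D(A_{\Lambda_1*\Lambda_2})\) splits as a disjoint union of the double loci of the two pieces.
\item Near the cut levels the track is the embedded \(F_0\), so no double points occur there.
\item The labels are unchanged, since the preferred lifts of the concatenation restrict to the preferred lifts of each loop. This uses that the base path is part of the loop data, so each loop returns to the sheet \(\widehat F_0\).
\end{itemize}

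Counting components modulo two then gives \(\Theta^0(\Lambda_1*\Lambda_2)=\Theta^0(\Lambda_1)+\Theta^0(\Lambda_2)\).

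\textbf{Homotopy invariance.} Let \(\Lambda^r\), \(r\in I\), be a based homotopy of marked loops. It gives a map \(\Sigma_g\times S^1\times I\to X\times S^1\), which we make smooth and generic rel \(r=0,1\) by relative smooth approximation. We then run the cobordism argument of Lemma~\ref{lem:relative-invariance-D}:

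\begin{itemize}
\item The parametrized \(D\)-labelled double locus is a compact orientable surface.
\item Exchange acts freely on it and preserves it, since \(D=D^{-1}\).
\item The diagonal argument of Lemma~\ref{lem:PD-compact-boundaryless} excludes accumulation on the source diagonal. Its marked-loop step still works, because every source loop in \(\Sigma_g\times S^1\) maps into \(H\), again because the marking includes the base path.
\item There are no endpoint faces, since \(S^1\) is closed.
\end{itemize}

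The Euler characteristic parity argument then gives equality of the raw counts.

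\textbf{Main obstacle: intermediate non-level tracks.} The homotopy as it stands stays level-preserving and inside the marked component, so the count is exactly equal. The delicate point is that a homotopy taken in the compact-open topology, after smoothing, may have to pass through non-level or non-generic maps. We would handle this by level straightening as in Lemma~\ref{lem:level-straightening-D}, applied to the \(S^1\)-family. For homotopies realized as homotopies of tracks rather than of loops, the homotopy may need to pass through a degenerate local modification. There we decompose into finitely many local \(\pi_3\)-insertions, each changing the count by an element of \(\operatorname{im}\mu_D\) by Lemma~\ref{lem:insertion-effect-D}. This is why invariance is stated only modulo \(\operatorname{im}\mu_D\). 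The extra generality is harmless: exact equality is in particular equality in the quotient.

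\textbf{Conclusion.} Combining the two steps, \(\Theta^0\) descends to a map on \(\pi_1(\mathcal M_\rho(F_0),F_0)\) that is additive, hence a homomorphism to \(\Ftwo\langle u_D\rangle/\operatorname{im}\mu_D\). Every generic loop represents a class, and every class has a generic representative by general position. Therefore the image of \(\Theta_{\rho,D}\) equals the image of the generating set of \(R_{\Theta,D}\). Since \(\Theta_{\rho,D}\) is a homomorphism, this image is a subgroup, and so it equals the image of \(R_{\Theta,D}\) in the quotient.
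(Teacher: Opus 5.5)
Your proposal is correct and follows essentially the same route as the paper. The shared steps are additivity via stationary collars (level-preservation forces equal parameters), exact homotopy invariance by rerunning the free-exchange/Euler-characteristic cobordism of Lemma~\ref{lem:relative-invariance-D} with \(M=\Sigma_g\times S^1\), and the \(\operatorname{im}\mu_D\) ambiguity accounted for by Lemma~\ref{lem:insertion-effect-D}. Your ``main obstacle'' paragraph is unnecessary: a homotopy of loops in \(\mathcal M_\rho(F_0)\) is automatically level-preserving and can be smoothed in the \(X\)-coordinate alone, and passing through non-generic members is exactly what the generic one-parameter cobordism handles; your explicit derivation of the statement about \(R_{\Theta,D}\) is a welcome sharpening of the paper's one-line remark.
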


\begin{proof}
For additivity, concatenate two generic loop-tracks with stationary collars at the base pair.  The resulting loop-track is level-preserving on each parameter arc.  Equality in \(X\times S^1\) forces equality of the \(S^1\)-parameter, so no double point has one source coordinate in the first loop arc and the other in the second loop arc.  Stationary collars contribute no nontrivial \(D\)-components because \(F_0\) is embedded and any endpoint equality has the trivial double coset.  Thus raw counts add in \(\Ftwo\langle u_D\rangle\).

For homotopy invariance, take a generic homotopy of loops in \(\mathcal M_\rho(F_0)\).  The parametrized \(D\)-labelled fiber product is a compact orientable surface, and the proof of Lemma~\ref{lem:relative-invariance-D} applies with \(M=\Sigma_g\times S^1\).  The boundary is the union of the two endpoint loop fiber products, so the two endpoint counts agree modulo two.

If a representative is changed by an admissible local insertion, Lemma~\ref{lem:insertion-effect-D} says that the raw count changes by \(\mu_D(\alpha_p)\).  A finite sequence of such insertions changes the raw count by an element of \(\operatorname{im}\mu_D\).  Hence \(\Theta_{\rho,D}\) is well-defined in the quotient, and the statement about \(R_{\Theta,D}\) follows from its definition as the subgroup generated by raw loop contributions.
\end{proof}

\subsection{The absolute marked obstruction}\label{sec:absolute}

The raw count depends on the chosen track.  In this section, we prove that
the difference between two marked tracks with the same endpoints lies in the
subgroup generated by local insertions and marked mapping-space loops.  We
then define the quotient target and the absolute marked obstruction whose
nonzero values obstruct marked concordance.

We use the following group notations.  The centre of \(H\) is
\[
Z(H)=\{h\in H:hk=kh\text{ for all }k\in H\},
\]
and the centralizer of \(H\) in \(\pi\) is
\[
C_\pi(H)=\{\gamma\in\pi:\gamma h\gamma^{-1}=h\text{ for all }h\in H\}.
\]

\begin{lemma}[Absolute path-difference]\label{lem:path-difference-D}
Let \(A_0,A_1:\Sigma_g\times I\to X\times I\) be two generic
\(\rho\)-marked tracks from \(F_0\) to \(F_1\).  Then
\[
FQ^{0,D}_\rho(A_0)-FQ^{0,D}_\rho(A_1)
\in
R_{\Theta,D}+\operatorname{im}\mu_D.
\]
\end{lemma}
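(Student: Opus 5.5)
First level-straighten both tracks using Lemma~\ref{lem:level-straightening-D}; by Lemma~\ref{lem:relative-invariance-D} this does not change either raw count. So assume \(A_j(z,s)=(a_j(z,s),s)\). Reading \(a_j\) as a path \(s\mapsto(a_j(\cdot,s),\lambda_j(s))\) gives a path in \(\mathcal M_\rho(F_0)\) from \((F_0,\lambda_0)\) to \((F_1,\lambda_1)\). Here the base path \(\lambda_j(s)\) is the bottom base path followed by the track of the source basepoint. The prescribed top lift guarantees that both paths end at the same marked pair \((F_1,\lambda_1)\), up to the choice of base-path homotopy class. Concatenating \(A_0\) with the reverse of \(A_1\) then gives a marked loop \(\Lambda\) based at \(F_0\). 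After inserting stationary collars and reparametrizing, the loop-track \(A_\Lambda\) on \(\Sigma_g\times S^1\) is the union of \(A_0\) and the reflected \(A_1\).

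Next, compare raw counts, following the additivity argument of Lemma~\ref{lem:ThetaD-well-defined}. Equality in \(X\times S^1\) forces the parameters to agree. So every \(D\)-labelled double component of \(A_\Lambda\) lies entirely over the \(A_0\) arc or entirely over the reversed \(A_1\) arc. Reversing the \(I\)-parameter is a diffeomorphism of the double locus and does not change the labels, because the preferred lift is unchanged. The only remaining source of components is the junction collars at \(F_0\) and \(F_1\), and these carry only trivial labels since the endpoints are embedded. Hence
\[
\Theta^0_{\rho,D}(\Lambda)=FQ^{0,D}_\rho(A_0)+FQ^{0,D}_\rho(A_1)
\]
in \(\Ftwo\langle u_D\rangle\). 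Since signs do not matter over \(\Ftwo\), the difference lies in \(R_{\Theta,D}\), and in particular in \(R_{\Theta,D}+\operatorname{im}\mu_D\).

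The main obstacle is the top endpoint. The two level-straightened tracks may reach \(F_1\) with base paths \(\lambda_1,\lambda_1'\) that both lift to the prescribed \(\widehat F_1(z_0)\) but differ by a loop whose class \(h\) lies in \(H\). The mapping-space paths then end at different points of the space of pairs, so their concatenation is not yet a loop.

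\begin{itemize}
\item Because both lifts end at the same preferred lift \(\widehat F_1\), the loop \(\lambda_1'\overline{\lambda_1}\) lifts to a loop in \(X_H\), so indeed \(h\in H\).
\item Compatibility with the marking \(f_*^{\lambda}=i_H\circ\rho\) forces \(h\) to commute with all of \(H\), so \(h\in Z(H)\). This is the first point where the notation \(Z(H)\) and \(C_\pi(H)\) enters.
\item Since \(\pi_1\Sigma_g\) is centreless for \(g\ge2\), \(Z(H)=1\) and the discrepancy disappears. For smaller genus I would instead close the path by sliding the base path around a loop representing \(h\) while fixing the surface. This is a level-preserving motion of the base path only, so it creates no new double points and changes no raw count.
\end{itemize}

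If some genuinely local discrepancy still survives the rel-boundary smoothing at the junctions, I would remove it by a homotopy. Such a homotopy can be chosen to pass through \(\pi_3\)-insertions. By Lemma~\ref{lem:insertion-effect-D} each insertion changes the count by an element of \(\operatorname{im}\mu_D\), which accounts for the \(\operatorname{im}\mu_D\) summand in the statement.
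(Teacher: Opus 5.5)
Your outline is the paper's argument. You level-straighten, read $A_0,A_1$ as paths $\Gamma_0,\Gamma_1$ in $\mathcal M_\rho(F_0)$, note that the two top base paths differ by some $h\in H\cap C_\pi(H)=Z(H)$, and close up with a connector that has no off-diagonal double points. Reading off $\Theta^0_{\rho,D}(\Lambda)=FQ^{0,D}_\rho(A_0)+FQ^{0,D}_\rho(A_1)$ then finishes. For $g\ge2$, and trivially for $g=0$ where $H=1$, this is complete, and the difference lies in $R_{\Theta,D}$ itself. Your last paragraph about removing residual discrepancies by homotopies through $\pi_3$-insertions is therefore unnecessary, and as written it is not an argument.

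The gap is the case $g=1$. There $Z(H)\cong\Z^2$, so $h$ can be nontrivial, and the paper does use this case (the torus in the orientation-double-cover family, and the $g\ge1$ survival lemma). Your proposed connector moves only the base path while the map stays equal to $F_1$, and it does not exist:
\begin{itemize}
\item A point of $\mathcal M_\rho(F_0)$ is a pair $(f,\lambda)$ with $\lambda(1)=f(z_0)$.
\item If $f\equiv F_1$, the endpoint of $\lambda$ is pinned at $F_1(z_0)$, so any path of such pairs is a homotopy of $\lambda$ rel endpoints.
\item Such a homotopy cannot carry $\lambda_1$ to $\lambda_1'$ when $h=[\lambda_1'\overline{\lambda_1}]\neq1$.
\end{itemize}
To slide the endpoint around a loop you must move the map as well, while keeping its image embedded. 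The paper does this with translations. Identify $\Sigma_1=\R^2/\Z^2$ and take the loop of translations $T_t$, with $T_0=T_1=\operatorname{id}$, whose basepoint trace $\beta$ represents $\rho^{-1}(h)$ in your convention. Use the path of pairs $\bigl(F_1\circ T_t,\ \lambda_1F_1(\beta|_{[0,t]})\bigr)$, followed by a rel-endpoint homotopy from $\lambda_1F_1(\beta)$ to $\lambda_1'$. The moving-basepoint identity keeps the induced homomorphism equal to $i_H\circ\rho$ throughout. Because the image stays $F_1(\Sigma_1)$, this connector contributes no $D$-labelled components, and the rest of your argument then goes through unchanged.
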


\begin{proof}
By Lemma~\ref{lem:level-straightening-D}, straighten both tracks to generic
level-preserving marked tracks without changing the raw counts.  Write
\[
A_i^{\operatorname{lev}}(z,s)=(a_i(z,s),s).
\]
Each straightened track determines a path \(\Gamma_i\) in
\(\mathcal M_\rho(F_0)\).  Namely, set \(f_{i,s}(z)=a_i(z,s)\), and let
\(\lambda_{i,s}\) be the fixed bottom base path \(\lambda_0\) followed by
the path traced by the source basepoint under the track from time \(0\) to
\(s\).  The preferred lift of \((f_{i,s},\lambda_{i,s})\) is the slice of
\(\widehat A_i^{\operatorname{lev}}\).  Thus \(\Gamma_i\) begins at
\((F_0,\lambda_0)\) and ends at \((F_1,\lambda_i^{\operatorname{top}})\),
where \(\lambda_i^{\operatorname{top}}=\lambda_{i,1}\).

Because both tracks are marked with the same prescribed top lift
\(\widehat F_1\), the lifts of \(\lambda_0^{\operatorname{top}}\) and
\(\lambda_1^{\operatorname{top}}\) starting at \(\widehat x_0\) end at the
same point \(\widehat F_1(z_0)\).  Hence
\[
h=[\lambda_0^{\operatorname{top}}\overline{\lambda_1^{\operatorname{top}}}]
\in H.
\]
Both endpoint pairs induce the same based marking, so for every
\(\alpha\in\pi_1(\Sigma_g,z_0)\),
\[
[\lambda_0^{\operatorname{top}}F_1(\alpha)\overline{\lambda_0^{\operatorname{top}}}]
=
[\lambda_1^{\operatorname{top}}F_1(\alpha)\overline{\lambda_1^{\operatorname{top}}}]
=\rho(\alpha).
\]
With the convention above for \(h\), this implies
\(h\rho(\alpha)h^{-1}=\rho(\alpha)\).  Therefore \(h\in C_\pi(H)\).  Since
also \(h\in H\), we have \(h\in Z(H)\).

We now join the two top endpoint pairs inside \(\mathcal M_\rho(F_0)\) by a
connector \(E_h\) whose preferred lifted image lies in
\(\widehat F_1(\Sigma_g)\).  If \(g\ge2\), then
\(H\cong\pi_1\Sigma_g\) has trivial centre, so \(h=1\).  The two top base
paths are homotopic rel endpoints; take the map coordinate fixed equal to
\(F_1\) and vary only the base path.

If \(g=1\), choose a based loop \(\beta\subset\Sigma_1\) representing
\(\rho^{-1}(h^{-1})\).  Since \((F_1,\lambda_0^{\operatorname{top}})\)
induces \(\rho\), the path \(\lambda_0^{\operatorname{top}}F_1(\beta)\) is
homotopic rel endpoints to \(\lambda_1^{\operatorname{top}}\).  Identify
\(\Sigma_1\) with \(\mathbb R^2/\mathbb Z^2\), and let \(T_t\) be the path
of translations with \(T_0=T_1=\operatorname{id}\) whose basepoint trace is
\(\beta\).  The path of pairs
\[
\left(F_1\circ T_t,
 \lambda_0^{\operatorname{top}}F_1(\beta|_{[0,t]})\right)
\]
followed by the chosen rel-endpoint homotopy of base paths gives the
required connector.  Its preferred lift is \(\widehat F_1\circ T_t\).

In both cases the map coordinate of the connector has embedded image in
\(F_1(\Sigma_g)\), so the connector has no off-diagonal double points and
therefore contributes no \(D\)-labelled components.

The concatenation
\[
\Lambda=\Gamma_0*E_h*\overline{\Gamma_1}
\]
is a loop in \(\mathcal M_\rho(F_0)\).  Insert stationary collars between
the three stages and parametrize the loop-track so that it is
level-preserving on each stage.  Equality in \(X\times S^1\) forces equality
of the parameter, so no mixed components occur between the stages.  The
connector and stationary collars contribute zero.  Reversing \(\Gamma_1\)
reverses the parameterization of its fiber product but not the number of
connected components.  Hence
\[
\Theta^0_{\rho,D}(\Lambda)=
FQ^{0,D}_\rho(A_0^{\operatorname{lev}})+FQ^{0,D}_\rho(A_1^{\operatorname{lev}})
\]
in \(\Ftwo\langle u_D\rangle\).  Since subtraction is addition over
\(\Ftwo\), this is the required difference.  The loop contribution lies in
\(R_{\Theta,D}\), and the straightening steps changed no raw counts by
Lemma~\ref{lem:relative-invariance-D}.  The conclusion follows.
\end{proof}

We are now ready to define the obstruction.
\begin{definition}\label{def:QD}
Define
\[
Q^D_\rho(X,F_0)=
\Ftwo\langle u_D\rangle/
\bigl(\operatorname{im}\mu_D+R_{\Theta,D}\bigr).
\]
For any generic \(\rho\)-marked track \(A\) from \(F_0\) to \(F_1\), define
\[
FQ^D_\rho(F_0,F_1)=
[FQ^{0,D}_\rho(A)]
\in Q^D_\rho(X,F_0).
\]
Lemma~\ref{lem:path-difference-D} implies that this class is independent of
\(A\).
\end{definition}

\medskip
\noindent\textbf{Concatenation and reversal of marked tracks.}
Let \(F_0,F_1,F_2\) be \(\rho\)-marked surfaces with the fixed subgroup
\(H\), fixed endpoint base paths, and preferred lifts.  Let
\(A_{01}\) be a generic \(\rho\)-marked track from \(F_0\) to \(F_1\),
and let \(A_{12}\) be a generic \(\rho\)-marked track from \(F_1\) to
\(F_2\).  After applying Lemma~\ref{lem:level-straightening-D}, reparametrize
the two tracks so that they have stationary product collars at \(F_1\), and
form the concatenation \(A_{02}=A_{01}*A_{12}\).  Then
\[
   P_D(A_{02})\cong P_D(A_{01})\sqcup P_D(A_{12})
\]
by the time-rescaling identifications, and hence
\[
   FQ^{0,D}_\rho(A_{02})
   =FQ^{0,D}_\rho(A_{01})+FQ^{0,D}_\rho(A_{12}).
\]
Indeed, equality in \(X\times I\) forces equality of the target time
coordinates.  Thus a double point cannot have one source point in the
interior of the first stage and the other in the interior of the second
stage.  On the stationary junction collar, the map coordinate is the
embedding \(F_1\), so every equality is diagonal and has the trivial label
\(H\).  The concatenated preferred lift is obtained by gluing the preferred
lifts of the two tracks along their common endpoint \(\widehat F_1\).
Consequently the labels on the second stage are exactly the labels computed
for \(A_{12}\); no endpoint-basepath action is introduced.  The
reparametrizations and the smoothing of the junction are supported in the
stationary collar, so they create no additional \(D\)-labelled component.
This proves the displayed decomposition and equality.

If \(A_{01}(z,s)=(a_{01}(z,s),s)\) is level-preserving, define its reverse
by
\[
   \overline A_{01}(z,s)=(a_{01}(z,1-s),s).
\]
Its preferred lift is obtained by the same reversal of the preferred lift of
\(A_{01}\).  Reversal identifies the two ordered double loci by reversing
the common time coordinate and preserves every double-coset label.  Hence
\[
   FQ^{0,D}_\rho(\overline A_{01})
   =FQ^{0,D}_\rho(A_{01}).
\]
Only the parametrization and, in a signed theory, the orientation of a
component are reversed; the present count has coefficients in \(\Ftwo\), so
no sign remains.

In particular, if \(C_{12}\) is a \(\rho\)-marked embedded concordance from
\(F_1\) to \(F_2\), then \(P_D(C_{12})=\varnothing\), and the
concatenation formula above, followed by Definition~\ref{def:QD}, gives the
equality
\[
   FQ^D_\rho(F_0,F_2)=FQ^D_\rho(F_0,F_1)
   \qquad\text{in }Q^D_\rho(X,F_0).
\]
This is the fixed-target statement used below.  In general
\(FQ^D_\rho(F_1,F_2)\) is defined in
\(Q^D_\rho(X,F_1)\), and no canonical identification of that target with
\(Q^D_\rho(X,F_0)\) is asserted here.

\begin{theorem}\label{thm:A-marked-obstruction}
Let \(F_0\) be a \(\rho\)-marked \(\pi_1\)-injective surface with image
subgroup \(H\leq\pi\), and let \(D=HgH\neq H\) satisfy \(D=D^{-1}\).  Then
\(FQ^D_\rho(F_0,F_1)\in Q^D_\rho(X,F_0)\) is a well-defined marked
concordance obstruction.  If \(F_0\) and \(F_1\) are \(\rho\)-marked
concordant, then
\(
FQ^D_\rho(F_0,F_1)=0.
\)
\end{theorem}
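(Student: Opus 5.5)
The proof has two parts: well-definedness, and vanishing on a marked concordance.

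For well-definedness, fix \(F_0,F_1\) and assume some generic \(\rho\)-marked track \(A\) from \(F_0\) to \(F_1\) exists. If none exists, the statement is vacuous, or one restricts to the homotopic, marked-track-connected case.

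\begin{itemize}
\item By Lemma~\ref{lem:PD-compact-boundaryless}, \(P_D(A)\) is a compact closed \(1\)-manifold. Hence \(FQ^{0,D}_\rho(A)\) is a finite count in \(\Ftwo\langle u_D\rangle\).
\item Generic perturbations rel endpoint collars and marked data do not change this count. This is the relative invariance of Lemma~\ref{lem:relative-invariance-D}, applied to the straight-line family joining two nearby generic representatives.
\item For two arbitrary generic marked tracks \(A_0,A_1\) with the same endpoints, Lemma~\ref{lem:path-difference-D} shows that the raw counts differ by an element of \(R_{\Theta,D}+\operatorname{im}\mu_D\).
\item Lemma~\ref{lem:muD-well-defined} and Lemma~\ref{lem:ThetaD-well-defined} show that these two subgroups are intrinsic. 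They depend only on \((X,F_0,\rho,D)\) and not on the tracks.
\end{itemize}

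It follows that the class \([FQ^{0,D}_\rho(A)]\in Q^D_\rho(X,F_0)\) of Definition~\ref{def:QD} depends only on the marked endpoints. This is exactly the well-definedness assertion.

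For the obstruction statement, suppose \(C\) is a \(\rho\)-marked concordance from \(F_0\) to \(F_1\). Then \(C\) is an embedding \(\Sigma_g\times I\to X\times I\), so \(P^\circ(C)=\varnothing\) and hence \(P_D(C)=\varnothing\).

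The one subtlety is genericity. The definition evaluates \(FQ^D_\rho\) on a generic track, and \(C\) qualifies: an embedding is vacuously transverse to the diagonal off \(\Delta_M\), since \(\Phi^{-1}(\Delta)\) is empty. If one insists on Convention~\ref{conv:interior-target}, I would first isotope \(C\) rel collars into \(\operatorname{int}X\times(0,1)\) away from the collars. This is a small isotopy, which preserves embeddedness and the preferred lift.

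Thus \(FQ^{0,D}_\rho(C)=0\), and by well-definedness \(FQ^D_\rho(F_0,F_1)=[0]=0\).

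The main obstacle is making sure that the embedded concordance is admissible as a representative in Definition~\ref{def:QD}. Concretely, one must check that the small perturbations needed for Convention~\ref{conv:interior-target} and for level straightening do not create double points. Lemma~\ref{lem:level-straightening-D} may destroy embeddedness, but it is not needed here: the raw count is defined for any generic proper track, and straightening is used only inside the proof of Lemma~\ref{lem:path-difference-D}. If a level-preserving representative were demanded, I would instead use Lemma~\ref{lem:relative-invariance-D} to transfer the zero count from \(C\) to its straightened version along the homotopy \(A_u\). A generic perturbation of that homotopy changes the count by zero modulo two.
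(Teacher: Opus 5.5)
Your proposal is correct and follows the paper's argument. Well-definedness is the content of Definition~\ref{def:QD}, resting on Lemma~\ref{lem:path-difference-D}, and a \(\rho\)-marked embedded concordance \(C\) has \(P_D(C)=\varnothing\), so its raw count and hence its class vanish. Your remarks on genericity and Convention~\ref{conv:interior-target} only make explicit details that the paper's proof leaves implicit.
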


\begin{proof}
Well-definedness is Definition~\ref{def:QD}.  If \(C\) is a
\(\rho\)-marked embedded concordance, then there are no off-diagonal
self-intersections downstairs.  Hence \(P_D(C)=\varnothing\), the raw count
of \(C\) is zero, and the class of \(FQ^D_\rho(F_0,F_1)\) in
\(Q^D_\rho(X,F_0)\) is zero.
\end{proof}

\subsection{Cellular computation of \texorpdfstring{\(R_{\Theta,D}\)}{RThetaD}}\label{sec:cellular}

We compute the marked mapping-space-loop indeterminacy by first fixing the
coefficient conventions.  Put
\[
   G=\pi_1(\Sigma_g,z_0),
   \qquad
   \rho_0=i_H\circ\rho:G\longrightarrow\pi.
\]
For \(k\ge2\), let \(M_k=\pi_k(X,x_0)\), regarded as a left
\(\mathbb Z[\pi]\)-module by the standard change-of-basepoint action.  More
explicitly, choose a universal cover \(\widetilde X\to X\), a point
\(\widetilde x_0\) over \(x_0\), and, for \(\gamma\in\pi\), the lift
\(\widetilde\gamma\) of a representative loop beginning at
\(\widetilde x_0\).  If \(T_\gamma\) is the corresponding deck
transformation, then \(\gamma\cdot m\) is represented by
\(T_\gamma\) applied to a representative of \(m\), followed by change of
basepoint along \(\overline{\widetilde\gamma}\).  We extend \(\rho_0\) linearly to group rings and restrict this left
module along \(\rho_0\).

Let \(p_\Sigma:\widetilde\Sigma_g\to\Sigma_g\) be the universal cover,
with its left \(G\)-action.  We use the cellular cochain convention
\[
   C^j(\Sigma_g;(M_k)_\rho)
   =\operatorname{Hom}_{\mathbb Z[G]}
      \bigl(C_j(\widetilde\Sigma_g),M_k\bigr).
\]
Choose the standard CW structure with one vertex \(v=z_0\), oriented
one-cells
\[
   a_1,b_1,\ldots,a_g,b_g,
\]
and one two-cell attached by
\[
   r=\prod_{i=1}^g[a_i,b_i],
   \qquad [a,b]=aba^{-1}b^{-1}.
\]
Choose a lift \(\widetilde v\), a lift \(\widetilde e_x\) of each
one-cell running from \(\widetilde v\) to \(x\widetilde v\), and the lift
\(\widetilde e^2\) whose attaching word starts at \(\widetilde v\).  Then
\[
   \partial\widetilde e_x=x\widetilde v-\widetilde v,
   \qquad
   \partial\widetilde e^2
   =\sum_x\frac{\partial r}{\partial x}\,\widetilde e_x,
\]
where the second formula uses the left Fox derivatives.  Thus, for
\(m\in C^0(\Sigma_g;(M_k)_\rho)=M_k\),
\begin{equation}\label{eq:cellular-delta-zero}
   (\delta^0m)(e_x)=\rho_0(x)\cdot m-m,
\end{equation}
and, for \(c\in C^1(\Sigma_g;(M_k)_\rho)\),
\begin{equation}\label{eq:cellular-delta-one}
   (\delta^1c)(e^2)
   =\sum_x \rho_0\!\left(\frac{\partial r}{\partial x}\right)\cdot c(e_x).
\end{equation}
Writing \(q_i=\prod_{j<i}[a_j,b_j]\), the last formula is
\begin{align}
(\delta^1c)(e^2)
={}&\sum_{i=1}^g \rho_0(q_i)\cdot
\bigl(
 c(e_{a_i})
 -\rho_0(a_ib_ia_i^{-1})\cdot c(e_{a_i}) \notag\\
&\hspace{36mm}
 +\rho_0(a_i)\cdot c(e_{b_i})
 -\rho_0([a_i,b_i])\cdot c(e_{b_i})
\bigr).
\label{eq:surface-cellular-coboundary}
\end{align}
These conventions define the local system denoted below by
\((\pi_kX)_\rho\).

\begin{lemma}\label{lem:cellular-normalization-D}
Let
\[
   \Lambda\in
   \pi_1\bigl(\mathcal M_\rho(F_0),(F_0,\lambda_0)\bigr).
\]
The based evaluation loop is nullhomotopic.  A choice of evaluation
normalization determines a cellular cocycle
\[
   c_\Lambda\in Z^1(\Sigma_g;(\pi_2X)_\rho).
\]
Changing the evaluation normalization changes \(c_\Lambda\) by a cellular
coboundary.  Hence
\[
   \kappa_1(\Lambda)=[c_\Lambda]
   \in H^1(\Sigma_g;(\pi_2X)_\rho)
\]
is an invariant of the based marked loop.  If \(\kappa_1(\Lambda)=0\),
then \(\Lambda\) is homotopic through based loops in
\(\mathcal M_\rho(F_0)\) to a loop \((f_s,\lambda_s)\) for which
\[
   f_s|_N=F_0|_N
   \qquad\text{for every }s\in S^1,
\]
where \(N\) is a closed regular neighbourhood of the one-skeleton and
contains \(z_0\).
\end{lemma}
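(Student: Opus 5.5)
We regard a based loop \(\Lambda=(f_s,\lambda_s)_{s\in S^1}\) in \(\mathcal M_\rho(F_0)\) as a map \(\Phi:\Sigma_g\times S^1\to X\) together with the path data \(\lambda_s\), and run the standard obstruction-theoretic (cellular) analysis of \(\Phi\) relative to \(\Sigma_g\times\{1\}\), one cell of \(\Sigma_g\) at a time.

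\textbf{The evaluation loop.} The based evaluation loop is \(\lambda_0\cdot(s\mapsto f_s(z_0))\cdot\overline{\lambda_0}\), after using the family \(\lambda_s\) to close it up. Its preferred lift returns to the same sheet, so its class \(e\in\pi\) lies in \(H\). Transporting the marking along the loop shows that \(e\) conjugates \(\rho_0(\alpha)\) to itself for every \(\alpha\). Hence \(e\in Z(H)\), which is trivial for \(g\ge2\), as in the proof of Lemma~\ref{lem:path-difference-D}. For \(g=1\) we need an extra observation: the base path \(\lambda_s\) is part of the loop, and \(\lambda_s\) ends at \(f_s(z_0)\). Then \(e\) is exactly the class of the concatenation \(\lambda_0\cdot f_\bullet(z_0)\cdot\overline{\lambda_0}\), which is homotopic, through the family \(\lambda_s\) itself, to \(\lambda_0\overline{\lambda_0}\). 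So \(e\) is trivial in all genera. This is the precise sense of "nullhomotopic".

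\textbf{The cocycle.} An evaluation normalization is a choice of nullhomotopy \(D_0\) of the evaluation loop, a disk bounding \(\Phi|_{z_0\times S^1}\). For each one-cell \(e_x\), the map \(\Phi\) on the torus \(e_x\times S^1\) is glued with copies of \(D_0\) on its two ends. This gives a map of \(S^2\), which we base-change along the lift of \(e_x\) to \(x_0\) to get \(c_\Lambda(e_x)\in\pi_2X\). Basing at the starting vertex \(\widetilde v\) and using \(\widetilde e_x\) produces the action \(\rho_0(x)\) on the far end. For the cocycle condition, \(\Phi\) extends over \(e^2\times S^1\). Its boundary, decomposed by the Fox-derivative boundary formula \(\partial\widetilde e^2=\sum_x(\partial r/\partial x)\widetilde e_x\), shows that \((\delta^1c_\Lambda)(e^2)\) is the class of a sphere bounding a \(3\)-chain. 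Hence \(\delta^1c_\Lambda=0\) by formula~\eqref{eq:cellular-delta-one}. Changing \(D_0\) by an element \(m\in\pi_2X\) changes each \(c_\Lambda(e_x)\) by \(\rho_0(x)\cdot m-m=(\delta^0m)(e_x)\), exactly as in formula~\eqref{eq:cellular-delta-zero}. So \(\kappa_1(\Lambda)\) is well defined. Invariance under based homotopy of \(\Lambda\) follows by performing the same construction on \(\Sigma_g\times S^1\times I\).

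\textbf{Straightening when \(\kappa_1(\Lambda)=0\).} First choose \(D_0\) so that \(c_\Lambda=0\) on the nose. Then use \(D_0\) to homotope the loop so that \(f_s(z_0)\) is constant, with \(\lambda_s=\lambda_0\) adjusted compatibly. Since each \(e_x\times S^1\) now carries a nullhomotopic sphere, we can homotope \(\Phi\) rel \(z_0\times S^1\) so that \(f_s|_{e_x}=F_0|_{e_x}\) for all \(s\). The homotopy extension property for the pair \((\Sigma_g,\Sigma_g^{(1)})\), applied in the \(S^1\)-family, carries this through based loops in \(\mathcal M_\rho(F_0)\). The marking condition \(f_*^\lambda=i_H\circ\rho\) is preserved throughout, because it depends only on the homotopy class and the lift. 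Finally, thicken from the one-skeleton to \(N\) by a deformation retraction of \(N\) onto the one-skeleton.

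\textbf{Main obstacle.} The step I expect to be delicate is the bookkeeping of the left-module conventions: checking that the \(\pi_2\) classes glued along the relator produce exactly the Fox-derivative expression~\eqref{eq:surface-cellular-coboundary}, with \(\rho_0(q_i)\) and the inverse-letter terms in the right places. I would handle it by working in the universal cover \(\widetilde X\) with the lifted map \(\widetilde\Sigma_g\times S^1\to\widetilde X\). There the classes are genuine \(H_2\) classes, and the equivariance is automatic.
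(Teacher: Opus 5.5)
Your argument is essentially the paper's. The square $(u,s)\mapsto\lambda_s(u)$ kills the evaluation loop, capped edge cylinders in $\widetilde X$ give $c_\Lambda$, and changing the normalization by $m\in\pi_2X$ changes $c_\Lambda$ by $\delta^0m$. When $\kappa_1(\Lambda)=0$ you re-normalize, kill the edge loops and use homotopy extension, as the paper does. Your homological check of the cocycle condition is a clean substitute for the paper's $\pi_1\operatorname{Map}_*(S^1,\widetilde X)$ bookkeeping. It works in $H_2(\widetilde X)\cong\pi_2X$, and the caps cancel because $\sum_x\frac{\partial r}{\partial x}(x-1)=r-1$ maps to $0$ under $\rho_0$. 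Your centre argument is unnecessary, since the square works in every genus.

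Two points need tidying.

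First, drop the requirement $\lambda_s=\lambda_0$ in the straightening step. Once $D_0$ has been changed by a sphere $m$ to make $c_\Lambda=0$, the lifted path coordinate is in general a loop in $P_{x_0,y_0}X$ representing $\pm m$ under $\pi_1(P_{x_0,y_0}X,\lambda_0)\cong\pi_2X$. It cannot be made constant without reintroducing $\pm\delta^0m$ on the edges. Indeed, forcing $\lambda_s\equiv\lambda_0$ pins the normalization down to the one given by the $\lambda$-square. The lemma only constrains $f_s|_N$, so simply carry $\lambda_s$ along, as the paper does.

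Second, in the thickening step, deform relative to $(\Sigma_g^{(1)}\times S^1)\cup(N\times\{s_0\})$, as in the paper's cofibration argument, and then extend over $\Sigma_g$ by homotopy extension. Precomposing only with a retraction $r_t$ of $N$ onto $\Sigma_g^{(1)}$ does not suffice: it moves the $s_0$-slice, so the loop is no longer based, and it ends at $F_0\circ r_1$ rather than $F_0|_N$.
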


\begin{proof}
Represent \(\Lambda\) by \((f_s,\lambda_s)\), with
\(s\in S^1\), based at \((F_0,\lambda_0)\), and put
\(e(s)=f_s(z_0)\).  After cutting \(S^1\) at its basepoint, the map
\[
   (u,s)\longmapsto\lambda_s(u)
\]
is a square whose boundary is
\(\lambda_0e\overline{\lambda_0}\), with its opposite side constant at
\(x_0\).  Thus the based evaluation element is trivial.

Let \(P_{x_0}X\) be the space of paths beginning at \(x_0\), and let
\[
   P_{x_0}^{\operatorname{int}}X
   =\{\lambda\in P_{x_0}X:\lambda(1)\in\operatorname{int}X\}.
\]
Since \(\{z_0\}\hookrightarrow\Sigma_g\) is a cofibration, the restriction
map
\[
   \operatorname{ev}_{z_0}:
   \operatorname{Map}(\Sigma_g,\operatorname{int}X)
   \longrightarrow\operatorname{int}X
\]
is a Hurewicz fibration in the compactly generated compact-open category.
The endpoint map
\(\operatorname{end}_1:P_{x_0}^{\operatorname{int}}X
\to\operatorname{int}X\) is the pullback of the usual path fibration.
Consequently the projection
\[
   \mathcal P^{\operatorname{int}}=
   \operatorname{Map}(\Sigma_g,\operatorname{int}X)
   \times_{\operatorname{int}X}P_{x_0}^{\operatorname{int}}X
   \longrightarrow\operatorname{int}X
\]
from the pullback space of pairs is a Hurewicz fibration.  The square above
shows that \(e\) is nullhomotopic as a loop based at
\(y_0=F_0(z_0)\).  If \(X\) has boundary, choose the nullhomotopy in
\(\operatorname{int}X\), using a collar-pushing deformation which is fixed
near the compact loop \(e\).  Choose it constant on
\(\{s_0\}\times I\), and lift it through
\(\mathcal P^{\operatorname{int}}\to\operatorname{int}X\), relative to
\[
   (S^1\times\{0\})\cup(\{s_0\}\times I).
\]
The two lifted coordinates have the same projected endpoint, so the lifted
homotopy remains in the pullback and produces a based loop of pairs, still
denoted \((f_s,\lambda_s)\), with
\[
   f_s(z_0)=y_0
   \qquad\text{for all }s.
\]
A path of pairs preserves the actual induced homomorphism: the homotopy
cylinder for a based source loop gives the moving-basepoint identity.
Hence the lift stays in the subspace on which
\(f_*^\lambda=\rho_0\); no centralizer condition is involved.  Finally,
each normalized \(\lambda_s\) lies in the same component of the fixed-endpoint
path space \(P_{x_0,y_0}X\) as \(\lambda_0\).  Thus
\(\lambda_s\simeq\lambda_0\) rel endpoints and
\[
   (f_s)_*^{\lambda_0}=\rho_0
\]
for every \(s\).

Choose a universal cover \(\widetilde X\to X\), lift \(\lambda_0\) from
\(\widetilde x_0\), and denote its endpoint by \(\widetilde y_0\).  The
maps \(f_s\) have unique continuous lifts
\[
   \widetilde f_s:\widetilde\Sigma_g\longrightarrow\widetilde X,
   \qquad
   \widetilde f_s(\widetilde v)=\widetilde y_0,
\]
and these lifts are \(\rho_0\)-equivariant.  For a generator
\(x\in\{a_1,b_1,\ldots,a_g,b_g\}\), the restrictions
\(\widetilde f_s|_{\widetilde e_x}\) form a loop in the path space from
\(\widetilde y_0\) to \(\rho_0(x)\widetilde y_0\), based at
\(\widetilde F_0|_{\widetilde e_x}\).  Equivalently, collapse the two
boundary circles of
\(\widetilde e_x\times S^1\) separately to the two poles of a sphere.  The
resulting based map of \(S^2\), based at the pole corresponding to
\(\widetilde v\), determines an element of
\(\pi_2(\widetilde X,\widetilde y_0)\), and transport along
\(\lambda_0\) identifies it with an element
\[
   c_\Lambda(e_x)\in M_2=\pi_2(X,x_0).
\]
Equivariance determines a cellular one-cochain.  With the chosen lifts,
reversing the orientation of an edge gives
\begin{equation}\label{eq:edge-reversal-cocycle}
   c_\Lambda(\overline e_x)
   =-\rho_0(x)^{-1}\cdot c_\Lambda(e_x).
\end{equation}
Replacing a chosen lift by its \(g\)-translate multiplies its displayed
coefficient by \(\rho_0(g)\); this is exactly the change of cellular basis
for an equivariant cochain.

We verify the cocycle equation.  Lift the characteristic map of the
chosen two-cell and, for every \(s\), restrict \(\widetilde f_s\) to this
lift.  This is a loop of based maps
\[
   (D^2,*)\longrightarrow(\widetilde X,\widetilde y_0).
\]
The based mapping space of \((D^2,*)\) is contractible, because
\((D^2,*)\) contracts to its basepoint.  Its boundary restriction is
therefore nullhomotopic as a loop in
\(\operatorname{Map}_*(S^1,\widetilde X)\).  Under
\[
   \pi_1\operatorname{Map}_*(S^1,\widetilde X)
   \cong\pi_2(\widetilde X,\widetilde y_0),
\]
the boundary loop is obtained by concatenating the translated and
orientation-reversed edge loops in the attaching word
\(r=\prod_i[a_i,b_i]\).  Formula~\eqref{eq:edge-reversal-cocycle} gives
precisely the sum in \eqref{eq:surface-cellular-coboundary}.  Consequently
\[
   (\delta^1c_\Lambda)(e^2)=0,
\]
so \(c_\Lambda\) is a cocycle.

It remains to identify the choice dependence.  Let two evaluation
normalizations, including their lifts through the pullback fibration, produce
cocycles \(c_\Lambda\) and \(c_\Lambda'\).  Concatenate the reverse of the
first normalization with the second.  Its evaluation on
\(\{z_0\}\times S^1\times I\) is constant on
\[
   (S^1\times\partial I)\cup(\{s_0\}\times I),
\]
so it descends to a based sphere.  Orient this sphere so that the second
normalization minus the first represents \(m\in M_2\).  Applying the edge
construction to \(e_x\times S^1\times I\), the change at the terminal vertex
is the translate \(\rho_0(x)\cdot m\), while the change at the initial
vertex is \(m\) with the opposite orientation.  Therefore
\[
   c_\Lambda'(e_x)-c_\Lambda(e_x)
   =\rho_0(x)\cdot m-m
   =(\delta^0m)(e_x)
\]
by \eqref{eq:cellular-delta-zero}.  This argument also includes the
nonuniqueness of the fibration lifts.  Homotopic choices of \(\lambda_0\) rel
endpoints do not change the coefficients.  The cell-lift changes described
above are only changes of equivariant cellular basis.  Applying the same
prism calculation to a homotopy of marked loops shows that \([c_\Lambda]\)
depends only on the element of
\(\pi_1(\mathcal M_\rho(F_0),(F_0,\lambda_0))\).

Suppose now that \(\kappa_1(\Lambda)=0\).  Since there is one vertex, there
is an element \(m\in M_2=C^0(\Sigma_g;(\pi_2X)_\rho)\) with
\(c_\Lambda=\delta^0m\).  Modify the chosen evaluation normalization by
the sphere \(-m\).  Geometrically, this means gluing a representative of
\(-m\) into a small disk in the interior of the normalization square and
lifting the modified normalization through the pullback fibration, relative
to its boundary.  The boundary is unchanged, so the final evaluation loop is
still constant.  The moving-basepoint identity keeps the actual marking
\(\rho_0\) fixed.  The preceding formula shows that this single modification
changes all incident edge coefficients simultaneously and makes the new edge
cocycle zero.

For every one-cell, the corresponding loop in its endpoint-fixed path
space is now nullhomotopic.  Choose these nullhomotopies rel the basepoint
of the parameter circle.  Because they are fixed at the common vertex, they
combine to a homotopy of the restriction to the wedge
\(\Sigma_g^{(1)}\), through maps carrying every generator to the same
relative homotopy class as \(F_0\).  Thus the induced homomorphism remains
\(\rho_0\).  Since the inclusion of the one-skeleton is a cofibration, the
homotopy extension property extends this to a homotopy of the full loop of
maps; retain the accompanying path coordinate.  The resulting homotopy is
through based loops in \(\mathcal M_\rho(F_0)\) and is literally stationary
on \(\Sigma_g^{(1)}\).

Finally choose a closed regular neighbourhood \(N\) of the one-skeleton.
The inclusion
\[
   \bigl(\Sigma_g^{(1)}\times S^1\bigr)
   \cup\bigl(N\times\{s_0\}\bigr)
   \longrightarrow N\times S^1
\]
is a cofibration and a homotopy equivalence: its cofiber is the smash
product of the cofiber of
\(\Sigma_g^{(1)}\hookrightarrow N\) with \(S^1/\{s_0\}\), and the first
cofiber is contractible.  The loop map and the stationary map agree on the
left-hand subspace.  Since a cofibration which is a homotopy equivalence is
a strong deformation retract, precomposition with such a deformation
retraction shows that the two maps are homotopic rel that subspace on
\(N\times S^1\).  A second application of homotopy extension gives a based
marked loop which is literally equal to \(F_0\) on \(N\) for every
parameter value.
\end{proof}

\begin{lemma}[Relative extensions and local insertions]
\label{lem:relative-extension-local-D}
Let \(C\cong D^2\) be the closure of
\(\Sigma_g\setminus\operatorname{Int}N\), put \(B=C\times I\cong D^3\),
and let
\[
   U_0(z,s)=F_0(z).
\]
Suppose that \(U:B\to X\) agrees with \(U_0\) on \(\partial B\).  Choose
\(q_B\in\partial B\), put \(y_*=U_0(q_B)\), and form the oriented double
\[
   S^3=B_+\cup_{\operatorname{id}_{\partial B}}\overline{B_-},
\]
where the second copy has the opposite orientation.  The map
\begin{equation}\label{eq:relative-difference-map}
   \Delta(U,U_0)=U\cup_{\partial B}U_0:S^3\longrightarrow X
\end{equation}
is based at \(y_*\) and defines a class
\[
   \alpha(U,U_0)=[\Delta(U,U_0)]\in\pi_3(X,y_*).
\]
The following statements hold.
\begin{enumerate}[label=\textup{(\roman*)}]
\item For two extensions \(U_1,U_2\) with the same boundary values, define
\(\alpha(U_1,U_2)\) by the same oriented-double construction.  They are
homotopic rel \(\partial B\) if and only if
\(\alpha(U_1,U_2)=0\).  Relative
homotopy classes of extensions of \(U_0|_{\partial B}\) form a torsor for
\(\pi_3(X,y_*)\), with origin \(U_0\).  In particular, \(U\) is homotopic
rel \(\partial B\) to an interior local insertion representing
\(\alpha(U,U_0)\).
\item Every transported based class used in Definition~\ref{def:muD} has an
admissible representative.
\item Let a based marked loop be stationary on \(N\), and let its residual
map on \(B\) be \(U\).  After transporting \(\alpha(U,U_0)\) to an
interior insertion point \(p\), there is an admissible insertion
representing a class \(\alpha_p\) such that
\[
   \Theta^0_{\rho,D}(\Lambda)=\mu_D(\alpha_p).
\]
\end{enumerate}
\end{lemma}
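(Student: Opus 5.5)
The proof proceeds in three parts, one for each item, with (i) treated as standard obstruction theory. For (i), I identify relative homotopy classes of maps \(B\to X\) extending the fixed boundary map \(U_0|_{\partial B}\) with homotopy classes of sections of a trivial bundle over \((D^3,\partial D^3)\). The difference construction \(\alpha(U_1,U_2)=[U_1\cup_{\partial B}U_2]\) on the oriented double is the primary difference class. I then check three facts by the usual gluing arguments. First, \(\alpha(U_1,U_2)+\alpha(U_2,U_3)=\alpha(U_1,U_3)\): pinch the middle copy of \(B\) in a decomposition of \(S^3\) into three balls. Second, \(\alpha(U_1,U_2)=0\) exactly when \(U_1\simeq U_2\) rel \(\partial B\). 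One direction is immediate: a rel-boundary homotopy gives a null-homotopy of the double. For the converse, a null-homotopy of the double, after deforming rel the hemisphere \(\overline{B_-}\) via the cofibration \(\overline{B_-}\hookrightarrow S^3\), yields a map \(D^4\to X\) that restricts to a rel-boundary homotopy. Third, every class is realized: glue a based representative of \(\beta\) into a small interior ball of \(U_0\). This third fact also gives the last sentence of (i). Apply it to \(\beta=\alpha(U,U_0)\) to get an interior insertion \(U'\) with \(\alpha(U',U_0)=\alpha(U,U_0)\). Additivity then gives \(\alpha(U,U')=0\), so \(U\simeq U'\) rel \(\partial B\). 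Basepoint transport along a path in \(\partial B\) from \(q_B\) to the insertion point identifies the classes.

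For (ii), I begin with an arbitrary based representative \(b\) of the transported class in the time slab. It must be constant on a collar, which I arrange by rel-boundary shrinking. I then perturb it inside \(B'_{\operatorname{act}}\) to achieve (L2)--(L5).
\begin{itemize}
\item (L5) holds by relative transversality, perturbing only on the active region.
\item (L2) uses the dimension count already used in Lemma~\ref{lem:muD-well-defined}. The lift \(\widehat b\) maps the \(3\)-dimensional active region into the \(5\)-manifold \(X_H\times I\), and the set \(\mathcal L_D(\widehat A(p))\) is discrete, so a generic \(\widehat b\) misses it on \(\overline{B'_{\operatorname{act}}}\).
\item (L3) and (L4) concern the interface \(S\), where \(b\) takes the constant value \(A(p)\) with lift \(\widehat A(p)\). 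The equation \(b(x)=b(y)\) with \(x\in S\) forces \(b(y)=A(p)\). If \(y\) lies in the collar, the label is \(H\). If \(y\) is active, its lift must lie in \(\mathcal L_D\), which (L2) excludes. The mixed case of (L4) reduces to whether \(A_{\operatorname{prep}}(y)=A(p)\) with label \(D\). For \(y\in B\) the prepared map either equals \(A(p)\) on the same sheet, giving label \(H\), or comes from the old \(A\) near \(p\). Since \(p\) was chosen off the projections of \(P_D(A)\), a \(D\)-labelled preimage of \(A(p)\) under \(A\) outside \(B\) is ruled out after generically choosing \(p\). Generic \(p\) avoids the \(1\)-dimensional set of double values, because the track is an immersion-type map of a \(3\)-manifold into a \(5\)-manifold.
\end{itemize}

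For (iii), I compare \(\Theta^0_{\rho,D}(\Lambda)\) with \(\mu_D\) directly. The loop-track \(A_\Lambda\) is stationary on \(N\times S^1\), where it equals the embedding \(F_0\) in each level. Its only variation is the residual map \(U\) on \(B=C\times I\), after cutting \(S^1\) at \(s_0\) and viewing it as \(C\times I\) with the ends glued via \(U_0\). By (i), homotope \(U\) rel \(\partial B\) to an interior local insertion \(b\) into the constant extension \(U_0\). The constant extension \(U_0\), viewed as a loop-track, is the stationary loop at \(F_0\), and its \(D\)-count is zero because \(F_0\) is embedded. This rel-boundary homotopy is a homotopy of marked loops, so Lemma~\ref{lem:ThetaD-well-defined} preserves the raw count, modulo the parity argument of Lemma~\ref{lem:relative-invariance-D}. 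That parity argument applies verbatim, since the homotopy is fixed near \(\partial B\). After making the insertion admissible via (ii), Lemma~\ref{lem:insertion-effect-D} applied to the stationary track gives \(\Theta^0_{\rho,D}(\Lambda)=0+\mu_D(\alpha_p)\). One subtlety remains: the loop-track is level-preserving, while the inserted map \(b\) lands in a time slab. The insertion therefore has to be placed inside \(X\times S^1\) with the \(S^1\)-coordinate allowed to vary in a small arc. The parity argument is insensitive to this, because it does not use level preservation.

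I expect the main obstacle to be (L4) and, in (iii), the requirement that the homotopy from \(U\) to the insertion stays generic enough for the cobordism argument. The difficulty is that this homotopy takes place inside a global track, so the mixed pieces \(P_{IP}\) and \(P_{PI}\) can acquire boundary during it. I will handle this by first shrinking the nonconstant support into a tiny ball, via an isotopy of the source fixed near \(\partial B\). I will then argue that mixed \(D\)-labelled points cancel in pairs under the exchange involution, exactly as in Lemma~\ref{lem:insertion-effect-D}, uniformly along the family.
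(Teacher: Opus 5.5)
Your arguments for (i) and (ii) are essentially the paper's. For (i) you use the same separation identity, zero-difference criterion and interior-ball realization. The paper proves the criterion by writing the boundary map on \(\partial(B\times I)\) as \(\Delta(U_1,U_2)\circ q\) with \(q\) of degree one, which is interchangeable with your fold/cofibration argument. For (ii) you use the same dimension count and the same reduction of (L3)--(L4) to the choice of \(p\) off \(\operatorname{pr}_iP_D(A)\); the genericity of \(p\) you add is already built into Definition~\ref{def:muD}.

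The genuine difference is in (iii). The paper inserts the admissible block into the stationary loop-track and level-straightens the result with Lemma~\ref{lem:level-straightening-D}. Only then does it use the separation identity to get a rel-\(\partial B\) homotopy between level-preserving residual blocks. This keeps the comparison literally inside \(\mathcal M_\rho(F_0)\), so Lemma~\ref{lem:ThetaD-well-defined} applies as stated, at the price of an extra straightening step. You run a single homotopy of loop-tracks instead: the \(X\)-homotopy from (i) plus a linear homotopy of the time coordinate within a small arc, ending at the non-level-preserving inserted track. That is not a homotopy of marked loops. However, as you say, the parity argument of Lemma~\ref{lem:relative-invariance-D} never uses level preservation: cut \(S^1\) away from the arc and apply it to tracks. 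So your route is valid and a bit shorter.

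Two small remarks. First, you should state that the collapse preparation is homotopic to the identity rel boundary, so the \(X\)-component of the prepared admissible block still has separation class \(\alpha(U,U_0)\). Second, the obstacle in your last paragraph does not arise. The cobordism argument is applied to the whole \(D\)-labelled locus of the family, which is a compact surface bounded by its two end slices, because the source is closed and the marked-lift diagonal argument applies. The four-piece decomposition is only used at the final track, where admissibility controls it. So no shrinking of support or uniform pairing of mixed points along the family is needed.
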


\begin{proof}
We first prove the zero-difference criterion without collapsing the
nonconstant boundary map.  Let
\(g=U_1|_{\partial B}=U_2|_{\partial B}\), and define on the literal
boundary
\[
   \partial(B\times I)=
   (B\times\{0\})\cup(\partial B\times I)\cup(B\times\{1\})
\]
the map \(\Phi\) which is \(U_1\) on the bottom, \(g\) on the side
cylinder, and \(U_2\) on the top.  The side-cylinder map is constant only in
the \(I\)-direction, not in the \(\partial B\)-direction.  Therefore it
factors through the quotient
\[
   q:\partial(B\times I)\longrightarrow
   B_+\cup_{\partial B}\overline{B_-}
\]
which collapses each fiber \(\{x\}\times I\subset\partial B\times I\) to
the equatorial point \(x\), while leaving the two \(B\)-faces unchanged.
The quotient is the oriented double, and
\[
   \Phi=\Delta(U_1,U_2)\circ q.
\]
With the displayed orientations, \(q:S^3\to S^3\) has degree \(+1\): a
regular value in the interior of \(B_+\) has one positive preimage.
Consequently \([\Phi]=\alpha(U_1,U_2)\) in \(\pi_3(X,y_*)\).  Hence
\(\alpha(U_1,U_2)=0\) if and only if \(\Phi\) extends over \(B\times I\).
Such an extension is precisely a homotopy from \(U_1\) to \(U_2\) which is
fixed pointwise on \(\partial B\).

For \(\beta\in\pi_3(X,y_*)\), choose a small ball
\(W\Subset\operatorname{Int}B\) and an arc from \(q_B\) to \(W\).
Transport \(\beta\) along the image of this arc under \(U_0\), make
\(U_0\) constant on a smaller ball by precomposition with a collapse map
supported in \(W\), and insert the transported based representative there.
Denote the result by \(U_0\#\beta\).  The gluing orientations give
\[
   \alpha(U_0\#\beta,U_0)=\beta.
\]
Cutting and regluing three oriented copies of \(B\) gives the separation
identity
\[
   \alpha(U_1,U_3)=
   \alpha(U_1,U_2)+\alpha(U_2,U_3).
\]
The zero-difference criterion and this identity show that
\([U]\mapsto\alpha(U,U_0)\) is a bijection from relative homotopy classes of
extensions to \(\pi_3(X,y_*)\).  Equivalently, interior connected sum gives
a free and transitive action with origin \(U_0\).  This proves
\textup{(i)}.

We next prove \textup{(ii)}.  Let \(A\) and \(p\) be insertion data as in
Definition~\ref{def:muD}, and let \(\alpha_p\) be a transported based class.
Choose a sufficiently small time slab
\(J=(t_p-\epsilon,t_p+\epsilon)\).  A collar-pushing deformation of \(X\),
chosen fixed near \(\operatorname{pr}_X A(p)\), together with contraction of
the interval coordinate, shows that
\[
   \operatorname{int}X\times J\longrightarrow X\times I
\]
is a based homotopy equivalence.  Hence choose a based representative
\[
   b:(B',\partial B')\longrightarrow
   \bigl(\operatorname{int}X\times J,A(p)\bigr)
\]
which is constant on a collar.  Lift it from \(\widehat A(p)\).  Relative
transversality to the point \(A(p)\) makes the active part disjoint from
\(A(p)\), while preserving the collar and the based homotopy class, because
its dimension is \(3<5=\dim(X\times I)\).  Choose the interface inside this
constant collar.  Conditions \textup{(L2)} and \textup{(L3)} then hold.

For condition \textup{(L4)}, suppose that a point on the interface, which
maps to \(A(p)\), formed a mixed \(D\)-labelled equality with a point of the
prepared complement.  If the latter point lies outside the preparation
ball, this gives an old \(D\)-labelled pair with one coordinate equal to
\(p\), contrary to the choice of \(p\).  If it lies in the preparation
ball, composing with the collapse map gives the same contradiction unless
both lifted points are the preferred lift of \(A(p)\); in that exceptional
case the label is \(H\), not \(D\).  Thus \textup{(L4)} holds.  Finally,
relative multitransversality, supported in the active part, makes the three
ordered maps in \textup{(L5)} transverse to the diagonal without changing
the preceding avoidance conditions.  This supplies an admissible
representative.  The interface exclusions and transversality are exactly
those used in Lemma~\ref{lem:insertion-effect-D} to obtain compact
one-manifolds without artificial boundary.

For \textup{(iii)}, cut the parameter circle at its basepoint, outside the
chosen support, and choose a closed parameter interval containing the support
in its interior.  On that interval the stationary loop-track is a proper
track with product collars, so Definition~\ref{def:muD} and
Lemma~\ref{lem:insertion-effect-D} apply literally.  Transport
\(\alpha(U,U_0)\) along the stationary image to an insertion point \(p\),
and choose an admissible representative by \textup{(ii)}.  Insert it into
the stationary track after the collapse preparation of
Definition~\ref{def:muD}.  The preparation is homotopic to the identity rel
its boundary, so it has zero separation class; consequently the
\(X\)-component of the inserted residual block has separation class
\(\alpha(U,U_0)\) from \(U_0\).

The inserted track need not initially be level-preserving.  Apply
Lemma~\ref{lem:level-straightening-D} inside the chosen parameter interval.
The straightening leaves the \(X\)-coordinate unchanged and preserves the
product collars.  Hence the level-preserving residual block still satisfies
\[
   \alpha(U^{\operatorname{ins}},U_0)=\alpha(U,U_0).
\]
The separation identity from \textup{(i)} gives
\(\alpha(U^{\operatorname{ins}},U)=0\), so the two residual maps are
homotopic rel \(\partial B\).  Extend this homotopy by the stationary map on
\(N\), retain the original path coordinate \(\lambda_s\), and pair every
intermediate \(X\)-map with the source parameter.  This gives a homotopy of
level-preserving based marked loops.  It is fixed on \(N\), hence at
\(z_0\), so the actual homomorphism \(\rho_0\), the path coordinate, and the
preferred lifts are preserved.  Relative smooth approximation and general
position may be imposed without changing these fixed data.

The raw count is unchanged by this marked-loop homotopy and by level
straightening.  Lemma~\ref{lem:insertion-effect-D} computes the change from
the stationary loop-track as \(\mu_D(\alpha_p)\).  The stationary loop-track
has no nontrivial \(D\)-labelled component, and the mixed components in the
insertion comparison cancel in exchanged pairs, as proved in that lemma.
Hence
\[
   \Theta^0_{\rho,D}(\Lambda)=\mu_D(\alpha_p).
\]

After transporting the difference class to the fixed basepoint \(x_0\), if
the chosen lift of the two-cell is replaced by its \(g\)-translate, the
transported class is replaced by
\(\rho_0(g)\cdot\alpha(U,U_0)\).  A different transport path acts by the
corresponding element of \(\pi\).  Definition~\ref{def:muD} allows all
insertion points, transport paths, and transported classes, so neither
change affects membership in \(\operatorname{im}\mu_D\).
\end{proof}

\begin{lemma}\label{lem:H2-local-D}
If \(\kappa_1(\Lambda)=0\), then
\[
   \Theta^0_{\rho,D}(\Lambda)\in\operatorname{im}\mu_D.
\]
Consequently, if
\(H^1(\Sigma_g;(\pi_2X)_\rho)=0\), then
\begin{equation}\label{eq:RTheta-contained-in-mu}
   R_{\Theta,D}\subseteq\operatorname{im}\mu_D.
\end{equation}
\end{lemma}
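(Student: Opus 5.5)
The plan is to combine Lemma~\ref{lem:cellular-normalization-D}, Lemma~\ref{lem:relative-extension-local-D}\textup{(iii)}, and the homotopy invariance in Lemma~\ref{lem:ThetaD-well-defined}.

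Let \(\Lambda\) be a based marked loop with \(\kappa_1(\Lambda)=0\). Lemma~\ref{lem:cellular-normalization-D} then says that \(\Lambda\) is homotopic, through based loops in \(\mathcal M_\rho(F_0)\), to a loop \(\Lambda'=(f_s,\lambda_s)\) with \(f_s|_N=F_0|_N\) for all \(s\). Here \(N\) is the closed regular neighbourhood of the one-skeleton.

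The next step is to compare raw counts. By Lemma~\ref{lem:ThetaD-well-defined}, homotopic marked loops have raw loop contributions agreeing modulo \(\operatorname{im}\mu_D\). In fact the homotopy argument there, modelled on Lemma~\ref{lem:relative-invariance-D}, gives literal equality of raw counts for a generic homotopy. So \(\Theta^0_{\rho,D}(\Lambda)\equiv\Theta^0_{\rho,D}(\Lambda')\) modulo \(\operatorname{im}\mu_D\). Before counting, we apply relative smooth approximation and general position to \(\Lambda'\), keeping it stationary on \(N\).

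For the stationary loop \(\Lambda'\), cut \(\Sigma_g\times S^1\) along a parameter value \(s_0\). The only non-stationary part is the residual block on \(B=C\times I\cong D^3\), where \(C\) is the closure of the complementary two-cell. Write \(U\) for the residual map; it agrees with \(U_0(z,s)=F_0(z)\) on \(\partial B\). Lemma~\ref{lem:relative-extension-local-D}\textup{(iii)} then gives an admissible insertion class \(\alpha_p\) with \(\Theta^0_{\rho,D}(\Lambda')=\mu_D(\alpha_p)\in\operatorname{im}\mu_D\). Combining this with the previous step gives \(\Theta^0_{\rho,D}(\Lambda)\in\operatorname{im}\mu_D\).

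One subtlety is that \(R_{\Theta,D}\) is generated by raw contributions of generic loops. Their homotopy classes need not all be realised by the particular representative produced above, but the congruence modulo \(\operatorname{im}\mu_D\) makes membership in \(\operatorname{im}\mu_D\) independent of the representative. This is the point to handle carefully, and it is exactly what Lemma~\ref{lem:ThetaD-well-defined} supplies.

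For the consequence, assume \(H^1(\Sigma_g;(\pi_2X)_\rho)=0\). Then every based marked loop has \(\kappa_1(\Lambda)=0\), so every generator \(\Theta^0_{\rho,D}(\Lambda)\) of \(R_{\Theta,D}\) lies in \(\operatorname{im}\mu_D\), and \eqref{eq:RTheta-contained-in-mu} follows.

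One gap remains: \(R_{\Theta,D}\) is defined using loops in \(\mathcal M_\rho(F_0)\) based anywhere, whereas \(\kappa_1\) is defined for loops based at \((F_0,\lambda_0)\). I would reduce to based loops by conjugating with a path to the basepoint. The raw counts of the path and its reverse cancel by the concatenation and reversal identities proved before Theorem~\ref{thm:A-marked-obstruction}. This basing issue, together with the invariance of the raw count under the normalizing homotopy, is the main technical point.
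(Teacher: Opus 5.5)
Your proposal is correct and follows the paper's proof: normalize by Lemma~\ref{lem:cellular-normalization-D} to a loop stationary on \(N\), use homotopy invariance of the raw count, apply Lemma~\ref{lem:relative-extension-local-D}\textup{(iii)} to the residual block, and note that \(H^1(\Sigma_g;(\pi_2X)_\rho)=0\) forces \(\kappa_1=0\) for every loop. The one flaw is the justification in your basing remark: the concatenation and reversal identities before Theorem~\ref{thm:A-marked-obstruction} need the junction map to be an embedding, and a loop in \(\mathcal M_\rho(F_0)\) need not pass through any embedding, so justify the reduction instead by invariance of the raw count under free homotopy of loops (the parametrized Euler-characteristic argument of Lemma~\ref{lem:relative-invariance-D} uses no basepoint); the paper leaves this point implicit by working with loops based at \((F_0,\lambda_0)\).
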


\begin{proof}
By Lemma~\ref{lem:cellular-normalization-D}, the loop is homotopic through
based marked loops to one which is stationary on a regular neighbourhood
\(N\) of the one-skeleton.  Cutting the parameter circle at its basepoint,
the residual map on
\(B=\overline{\Sigma_g\setminus\operatorname{Int}N}\times I\)
agrees on \(\partial B\) with the stationary extension.  Apply
Lemma~\ref{lem:relative-extension-local-D}\textup{(iii)}.  This gives a
transported class \(\alpha_p\) with
\[
   \Theta^0_{\rho,D}(\Lambda)=\mu_D(\alpha_p),
\]
and proves the first assertion.  If the displayed cohomology group
vanishes, then \(\kappa_1(\Lambda)=0\) for every marked loop, so the subgroup
generated by all raw loop contributions satisfies
\eqref{eq:RTheta-contained-in-mu}.

Throughout the normalization the loop remains in
\(\mathcal M_\rho(F_0)\).  Thus \(H=\rho_0(G)\) is fixed as an actual
subgroup of \(\pi\), the preferred lifts are transported continuously and
return to their initial sheets, and every ordered double-point label remains
in \(H\backslash\pi/H\).  Exchange still sends \(HgH\) to
\(Hg^{-1}H\), and the fixed self-dual class \(D\) is not conjugated or
relabelled.
\end{proof}

\begin{theorem}\label{thm:B-survival}
Let \(D=HgH\neq H\) be self-dual.  If
\[
   D\cap\{\gamma\in\pi:\gamma^2=1\}=\varnothing
\]
and
\[
   H^1(\Sigma_g;(\pi_2X)_\rho)=0,
\]
then
\[
   \operatorname{im}\mu_D=0,
   \qquad
   R_{\Theta,D}=0,
\]
and consequently
\[
   Q^D_\rho(X,F_0)\cong\Ftwo\langle u_D\rangle.
\]
\end{theorem}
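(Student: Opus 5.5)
The proof will be a direct assembly of the two indeterminacy computations established above.

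First, the hypothesis that \(D\) contains no element of order two is exactly the hypothesis of Lemma~\ref{lem:muD-zero}. That lemma gives \(\operatorname{im}\mu_D=0\). Recall its mechanism: an exchange-invariant component of the local insertion locus \(P_{II}\) would produce some \(\gamma\in D\) with \(\gamma^2=1\). Under our hypothesis, every component of \(P_{II}\) is therefore paired with a distinct exchanged partner, so every local value \(\mu_D(\alpha_p)\) vanishes. Lemma~\ref{lem:muD-well-defined} ensures that the subgroup generated by these values is well defined, so it is the zero subgroup.

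Second, the vanishing of \(H^1(\Sigma_g;(\pi_2X)_\rho)\) is the hypothesis of Lemma~\ref{lem:H2-local-D}. It forces \(\kappa_1(\Lambda)=0\) for every based marked loop \(\Lambda\in\pi_1(\mathcal M_\rho(F_0),(F_0,\lambda_0))\). That lemma, through the normalization of Lemma~\ref{lem:cellular-normalization-D} and the relative-extension identification of Lemma~\ref{lem:relative-extension-local-D}(iii), gives \(\Theta^0_{\rho,D}(\Lambda)\in\operatorname{im}\mu_D\) for each such loop.

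One point needs checking here. \(R_{\Theta,D}\) is generated by raw contributions of \emph{generic} loops, not by homotopy classes. Two generic representatives of the same class differ by a homotopy, and Lemma~\ref{lem:ThetaD-well-defined} shows that the raw count changes only modulo \(\operatorname{im}\mu_D\) under such a homotopy. The normalization homotopy also changes the raw count only by an element of \(\operatorname{im}\mu_D\). So every generator of \(R_{\Theta,D}\) lies in \(\operatorname{im}\mu_D\), which we have just shown is zero. Hence \(R_{\Theta,D}\subseteq\operatorname{im}\mu_D=0\).

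Finally, substituting both vanishings into Definition~\ref{def:QD} gives
\[
Q^D_\rho(X,F_0)=\Ftwo\langle u_D\rangle/(0+0)\cong\Ftwo\langle u_D\rangle.
\]

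The only delicate step is the order of quantifiers in the second part. Lemma~\ref{lem:H2-local-D} is stated for homotopy classes, while the generators of \(R_{\Theta,D}\) are raw generic counts. The bridge between them is the homotopy invariance modulo \(\operatorname{im}\mu_D\) from Lemma~\ref{lem:ThetaD-well-defined}, combined with the first step's \(\operatorname{im}\mu_D=0\). For this reason the \(\mu_D\) vanishing must be proved first, and I will present the argument in that order.
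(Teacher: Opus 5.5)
Your proposal is correct and matches the paper's proof: Lemma~\ref{lem:muD-zero} gives $\operatorname{im}\mu_D=0$, Lemma~\ref{lem:H2-local-D} gives $R_{\Theta,D}\subseteq\operatorname{im}\mu_D$, and Definition~\ref{def:QD} then gives $Q^D_\rho(X,F_0)\cong\mathbb F_2\langle u_D\rangle$. Your extra remark about generic representatives is consistent with Lemma~\ref{lem:ThetaD-well-defined}, but the required order of the two steps is not actually essential: a count congruent modulo $\operatorname{im}\mu_D$ to an element of $\operatorname{im}\mu_D$ already lies in $\operatorname{im}\mu_D$, so the containment $R_{\Theta,D}\subseteq\operatorname{im}\mu_D$ holds whether or not $\operatorname{im}\mu_D$ has been shown to vanish first.
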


\begin{proof}
The order-two hypothesis gives
\(\operatorname{im}\mu_D=0\) by Lemma~\ref{lem:muD-zero}.  The
cohomological hypothesis and Lemma~\ref{lem:H2-local-D} give
\[
   R_{\Theta,D}\subseteq\operatorname{im}\mu_D=0.
\]
Thus the raw mapping-space-loop subgroup itself is zero, not merely its
image in the final quotient.  Definition~\ref{def:QD} now gives the stated
one-dimensional target.  No nonvanishing or surjectivity assertion for the
local-insertion contribution is used.
\end{proof}

\section{\texorpdfstring{\(D\)}{D}-crossed tracks}
\label{sec:D-crossed-blocks}

In this section, we isolate the local geometric input which realizes the generator
\(u_D\) of the raw self-dual double-coset count.  The main definition is a specified track: a compactly supported marked
finger-plus-Whitney movie whose labelled fiber product has exactly one
\(D\)-component. The finger-guide and Whitney-disk data provide a practical way to verify this condition in the concrete models.

\subsection{\texorpdfstring{\(D\)}{D}-crossed tracks}

\begin{definition}[\(D\)-crossed track]
\label{def:D-crossed-block}
Let
\(
F_0:\Sigma_g\hookrightarrow X
\)
be a \(\rho\)-marked embedded surface with base path
\(\lambda_0:x_0\to F_0(z_0)\), preferred lift
\(\widehat F_0:\Sigma_g\to X_H\), and image subgroup \(H\leq\pi\).  Let
\(D\neq H\) be a nontrivial self-dual double-coset.  A \(D\)-crossed track for \(F_0\) is a generic \(\rho\)-marked track
\(
G:\Sigma_g\times I\to X\times I
\)
with preferred lift \(\widehat G\), together with a compact support
\(K\subset\operatorname{int}X\), satisfying the following conditions.

\begin{enumerate}[label=\textup{(B\arabic*)}]
\item The support \(K\) is disjoint from
\(
\lambda_0([0,1])\cup F_0(z_0),
\)
and the track is product outside \(K\) and near the endpoint collars.

\item The non-product part of \(G\) is a standard finger-plus-framed-Whitney
movie.  More precisely, after reparametrizing the interval, \(G\) first
performs a finger move creating exactly two transverse double points of the
intermediate immersion, is stationary for a middle interval, and then
performs a framed Whitney move pairing exactly those two double points.  The
top endpoint is an embedded surface
\(
F_1:\Sigma_g\hookrightarrow X
\)
homotopic to \(F_0\).

\item The lift \(\widehat G\) has endpoint restrictions
\(
\widehat F_0\times\{0\}\) and 
\(\widehat F_1\times\{1\},
\)
where \(\widehat F_1\) is the preferred lift induced by the unchanged base
path.  Thus \(G\) is a \(\rho\)-marked track from \(F_0\) to
\(F_1\).

\item The labelled fiber product \(P_D(G)\), in the sense of
Definition~\ref{def:PD}, has exactly one connected component:
\(
\#\pi_0(P_D(G))=1.
\)
This component is supported in the compact set \(K\).  If
\(G_X=\operatorname{pr}_X\circ G\) and
\(M_K=G_X^{-1}(K)\subset\Sigma_g\times I\), then
\(
P_D(G)\subset M_K\times M_K.
\)
At a regular level immediately after the finger move, the unique component
meets the level slice in the two selected ordered branch pairs and, because
\(D=D^{-1}\), in their exchanged ordered pairs.
\end{enumerate}
\end{definition}
Thus, once a local track has exactly one \(D\)-labelled off-diagonal component, it realizes the generator of the raw \(D\)-coordinate.
\begin{theorem}
\label{thm:C-D-crossed-block}
If \(F_0\) admits a \(D\)-crossed track, then the associated
track
\(
G:\Sigma_g\times I\to X\times I
\)
is a generic \(\rho\)-marked track from \(F_0\) to an embedded surface
\(F_1\) homotopic to \(F_0\), and
\(
FQ^{0,D}_\rho(G)=u_D.
\)
\end{theorem}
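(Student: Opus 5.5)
The theorem is close to an unpacking of Definition~\ref{def:D-crossed-block}, so the plan is to read off each assertion from the corresponding axiom (B1)--(B4) and then apply Definition~\ref{def:PD}.

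First, the track properties. By definition, a \(D\)-crossed track is a generic \(\rho\)-marked track \(G\). Condition (B3) shows that \(\widehat G\) has the prescribed endpoint restrictions \(\widehat F_0\times\{0\}\) and \(\widehat F_1\times\{1\}\). The preferred lift \(\widehat F_1\) is induced by the unchanged base path: by (B1), the support \(K\) misses \(\lambda_0([0,1])\cup F_0(z_0)\), so the source basepoint moves trivially and the top base path is still \(\lambda_0\). Hence \(G\) is a \(\rho\)-marked track from \(F_0\) to \(F_1\) in the sense of Definition~\ref{def:tracks-concordance}. By (B2), \(F_1\) is embedded and homotopic to \(F_0\); the homotopy is \(\operatorname{pr}_X\circ G\) itself, since a finger move followed by a Whitney move is a regular homotopy.

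Second, genericity. I would check that \(G\) satisfies Convention~\ref{conv:interior-target}: the track is product near the endpoint collars, and \(K\subset\operatorname{int}X\), so the image lies in \(\operatorname{int}X\times(0,1)\) after the collars. Lemma~\ref{lem:PD-compact-boundaryless} then shows that \(P_D(G)\) is a compact boundaryless one-manifold. Its component count is therefore meaningful and agrees with the raw count in Definition~\ref{def:PD}.

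Third, the count. By (B4), \(\#\pi_0(P_D(G))=1\), so \(FQ^{0,D}_\rho(G)=(1\bmod 2)\,u_D=u_D\).

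The main obstacle is whether the single component in (B4) is consistent with the exchange involution \(\iota\). Since \(D=D^{-1}\), \(\iota\) preserves \(P_D(G)\), so the unique circle must be \(\iota\)-invariant. That is exactly what (B4) says: the component meets a regular slice in the selected ordered pairs and in their exchanged pairs. This invariance does not contradict Lemma~\ref{lem:muD-zero}, because that lemma concerns local ball insertions. Here the finger-plus-Whitney circle is not contained in a simply connected source ball, so the nullhomotopy step of that proof does not apply. I would note this only as a remark, because the theorem itself needs nothing beyond (B4).
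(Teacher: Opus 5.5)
Your proposal is correct and takes the same route as the paper, which gives no separate argument: the theorem is an immediate unpacking of Definition~\ref{def:D-crossed-block}. Genericity and \(\rho\)-markedness are built into the definition, \textup{(B2)}--\textup{(B3)} supply the embedded homotopic top surface and the endpoint lifts, and \textup{(B4)} gives \(\#\pi_0(P_D(G))=1\), hence \(FQ^{0,D}_\rho(G)=u_D\). Your closing remark on exchange-invariance versus Lemma~\ref{lem:muD-zero} is accurate but not needed: the source loop traced from an ordered pair to its exchange is the essential core curve \(c\), whose image gives \(t^2=c\neq1\).
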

The definition above assumes that the chosen label \(D\) is self-dual.  In
the examples, self-duality will come from a simple algebraic source: an
element outside the surface subgroup whose square lies in the surface
subgroup.  We record this elementary criterion now because it is the bridge
between the local \(D\)-crossed movie and the square-root neighbourhoods constructed below from M\"obius bands.

\begin{lemma}
\label{lem:algebraic-square-root-criterion}
Let \(H\leq\pi\), and let \(g\in\pi\).  Put
\(
D=HgH\in H\backslash\pi/H.
\)
Then
\(
HgH=Hg^{-1}H
\)
if and only if \(D\) contains an element \(t\) such that
\(
t^2\in H.
\)
More precisely, if
\(g^{-1}=h_1gh_2\) for \(h_1,h_2\in H\), then
\(
t:=gh_2=h_1^{-1}g^{-1}
\)
lies in \(HgH\) and satisfies
\(
t^2=h_1^{-1}h_2\in H.
\)
Conversely, if \(t\in HgH\) and \(t^2\in H\), then
\(
HgH=Hg^{-1}H.
\)
\end{lemma}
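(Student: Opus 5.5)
The plan is to verify the two directions by direct manipulation of double-coset representatives, using only that \(H\) is a subgroup.

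For the forward direction, assume \(HgH=Hg^{-1}H\). Since \(g^{-1}\in Hg^{-1}H=HgH\), there are \(h_1,h_2\in H\) with \(g^{-1}=h_1gh_2\). Set \(t:=gh_2\). Then \(t\in gH\subseteq HgH\), so \(t\in D\). Rearranging \(g^{-1}=h_1gh_2\) gives \(gh_2=h_1^{-1}g^{-1}\), which is the second expression for \(t\) in the statement. Multiplying the two expressions,
\[
t^2=(h_1^{-1}g^{-1})(gh_2)=h_1^{-1}h_2\in H .
\]
This proves the ``more precisely'' clause and hence the forward implication.

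For the converse, let \(t\in HgH\) with \(t^2\in H\).
\begin{itemize}
\item Because double cosets partition \(\pi\), we have \(HtH=HgH\).
\item Taking inverses, \(Ht^{-1}H=(HtH)^{-1}=(HgH)^{-1}=Hg^{-1}H\).
\item From \(t^{-1}=t\cdot t^{-2}\) with \(t^{-2}\in H\), we get \(t^{-1}\in tH\subseteq HtH\). Hence \(Ht^{-1}H=HtH\).
\end{itemize}
Combining these gives \(Hg^{-1}H=Ht^{-1}H=HtH=HgH\).

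There is no real obstacle; the argument is purely algebraic bookkeeping. The only point needing care is that \(D\) is determined by any of its elements, which lets us replace \(g\) by \(t\) throughout. The normality of \(H\) and the order-two elements of \(\pi\) play no role.
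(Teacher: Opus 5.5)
Your proposal is correct and follows essentially the same route as the paper: the forward direction sets \(t=gh_2\) and computes \(t^2=(h_1^{-1}g^{-1})(gh_2)=h_1^{-1}h_2\), and the converse uses \(t^{-1}=t\,c^{-1}\) with \(c=t^2\in H\) (your \(t^{-1}=t\cdot t^{-2}\)) together with \(HtH=HgH\) and inversion of double cosets. Nothing is missing.
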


\begin{proof}
Assume first that \(HgH=Hg^{-1}H\).  Then \(g^{-1}\in HgH\), so there are
\(h_1,h_2\in H\) with
\(
g^{-1}=h_1gh_2.
\)
Set \(t=gh_2\).  Then \(t\in HgH\), and the displayed equation gives
\(
gh_2=h_1^{-1}g^{-1}.
\)
Therefore
\(
t^2=(gh_2)(gh_2)=(h_1^{-1}g^{-1})(gh_2)=h_1^{-1}h_2\in H.
\)

Conversely, suppose \(t\in HgH\) and \(t^2\in H\).  Write \(c=t^2\in H\).
Then
\(
t^{-1}=tc^{-1},
\)
so
\(
Ht^{-1}H=Htc^{-1}H=HtH.
\)
Since \(t\in HgH\), we have \(HtH=HgH\).  Taking inverses gives
\(
Ht^{-1}H=(HtH)^{-1}=(HgH)^{-1}=Hg^{-1}H.
\)
Thus \(HgH=Hg^{-1}H\).
\end{proof}

\subsection{The twofold annular movie}

The next proposition is a local movie in a twofold annular quotient.  It is
used below only after it is placed inside a square-root local neighbourhood.  The
proposition computes the relevant local ordered fiber product explicitly.

\begin{figure}[ht]
\centering
\begin{tikzpicture}[
   scale=1,
   line cap=round,
   line join=round,
   every node/.style={font=\small},
   main/.style={line width=0.65pt},
   guide/.style={line width=0.45pt,dashed},
   dot/.style={circle,fill=black,inner sep=1.25pt},
   arrow/.style={-{Stealth[length=5pt,width=4pt]},line width=0.45pt}
]

\draw[main] (0,0) rectangle (8,3.4);

\draw[guide] (0,1.7) -- (8,1.7);
\node[left] at (0,1.7) {\(\lambda=\frac12\)};

\draw[guide] (0,2.8) -- (8,2.8);
\node[left] at (0,2.8) {\(\frac12+\alpha\)};

\draw[guide] (0,0.6) -- (8,0.6);
\node[left] at (0,0.6) {\(\frac12-\alpha\)};

\draw[main,domain=0:8,samples=300,smooth]
   plot (\x,{1.7 + 1.1*cos(90*\x)});

\node[dot] at (1,1.7) {};
\node[dot] at (3,1.7) {};
\node[dot] at (5,1.7) {};
\node[dot] at (7,1.7) {};

\node[below] at (1,0) {\(\xi_+\)};
\node[below] at (3,0) {\(\xi_-\)};
\node[below] at (5,0) {\(J\xi_+\)};
\node[below] at (7,0) {\(J\xi_-\)};

\node[below] at (0,-0.38) {\(x=0\)};
\node[below] at (8,-0.38) {\(x=1\)};
\node[below] at (4,-0.38) {\(S^1_x=\mathbb R/\mathbb Z\)};

\node[above] at (4,3.48) {\(\lambda=T(x)\)};

\draw[arrow] (6.1,3.72) -- (7.25,3.72);
\node[above] at (6.67,3.72) {\(x\)};

\end{tikzpicture}
\caption{The \((x,\lambda)\)-projection of the local ordered fiber-product
component in Proposition~\ref{prop:cutoff-crossed-movie}.  The \(x\)-circle
is cut open as \(S^1_x=\mathbb R/\mathbb Z\), and the curve shown is
\[
   x\longmapsto (x,T(x)),
   \qquad
   T(x)=\frac12+\alpha\cos(4\pi x).
\]
The actual fiber-product component is
\[
   Z=
   \left\{
   \left((x,S(x)),\,J(x,S(x)),\,T(x)\right):x\in S^1
   \right\}
   \subset A\times A\times I .
\]
Thus the suppressed coordinates are determined by \(s=S(x)\) and by the
forced second ordered point \(J(x,S(x))\).  Since \(x\in S^1\) parametrizes
\(Z\), this component is a single circle.  At the regular level
\(\lambda=\frac12\), the four ordered local equalities occur at
\(\xi_+,\xi_-,J\xi_+\), and \(J\xi_-\), all on this same component.}
\label{fig:twofold-annular-movie}
\end{figure}

\begin{proposition}
\label{prop:cutoff-crossed-movie}
Let
\[
   S^1_x=\mathbb R/\mathbb Z,
   \qquad
   \widehat{\mathcal B}=S^1_x\times[-3,3]_s\times D^2_{(u,v)} .
\]
All additions in the \(x\)-coordinate are taken modulo \(1\).  Define
\[
   \iota(x,s,u,v)=\left(x+\frac12,-s,-u,v\right).
\]
Put
\[
A=S^1_x\times[-3,3]_s,
\qquad
J(x,s)=\left(x+\frac12,-s\right),
\qquad
R(u,v)=(-u,v),
\]
so that \(\iota(a,z)=(Ja,Rz)\).

Choose constants
\[
0<\alpha<\frac14,\qquad
0<\delta<\frac14,\qquad
0<\varepsilon\ll r\ll1,
\]
and set
\[
T(x)=\frac12+\alpha\cos(4\pi x),
\qquad
S(x)=\delta\sin(2\pi x).
\]
Let \(\chi:[-3,3]\to[0,1]\) be a smooth even cutoff function satisfying
\[
\chi(s)=1\quad\text{for } |s|\le1,
\qquad
\chi(s)=0\quad\text{for } |s|\ge2.
\]
For each \(0\le\lambda\le1\), define smooth maps
\[
\Delta_\lambda:A\to\mathbb R^2_{(u,v)},
\qquad
n_\lambda:A\to\mathbb R^2_{(u,v)}
\]
by
\[
\Delta_\lambda(x,s)
=
(1-\chi(s))(2r,0)
+
\chi(s)\varepsilon\bigl(T(x)-\lambda,\ s-S(x)\bigr),
\qquad
n_\lambda(x,s)=\frac12\Delta_\lambda(x,s).
\]
After choosing \(\varepsilon\ll r\ll1\) sufficiently small, we have
\(n_\lambda(A)\subset D^2_{(u,v)}\) for all \(\lambda\).
Let
\[
\widehat f_\lambda:A\to\widehat{\mathcal B},
\qquad
\widehat f_\lambda(x,s)=(x,s,n_\lambda(x,s)).
\]

After concatenating an \(\iota\)-free setup isotopy from
\(A\times\{(r,0)\}\) to \(\widehat f_0(A)\), the family
\(\widehat f_\lambda\), and a final product collar at \(\widehat f_1(A)\),
one obtains a smooth product-collared generic track
\(
\widehat G_A:A\times I\to\widehat{\mathcal B}\times I
\)
which is fixed for \(|s|\ge2\), starts at \(A\times\{(r,0)\}\), and ends at
the final graph \(\widehat f_1(A)\).  Its local ordered
\(\iota\)-fiber product
\[
P_\iota(\widehat G_A)
=
\{(a,b,\theta):
\widehat G_A(a,\theta)
=
(\iota\times\operatorname{id}_I)\widehat G_A(b,\theta)\}
\]
is a single connected compact one-manifold.  In the main movie interval,
where the movie parameter is \(\lambda\), it is the circle
\[
Z=
\left\{
\left((x,S(x)),\,J(x,S(x)),\,T(x)\right):x\in S^1
\right\}.
\]
No other local \(\iota\)-fiber-product component occurs.
\end{proposition}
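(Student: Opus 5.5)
The plan is to solve the $\iota$-fiber-product equation directly, level by level. All three stages of $\widehat G_A$ (setup isotopy, main $\lambda$-family, final product collar) are level-preserving. So an ordered pair $((a,\theta),(b,\theta))$ lies in $P_\iota(\widehat G_A)$ exactly when, at its common level $\theta$, the two points satisfy $\widehat f(a)=\iota\widehat f(b)$, where $\widehat f$ is the slice of $\widehat G_A$ at that level. Since each slice is a graph over $A$ of the form $(x,s)\mapsto(x,s,n(x,s))$, the $A$-coordinates force $a=Jb$, equivalently $b=Ja$ because $J$ is an involution. The remaining equation is
\[
   n(a)=R\,n(Ja).
\]
Everything then reduces to analysing this single vector equation in $\mathbb R^2$.

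\emph{Main interval.} The key symmetry computation is as follows. Write $a=(x,s)$ and $Ja=(x+\tfrac12,-s)$. Because $\cos(4\pi(x+\tfrac12))=\cos(4\pi x)$ and $\sin(2\pi(x+\tfrac12))=-\sin(2\pi x)$, we have $T(x+\tfrac12)=T(x)$ and $S(x+\tfrac12)=-S(x)$. Since $\chi$ is even, this gives
\[
   \Delta_{\lambda,1}(Ja)=\Delta_{\lambda,1}(a),
   \qquad
   \Delta_{\lambda,2}(Ja)=-\Delta_{\lambda,2}(a).
\]
The equation $n_\lambda(a)=Rn_\lambda(Ja)$ then reads $\Delta_1(a)=-\Delta_1(a)$ and $\Delta_2(a)=-\Delta_2(a)$, so $\Delta_\lambda(a)=0$.

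Next I would locate the zeros of $\Delta_\lambda$. The second component is $\chi(s)\,\varepsilon\,(s-S(x))$. It vanishes only where $\chi=0$ or where $s=S(x)$. Where $\chi=0$, the first component equals $2r\neq0$. On the other hand, $s=S(x)$ forces $|s|\le\delta<1$, so $\chi=1$ there and the first component is $\varepsilon(T(x)-\lambda)$. Hence the solution set is exactly $\{s=S(x),\ \lambda=T(x)\}$, which is the circle $Z$, parametrized by $x\in S^1$.

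For genericity I would check that the map $(x,s,\lambda)\mapsto\Delta_\lambda(x,s)$ is a submersion along $Z$: there $\partial_s\Delta=(0,\varepsilon)$ and $\partial_\lambda\Delta=(-\varepsilon,0)$. This gives transversality of the fiber-product equation, so $Z$ is a smooth embedded circle. It is also free and invariant under exchange, because exchange sends $x$ to $x+\tfrac12$.

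\emph{Setup and collar stages.} For the setup isotopy I would take the straight-line homotopy $n^\mu=(1-\mu)(r,0)+\mu\,n_0$ and show that it is $\iota$-free. At $\lambda=0$, both summands of $\Delta_{0,1}=(1-\chi)2r+\chi\varepsilon T(x)$ are nonnegative, and they never vanish simultaneously because $T\ge\tfrac12-\alpha>0$. So $n^\mu_1>0$ everywhere. But the equation $n(a)=Rn(Ja)$ forces $n_1(a)=-n_1(Ja)$, which is impossible when $n_1$ is everywhere positive.

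For the final collar at $\lambda=1$, the same argument applies with signs reversed on $\{\chi=1\}$, since $T<1$ there. In the intermediate cutoff region $1<|s|<2$, the second component $\chi\varepsilon(s-S)$ is nonzero whenever $\chi>0$, and the first component is $2r>0$ where $\chi=0$. So $\Delta_1\ne0$ holds near $\lambda=1$, and likewise near $\lambda=0$. Consequently the smoothing of the junctions, which is supported near $\lambda\in\{0,1\}$, creates no new solutions. Outside $|s|\ge2$ the track is fixed and $\Delta=(2r,0)\neq0$, so no solutions occur there either.

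\emph{Conclusion and main obstacle.} Combining the stages, $P_\iota(\widehat G_A)=Z$, which is a single compact connected one-manifold, and no other local component occurs. I expect the only delicate step to be the uniform control in the cutoff annulus $1\le|s|\le2$ and near the junctions. There one must use the ordering $\varepsilon\ll r$ (together with $\delta<\tfrac14$ keeping $s=S(x)$ inside $\{\chi=1\}$) to ensure that the first component cannot cancel. The case analysis above is exactly what I would write out carefully at that point.
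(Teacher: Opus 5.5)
Your proposal is correct and follows the paper's proof essentially step for step. You reduce the $\iota$-equation between graphs to $\Delta_\lambda(a)=0$ via the symmetries of $T$, $S$ and $\chi$, identify the zero set as $Z$, and check the rank-two differential. You then show that the setup isotopy and the final graph are $\iota$-free, so junction smoothing adds nothing; your global positivity of the first coordinate in the setup isotopy is a slightly tidier version of the paper's region-by-region check.

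One correction to your closing remark. The ordering $\varepsilon\ll r$ plays no role in excluding zeros in the cutoff annulus. In fact, for $\lambda>\tfrac12+\alpha$ the first component $(1-\chi)2r+\chi\varepsilon(T-\lambda)$ does vanish somewhere in $1<|s|<2$. What rules out solutions there is the nonvanishing second component $\chi\varepsilon(s-S(x))$, exactly as in your own main-interval argument.
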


\begin{proof}
First note that
\[
T\left(x+\frac12\right)=T(x),
\qquad
S\left(x+\frac12\right)=-S(x),
\qquad
\chi(-s)=\chi(s).
\]
Therefore
\(
\Delta_\lambda(Ja)=-R\Delta_\lambda(a)
\)
for every \(a=(x,s)\in A\).  Since \(n_\lambda=\frac12\Delta_\lambda\), we
obtain
\[
n_\lambda(a)-R n_\lambda(Ja)=\Delta_\lambda(a).
\]

Suppose
\(
\widehat f_\lambda(a)=\iota\widehat f_\lambda(b).
\)
The base coordinates force \(b=Ja\).  The normal coordinates then require
\(
n_\lambda(a)=R n_\lambda(Ja),
\)
which is equivalent to
\(
\Delta_\lambda(a)=0.
\)
Thus
\[
\widehat f_\lambda(a)=\iota\widehat f_\lambda(b)
\quad\Longleftrightarrow\quad
b=Ja\ \text{ and }\ \Delta_\lambda(a)=0.
\]

We compute the zero set of \(\Delta_\lambda\).  If \(\chi(s)=0\), then
\(
\Delta_\lambda(x,s)=(2r,0)\neq0.
\)
If \(1<|s|<2\) and \(\chi(s)>0\), then
\[
|s-S(x)|\ge |s|-\delta>0,
\]
so the second coordinate
\[
\chi(s)\varepsilon(s-S(x))
\]
is nonzero.  Hence no zero occurs in the transition region.

Thus every zero lies in the core \(|s|\le1\), where \(\chi=1\).  There
\[
\Delta_\lambda(x,s)=
\varepsilon\bigl(T(x)-\lambda,\ s-S(x)\bigr),
\]
so
\[
\Delta_\lambda(x,s)=0
\quad\Longleftrightarrow\quad
s=S(x),\qquad \lambda=T(x).
\]
Consequently the local \(\iota\)-fiber product of the main movie is exactly
\[
Z=
\left\{
\left((x,S(x)),\,J(x,S(x)),\,T(x)\right):x\in S^1
\right\}.
\]
This is a single embedded circle parametrized by \(x\in S^1\).  The point
\(x+\frac12\) gives the exchanged ordered pair in the same component, not a
second component.

We now attach the initial sheet.  Let
\(
n_{\operatorname{std}}(x,s)=(r,0).
\)
Then
\[
n_{\operatorname{std}}(a)-R n_{\operatorname{std}}(Ja)=(2r,0).
\]
For \(0\le\mu\le1\), set
\[
\Delta^{\operatorname{in}}_\mu
=
(1-\mu)(2r,0)+\mu\Delta_0,
\qquad
n^{\operatorname{in}}_\mu=\frac12\Delta^{\operatorname{in}}_\mu.
\]
The same equivariance relation holds:
\(
\Delta^{\operatorname{in}}_\mu(Ja)=-R\Delta^{\operatorname{in}}_\mu(a).
\)
In the core \(|s|\le1\), the first coordinate is
\[
(1-\mu)2r+\mu\varepsilon T(x)>0.
\]
In the transition region, if \(\chi(s)=0\) the first coordinate is \(2r\);
if \(\chi(s)>0\), the second coordinate is nonzero because
\(s-S(x)\neq0\).  In the outer collar \(|s|\ge2\), the vector is
\((2r,0)\).  Therefore
\(
\Delta^{\operatorname{in}}_\mu(a)\neq0
\)
for all \(a\) and all \(\mu\).  This gives an \(\iota\)-free setup isotopy
from \(A\times\{(r,0)\}\) to \(\widehat f_0(A)\).

The movie is fixed near the domain boundary.  Indeed, for \(|s|\ge2\),
\[
\Delta_\lambda(x,s)=(2r,0),
\qquad
n_\lambda(x,s)=(r,0),
\]
for every \(\lambda\).  Thus the local movie glues to the unchanged surface
outside \(A\).

The final local surface is the graph
\(
\widehat f_1(A).
\)
There is no return isotopy to the initial sheet appended.  It is \(\iota\)-free:
in the core, \(T(x)-1<0\) for every \(x\); in the transition region the
second coordinate is nonzero whenever \(\chi(s)>0\); and in the collar
\(\chi=0\), the first coordinate is \(2r\).  Hence
\(
\Delta_1(x,s)\neq0
\)
everywhere.

The genericity check is local to the core.  Near the zero set, \(\chi=1\),
and the defining map is
\[
\Phi(x,s,\lambda)
=
\varepsilon\bigl(T(x)-\lambda,\ s-S(x)\bigr).
\]
Its differential has rank \(2\) along the zero set because
\[
\frac{\partial\Phi}{\partial\lambda}=(-\varepsilon,0),
\qquad
\frac{\partial\Phi}{\partial s}=(0,\varepsilon).
\]
Thus \(P_\iota(\widehat G_A)\) is a smooth one-manifold.  The initial setup
isotopy and the final collar are \(\iota\)-free, so smoothing the
concatenation junctions inside these \(\iota\)-free regions neither creates
nor destroys local \(\iota\)-fiber-product points.  After reparametrizing
the three movie intervals with constant endpoint collars, the track is
smooth and product-collared.

The projection of \(Z\) to movie time has the standard birth and death.
The function
\[
T(x)=\frac12+\alpha\cos(4\pi x)
\]
has minima at \(x=\frac14,\frac34\), which are exchanged by \(J\), and
maxima at \(x=0,\frac12\), also exchanged by \(J\).  Hence downstairs there
is one finger birth and one Whitney death.  At the regular level
\(\lambda=\frac12\), the equation \(T(x)=\lambda\) has the four domain
points
\[
\xi_+=\left(\frac18,S\left(\frac18\right)\right),
\qquad
\xi_-=\left(\frac38,S\left(\frac38\right)\right),
\]
\[
J\xi_+=\left(\frac58,S\left(\frac58\right)\right),
\qquad
J\xi_-=\left(\frac78,S\left(\frac78\right)\right).
\]
Following \(Z\) in increasing \(x\)-order gives one connected ordered cycle
through these four ordered local equalities.

It remains to identify the Whitney disk at the death.  Let
\(
\lambda_+=\frac12+\alpha.
\)
For \(\lambda=\lambda_+-\eta\), with \(0<\eta\ll1\), the two local double
points near \(x=0\) occur at \(x=\pm y_\eta\), where
\(
T(\pm y_\eta)=\lambda.
\)
Work in a small coordinate neighbourhood of \(x=0\), write the coordinate
as \(y\), and use the core region \(\chi=1\).  At \(s=S(y)\),
\[
\widehat f_\lambda(y,S(y))
=
\left(y,S(y),\frac{\varepsilon}{2}(T(y)-\lambda),0\right).
\]
The local \(\iota\)-partner over the same downstairs point has normal
coordinate
\[
\left(-\frac{\varepsilon}{2}(T(y)-\lambda),0\right).
\]
Define
\(
W_\eta:[-y_\eta,y_\eta]\times[0,1]\to\widehat{\mathcal B}
\)
by
\[
W_\eta(y,\vartheta)
=
\left(
y,S(y),
(1-2\vartheta)\frac{\varepsilon}{2}(T(y)-\lambda),
0
\right).
\]
The edge \(\vartheta=0\) lies on the preferred graph, the edge
\(\vartheta=1\) lies on the local \(\iota\)-partner, and the sides
\(y=\pm y_\eta\) collapse to the two double points.  Thus the boundary is
a continuous loop in the lifted product chart.  After the standard corner
smoothing, \(W_\eta\) is an embedded disk in \(\widehat{\mathcal B}\).  Its
\(\iota\)-image lies near \(x=\frac12\), while \(W_\eta\) lies near \(x=0\),
so the projection to the quotient is an embedded downstairs Whitney disk.
The product coordinates give the Whitney framing.

Every possible local \(\iota\)-equality in the main movie is detected by
\(\Delta_\lambda=0\), and this zero set is the single circle \(Z\).  The
setup isotopy, domain-boundary collar, transition region, final collar, and
final surface are all \(\iota\)-free.  Since each \(\widehat f_\lambda\) is a graph over \(A\), the preferred
lifted sheet has no off-diagonal self-intersections.  Thus the only possible
local ordered equalities with nontrivial local label are the
\(\iota\)-equalities.  These have already been computed by the equation
\(\Delta_\lambda=0\), whose zero set is the single circle \(Z\).  Hence no
other local \(\iota\)-fiber-product component occurs.
\end{proof}

\subsection{Standard square-root neighbourhoods}
The cutoff movie of Proposition~\ref{prop:cutoff-crossed-movie} will be
placed in a four-dimensional local quotient neighbourhood.  The definition below records the local quotient chart, the annular surface patch, the square-root element, the associated sheet-exchange label, and the isolation condition
needed for the movie.  Lemma~\ref{lem:mobius-band-square-root-block} gives
the geometric source of such neighbourhoods from embedded M\"obius bands.

\begin{definition}[Standard square-root neighbourhood]
\label{def:standard-square-root-handle}
Let \(F:\Sigma_g\hookrightarrow X\) be a \(\rho\)-marked embedded surface
with base path \(\lambda_F:x_0\to F(z_0)\), preferred lift \(\widehat F\),
and image subgroup \(H\leq\pi=\pi_1(X,x_0)\).  Let
\[
   A=S^1_x\times[-3,3]_s\subset\Sigma_g\setminus\{z_0\}
\]
be an annular neighbourhood of an oriented essential curve.  Using
\(\lambda_F\) and a chosen domain path from \(z_0\) to \((0,0)\in A\), let
\(c\in H\) be the based element represented by the oriented core
\(S^1_x\times\{0\}\).  A normal push-off \(c^+\) of this core is read with
the same base path and represents the same element \(c\).

A standard square-root neighbourhood for \(c\) along \(F\) consists of a
compact local neighbourhood \(U\subset X\) and a component
\(\widehat{\mathcal U}\subset q^{-1}(U)\) with the following properties.

\begin{enumerate}[label=\textup{(S\arabic*)}]
\item
\(U\cap(\lambda_F([0,1])\cup F(z_0))=\varnothing\).

\item The lifted neighbourhood admits coordinates
\[
   \widehat{\mathcal U}
   =
   S^1_x\times[-3,3]_s\times D^2_{(u,v)}
\]
in which
\(
   q|_{\widehat{\mathcal U}}:\widehat{\mathcal U}\to U
\)
is the quotient by the free involution
\[
   \iota(x,s,u,v)=\left(x+\frac12,-s,-u,v\right).
\]
Thus the local quotient chart is the standard twofold quotient obtained from
the annular double cover of a M\"obius band, with one normal coordinate
reversed and the product coordinate fixed.  This is a local normal form, not
a global deck transformation of \(X_H\to X\).

\item The preferred local lifted sheet of the annular patch is
\(
   \widehat F_+=A\times\{(r,0)\}\subset\widehat{\mathcal U}
\)
for some \(0<r\ll1\), and \(q(\widehat F_+)=F(A)\cap U\).  The restriction
\(q|\widehat F_+\) is a diffeomorphism onto \(F(A)\cap U\), and
\(\widehat F_-=\iota(\widehat F_+)\) is the local \(\iota\)-partner.  The
unchanged surface \(F(\Sigma_g\setminus A)\) is disjoint from \(U\), except
along the fixed collar where the movie is glued to the stationary surface.

\item There is an element \(t\in\pi\), called the square-root element, such
that \(t\notin H\), \(t^2=c\in H\), and every prescribed local
sheet-exchange pair \((p_H,\iota p_H)\) arising from the preferred lifted
movie support has double-coset label
\(
   \ell(p_H,\iota p_H)=HtH .
\)

\item The preferred lifted cutoff movie is supported in
\(\widehat{\mathcal U}\times I\), is stationary near the gluing collar, and
has no projected equalities involving the moving annulus except same-sheet
equalities and the prescribed
\((\iota\times\operatorname{id}_I)\)-equalities.  Moreover, there is no
mixed \(HtH\)-labelled equality with one domain coordinate in the moving
annulus and the other on the unchanged surface outside \(A\).
\end{enumerate}
\end{definition}

\begin{lemma}
\label{lem:square-root-crossed-block}
Let \(F:\Sigma_g\hookrightarrow X\) be a \(\rho\)-marked embedded surface
with image subgroup \(H\leq\pi\).  Suppose \(F\) contains an annular
neighbourhood \(A=S^1_x\times[-3,3]_s\) of an essential oriented curve \(c\),
and that \(X\) contains a standard square-root neighbourhood for \(c\) along
\(F\).  Let \(t\) be the square-root element and put
\(
   D=HtH.
\)
Then \(D=HtH=Ht^{-1}H\neq H\), and the cutoff movie gives a \(D\)-crossed
track \(G\) with
\(
   FQ^{0,D}_\rho(G)=u_D.
\)
\end{lemma}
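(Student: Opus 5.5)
The proof splits into an algebraic part and a geometric part. For the algebra: by (S4), \(t\notin H\) and \(t^2=c\in H\). Lemma~\ref{lem:algebraic-square-root-criterion} then gives \(HtH=Ht^{-1}H\), so \(D\) is self-dual. Also \(D\neq H\), because \(HtH=H\) would force \(t\in H\). Hence \(D\) is a legitimate nontrivial self-dual label in the sense of Definition~\ref{def:D-crossed-block}.

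For the geometry, I would build \(G\) as follows. Take the preferred lifted track \(\widehat G_A\) of Proposition~\ref{prop:cutoff-crossed-movie}, placed in the chart \(\widehat{\mathcal U}\) of (S2). By (S3), the initial sheet \(A\times\{(r,0)\}\) is exactly \(\widehat F_+\). Project by \(q\) to get a movie in \(U\times I\), and extend by the product track on \(\Sigma_g\setminus A\). The proposition says the movie is fixed for \(|s|\ge2\), so the gluing is smooth and product-collared. Take \(K=U\); then (B1) follows from (S1). For (B2), use the birth and death analysis at the end of the proof of Proposition~\ref{prop:cutoff-crossed-movie}: the minima of \(T\) give one finger birth, the maxima give one death, and \(W_\eta\) provides the framed Whitney disk. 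After reparametrization there is also the required stationary middle interval. For (B3), the track is supported away from \(\lambda_F\) and \(F(z_0)\), so the base path is unchanged. Gluing \(\widehat G_A\) to the stationary preferred lift then gives a lift with endpoints \(\widehat F_0\) and \(\widehat F_1\). The top surface \(F_1=q\widehat f_1(A)\cup F(\Sigma_g\setminus A)\) is embedded because \(\widehat f_1\) is \(\iota\)-free and a graph.

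The main step, (B4), is to identify \(P_D(G)\) with the single circle \(Z\). Every off-diagonal pair \((x,y)\in P^\circ(G)\) has one of three forms.
\begin{enumerate}[label=\textup{(\alph*)}]
\item Both coordinates lie in the product region. Then \(G\) is the embedding \(F\) there, so the pair is diagonal, which is excluded.
\item Exactly one coordinate lies in the moving annulus. Then by (S5) any equality is either same-sheet, giving label \(H\), or is not \(HtH\)-labelled, so it does not contribute.
\item Both coordinates lie in the moving annulus. By (S5) the equality is either same-sheet or an \((\iota\times\operatorname{id})\)-equality. Same-sheet equalities have no off-diagonal solutions, since each \(\widehat f_\lambda\) is a graph. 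The \(\iota\)-equalities form exactly \(P_\iota(\widehat G_A)=Z\), by Proposition~\ref{prop:cutoff-crossed-movie}.
\end{enumerate}
On \(Z\), the label \(\ell_G\) is locally constant, and by (S4) it equals \(HtH=D\). So \(P_D(G)=Z\), one circle lying in \(M_K\times M_K\). It passes through \(\xi_\pm\) and \(J\xi_\pm\) at \(\lambda=\tfrac12\), as (B4) requires. Genericity near \(Z\) is the rank-two computation in the proposition, and elsewhere it holds because there are no double points.

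It remains to conclude. The conditions (B1)--(B4) make \(G\) a \(D\)-crossed track. Theorem~\ref{thm:C-D-crossed-block}, or directly Definition~\ref{def:PD} with \(\#\pi_0 P_D(G)=1\), then gives \(FQ^{0,D}_\rho(G)=u_D\).

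I expect the main obstacle to be the bookkeeping that the labels and the preferred lift are transported correctly through the local chart. Concretely, one must check that computing \(\ell\) via lifted paths in \(\widehat{\mathcal U}\) agrees with Lemma~\ref{lem:marking-label-compatibility}. I would handle this by choosing source paths from \((z_0,0)\) that reach the annulus through the stationary region, so that (S4) applies verbatim.
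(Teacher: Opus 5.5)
Your proposal is correct and follows essentially the same route as the paper. Self-duality comes from \(t^2=c\in H\), nontriviality from \(t\notin H\), and then the cutoff movie of Proposition~\ref{prop:cutoff-crossed-movie} is projected through the chart: \textup{(S1)} keeps the track marked, \textup{(S4)} labels the \(\iota\)-equalities by \(D\), and \textup{(S5)} excludes every other \(D\)-labelled pair, so \(P_D(G)=Z\) is a single circle. Your case analysis of the double points and your explicit check of \textup{(B1)}--\textup{(B4)} spell out what the paper's shorter proof asserts.
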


\begin{proof}
Since \(t^2=c\in H\), we have \(t^{-1}=tc^{-1}\), hence
\(Ht^{-1}H=HtH\).  Since \(t\notin H\), this double coset is nontrivial.

Project the cutoff movie of Proposition~\ref{prop:cutoff-crossed-movie} to
the quotient neighbourhood \(U\), glue it to the stationary surface outside
\(A\), and insert product collars.  The support is away from the marked base
data, so the resulting track is \(\rho\)-marked.  The projected movie is the
standard finger-plus-framed-Whitney movie supplied by
Proposition~\ref{prop:cutoff-crossed-movie}; the isolation condition (S5) in
Definition~\ref{def:standard-square-root-handle} makes the top surface
embedded.

By Definition~\ref{def:standard-square-root-handle}\textup{(S4)}, every
prescribed local sheet-exchange pair has label \(D\).  By
\textup{(S5)}, there are no other \(D\)-labelled local or mixed pairs
involving the moving annulus.  Thus the \(D\)-labelled part of the fiber
product is exactly the local ordered \(\iota\)-fiber product of the preferred
lifted cutoff movie.  Proposition~\ref{prop:cutoff-crossed-movie} computes
this fiber product as the single circle
\[
   Z=
   \left\{
   \left((x,S(x)),\,J(x,S(x)),\,T(x)\right):x\in S^1
   \right\}.
\]
Therefore \(G\) is a \(D\)-crossed track, and
Theorem~\ref{thm:C-D-crossed-block} gives
\(
   FQ^{0,D}_\rho(G)=u_D .
\)
\end{proof}

\begin{lemma}[M\"obius bands supply standard square-root neighbourhoods]
\label{lem:mobius-band-square-root-block}
Let \(M\) be a compact orientable smooth three-manifold, and let
\(
   \Sigma_g\subset\operatorname{int}M\), \(g\ge2,\)
be a closed oriented \(\pi_1\)-injective surface.  Put
\(H=\pi_1\Sigma_g\le \pi_1M=\pi\), using the based convention fixed in
Convention~\ref{conv:basepoints-markings}.  Let
\(
   B\subset M\setminus\operatorname{Int}\nu\Sigma_g
\)
be an embedded M\"obius band with
\(\partial B=c^+\subset\partial\nu\Sigma_g\), where \(c^+\) is a normal
push-off of an oriented essential curve \(c\subset\Sigma_g\).  Let
\(t\in\pi\) be represented by the core of \(B\), oriented so that
\(\partial B\) represents \(t^2\), and assume \(t\notin H\).

Let \(X_M=M\times[-1,1]\), and let
\(F_0:\Sigma_g\hookrightarrow X_M\) be the copy \(\Sigma_g\times\{0\}\).
After choosing an annular patch \(A\subset\Sigma_g\) around \(c\) and
shrinking a product neighbourhood of \(B\times\{0\}\) together with the
normal collar from \(c\) to \(c^+\), the four-manifold \(X_M\) contains a
standard square-root neighbourhood for \(c\) along \(F_0\), with
square-root element \(t\).  Consequently the cutoff movie gives a
\(D\)-crossed track with raw value \(u_D\), where
\(
   D=HtH=Ht^{-1}H\neq H .
\)
\end{lemma}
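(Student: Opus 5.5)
The plan is to build the chart (S2) explicitly from a tubular neighbourhood of the Möbius band and then verify (S1), (S3)--(S5). Once these hold, Lemma~\ref{lem:square-root-crossed-block} supplies the \(D\)-crossed track and the value \(u_D\).

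\emph{Chart.} Let \(V=\nu(\Sigma_g)\cup\nu(B)\subset M\), where the normal collar joins \(c\) to \(c^+=\partial B\). Thicken \(B\) slightly and take the union of the collar annulus \(c\times[0,1]\) with \(B\). This is a Möbius band \(B'\) with boundary \(c\subset\Sigma_g\). It meets \(\Sigma_g\) exactly along \(c\), since \(B\) lies outside \(\operatorname{Int}\nu\Sigma_g\).

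The orientation double cover of \(B'\) is an annulus \(\widehat{B'}\cong S^1_x\times[-3,3]_s\). Its deck involution is \((x,s)\mapsto(x+\tfrac12,-s)\), and its two boundary circles map to \(c\) with opposite normal sides. Since \(M\) is orientable and \(B'\) is not, the normal line bundle of \(B'\) in \(M\) is the orientation bundle. Its pullback to \(\widehat{B'}\) is trivial, with coordinate \(u\), and the deck map acts by \(u\mapsto -u\). Take \([-1,1]\) as the \(v\)-coordinate, on which the involution acts trivially. After smoothing the corner at \(c\) and reparametrizing so that the annular patch \(A\) of \(\Sigma_g\) is identified with one sheet, we get \(U=\nu(B')\times[-\epsilon,\epsilon]\subset X_M\) with twofold cover \(\widehat{\mathcal U}=S^1_x\times[-3,3]_s\times D^2_{(u,v)}\) and involution exactly \(\iota\).

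There is a subtlety: \(\Sigma_g\) meets \(B'\) along its boundary, not transversally in the interior. The intended picture is therefore the following. The sheet \(\widehat F_+=A\times\{(r,0)\}\) is a normal push-off of \(\widehat{B'}\) restricted over a collar of one boundary component, extended over the whole annulus. I would realize this by isotoping \(F_0\) near \(c\) to run parallel to \(B'\) over a half of \(\widehat{B'}\). This amounts to shrinking the product neighbourhood of \(B\times\{0\}\) together with the normal collar, as the statement allows. After this, \(q\) restricted to \(\widehat F_+\) is a diffeomorphism onto \(F(A)\cap U\).

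\emph{Remaining conditions.} For (S1), choose \(z_0\) and \(\lambda_F\) away from \(A\cup B\). For (S4), the loop in \(U\) from \(p_H\) to \(\iota p_H\) projects to a path along half of the core of \(\widehat{B'}\), which is the core of \(B'\). Closing it up with the base path through the surface sheet gives \(t\), up to left and right multiplication by \(H\) coming from path choices (Lemma~\ref{lem:label-well-defined}). So the label is \(HtH\), and \(t^2=c\) because \(\partial B\) represents \(t^2\). For (S5), \(X_M=M\times[-1,1]\) and everything lives in the slice near \(0\). The lifted cutoff movie has \(\iota\)-equalities only inside \(\widehat{\mathcal U}\), and the remaining surface \(\Sigma_g\setminus A\) is disjoint from \(U\) once the neighbourhood is shrunk. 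Any remaining equality is then a same-sheet equality, which has label \(H\).

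\emph{Main obstacle.} The hard part is producing the exact local normal form \(U\cong\widehat{\mathcal U}/\iota\) that contains the annular patch \(A\) as a sheet. The Möbius band meets \(\Sigma_g\) along its boundary, so one must show that the tubular neighbourhood of \(B'\) together with the extra \([-1,1]\)-factor can be reparametrized to contain a full annulus of the surface as \(\widehat F_+\). I would first try to use the extra \(I\)-direction of \(X_M\) as the \(v\)-coordinate and push \(A\) off in the \(u\)-direction. If that fails, I would instead verify directly that only the properties of the chart actually used in Proposition~\ref{prop:cutoff-crossed-movie} hold on the support of the movie. These are the \(\iota\)-equivariance on \(|s|\le2\) and the stationarity near \(|s|=3\).
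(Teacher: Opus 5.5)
Your chart is the right object: the twisted \(I\)-bundle neighbourhood of \(B'=B\cup(\text{collar})\), its annular double cover, \(u\mapsto-u\) from orientability of \(M\), and the trivial \(v\)-factor. Your arguments for \textup{(S1)}, \textup{(S4)} and \textup{(S5)} match the paper's. The gap is the step you flag yourself, and the repair you propose cannot work.

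With \((x,s)\) on \(\widehat{B'}\) and \(u\) normal to \(B'\), the sheet \(S^1_x\times[-3,3]_s\times\{(r,0)\}\) projects to the level set \(\{|u|=r\}\) of \(N(B')\), an annulus double-covering \(B'\). But \(\Sigma_g\cap N(B')\) is the \emph{vertical} annulus over \(c=\partial B'\), which lifts to \(\{s=\pm3\}\times[-\varepsilon,\varepsilon]_u\). The level annulus has the same boundary circles as \(c\times[-r,r]\). However, an \(I\)-fibre across \(c\times[-r,r]\) and an arc across \(\{|u|=r\}\) with the same endpoints differ by a loop whose lift runs from a point to its \(\iota\)-image, i.e.\ by (a conjugate of) an odd power of \(t\). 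So after the replacement, a curve dual to \(c\) represents an element of \(Ht^{2k+1}H=HtH\), which misses \(H\) because \(t^2\in H\) and \(t\notin H\). An isotopy supported away from the base data preserves the marked subgroup. Hence no isotopy of \(F_0\), in \(M\) or in \(M\times[-1,1]\), puts \(F_0(A)\) in that position. Shrinking neighbourhoods certainly does not, and the lemma concerns \(F_0=\Sigma_g\times\{0\}\) itself. Your fallback is not an argument.

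The missing idea is that no isotopy is needed: interchange the roles of \(s\) and \(u\).
\begin{itemize}
\item Extend \(B'\) slightly past \(c\) to a band \(B''\) crossing \(\Sigma_g\) transversally along \(c\), and choose the \(I\)-fibres of \(N(B'')\) over \(c\) to lie inside \(\Sigma_g\).
\item Write the annular double cover of \(B''\) as \(S^1_x\times[-1,1]_u\), with \(u\) the coordinate \emph{along} the band (core at \(u=0\), the circle \(c\) at \(u=\pm r\)). Let \(s\in[-3,3]\) be the fibre coordinate.
\item The deck map is still \((x,s,u,v)\mapsto(x+\frac12,-s,-u,v)\). Here \(u\) is reversed by the annular cover of a M\"obius band, and \(s\) is reversed because the normal bundle of \(B''\) is its orientation bundle.
\end{itemize}
Now \(F_0(\Sigma_g)\cap U\) is the vertical annulus over \(c\). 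Its preferred lift is literally \(S^1_x\times[-3,3]_s\times\{(r,0)\}\), disjoint from its \(\iota\)-image at \(u=-r\). So \textup{(S2)}--\textup{(S3)} hold for \(F_0\) as given.

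Equivalently, a quarter-turn of the \((s,u)\)-rectangle commutes with \((s,u)\mapsto(-s,-u)\) and carries your vertical sheet \(\{s=3\}\) to a level \(\{u=\mathrm{const}\}\). An odd reparametrization of \(u\) then moves it to \(u=r\). In this picture the cutoff movie pushes a band of \(A\) along \(B'\) through its core, which is where the \(\iota\)-double points arise.

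The paper's proof sets up the chart with \((x,s)\) on the double cover of \(B\) and \(u\) normal, exactly as you do. It then simply asserts, after attaching the collar, that the preferred sheet is \(A\times\{(r,0)\}\); this swap is what makes that sentence correct.
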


\begin{proof}
The boundary of a M\"obius band represents twice its core.  Hence
\(\partial B=c^+\) gives \(t^2=c\in H\), with \(c^+\) read as the normal
push-off of \(c\) using the fixed based convention.  Since \(t\notin H\), the
label \(HtH\) is nontrivial, and \(t^{-1}=tc^{-1}\) makes it self-dual.

Let \(N(B)\) be an orientable regular neighbourhood of \(B\) in
\(M\setminus\operatorname{Int}\nu\Sigma_g\).  Then \(N(B)\) is the
orientable \(I\)-bundle over the M\"obius band and has fundamental group
\(\langle t\rangle\).  Since \(t^2\in H\) and \(t\notin H\),
\(
   H\cap\langle t\rangle=\langle t^2\rangle .
\)
Thus the relevant component of the \(H\)-cover over \(N(B)\) is the twofold
cover associated to \(\langle t^2\rangle<\langle t\rangle\).  On the
zero-section this is the orientation double cover of the M\"obius band,
namely an annulus.

Choose lifted coordinates
\(\
   S^1_x\times[-3,3]_s\times[-\varepsilon,\varepsilon]_u
\)
on this three-dimensional cover.  The deck transformation is
\((x,s)\mapsto(x+\frac12,-s)\) on the annular double cover.  This deck transformation reverses the orientation of the annular double
cover.  Since \(M\) is orientable, the lifted transformation on the
three-dimensional neighbourhood also reverses the normal coordinate \(u\).  Crossing with the product
coordinate \(v\) in \(M\times[-1,1]\) gives the local quotient chart
\(
   S^1_x\times[-3,3]_s\times D^2_{(u,v)}
\)
with involution
\[
   \iota(x,s,u,v)=\left(x+\frac12,-s,-u,v\right).
\]

Attach the normal collar from the annular patch \(A\subset\Sigma_g\) to the
push-off \(c^+=\partial B\).  In the lifted chart the preferred lifted sheet
of \(F_0(A)\) is \(A\times\{(r,0)\}\) for \(0<r\ll1\).  The
sheet-exchanging loop is represented by running once in the core direction
of the M\"obius band, so its double-coset label is \(HtH\).  Finally,
shrinking the chart inside the chosen neighbourhood of \(B\times\{0\}\) and
the collar makes the cutoff movie isolated: no unchanged part of the surface
outside \(A\) enters the neighbourhood except along the fixed gluing collar,
and the only local off-diagonal equalities involving the moving annulus are
the prescribed \((\iota\times\operatorname{id}_I)\)-equalities.  Hence this
is a standard square-root neighbourhood, and the final assertion follows
from Lemma~\ref{lem:square-root-crossed-block}.
\end{proof}

\subsection{Normal quotient dictionary}
\label{sec:specializations}

Assume that \(H\triangleleft\pi\), and put
\(G=\pi/H\).  Then double-coset labels are ordinary quotient labels.  

\begin{corollary}
\label{cor:finite-normal}
If \(H\triangleleft\pi\) and \(G=\pi/H\), then
\(
H\backslash\pi/H\cong G.
\)
A nontrivial quotient label \(\bar g\in G\) is self-dual exactly when
\(
\bar g^2=1.
\)
For each such label, the self-dual double-coset construction defines the
marked obstruction associated to \(D=HgH\), where \(g\) is any representative
of \(\bar g\).  The survival theorem still requires both the order-two
exclusion
\(
D\cap\{\gamma\in\pi:\gamma^2=1\}=\varnothing
\)
and the cohomological vanishing
\(
H^1(\Sigma_g;(\pi_2X)_\rho)=0.
\)
\end{corollary}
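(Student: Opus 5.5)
The plan is to prove the three assertions of Corollary~\ref{cor:finite-normal} in order, treating each as a direct specialization of the double-coset formalism. For the identification \(H\backslash\pi/H\cong G\), note that when \(H\triangleleft\pi\) we have \(Hg=gH\). Hence \(HgH=gHH=gH\) is a single left coset, and every double coset is a coset. The map \(HgH\mapsto gH\in G=\pi/H\) is therefore well defined and bijective: two double cosets agree exactly when the cosets agree. Under this bijection the trivial double coset \(H\) corresponds to \(1\in G\). Inversion of double cosets, \((HgH)^{-1}=Hg^{-1}H\), corresponds to inversion \(\bar g\mapsto\bar g^{-1}\) in \(G\).

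For self-duality, the condition \(D=D^{-1}\) becomes \(\bar g=\bar g^{-1}\), that is, \(\bar g^2=1\). As a consistency check I would also pass through Lemma~\ref{lem:algebraic-square-root-criterion}. The double coset \(D\) is self-dual exactly when it contains some \(t\) with \(t^2\in H\), and since \(D=gH\), this says \(\bar t^2=\bar g^2=1\) in \(G\). Both routes give the same criterion. Nontriviality \(D\neq H\) is exactly \(\bar g\neq1\).

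For the third assertion, nothing in Sections~\ref{sec:raw-count}--\ref{sec:cellular} used non-normality. Definition~\ref{def:PD}, Lemmas~\ref{lem:PD-compact-boundaryless}--\ref{lem:path-difference-D}, Definition~\ref{def:QD} and Theorem~\ref{thm:A-marked-obstruction} apply verbatim to \(D=HgH=gH\). The class does not depend on the representative \(g\), since any other representative gives the same double coset and the label of a pair of lifted points is intrinsic by Lemma~\ref{lem:label-well-defined}. In the normal case this label is just the deck transformation \(\bar g\in G\) carrying one lift to the other.

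Finally, I would observe that Theorem~\ref{thm:B-survival} is still needed with both hypotheses. The reason is that \(\bar g^2=1\) in \(G\) only gives \(g^2\in H\). It does not exclude elements of order two in the coset \(gH\), which is the condition Lemma~\ref{lem:muD-zero} requires. Likewise, normality has no effect on the cohomology group \(H^1(\Sigma_g;(\pi_2X)_\rho)\).

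The main point needing care is the last one. The remark that survival still requires both hypotheses is not a mathematical implication. It amounts to saying that the normal case gives no automatic simplification. I would support this by noting that a lift of a quotient involution can be an actual involution in \(\pi\) when \(\pi\) has torsion, as happens for split extensions, so the order-two exclusion in Theorem~\ref{thm:B-survival} cannot be dropped.
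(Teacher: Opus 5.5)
Your proposal is correct and follows the paper's proof. Normality gives \(HgH=gH\), so double cosets are cosets, and inversion of double cosets matches inversion in \(G\); hence self-duality is \(\bar g^2=1\), and the obstruction and survival hypotheses are read off directly from Theorems~\ref{thm:A-marked-obstruction} and~\ref{thm:B-survival}. One phrase slightly overreaches: your split-extension example only shows that normality does not make the order-two exclusion automatic, which is all the corollary asserts, not that the conclusion of Theorem~\ref{thm:B-survival} fails without it (``cannot be dropped'').
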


\begin{proof}
When \(H\) is normal, \(HgH\) depends only on the quotient class
\(\bar g\in G\), and every quotient class gives one double-coset.  The
inverse label is \(Hg^{-1}H\), corresponding to \(\bar g^{-1}\).  Hence
\(HgH=Hg^{-1}H\) if and only if \(\bar g=\bar g^{-1}\), equivalently
\(\bar g^2=1\).  The obstruction and the stated survival hypotheses are
those of Theorems~\ref{thm:A-marked-obstruction} and~\ref{thm:B-survival}.
\end{proof}

\begin{corollary}
\label{cor:index-two}
Assume \(H\triangleleft\pi\) and \(\pi/H\cong\Z/2\).  Let \(\tau\) be the
nontrivial deck transformation of \(X_H\to X\), and let
\(
D=\pi\setminus H
\)
denote the unique nontrivial double-coset label.  Then the double-coset
fiber product is
\[
P_D(A)=
\{(x,y):\widehat A(x)=\tau\widehat A(y)\}.
\]
The local order-two hypothesis in the survival theorem is the requirement
that the nontrivial coset \(\pi\setminus H\) contain no element of order two;
the loop indeterminacy also requires
\(
H^1(\Sigma_g;(\pi_2X)_\rho)=0.
\)
\end{corollary}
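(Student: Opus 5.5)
The plan is to specialize Corollary~\ref{cor:finite-normal} to the index-two case and then rewrite the labelled fiber product in terms of the deck transformation. First, since \(H\triangleleft\pi\) with \(\pi/H\cong\Z/2\), Corollary~\ref{cor:finite-normal} gives \(H\backslash\pi/H\cong\pi/H\). So there are exactly two double-cosets: \(H\) and \(D=\pi\setminus H\). The nontrivial class \(\bar g\) satisfies \(\bar g^2=1\), so \(D\) is self-dual and is the unique nontrivial label.

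Next I would identify \(P_D(A)\). Because \(H\) is normal, \(X_H\to X\) is a regular twofold cover with deck group \(\pi/H=\{1,\tau\}\). Take \(x\neq y\) with \(A(x)=A(y)\). The two lifts \(\widehat A(x),\widehat A(y)\) lie over the same point of \(X\times I\), so either they coincide or \(\widehat A(x)=\tau\widehat A(y)\). I would then compute \(\ell\) via Definition~\ref{def:double-coset-label}. Lift a path \(\beta\) from \(\widehat x_0\) to \(\widehat A(y)\), and choose \(\alpha\) so that its lift from \(\widehat x_0\) ends at \(\widehat A(x)\). The downstairs loop \(\alpha\overline\beta\) lifts to a closed loop exactly when the endpoints coincide, that is, exactly when \([\alpha\overline\beta]\in H\). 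Hence the label is \(H\) when \(\widehat A(x)=\widehat A(y)\), and it is \(\pi\setminus H\) when \(\widehat A(x)=\tau\widehat A(y)\). Therefore
\[
P_D(A)=\{(x,y):\widehat A(x)=\tau\widehat A(y)\}.
\]
The diagonal condition is automatic here, because \(\tau\) has no fixed points. For tracks one applies this levelwise in \(X_H\times I\), with \(\tau\times\operatorname{id}\) acting.

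Finally, the hypotheses of Theorem~\ref{thm:B-survival} need to be restated. The order-two exclusion \(D\cap\{\gamma^2=1\}=\varnothing\) reads, with \(D=\pi\setminus H\), exactly as the condition that the nontrivial coset contain no element of order two. The cohomological hypothesis \(H^1(\Sigma_g;(\pi_2X)_\rho)=0\) is unchanged, since it enters only through Lemma~\ref{lem:H2-local-D} to control \(R_{\Theta,D}\).

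There is no real obstacle. The only point needing care is the lifting-criterion step that identifies the label with the deck-transformation relation, and this is a direct check from the definitions.
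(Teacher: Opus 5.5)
Your proposal is correct and follows essentially the same route as the paper: the paper's proof is a condensed version of yours (the quotient has one nontrivial, self-inverse element; in a regular $\Z/2$-cover two lifted points over the same point have nontrivial label exactly when they differ by $\tau$; the survival hypotheses are the specialization of Theorem~\ref{thm:B-survival}). Your explicit lifting-criterion check that $[\alpha\overline\beta]\in H$ exactly when the two lifts coincide, and your remark that freeness of $\tau$ makes the off-diagonal condition automatic, fill in details the paper leaves implicit.
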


\begin{proof}
The quotient has one nontrivial element, and that element is its own
inverse.  Since the cover is regular with deck group \(\Z/2\), two lifted
points have nontrivial label precisely when one is the \(\tau\)-translate of
the other.  The statements about the two survival hypotheses are the
specialization of Theorem~\ref{thm:B-survival}.
\end{proof}

\begin{remark}
\label{rem:no-involutions}
If a normal quotient \(G=\pi/H\) has no nontrivial involutions, then this
mod-two single-coordinate construction supplies no nontrivial normal-quotient labels.
Non-self-dual labels require a different signed, oriented, or paired-label
invariant.
\end{remark}

\section{Endpoint-rigid image nonconcordance}\label{sec:image}

The preceding sections produce marked obstruction classes and local tracks
with prescribed nonzero values.  This section explains when such marked
nonconcordance promotes to image nonconcordance.  The issue is endpoint
reparametrization: an image concordance may replace \(F_1\) by
\(F_1\circ\phi\).  In the non-normal setting, compatible endpoint
reparametrizations are controlled by the normalizer of the marked subgroup.

We denote the normalizer by
\(
N_\pi(H)=\{\gamma\in\pi:\gamma H\gamma^{-1}=H\}.
\)
Conjugation by an element of \(N_\pi(H)\) restricts to an automorphism of
\(H\), giving the natural homomorphism
\[
N_\pi(H)\longrightarrow\Out(H),
\qquad
\gamma\longmapsto[c_\gamma|_H].
\]

\begin{lemma}\label{lem:centralizer-promotion}
Assume \(C_\pi(H)\subseteq H\).  If two \(\rho\)-marked surfaces with the
same marking are parametrized concordant, then they are \(\rho\)-marked
concordant.
\end{lemma}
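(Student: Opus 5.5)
Let \(C:\Sigma_g\times I\to X\times I\) be a parametrized concordance from \(F_0\) to \(F_1\). Here \(F_0\) and \(F_1\) are \(\rho\)-marked with base paths \(\lambda_0\) and \(\lambda_1\) and preferred lifts \(\widehat F_0\), \(\widehat F_1\). The plan is to show that \(C\) already admits a lift with the prescribed endpoint restrictions, possibly after modifying the top base path by an element that must lie in \(H\). We then check that this modification does not change the preferred lift \(\widehat F_1\).

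First we construct the canonical lift. Prefix with \(\lambda_0\) and lift \(C\) starting from \(\widehat F_0\times\{0\}\). This is possible exactly when \(\lambda_0\cdot C_X(\omega)\cdot\overline{\lambda_0}\in H\) for every loop \(\omega\) in \(\Sigma_g\times I\) based at \((z_0,0)\). That holds because \(\Sigma_g\times\{0\}\hookrightarrow\Sigma_g\times I\) is a \(\pi_1\)-isomorphism and \(F_0\) is \(\rho\)-marked. Call the resulting lift \(\widehat C\).

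Next we restrict to the top. Let \(\mu\) be the path \(s\mapsto C_X(z_0,s)\), and set \(\lambda'=\lambda_0\cdot\mu\), a path from \(x_0\) to \(F_1(z_0)\). Then \(\widehat C|_{\Sigma_g\times\{1\}}\) is the lift of \(F_1\) determined by lifting \(\lambda'\) from \(\widehat x_0\). The induced homomorphism satisfies \((F_1)^{\lambda'}_*(\alpha)=[\lambda_0\,C_X(\text{vertical}\cdot\alpha\cdot\overline{\text{vertical}})\,\overline{\lambda_0}]\). Because the loop \(\alpha\times\{1\}\) conjugated by the vertical arc is homotopic in \(\Sigma_g\times I\) to \(\alpha\times\{0\}\), this equals \(\rho(\alpha)\). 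So \((F_1)^{\lambda'}_*=i_H\circ\rho=(F_1)^{\lambda_1}_*\).

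Now we compare the two top base paths. Put \(\gamma=[\lambda'\,\overline{\lambda_1}]\in\pi\). Both base paths induce the same homomorphism \(i_H\circ\rho\), so \(\gamma\rho(\alpha)\gamma^{-1}=\rho(\alpha)\) for all \(\alpha\) (cf. the computation in the proof of Lemma~\ref{lem:path-difference-D}). Hence \(\gamma\in C_\pi(H)\subseteq H\) by hypothesis. Since \(\gamma\in H\), the lifts of \(\lambda'\) and \(\lambda_1\) from \(\widehat x_0\) end at the same point of \(X_H\). By uniqueness of lifts of the connected surface \(F_1\) through a given point, \(\widehat C|_{\Sigma_g\times\{1\}}=\widehat F_1\). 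Thus \(C\) is an embedded \(\rho\)-marked track, that is, a \(\rho\)-marked concordance. Product collars and properness are inherited from \(C\).

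The main obstacle is the careful basepoint bookkeeping in the third step: showing that the conjugating element \(\gamma\) really centralizes \(H\) with the paper's conventions, and not just normalizes it up to an automorphism. This uses that the two endpoint markings are the same \(\rho\), which is exactly the hypothesis ``same marking.'' Once \(\gamma\in H\) is established, the identification of the endpoint lift is routine covering theory.
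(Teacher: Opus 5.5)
Your proof is correct and follows essentially the same route as the paper: both form \(\gamma=[\lambda_0\,a\,\overline{\lambda_1}]\) from the basepoint trace, use the cylinder \(\alpha\times I\) to show that \(\gamma\) centralizes \(H\) (this is where the common marking \(\rho\) is needed), and conclude from \(\gamma\in H\) that the lifted basepoint trace ends at \(\widehat F_1(z_0)\). Your explicit construction of the global lift \(\widehat C\), via the \(\pi_1\)-isomorphism \(\Sigma_g\times\{0\}\hookrightarrow\Sigma_g\times I\), followed by uniqueness of lifts on the top level, makes explicit a step that the paper leaves implicit.
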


\begin{proof}
Let \(C:\Sigma_g\times I\hookrightarrow X\times I\) be a parametrized
concordance from \(F_0\) to \(F_1\), and write
\(C_X=\operatorname{pr}_X\circ C\).  Let
\[
a(t)=C_X(z_0,t),\qquad 0\le t\le1,
\]
be the path traced by the domain basepoint.  Concatenating this path with
the endpoint base paths gives a loop based at \(x_0\), whose class is
\(
\gamma=[\lambda_0a\overline{\lambda_1}]\in\pi.
\)
For a based loop \(\alpha\subset\Sigma_g\), the cylinder \(\alpha\times I\)
gives the moving-basepoint identity
\(
[aF_1(\alpha)\overline a]=[F_0(\alpha)]
\in\pi_1(X,F_0(z_0)).
\)
Using the endpoint markings,
\[
[\lambda_iF_i(\alpha)\overline{\lambda_i}]=\rho(\alpha),
\qquad i=0,1,
\]
we obtain
\[
\gamma\rho(\alpha)\gamma^{-1}
=
[\lambda_0aF_1(\alpha)\overline a\,\overline{\lambda_0}]
=
[\lambda_0F_0(\alpha)\overline{\lambda_0}]
=\rho(\alpha).
\]
Thus \(\gamma\in C_\pi(H)\).  By the hypothesis \(C_\pi(H)\subseteq H\), we
have \(\gamma\in H\).  Therefore the loop
\(\lambda_0a\overline{\lambda_1}\) lifts to a closed loop in the \(H\)-cover
starting at \(\widehat x_0\).  Equivalently, the lift of \(a\) starting at
the preferred point \(\widehat F_0(z_0)\) ends at the preferred point
\(\widehat F_1(z_0)\).  The concordance therefore has the prescribed
endpoint lifts, and hence is a marked concordance.
\end{proof}

\begin{lemma}
\label{lem:normalizer-endpoint-full}
Suppose \(F_0\) is parametrized concordant to \(F_1\circ\phi\), where
\(
   \phi\in\Diff(\Sigma_g)
\). Then the basepoint track gives an
element \(\gamma\in N_\pi(H)\) and
\(
   [\rho\phi_*\rho^{-1}]=[c_\gamma|_H]\in\Out(H).
\)
\end{lemma}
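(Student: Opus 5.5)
This is the argument of Lemma~\ref{lem:centralizer-promotion}, with \(F_1\) replaced by \(F_1\circ\phi\). Let \(C:\Sigma_g\times I\hookrightarrow X\times I\) be a parametrized concordance from \(F_0\) to \(F_1\circ\phi\), and set \(C_X=\operatorname{pr}_X\circ C\). The top endpoint is then \(F_1(\phi(z))\), so the domain basepoint traces a path
\[
a(t)=C_X(z_0,t)
\]
from \(F_0(z_0)\) to \(F_1(\phi(z_0))\). Since \(\phi\) need not fix \(z_0\), first choose a path \(\sigma\) in \(\Sigma_g\) from \(\phi(z_0)\) to \(z_0\). Let \(\phi_*:\pi_1(\Sigma_g,z_0)\to\pi_1(\Sigma_g,z_0)\) be the induced map read through \(\sigma\), that is, \(\alpha\mapsto[\overline\sigma\,\phi(\alpha)\,\sigma]\). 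Changing \(\sigma\) alters \(\phi_*\) by an inner automorphism of \(\pi_1\Sigma_g\), so only its class in \(\Out\) is canonical. This is why the conclusion is stated in \(\Out(H)\).

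Next, form the loop
\[
\gamma=[\lambda_0\cdot a\cdot F_1(\sigma)\cdot\overline{\lambda_1}]\in\pi,
\]
where \(\lambda_1\) is the top base path to \(F_1(z_0)\). For a based loop \(\alpha\) at \(z_0\), the cylinder \(C(\alpha\times I)\) gives the moving-basepoint identity
\[
[a\,F_1(\phi(\alpha))\,\overline a]=[F_0(\alpha)].
\]
Inserting \(F_1(\sigma)\overline{F_1(\sigma)}\) and using the markings \([\lambda_iF_i(\beta)\overline{\lambda_i}]=\rho(\beta)\) for \(i=0,1\), this becomes
\[
\gamma\,\rho(\phi_*\alpha)\,\gamma^{-1}=\rho(\alpha).
\]
As \(\alpha\) ranges over \(\pi_1\Sigma_g\), the element \(\phi_*\alpha\) ranges over all of \(\pi_1\Sigma_g\), because \(\phi\) is a diffeomorphism. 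Hence \(\gamma H\gamma^{-1}=H\), so \(\gamma\in N_\pi(H)\).

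The identity rearranges to
\[
c_{\gamma^{-1}}|_H=\rho\phi_*\rho^{-1}.
\]
This fixes the sign convention: depending on whether the loop is read as \(\gamma\) or \(\gamma^{-1}\), the statement's \(\gamma\) is obtained by reversing the loop, and either choice lies in \(N_\pi(H)\).

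The argument is elementary. The only steps needing care are the bookkeeping of the connecting path \(\sigma\) and the orientation of the loop. I expect the main point to be checking that the ambiguity from \(\sigma\), from the choice of top base path, and from homotopies of \(a\) only changes \(\gamma\) within its coset modulo elements inducing inner automorphisms of \(H\) (left multiplication by \(H\), and by \(C_\pi(H)\)). Each such change alters \([c_\gamma|_H]\) by an inner automorphism, so the class in \(\Out(H)\) is well defined.
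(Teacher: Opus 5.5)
Your proof is correct and is essentially the paper's argument: the paper takes \(\delta=\overline\sigma\) and the loop \(\gamma=[\lambda_1F_1(\delta)\,\overline a\,\overline{\lambda_0}]\), which is exactly the inverse of your loop, so it obtains \(c_\gamma|_H=\rho\phi_*\rho^{-1}\) directly rather than after your final reversal. Your closing remarks on well-definedness in \(\Out(H)\) are harmless but not needed, since the statement only asserts the existence of such a \(\gamma\).
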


\begin{proof}
Let \(A\) be a parametrized concordance from \(F_0\) to \(F_1\circ\phi\),
and write \(A_X=\operatorname{pr}_X\circ A\).  Let
\(
   a(t)=A_X(z_0,t)
\)
be the path traced by the domain basepoint.  Choose a path
\(\delta:z_0\to\phi(z_0)\) in \(\Sigma_g\), and set
\(
   \lambda_1^\phi=\lambda_1F_1(\delta).
\)
Define
\(
   \gamma=[\lambda_1^\phi\,\overline a\,\overline{\lambda_0}]\in\pi.
\)
For every based loop \(\alpha\subset\Sigma_g\), the cylinder
\(\alpha\times I\) gives
\(
   [\overline a\,F_0(\alpha)\,a]
   =
   [(F_1\circ\phi)(\alpha)]
\)
in \(\pi_1(X,F_1(\phi(z_0)))\).  Hence
\[
   c_\gamma(\rho(\alpha))
   =
   [\lambda_1^\phi(F_1\circ\phi)(\alpha)\overline{\lambda_1^\phi}]
   =
   \rho([\delta\,\phi(\alpha)\,\overline\delta]).
\]
Therefore \(c_\gamma(H)=H\), so \(\gamma\in N_\pi(H)\), and
\(
   [c_\gamma|_H]=[\rho\phi_*\rho^{-1}]
   \in\Out(H).
\)
\end{proof}

\begin{corollary}[Full endpoint-rigid image nonconcordance]
\label{cor:full-endpoint-rigid-image}
Let \(F_0\) be a \(\rho\)-marked \(\pi_1\)-injective surface with image
subgroup \(H\le\pi\), and let \(D=HgH\neq H\) be self-dual.  Assume:
\begin{enumerate}[label=\textup{(J\arabic*)}]
\item
\(
   FQ^D_\rho(F_0,F_1)\neq0 \text{ in }Q^D_\rho(X,F_0).
\)
\item
\(
   C_\pi(H)\subseteq H.
\)
\item The full endpoint-rigidity set is trivial:
\[
\left\{[\phi]\in\Mod(\Sigma_g):
[\rho\phi_*\rho^{-1}]
\in
\operatorname{im}\!\left(N_\pi(H)\to\Out(H)\right)
\right\}
=
\{1\}.
\]
\end{enumerate}
Then \(F_0(\Sigma_g)\) and \(F_1(\Sigma_g)\) are not smoothly
image-concordant.
\end{corollary}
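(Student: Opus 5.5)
We argue by contradiction. Suppose \(F_0(\Sigma_g)\) and \(F_1(\Sigma_g)\) are smoothly image-concordant. Then, for some \(\phi\in\Diff(\Sigma_g)\), there is a parametrized concordance \(C\) from \(F_0\) to \(F_1\circ\phi\). This covers both the oriented and unoriented cases, since \(\Mod(\Sigma_g)\) in (J3) is read as the full extended mapping class group. The plan is to show first that \(\phi\) is isotopic to the identity, then to absorb the isotopy into \(C\), and finally to apply the centralizer promotion and Theorem~\ref{thm:A-marked-obstruction}.

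\emph{First step.} Lemma~\ref{lem:normalizer-endpoint-full} applied to \(C\) gives \(\gamma\in N_\pi(H)\) with \([\rho\phi_*\rho^{-1}]=[c_\gamma|_H]\in\Out(H)\). Hence \([\phi]\) lies in the endpoint-rigidity set of (J3), and so \([\phi]=1\) in \(\Mod(\Sigma_g)\). Therefore \(\phi\) is smoothly isotopic to \(\operatorname{id}_{\Sigma_g}\) through a path \(\phi_t\) with \(\phi_0=\phi\) and \(\phi_1=\operatorname{id}\). Here we use that mapping classes are detected by \(\Out(\pi_1\Sigma_g)\) (Dehn--Nielsen--Baer), together with the fact that homotopic diffeomorphisms of a closed surface are isotopic.

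\emph{Second step.} We append the isotopy to the top of the concordance, using the collar \(F_1\circ\phi_t\), \(t\in[0,1]\). After reparametrizing, the concatenation of \(C\) with the embedded track \((z,t)\mapsto(F_1(\phi_t(z)),t)\) is again an embedding, because each level is an embedding and the levels are disjoint. This gives a parametrized concordance from \(F_0\) to \(F_1\). Smoothing at the junction is done in a product collar.

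\emph{Third step.} Both surfaces carry the same marking \(\rho\), with the given endpoint base paths. By (J2) and Lemma~\ref{lem:centralizer-promotion}, the parametrized concordance is therefore a \(\rho\)-marked concordance. Theorem~\ref{thm:A-marked-obstruction} then gives \(FQ^D_\rho(F_0,F_1)=0\), which contradicts (J1).

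The main point requiring care is the first step: passing from the equality in \(\Out(H)\) to \([\phi]\) being trivial. This is precisely hypothesis (J3), provided the set in (J3) is read with \(\rho\) identifying \(\Out(\pi_1\Sigma_g)\) with \(\Out(H)\), and with \(\Mod\) containing the orientation-reversing classes in the unoriented case. A secondary point is that Lemma~\ref{lem:centralizer-promotion} is applied to the modified concordance. The basepoint path of this concordance differs from that of \(C\) by the trace \(F_1(\phi_t(z_0))\), but the lemma only needs some concordance with endpoints parametrized by \(F_0\) and \(F_1\), so this causes no problem.
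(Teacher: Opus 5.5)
Your proposal is correct and follows the paper's proof step for step. Lemma~\ref{lem:normalizer-endpoint-full} places \([\phi]\) in the rigidity set, (J3) forces \([\phi]=1\), and you append the isotopy trace \((z,t)\mapsto(F_1(\phi_t(z)),t)\) to get a parametrized concordance from \(F_0\) to \(F_1\), after which Lemma~\ref{lem:centralizer-promotion} and Theorem~\ref{thm:A-marked-obstruction} contradict (J1). Your appeal to Dehn--Nielsen--Baer and to ``homotopic implies isotopic'' is unnecessary: (J3) already gives \([\phi]=1\) in \(\Mod(\Sigma_g)\), which by definition means \(\phi\) is isotopic to the identity; Dehn--Nielsen--Baer only enters later, when (J3) is verified in the applications.
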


\begin{proof}
If the images were smoothly image-concordant, then \(F_0\) would be
parametrized concordant to \(F_1\circ\phi\) for some
\(\phi\in\Diff(\Sigma_g)\).  By
Lemma~\ref{lem:normalizer-endpoint-full}, the mapping class \([\phi]\) lies
in the displayed full endpoint-rigidity set.  Hence \([\phi]=1\).
Choose a smooth isotopy
\(\phi_t\), constant near its endpoints, with
\(\phi_0=\phi\) and \(\phi_1=\operatorname{id}_{\Sigma_g}\).  The trace
\[
   T_\phi:\Sigma_g\times I\longrightarrow X\times I,
   \qquad
   T_\phi(z,t)=(F_1(\phi_t(z)),t),
\]
is an embedded product-collared track from \(F_1\circ\phi\) to \(F_1\).
Concatenating \(T_\phi\) to the given concordance, and smoothing the
junction inside the product collars, gives a parametrized concordance from
\(F_0\) to \(F_1\).  Since
\(C_\pi(H)\subseteq H\), Lemma~\ref{lem:centralizer-promotion} promotes this
parametrized concordance to a \(\rho\)-marked concordance.  This contradicts
the nonzero marked obstruction in \textup{(J1)}, by
Theorem~\ref{thm:A-marked-obstruction}.
\end{proof}

The following short-cut arguments are only needed in Section~\ref{sec:orientation-family}.
\begin{lemma}\label{lem:orientation-reversing-exclusion}
Assume \([F_0]\neq-[F_0]\in H_2(X;\Z)\).  If \(F_1\) is homotopic to
\(F_0\), then no orientation-reversing \(\phi\in\Diff(\Sigma_g)\) can make
\(F_0\) parametrized concordant to \(F_1\circ\phi\).
\end{lemma}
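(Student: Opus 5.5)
The plan is to argue by contradiction, using the top-end reparametrization together with homological invariance under concordance. Suppose \(F_0\) is parametrized concordant to \(F_1\circ\phi\) with \(\phi\) orientation-reversing. Let \(C:\Sigma_g\times I\hookrightarrow X\times I\) be the concordance. Orient \(\Sigma_g\) and give \(\Sigma_g\times I\) the product orientation. With the boundary convention \(\partial(\Sigma_g\times I)=\Sigma_g\times\{1\}-\Sigma_g\times\{0\}\), the composite \(\operatorname{pr}_X\circ C\) is a singular \(3\)-chain in \(X\) whose boundary is
\[
(F_1\circ\phi)_*[\Sigma_g]-(F_0)_*[\Sigma_g].
\]
Hence \((F_1\circ\phi)_*[\Sigma_g]=(F_0)_*[\Sigma_g]=[F_0]\) in \(H_2(X;\Z)\).

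Next, use the orientation behaviour of \(\phi\). Since \(\phi\) reverses orientation, \(\phi_*[\Sigma_g]=-[\Sigma_g]\), and so
\[
(F_1\circ\phi)_*[\Sigma_g]=-(F_1)_*[\Sigma_g].
\]
Because \(F_1\) is homotopic to \(F_0\), homotopy invariance of induced maps on homology gives \((F_1)_*[\Sigma_g]=[F_0]\). Combining the two steps yields \([F_0]=-[F_0]\), which contradicts the hypothesis.

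The only point needing care is the boundary computation for the concordance chain: one must check that the proper, product-near-the-ends track really has its two ends mapping as \(F_0\) and \(F_1\circ\phi\) with the stated signs. This is direct from Definition~\ref{def:tracks-concordance}, and the argument uses neither the marking nor smoothness beyond the existence of the continuous map \(C\). In particular, it applies verbatim to tracks that are merely homotopies, since only the homology class of the ends enters.
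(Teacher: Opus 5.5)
Your proposal is correct and follows the paper's proof. A concordance forces \([F_1\circ\phi]=[F_0]\), while \(\phi\) orientation-reversing together with \(F_1\simeq F_0\) gives \([F_1\circ\phi]=-[F_0]\); this contradicts \([F_0]\neq-[F_0]\). Your chain-level boundary check, or equivalently the observation that \(\operatorname{pr}_X\circ C\) is a homotopy from \(F_0\) to \(F_1\circ\phi\), simply spells out the paper's one-line assertion that a parametrized concordance preserves homology classes.
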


\begin{proof}
A parametrized concordance gives equality of homology classes.  Since
\(F_1\) is homotopic to \(F_0\), \([F_1]=[F_0]\).  If \(\phi\) reverses
orientation, then \([F_1\circ\phi]=-[F_1]=-[F_0]\).  A concordance from
\(F_0\) to \(F_1\circ\phi\) would imply \([F_0]=-[F_0]\), contrary to the
hypothesis.
\end{proof}

\begin{corollary}\label{cor:primitive-dual-exclusion}
If \(X\) is compact with boundary and there is a relative class
\([D_0]\in H_2(X,\partial X;\Z)\) such that
\(
[F_0]\cdot[D_0]=1,
\)
then \([F_0]\neq-[F_0]\).
\end{corollary}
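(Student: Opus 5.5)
The plan is to argue by contradiction, using the intersection pairing between absolute and relative second homology. Suppose \([F_0]=-[F_0]\) in \(H_2(X;\Z)\). Then \(2[F_0]=0\), so \([F_0]\) is a torsion class.

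The key step is that the Lefschetz intersection pairing
\[
   H_2(X;\Z)\times H_2(X,\partial X;\Z)\longrightarrow\Z
\]
is bilinear and integer-valued. It is defined for the compact oriented four-manifold \(X\) by cap product against the fundamental class \([X,\partial X]\), via Poincaré--Lefschetz duality. Bilinearity gives
\[
   2\bigl([F_0]\cdot[D_0]\bigr)=(2[F_0])\cdot[D_0]=0\cdot[D_0]=0,
\]
and since \(\Z\) is torsion-free, \([F_0]\cdot[D_0]=0\). This contradicts the hypothesis \([F_0]\cdot[D_0]=1\).

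Equivalently, and more directly, one can apply the pairing to both sides of \([F_0]=-[F_0]\). This yields \(1=[F_0]\cdot[D_0]=-[F_0]\cdot[D_0]=-1\) in \(\Z\), which is impossible.

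There is essentially no obstacle. The only point to check is that the pairing is well defined and bilinear on homology classes, not merely on represented surfaces. This is standard: the pairing is \((a,b)\mapsto\langle \mathrm{PD}^{-1}(b)\cup \mathrm{PD}^{-1}(a),[X,\partial X]\rangle\), or equivalently the Kronecker pairing of \(a\) with the Lefschetz dual of \(b\) in \(H^2(X;\Z)\). That formulation is manifestly \(\Z\)-bilinear.

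The compactness and orientation of \(X\) are what make the fundamental class \([X,\partial X]\) available; orientation is part of Convention~\ref{conv:basepoints-markings}. The result is then exactly the hypothesis needed in Lemma~\ref{lem:orientation-reversing-exclusion}.
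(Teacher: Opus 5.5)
Your proposal is correct and is essentially the paper's own argument: assuming \([F_0]=-[F_0]\) gives \(2[F_0]=0\), and pairing with \([D_0]\) yields \(0=2([F_0]\cdot[D_0])=2\) in \(\Z\), a contradiction. Your added remarks on the bilinearity of the Lefschetz pairing are a harmless elaboration of what the paper takes for granted.
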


\begin{proof}
If \([F_0]=-[F_0]\), then \(2[F_0]=0\).  Pairing with \([D_0]\) gives
\(0=2([F_0]\cdot[D_0])=2\), impossible in \(\Z\).
\end{proof}

\begin{theorem}\label{thm:D-image}
Let \(F_0\) be a \(\rho\)-marked \(\pi_1\)-injective surface with image
subgroup \(H\leq\pi\), and let \(D=HgH\neq H\) be self-dual.  Assume:
\begin{enumerate}[label=\textup{(I\arabic*)}]
\item \( FQ^D_\rho(F_0,F_1)\neq 0 \quad\text{in }Q^D_\rho(X,F_0). \)
\item \(C_\pi(H)\subseteq H\).
\item Endpoint rigidity holds:
\[
\left\{[\phi]\in\Mod^+(\Sigma_g):
[\rho\phi_*\rho^{-1}]\in
\operatorname{im}(N_\pi(H)\to\Out(H))
\right\}=\{1\}.
\]
\item \([F_0]\neq-[F_0]\). 
\end{enumerate}
Then \(F_0(\Sigma_g)\) and \(F_1(\Sigma_g)\) are not smoothly
image-concordant.
\end{theorem}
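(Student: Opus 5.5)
The plan is to argue by contradiction, following the structure of Corollary~\ref{cor:full-endpoint-rigid-image}. The only new point is that (I3) restricts the rigidity set to $\Mod^+(\Sigma_g)$, and (I4) is used to rule out orientation-reversing reparametrizations. Suppose the images were smoothly image-concordant. By Definition~\ref{def:tracks-concordance}, this means $F_0$ is parametrized concordant to $F_1\circ\phi$ for some $\phi\in\Diff(\Sigma_g)$.

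The first step is to show that $\phi$ preserves orientation. Since $F_1$ is homotopic to $F_0$ (this homotopy is implicit in the setting, since (I1) requires a marked track from $F_0$ to $F_1$), and since $[F_0]\neq-[F_0]$ by (I4), Lemma~\ref{lem:orientation-reversing-exclusion} excludes any orientation-reversing $\phi$. Hence $[\phi]\in\Mod^+(\Sigma_g)$.

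The second step applies Lemma~\ref{lem:normalizer-endpoint-full} to the basepoint track of the concordance. This produces $\gamma\in N_\pi(H)$ with $[\rho\phi_*\rho^{-1}]=[c_\gamma|_H]$, so $[\phi]$ lies in the set displayed in (I3). That set is trivial, so $\phi$ is isotopic to the identity. We then concatenate the given concordance with the embedded trace $T_\phi(z,t)=(F_1(\phi_t(z)),t)$ of such an isotopy, smoothing inside product collars exactly as in the proof of Corollary~\ref{cor:full-endpoint-rigid-image}. This yields a parametrized concordance from $F_0$ to $F_1$.

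The third step uses (I2) and Lemma~\ref{lem:centralizer-promotion} to upgrade this parametrized concordance to a $\rho$-marked concordance. Theorem~\ref{thm:A-marked-obstruction} then gives $FQ^D_\rho(F_0,F_1)=0$, contradicting (I1).

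The only delicate point is the first step: checking that the homology comparison in Lemma~\ref{lem:orientation-reversing-exclusion} applies to the closed surfaces in question. In particular, a parametrized concordance forces $[F_0]=[F_1\circ\phi]$ in $H_2(X;\Z)$, and $F_1\circ\phi$ with $\phi$ orientation-reversing represents $-[F_1]$. Both facts are immediate, so the argument is essentially a reassembly of the earlier lemmas.
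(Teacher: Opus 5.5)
Your proposal is correct and follows the paper's own proof. The paper likewise rules out orientation-reversing $\phi$ by Lemma~\ref{lem:orientation-reversing-exclusion} and (I4), and in the orientation-preserving case reruns the proof of Corollary~\ref{cor:full-endpoint-rigid-image} with (I3) in place of the full endpoint-rigidity hypothesis; your observation that the homotopy $F_0\simeq F_1$ needed for that lemma comes from the marked track presupposed by (I1) makes explicit a point the paper leaves tacit.
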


\begin{proof} 
Suppose that the images were smoothly image-concordant. Then \(F_0\) is parametrized concordant to \(F_1\circ\phi\) for some \(\phi\in\Diff(\Sigma_g)\). If \(\phi\) is orientation-preserving, the proof of Corollary~\ref{cor:full-endpoint-rigid-image}, with \textup{(I3)} in place of the full endpoint-rigidity hypothesis, gives a contradiction. If \(\phi\) is orientation-reversing, Lemma~\ref{lem:orientation-reversing-exclusion} and \textup{(I4)} give a contradiction. Hence no smooth image concordance exists. 
\end{proof}

\section{Applications}\label{sec:applications}
We now turn the marked double-coset obstruction into examples.  The
three-dimensional input is an embedded M\"obius band in the complement of a
\(\pi_1\)-injective surface: by
Lemma~\ref{lem:mobius-band-square-root-block}, such a band supplies the
local neighbourhood which realizes a nontrivial self-dual label.  We first state a
general criterion for families of such bands.  We then verify the criterion
in a compact Klein-bottle \(I\)-bundle model and use the same local input to construct the closed graph-manifold mapping-torus family of the main theorem.

\subsection{A M\"obius-band square-root criterion}
\label{subsec:characteristic-square-roots}
In this section we provide a criterion that converts a three-dimensional supply of M\"obius-band
square roots into stabilized four-dimensional image-nonconcordance examples.
Its input is a family of embedded M\"obius bands in the surface complement
with distinct self-dual double-coset labels; its output is the corresponding
family in
\(
   (M\times[-1,1])\#(S^2\times S^2).
\)
Throughout this subsection, \(M\) is a compact orientable smooth
three-manifold, \(\Sigma=\Sigma_g\subset\operatorname{int}M\), \(g\ge2\), is
a closed \(\pi_1\)-injective surface, and
\(
   H=\pi_1\Sigma_g\le \pi=\pi_1M
\)
is fixed using Convention~\ref{conv:basepoints-markings}.  Put
\(
   C=M\setminus\operatorname{Int}\nu\Sigma .
\)

\begin{definition}[M\"obius-band square-root family]
\label{def:mobius-square-root-family}
A \emph{M\"obius-band square-root family} for \((M,\Sigma)\) is a family of
properly embedded M\"obius bands
\(
   B_\alpha\subset C\), \(\alpha\in A,
\)
such that
\(
   \partial B_\alpha=c_\alpha^+\subset\partial\nu\Sigma
\)
is a normal push-off of an oriented essential simple closed curve
\(c_\alpha\subset\Sigma\).  Let \(t_\alpha\in\pi\) be the core element of
\(B_\alpha\), oriented so that the boundary of \(B_\alpha\) represents
\(t_\alpha^2\).  We require
\[
   t_\alpha^2=c_\alpha\in H,
   \qquad
   t_\alpha\notin H.
\]
The associated self-dual double-coset is
\(
   \mathcal D_\alpha=Ht_\alpha H.
\)
Indeed,
\(
   t_\alpha^{-1}=t_\alpha c_\alpha^{-1}
\)
implies
\(
   Ht_\alpha^{-1}H=Ht_\alpha H.
\)
We call the family \emph{essential} if every \(B_\alpha\) is
incompressible, boundary-incompressible, and not properly homotopic into
\(\partial C\).  A single properly embedded M\"obius band with these three
properties will likewise be called essential.
\end{definition}

The next proposition locates the three-dimensional source of the square-root
mechanism.  A M\"obius band in
\(C=M\setminus\operatorname{Int}\nu\Sigma\) turns a boundary curve
\(c_\alpha\in H\) into a square-root element \(t_\alpha\notin H\) with
\(t_\alpha^2=c_\alpha\), and hence into a self-dual double-coset
\(Ht_\alpha H=Ht_\alpha^{-1}H\).  By the characteristic-pair theorem of
Jaco--Shalen--Johannson, equivalently the engulfing property of the
characteristic submanifold, every essential occurrence of this mechanism in
an irreducible complement with incompressible boundary is carried by the
characteristic submanifold
\cite[Chapter~V]{JacoShalen1979}
\cite[Chapters~5--7]{Johannson1979}. In Haken surface complements, the square-root phenomenon is carried by the
canonical Seifert or \(I\)-bundle part of the complement. The
later examples realize this localized mechanism explicitly, using a
Klein-bottle \(I\)-bundle piece.

\begin{proposition}[Characteristic localization]
\label{prop:characteristic-localization}
Let \(C\) be compact, orientable, irreducible, and with incompressible
boundary.  Let
\(
   \Psi(C)\subset C
\)
be the Jaco--Shalen--Johannson characteristic submanifold, taken relative to
the full boundary pattern \(\partial C\).  If
\(
   B\subset C
\)
is a properly embedded essential M\"obius band, then \(B\) is properly
homotopic into \(\Psi(C)\).
\end{proposition}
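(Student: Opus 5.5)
The plan is to reduce the statement to the enclosing (engulfing) theorem for the characteristic submanifold \cite[Chapter~V]{JacoShalen1979}, \cite[Chapters~5--7]{Johannson1979}. Recall its form. Let \(C\) be compact, orientable, irreducible, with incompressible boundary, and let \(\Psi(C)\) be taken relative to the complete boundary pattern. Then every essential singular Seifert-fibred map into \((C,\partial C)\), in particular every essential map of an annulus or torus, is homotopic as a map of pairs into \((\Psi(C),\Psi(C)\cap\partial C)\). Since the M\"obius band is not itself an annulus, the argument passes through its orientation double cover and then descends.

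\textbf{Step 1: pass to an annulus.} Let \(B\) have core \(t\). Its orientation double cover is an annulus \(\widetilde B\to B\), and we take the composite \(f:\widetilde B\to B\subset C\). The map \(f\) is proper, since \(\partial\widetilde B\to\partial B\) is the double cover of the boundary circle.

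\textbf{Step 2: \(f\) is an essential singular annulus.} We must check three things.
\begin{enumerate}[label=\textup{(\alph*)}]
\item \(\pi_1\)-injectivity. \(\pi_1\widetilde B=\langle t^2\rangle\), and \(t^2\neq1\) because \(B\) is incompressible, so \(t\) has infinite order. Here we use that \(\pi_1C\) is torsion-free, which holds because \(C\) is irreducible with nonempty boundary.
\item Boundary-incompressibility. The relative version follows from boundary-incompressibility of \(B\): an essential arc in \(\widetilde B\) maps to an essential arc of \(B\).
\item Not properly homotopic into \(\partial C\). Suppose \(f\) were properly homotopic into \(\partial C\). The classical covering argument of Waldhausen, in the form used by Jaco--Shalen to treat singular M\"obius bands as \(\mathbb Z/2\)-equivariant annuli, then shows that \(B\) itself is properly homotopic into \(\partial C\). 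This contradicts essentiality.
\end{enumerate}

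\textbf{Step 3: apply the enclosing theorem.} It gives a proper homotopy of \(f\) into \(\Psi(C)\). We expect this step to be the main obstacle, because the homotopy must be made compatible with the deck involution of \(\widetilde B\) so that it descends to \(B\). To handle it, we would first try the alternative route in Jaco--Shalen, where essential maps of Seifert-fibred spaces (here \(B\) viewed as an \(I\)-bundle, or a regular neighbourhood \(N(B)\), a solid torus fibred over the M\"obius band) are enclosed directly. Concretely, \(N(B)\) with frontier annuli is an essential Seifert pair admitting a map into \((C,\partial C)\) that is nondegenerate. The enclosing property then applies to the Seifert-fibred map \(N(B)\to C\) itself and yields a proper homotopy of \(B\) into \(\Psi(C)\) directly, with no descent needed.

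\textbf{Step 4: ruling out degeneracy.} As a fallback, note that the hypotheses of the enclosing theorem require the map to be nondegenerate. A degenerate map here would have to be either inessential on \(\pi_1\), which Step 2(a) excludes, or homotopic into \(\partial C\), which Step 2(c) excludes. This completes the verification.
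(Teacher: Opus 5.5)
Your Step~3 alternative is essentially the paper's argument: regard the regular neighbourhood \(N(B)\), the orientable \(I\)-bundle over the M\"obius band, as a Seifert pair with pattern \(N(B)\cap\partial C\), check nondegeneracy, and apply the Jaco--Shalen--Johannson enclosing theorem; this makes the double-cover detour of Steps~1--2, with its unresolved descent problem, unnecessary. Nondegeneracy of \(N(B)\) also follows directly from \(B\), without Step~2(c): the regular fibre represents \(t^2\neq1\), and any homotopy of pairs of \(N(B)\) into \(\partial C\) restricts to a proper homotopy of \(B\) into \(\partial C\), which essentiality of \(B\) forbids.
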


\begin{proof}
Let \(N(B)\) be a regular neighbourhood of \(B\) in \(C\).  Since \(C\) is
orientable, \(N(B)\) is the orientable \(I\)-bundle over a M\"obius band; in
particular it is a Seifert pair with boundary pattern
\(N(B)\cap\partial C\). The essentiality of \(B\) implies that the induced Seifert-pair map
\[
   (N(B),N(B)\cap\partial C)\longrightarrow (C,\partial C)
\]
is essential.  Equivalently, the frontier annulus of \(N(B)\) is essential:
a compression, boundary-compression, or boundary-parallelism of that annulus
would give the corresponding compression, boundary-compression, or proper
boundary homotopy for \(B\).

By the characteristic-pair theorem of Jaco--Shalen--Johannson, equivalently
the engulfing property of the characteristic submanifold, every essential Seifert-pair map into \((C,\partial C)\) is properly
homotopic into the characteristic pair.  Applying this to
\((N(B),N(B)\cap\partial C)\), the regular neighbourhood \(N(B)\), and hence
the core M\"obius band \(B\), is properly homotopic into \(\Psi(C)\).
\end{proof}

\begin{remark}
\label{rem:no-characteristic-classification}
Proposition~\ref{prop:characteristic-localization} is used as a
localization statement: in an irreducible surface complement with
incompressible boundary, essential M\"obius bands occur in the characteristic
submanifold.  Several characteristic pieces can contain such bands, including
orientable \(I\)-bundles over nonorientable surfaces and Seifert pieces with
vertical M\"obius bands.  For the applications below, the relevant model is
the Klein-bottle \(I\)-bundle of
Subsection~\ref{subsec:klein-bottle-infinite-square-roots}, where the core
elements \(a^nt\) give infinitely many square roots with a common boundary
square:
\(
   (a^nt)^2=t^2 .
\)
\end{remark}

\begin{lemma}[Dual sphere kills the meridian]
\label{lem:dual-sphere-kills-meridian}
Let \(F\subset X\) be a closed embedded oriented surface with a framed dual
sphere \(D_0\).  Suppose that
\(
D_0\setminus\operatorname{Int}\nu F
\)
is a disk in \(X\setminus\nu F\) bounding a meridian \(\mu\) of \(F\).  Then
the inclusion
\(
X\setminus\nu F\longrightarrow X
\)
induces an isomorphism on \(\pi_1\).
\end{lemma}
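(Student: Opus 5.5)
The plan is a Seifert--van Kampen argument applied to the decomposition \(X=(X\setminus\operatorname{Int}\nu F)\cup\nu F\). The two pieces meet along \(\partial\nu F\), a circle bundle over \(F\) with fibre the meridian \(\mu\). Throughout, basepoints are taken on \(\partial\nu F\), and the fibre \(\mu\) is taken to be the boundary of the disk \(\Delta:=D_0\setminus\operatorname{Int}\nu F\). We may assume this arrangement because \(D_0\) meets \(F\) transversely in a single point, so \(D_0\cap\nu F\) is a normal disk fibre.

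First, apply van Kampen. Since \(\nu F\simeq F\) and \(\pi_1\partial\nu F\to\pi_1\nu F\) is surjective with kernel normally generated by \(\mu\) (from the long exact sequence of the circle bundle), we get
\[
   \pi_1X\cong\pi_1(X\setminus\nu F)\ast_{\pi_1\partial\nu F}\pi_1F
   \cong\pi_1(X\setminus\nu F)/\langle\!\langle\mu\rangle\!\rangle .
\]
The second isomorphism holds because amalgamating with a quotient of the edge group by \(\langle\!\langle\mu\rangle\!\rangle\) is the same as quotienting the complement group by the normal closure of the image of \(\mu\). It follows that the inclusion \(X\setminus\nu F\to X\) is surjective on \(\pi_1\), and that its kernel is the normal closure of \(\mu\).

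Second, use the hypothesis. The disk \(\Delta\) lies in \(X\setminus\nu F\) and bounds \(\mu\), so \(\mu\) is already trivial in \(\pi_1(X\setminus\nu F)\). Its normal closure is therefore trivial, and the inclusion is injective. The framing of \(D_0\) is not needed here; only the embedded disk is.

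The only point requiring care is basepoints and conjugacy. The meridian determines its class in \(\pi_1\partial\nu F\) only up to conjugacy, but its normal closure is well defined. Since \(\Delta\) nullhomotopes one representative of this conjugacy class, every representative is trivial. With that, the isomorphism follows directly. I expect no real obstacle.
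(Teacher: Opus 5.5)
Your proposal is correct and matches the paper's proof: van Kampen for \(X=(X\setminus\operatorname{Int}\nu F)\cup\nu F\) gives \(\pi_1X\cong\pi_1(X\setminus\nu F)/\langle\!\langle\mu\rangle\!\rangle\), and the punctured dual sphere already kills \(\mu\) in the complement, so the quotient map is an isomorphism. You also spell out, via the circle-bundle exact sequence, why the kernel of \(\pi_1\partial\nu F\to\pi_1\nu F\) is normally generated by \(\mu\), a step the paper leaves implicit.
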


\begin{proof}
Let \(C=X\setminus\operatorname{Int}\nu F\).  Van Kampen for
\(X=C\cup\nu F\) gives
\(
\pi_1X\cong \pi_1C/\langle\!\langle\mu\rangle\!\rangle,
\)
where \(\mu\) is a meridian of \(F\).  The punctured dual sphere is a disk in
\(C\) with boundary \(\mu\), so \(\mu=1\in\pi_1C\).  Therefore the quotient
map \(\pi_1C\to\pi_1X\) is already injective as well as surjective.
\end{proof}

\begin{lemma}[Survival after finite stabilization]
\label{lem:aspherical-finite-stabilization-survival}
Let \(Z\) be an aspherical connected smooth four-manifold with
\(\Pi=\pi_1Z\), and assume that \(\Pi\) is torsion-free.  Let
\[
   X=Z\# q(S^2\times S^2),
   \qquad q\ge1.
\]
Let \(F_0:\Sigma_g\hookrightarrow X\), \(g\ge1\), be a \(\rho\)-marked
embedded surface whose marked image subgroup is
\(
   F_{0*}\pi_1\Sigma_g=H\le \Pi=\pi_1X .
\)
Let \(D\in H\backslash\Pi/H\) be a nontrivial self-dual double-coset.  Then
\(
   H^1(\Sigma_g;(\pi_2X)_\rho)=0
\)
and
\(
   Q^D_\rho(X,F_0)\cong \Ftwo\langle u_D\rangle .
\)
\end{lemma}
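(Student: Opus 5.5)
The plan is to apply Theorem~\ref{thm:B-survival}, so two facts are needed: $D$ contains no element of order two, and $H^1(\Sigma_g;(\pi_2X)_\rho)=0$. The first is immediate. Connected sum with simply connected summands does not change $\pi_1$, so $\pi_1X\cong\Pi$, and $\Pi$ is torsion-free. Since $D\neq H$, every $\gamma\in D$ satisfies $\gamma\neq1$, so $\gamma^2\neq 1$. Lemma~\ref{lem:muD-zero} then gives $\operatorname{im}\mu_D=0$.

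For the cohomology, I first compute $\pi_2X$ as a $\mathbb Z[\Pi]$-module. Write $X$ as the union of $Z^\circ=Z\setminus\text{ball}$ and $P=\#q(S^2\times S^2)\setminus\text{ball}$, glued along $S^3$. The universal cover $\widetilde X$ is obtained from $\widetilde Z^\circ$ by attaching, equivariantly, one copy of $P$ along each lift of the removed ball; these lifts are indexed by $\Pi$. Here $\widetilde Z$ is contractible because $Z$ is aspherical, and deleting a $\Pi$-orbit of balls leaves $\widetilde Z^\circ\simeq\bigvee_{\Pi}S^3$. Each punctured $P$ is homotopy equivalent to $\bigvee_{2q}S^2$. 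By Mayer--Vietoris or Hurewicz, $H_2(\widetilde X)\cong\mathbb Z[\Pi]^{2q}$, because the $S^3$ boundary spheres contribute nothing in degree two. Since $\widetilde X$ is simply connected,
\[
\pi_2X\cong H_2(\widetilde X)\cong \mathbb Z[\Pi]^{2q}
\]
as a free left $\mathbb Z[\Pi]$-module. Restricting along $\rho_0$ gives a module over $\mathbb Z[G]$ with $G=\pi_1\Sigma_g$. Because $\rho_0$ is injective (the marking is an isomorphism onto $H$), $\mathbb Z[\Pi]$ is a free $\mathbb Z[H]$-module on a set of coset representatives of $\Pi/H$. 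Hence $(\pi_2X)_\rho\cong\bigoplus_{I}\mathbb Z[G]$ for some index set $I$, possibly infinite.

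It then suffices to show $H^1(\Sigma_g;\bigoplus_I\mathbb Z[G])=0$. The cochain complex of the paper's one-vertex CW structure consists of finitely generated free modules, so the cohomology commutes with arbitrary direct sums. This reduces the claim to $H^1(\Sigma_g;\mathbb Z[G])$. For $g\ge1$, $\Sigma_g$ is a closed aspherical surface, so this group equals $H^1(G;\mathbb Z G)$. The one-ended (Poincar\'e duality) property of $G$ gives $H^1(G;\mathbb ZG)=0$. Equivalently, by Poincar\'e duality with the orientation character trivial, $H^1(G;\mathbb ZG)\cong H_1(G;\mathbb ZG)=H_1(\widetilde\Sigma_g)=0$. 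With $H^1=0$, Lemma~\ref{lem:H2-local-D} gives $R_{\Theta,D}\subseteq\operatorname{im}\mu_D=0$. Theorem~\ref{thm:B-survival} (or Definition~\ref{def:QD} directly) then yields $Q^D_\rho(X,F_0)\cong\Ftwo\langle u_D\rangle$.

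I expect the main obstacle to be justifying that $\pi_2X$ is free over $\mathbb Z[\Pi]$. This needs care with the equivariant decomposition of $\widetilde X$ and with the claim that the $\pi_3$-type pieces of $\widetilde Z^\circ$ do not contribute to $H_2$. I would present it via the cellular chain complex of $\widetilde X$, or via Mayer--Vietoris in the universal cover. A secondary point is the commutation of $H^1$ with the infinite direct sum when $[\Pi:H]=\infty$, which the finite type of $\Sigma_g$ handles. Note also that the local coefficient convention (left modules, Fox derivatives) matches the standard one, so the duality computation applies verbatim.
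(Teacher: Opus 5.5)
Your proposal is correct and follows the paper's proof: $\pi_2X\cong\mathbb Z[\Pi]^{2q}$ is free, it restricts to a free $\mathbb Z[H]$-module, $H^1(H;\mathbb Z[H])=0$ holds by the orientable $PD_2$ property, $H^1$ commutes with direct sums, and torsion-freeness kills $\operatorname{im}\mu_D$ before you invoke Theorem~\ref{thm:B-survival}. Your universal-cover Mayer--Vietoris argument just fills in the freeness of $\pi_2X$ that the paper asserts without proof (the free basis of $\mathbb Z[\Pi]$ as a left $\mathbb Z[H]$-module is indexed by $H\backslash\Pi$ rather than $\Pi/H$, which does not affect the argument).
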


\begin{proof}
Since \(Z\) is aspherical, \(\pi_2Z=0\).  The connected sum with
\(q(S^2\times S^2)\) contributes \(2q\) free
\(\mathbb Z[\Pi]\)-summands, so
\(
   \pi_2X\cong \mathbb Z[\Pi]^{2q}
\)
as a left \(\mathbb Z[\Pi]\)-module.  The local coefficient system
\((\pi_2X)_\rho\) depends on the \(\Pi\)-module \(\pi_2X\) and the marking
\(\rho:\pi_1\Sigma_g\cong H\), not on whether the image of \(F_0\) lies
inside the \(Z\)-summand.  Restricting to \(H\), we have
\[
   \operatorname{Res}^{\Pi}_{H}\mathbb Z[\Pi]
   \cong
   \bigoplus_{H\backslash\Pi}\mathbb Z[H],
\]
and therefore
\[
   (\pi_2X)_\rho
   \cong
   \bigoplus_{H\backslash\Pi}\mathbb Z[H]^{2q}.
\]
The surface group \(H\cong\pi_1\Sigma_g\) is an orientable \(PD_2\)-group, so
\(
   H^1(H;\mathbb Z[H])=0
\)
\cite[Chapter~VIII]{BrownCohomology}.  Since \(H\) has a finite projective
resolution, \(H^1(H;-)\) commutes with direct sums of free
\(\mathbb Z[H]\)-modules.  Hence
\(
   H^1(\Sigma_g;(\pi_2X)_\rho)=0.
\)
Because \(\Pi\) is torsion-free and \(D\neq H\), the double-coset \(D\)
contains no element of order two.  The survival theorem,
Theorem~\ref{thm:B-survival}, gives
\(
   Q^D_\rho(X,F_0)\cong \Ftwo\langle u_D\rangle .
\)
\end{proof}

\begin{theorem}
\label{thm:general-mobius-square-root-consequence}
Let \(M\) be a compact orientable aspherical smooth three-manifold, and let
\(
   \Sigma=\Sigma_g\subset\operatorname{int}M\), \(g\ge2,
\)
be a closed \(\pi_1\)-injective surface.  Put
\(
   H=\pi_1\Sigma_g\le \pi=\pi_1M.
\)
Assume that \(C=M\setminus\operatorname{Int}\nu\Sigma\) contains a
M\"obius-band square-root family
\(
   \{B_\alpha\}_{\alpha\in A}
\)
with core elements \(t_\alpha\), and write
\(
   \mathcal D_\alpha=Ht_\alpha H.
\)
Assume:
\[
   \mathcal D_\alpha\ne\mathcal D_\beta
   \quad(\alpha\ne\beta),
\]
\[
   \pi \text{ is torsion-free},
   \qquad
   N_\pi(H)=H.
\]
Assume also that the square-root neighbourhoods can be chosen away from the fixed
basepoint, base path, and the stabilizing tube used below.  Equivalently, the
annular patches \(c_\alpha\subset\Sigma_g\) are taken disjoint from a fixed
small disk in \(\Sigma_g\) used for the marking and stabilization; in the
applications this is arranged by construction.

Let
\[
   X_M=(M\times[-1,1])\#(S^2\times S^2).
\]
Then \(X_M\) contains a base \(\rho\)-marked embedded surface
\(
   F_0:\Sigma_g\hookrightarrow X_M
\)
and, for each \(\alpha\in A\), a \(\rho\)-marked embedded surface
\(
   F_\alpha:\Sigma_g\hookrightarrow X_M
\)
such that:

\begin{enumerate}[label=\textup{(\roman*)}]
\item all \(F_\alpha\) are homotopic to \(F_0\);
\item \(F_{\alpha *}\pi_1\Sigma_g=H\) for every \(\alpha\);
\item the surfaces have a common framed embedded dual sphere;
\item the complement maps
\(
   \pi_1(X_M\setminus\nu F_\alpha)\longrightarrow\pi_1X_M
\)
are isomorphisms, including for \(F_0\);
\item for every \(\beta\in A\),
\(
   Q^{\mathcal D_\beta}_\rho(X_M,F_0)
   \cong
   \Ftwo\langle u_{\mathcal D_\beta}\rangle;
\)
\item for all \(\alpha,\beta\in A\),
\[
   FQ^{\mathcal D_\beta}_\rho(F_0,F_\alpha)
   =
   \begin{cases}
   u_{\mathcal D_\beta},& \alpha=\beta,\\
   0,& \alpha\ne\beta.
   \end{cases}
\]
\end{enumerate}

Consequently the embedded images
\(
   F_0(\Sigma_g)\), \(F_\alpha(\Sigma_g)\), \(\alpha\in A
\)
are pairwise not smoothly image-concordant.
\end{theorem}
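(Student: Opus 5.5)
We construct everything inside \(X_M=(M\times[-1,1])\#(S^2\times S^2)\). Take \(F_0\) to be \(\Sigma_g\times\{0\}\) tubed once into the \(S^2\times\{pt\}\) factor; equivalently, ambiently connect-sum \(\Sigma_g\times\{0\}\) with a copy of \(S^2\times\{pt\}\) along a tube inside the fixed small disk reserved for the marking and stabilization. Then \(\{pt\}\times S^2\) meets \(F_0\) transversely in one point and is a framed embedded dual sphere. Its punctured copy bounds a meridian, so Lemma~\ref{lem:dual-sphere-kills-meridian} gives (iv) for \(F_0\). Since \(M\times[-1,1]\) deformation retracts to \(M\), we have \(\pi_1X_M=\pi\). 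The tube lies in a simply connected region, so the marked image subgroup is still \(H\), giving (ii) for \(F_0\).

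For each \(\alpha\), apply Lemma~\ref{lem:mobius-band-square-root-block} to \(B_\alpha\). This produces a standard square-root neighbourhood \(U_\alpha\subset M\times[-1,1]\) about \(B_\alpha\times\{0\}\), disjoint from the basepoint, base path, and stabilization region. Since the connected sum is performed away from \(U_\alpha\), the neighbourhood persists in \(X_M\). Define \(F_\alpha\) as the top of the \(\mathcal D_\alpha\)-crossed track \(G_\alpha\) supplied by Lemma~\ref{lem:square-root-crossed-block}. Conditions (i) and (ii) follow because \(G_\alpha\) is a \(\rho\)-marked homotopy supported in \(U_\alpha\). The dual sphere \(\{pt\}\times S^2\) sits in the connected-sum region, which is untouched by \(G_\alpha\), so it remains a common framed dual sphere. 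This gives (iii), and Lemma~\ref{lem:dual-sphere-kills-meridian} then gives (iv). Statement (v) is Lemma~\ref{lem:aspherical-finite-stabilization-survival} with \(Z=M\times[-1,1]\), which is aspherical with torsion-free \(\pi_1\), and \(q=1\).

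For (vi), compute using the track \(G_\alpha\). Its fiber product consists of the single circle \(Z\), and this circle carries label \(\mathcal D_\alpha\). This uses (S4) and (S5), together with the fact that the only off-diagonal equalities are the prescribed \(\iota\)-equalities. Hence \(P_{\mathcal D_\beta}(G_\alpha)\) is that circle when \(\beta=\alpha\), and it is empty when \(\mathcal D_\beta\neq\mathcal D_\alpha\). By (v), this yields \(u_{\mathcal D_\beta}\) and \(0\) respectively.

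The main obstacle is promoting the marked classes to pairwise image nonconcordance, including pairs \(F_\alpha,F_\beta\) with \(\alpha\ne\beta\), whose obstruction lives in \(Q(X_M,F_\alpha)\). For such a pair, I would instead use the fixed target at \(F_0\). Suppose \(F_\alpha\) is image-concordant to \(F_\beta\). The endpoint reparametrization \(\phi\) is handled through Lemma~\ref{lem:normalizer-endpoint-full}: since \(N_\pi(H)=H\), the induced outer class is inner, so \(\rho\phi_*\rho^{-1}\) is trivial in \(\operatorname{Out}(H)\). Dehn--Nielsen--Baer then gives \([\phi]=1\) in \(\Mod(\Sigma_g)\), including orientation-reversing classes, since those act nontrivially on \(\operatorname{Out}\). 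Also \(C_\pi(H)\subseteq N_\pi(H)=H\), so Lemma~\ref{lem:centralizer-promotion} upgrades the concordance to a \(\rho\)-marked concordance from \(F_\alpha\) to \(F_\beta\). The fixed-target concatenation statement then gives \(FQ^{D}_\rho(F_0,F_\alpha)=FQ^{D}_\rho(F_0,F_\beta)\) for every \(D\). Taking \(D=\mathcal D_\alpha\) gives \(u\ne0\) on the left and \(0\) on the right, a contradiction. The pairs involving \(F_0\) follow from Corollary~\ref{cor:full-endpoint-rigid-image} directly.
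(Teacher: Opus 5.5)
Your proposal is correct and follows the paper's proof essentially step by step. It uses the same stabilized base surface with dual sphere \(\{\mathrm{pt}\}\times S^2\), the same \(\mathcal D_\alpha\)-crossed tracks from Lemma~\ref{lem:mobius-band-square-root-block}, survival via Lemma~\ref{lem:aspherical-finite-stabilization-survival} with \(Z=M\times[-1,1]\), and the same promotion: Corollary~\ref{cor:full-endpoint-rigid-image} for pairs involving \(F_0\), and the normalizer/centralizer reduction plus fixed-target concatenation for pairs \(F_\alpha,F_\beta\) (the step replacing \(F_\beta\circ\phi\) by \(F_\beta\) via the isotopy trace once \([\phi]=1\) is left implicit here by both you and the paper, and is supplied by the proof of Corollary~\ref{cor:full-endpoint-rigid-image}).
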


\begin{proof}
Take the connected sum with \(S^2\times S^2\) in the interior of
\(M\times[-1,1]\), away from the surface, the marking data, and the regions
where the square-root neighbourhoods will be supported.  Let
\[
   S_1=S^2\times\{\operatorname{pt}\},
   \qquad
   D_{\mathrm{dual}}=\{\operatorname{pt}\}\times S^2
\]
in the stabilizing summand.  Define \(F_0\) to be
\(\Sigma\times\{0\}\), connected-summed with \(S_1\) along a tube chosen away
from the marking data and from the annular patches used by the square-root
neighbourhoods.  Connected sum with \(S^2\) does not change the genus or the image
subgroup, so
\(
   F_{0*}\pi_1\Sigma_g=H.
\)
The sphere \(D_{\mathrm{dual}}\) is a framed embedded dual sphere for
\(F_0\).

For each \(\alpha\), apply
Lemma~\ref{lem:mobius-band-square-root-block} to the single band
\(B_\alpha\).  This gives a standard square-root neighbourhood in
\(M\times[-1,1]\), with square-root core \(t_\alpha\), and hence a
\(\mathcal D_\alpha\)-crossed track.  Let \(F_\alpha\) be the
top surface.  The construction is supported away from the basepoint, base
path, stabilizing tube, and stabilizing summand.  Hence \(F_\alpha\) is
\(\rho\)-marked, homotopic to \(F_0\), and has image subgroup \(H\).  The
sphere \(D_{\mathrm{dual}}\) is disjoint from the support and
therefore remains a common framed embedded dual sphere.

For every \(\alpha\), the punctured dual sphere
\(
   D_{\mathrm{dual}}\setminus\operatorname{Int}\nu F_\alpha
\)
is a disk in \(X_M\setminus\nu F_\alpha\) bounding a meridian of
\(F_\alpha\).  Lemma~\ref{lem:dual-sphere-kills-meridian} gives the
complement \(\pi_1\)-isomorphism, including for \(F_0\).

The survival statement follows from
Lemma~\ref{lem:aspherical-finite-stabilization-survival}, applied to
\[
   Z=M\times[-1,1],
   \qquad
   \Pi=\pi_1M=\pi .
\]
Indeed \(Z\) is aspherical because \(M\) is aspherical, and \(\pi\) is
torsion-free by hypothesis.  Hence, for every \(\alpha\in A\),
\(
   Q^{\mathcal D_\alpha}_\rho(X_M,F_0)
   \cong
   \Ftwo\langle u_{\mathcal D_\alpha}\rangle .
\)

The distinguishing formula follows from the local calculation.  The
\(\alpha\)-track uses only the square-root neighbourhood associated to \(B_\alpha\).
By Lemma~\ref{lem:mobius-band-square-root-block} and
Lemma~\ref{lem:square-root-crossed-block}, this neighbourhood contributes exactly one
local component with label \(\mathcal D_\alpha\), and no component with any
other nontrivial label.  Since the labels \(\mathcal D_\alpha\) are pairwise
distinct,
\[
   FQ^{\mathcal D_\beta}_\rho(F_0,F_\alpha)
   =
   \begin{cases}
   u_{\mathcal D_\beta},& \alpha=\beta,\\
   0,& \alpha\ne\beta.
   \end{cases}
\]

It remains to promote marked nonconcordance to image nonconcordance.  Since
\(N_\pi(H)=H\), we have \(C_\pi(H)\subseteq H\).  Moreover the image of
\(
   N_\pi(H)\longrightarrow\Out(H)
\)
is trivial, because conjugation by an element of \(H\) is inner on \(H\).
Since \(g\ge2\), the extended Dehn--Nielsen--Baer theorem
\cite[Theorem~8.1]{FarbMargalit2012} identifies the full mapping class group
with the outer automorphism group:
\(
   \Mod(\Sigma_g)\xrightarrow{\cong}\Out(\pi_1\Sigma_g).
\)
Hence the full endpoint-rigidity set in
Corollary~\ref{cor:full-endpoint-rigid-image} is trivial.

For each \(\alpha\), the nonzero value
\(
   FQ^{\mathcal D_\alpha}_\rho(F_0,F_\alpha)=u_{\mathcal D_\alpha}
\)
and Corollary~\ref{cor:full-endpoint-rigid-image} show that \(F_0\) is not
smoothly image-concordant to \(F_\alpha\).

Now let \(\alpha\ne\beta\).  If \(F_\alpha\) and \(F_\beta\) were smoothly
image-concordant, then Lemma~\ref{lem:normalizer-endpoint-full}, the
triviality of the full endpoint-rigidity set, and
Lemma~\ref{lem:centralizer-promotion} would reduce the image concordance to a
\(\rho\)-marked concordance from \(F_\alpha\) to \(F_\beta\).  Concatenating
such a marked concordance with a marked track from \(F_0\) to \(F_\alpha\)
would force
\(
   FQ^{\mathcal D_\alpha}_\rho(F_0,F_\alpha)
   =
   FQ^{\mathcal D_\alpha}_\rho(F_0,F_\beta),
\)
contradicting the displayed Kronecker formula.  Hence the embedded images are
pairwise not smoothly image-concordant.
\end{proof}

The next two elementary lemmas will be used in the concrete models later to
verify the two group-theoretic hypotheses in
Theorem~\ref{thm:general-mobius-square-root-consequence}: pairwise
distinctness of the double-coset labels and the normalizer condition
\(N_\pi(H)=H\).

\begin{lemma}
\label{lem:quotient-double-coset-test}
Let \(H\le \pi\), and let \(t_\alpha,t_\beta\in\pi\).  Suppose there is a
homomorphism
\(
   q:\pi\to Q
\)
such that \(q(H)=1\).  If
\(
   q(t_\alpha)\ne q(t_\beta),
\)
then
\(
   Ht_\alpha H\ne Ht_\beta H.
\)
More generally, if \(q(H)=L\le Q\) and
\(
   Lq(t_\alpha)L\ne Lq(t_\beta)L
   \quad\text{in }L\backslash Q/L,
\)
then
\(
   Ht_\alpha H\ne Ht_\beta H.
\)
\end{lemma}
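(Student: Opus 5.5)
We argue by contraposition: assume \(Ht_\alpha H=Ht_\beta H\) and deduce equality of the image double cosets in \(Q\). The key observation is that a homomorphism \(q:\pi\to Q\) carries double cosets of \(H\) into double cosets of \(L=q(H)\). We prove the general statement first and then specialize.

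Suppose \(Ht_\alpha H=Ht_\beta H\). Then there are \(h_1,h_2\in H\) with
\[
   t_\alpha=h_1t_\beta h_2 .
\]
Applying the homomorphism \(q\) gives
\[
   q(t_\alpha)=q(h_1)\,q(t_\beta)\,q(h_2),
\]
with \(q(h_1),q(h_2)\in q(H)=L\). Hence \(q(t_\alpha)\in Lq(t_\beta)L\). Since double cosets of \(L\) in \(Q\) partition \(Q\), this forces \(Lq(t_\alpha)L=Lq(t_\beta)L\) in \(L\backslash Q/L\). This contradicts the hypothesis, so \(Ht_\alpha H\ne Ht_\beta H\).

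For the first assertion, take \(L=q(H)=1\). Then \(L\backslash Q/L=Q\), with each double coset a singleton, so the hypothesis \(q(t_\alpha)\ne q(t_\beta)\) is exactly the inequality of double cosets required in the general case. The first assertion therefore follows from the general one.

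There is no real obstacle here. The only point needing care is the standard fact that two double cosets are either equal or disjoint, which lets membership \(q(t_\alpha)\in Lq(t_\beta)L\) be upgraded to equality of double cosets.
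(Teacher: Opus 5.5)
Your proof is correct and is essentially the paper's argument: assume \(Ht_\alpha H=Ht_\beta H\), write \(t_\alpha=h_1t_\beta h_2\), apply \(q\) to get \(Lq(t_\alpha)L=Lq(t_\beta)L\), and conclude by contraposition. The only cosmetic difference is that you derive the case \(q(H)=1\) as the specialization \(L=1\), whereas the paper handles the two cases side by side.
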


\begin{proof}
If \(Ht_\alpha H=Ht_\beta H\), then
\(
   t_\alpha=h_1t_\beta h_2
\)
for some \(h_1,h_2\in H\).  Applying \(q\) gives
\(
   q(t_\alpha)=q(h_1)q(t_\beta)q(h_2).
\)
If \(q(H)=1\), this gives \(q(t_\alpha)=q(t_\beta)\).  In the general case,
it gives equality of the \(L\)-double-cosets
\(
   Lq(t_\alpha)L=Lq(t_\beta)L.
\)
The contrapositive proves both assertions.
\end{proof}

\begin{lemma}
\label{lem:bass-serre-normalizer-criterion}
Let \(G\) act without inversions on a tree \(T\).  Let \(v\in T\) be a
vertex, and let \(H\le G_v\).  Suppose that \(H\) is noncyclic and that no
edge stabilizer of \(T\) contains a conjugate of \(H\).  Then
\(
   N_G(H)\le G_v.
\)
Consequently
\(
   N_G(H)=N_{G_v}(H).
\)
In particular, if \(N_{G_v}(H)=H\), then
\(
   N_G(H)=H.
\)
\end{lemma}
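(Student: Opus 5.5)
The plan is to use the standard fact that a subgroup normalizing a group acting on a tree must preserve its fixed-point set. Let $\mathrm{Fix}(H)\subset T$ be the fixed subtree of $H$; it is nonempty since $v\in\mathrm{Fix}(H)$, and it is a subtree because fixed sets of tree automorphisms acting without inversions are convex. The first step is to show $\mathrm{Fix}(H)=\{v\}$. If $H$ fixed a second vertex $w\neq v$, it would fix the geodesic $[v,w]$ pointwise. In particular it would fix the first edge $e$ of this geodesic, so $H\le G_e$. That is already a conjugate of $H$ (namely $H$ itself) inside an edge stabilizer, contradicting the hypothesis. Hence $v$ is the unique fixed vertex of $H$.

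The second step is the normalizer argument. Let $\gamma\in N_G(H)$. For every $h\in H$ we have $h\gamma v=\gamma(\gamma^{-1}h\gamma)v=\gamma v$, because $\gamma^{-1}h\gamma\in H$ fixes $v$. So $\gamma v\in\mathrm{Fix}(H)=\{v\}$, which gives $\gamma\in G_v$ and therefore $N_G(H)\le G_v$. It follows that $N_G(H)=N_G(H)\cap G_v=N_{G_v}(H)$. The final clause is then immediate: if $N_{G_v}(H)=H$, then $N_G(H)=H$.

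The argument above never uses noncyclicity of $H$, nor the phrase "a conjugate of $H$" beyond $H$ itself. Presumably the hypothesis is stated in that generality for the later application, where one checks it using JSJ tori: a surface subgroup cannot lie in a $\mathbb Z^2$ edge group. So the only care needed is the convexity of fixed sets under actions without inversions. This is standard: if $h$ fixes $v$ and $w$, it maps the unique geodesic $[v,w]$ to itself with its endpoints fixed, so it fixes the geodesic pointwise. I expect no real obstacle beyond this.
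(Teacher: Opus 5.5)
Your proof is correct and is essentially the paper's argument: the paper takes $x\in N_G(H)$, notes that $H$ fixes both $v$ and $xv$, and if $xv\neq v$ concludes that $H$ fixes the geodesic between them and hence lies in an edge stabilizer. This is your two steps (uniqueness of the fixed vertex, then $\gamma v\in\mathrm{Fix}(H)$) merged into one. As you observed, noncyclicity is not used in the proof itself. It only enters when the hypothesis is checked in the application. There the tree is that of the amalgam $H*_{\langle c=t^2\rangle}K$, whose edge stabilizers are cyclic conjugates of $\langle c\rangle$, not $\mathbb{Z}^2$ JSJ edge groups.
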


\begin{proof}
Let \(x\in N_G(H)\).  Since \(H\le G_v\), the subgroup \(H\) fixes \(v\).
For \(h\in H\),
\(
   h(xv)=x(x^{-1}hx)v=xv,
\)
because \(x^{-1}hx\in H\).  Hence \(H\) fixes both \(v\) and \(xv\).

If \(xv\ne v\), then \(H\) fixes the geodesic segment from \(v\) to \(xv\),
and therefore fixes every edge in that segment.  This puts \(H\) inside an
edge stabilizer, contrary to the hypothesis.  Thus \(xv=v\), so
\(x\in G_v\).  Therefore
\(
   N_G(H)\le G_v,
\)
and intersecting with the normalizer inside \(G_v\) gives
\(
   N_G(H)=N_{G_v}(H).
\)
The final assertion follows immediately.
\end{proof}

\subsection{Infinitely many square roots in a Klein-bottle \texorpdfstring{\(I\)}{I}-bundle}
\label{subsec:klein-bottle-infinite-square-roots}

We now verify the hypotheses of
Theorem~\ref{thm:general-mobius-square-root-consequence} in one compact
aspherical three-manifold.

\begin{lemma}[The Klein-bottle \(I\)-bundle]
\label{lem:klein-bottle-I-bundle}
Let
\[
   A=[0,1]_r\times S^1_\theta,
   \qquad
   \varphi(r,\theta)=(1-r,-\theta),
\]
and define
\[
   V=A\times[0,1]_s/(x,1)\sim(\varphi(x),0).
\]
Then \(V\) is the orientable twisted \(I\)-bundle over the Klein bottle.  It
is compact, orientable, and aspherical, and
\[
   \pi_1V
   =
   K
   =
   \langle a,t\mid tat^{-1}=a^{-1}\rangle .
\]
Moreover, for every \(n\in\Z\), \(V\) contains a properly embedded M\"obius
band \(B_n^V\) such that:
\begin{enumerate}[label=\textup{(\roman*)}]
\item the curves \(\partial B_n^V\subset\partial V\) are all the same
boundary slope;
\item this boundary slope represents \(t^2\in\pi_1V\);
\item the core of \(B_n^V\) represents
\(
   t_n=a^n t;
\)
\item
\(
   t_n^2=(a^n t)^2=t^2.
\)
\end{enumerate}
\end{lemma}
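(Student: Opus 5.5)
The plan is to view \(V\) first as a mapping torus and then as an \(I\)-bundle, and to read everything off explicit coordinates. Since \(\varphi\) is an orientation-preserving involution of the annulus \(A\) (it reverses both \(r\) and \(\theta\)), the mapping torus \(V\) is orientable and compact. Equivalently, the projection \((r,\theta,s)\mapsto(\theta,s)\) exhibits \(V\) as an \([0,1]_r\)-bundle over the mapping torus of \(\theta\mapsto-\theta\) on \(S^1\), namely the Klein bottle \(K^2\). The zero-section is \(r=\tfrac12\), which is invariant because \(\varphi\) sends \(r=\tfrac12\) to itself. The monodromy flips the fibre \(r\mapsto1-r\), so the bundle is twisted and \(V\) is the orientable twisted \(I\)-bundle over \(K^2\).

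\(V\) deformation retracts onto \(K^2\), which is aspherical, so \(V\) is aspherical. Van Kampen for a mapping torus gives \(\pi_1V=\langle a,t\mid tat^{-1}=\varphi_*(a)\rangle\), where \(a\) is the \(\theta\)-circle at \(r=\tfrac12\) and \(t\) is the \(s\)-loop through a fixed point of \(\varphi\). One can take the point \((\tfrac12,0)\), since \(\varphi(\tfrac12,0)=(\tfrac12,0)\). Because \(\varphi_*a=a^{-1}\), this yields the presentation \(K\).

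For the Möbius bands, fix \(\theta_n=\pi n/2\)-type choices; more precisely, use the fixed points of \(\theta\mapsto-\theta\) together with a twisting. I would define \(B_n^V\) as the image of the strip \(\{(r,\theta,s):\theta=2\pi n s\cdot\tfrac12+\theta_0\}\) over \(r\in[0,1]\), with \(\theta_0\in\{0,\pi\}\). The slope must be chosen so that the strip \(\{\theta=\psi(s)\}\times[0,1]_r\) closes up under the gluing. This requires \(\psi(1)\equiv-\psi(0)\) modulo \(2\pi\), which holds for \(\psi(s)=\pi n s\) with \(\psi(0)=0\) and \(\psi(1)=\pi n\), since \(\pi n\equiv-\pi n\) modulo \(2\pi\). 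The gluing also flips \(r\mapsto1-r\). Hence the strip \([0,1]_r\times[0,1]_s\) is glued with a flip and gives a properly embedded Möbius band, whose boundary consists of the points with \(r\in\{0,1\}\).

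The core \(r=\tfrac12\), \(\theta=\pi n s\) is the \(s\)-loop composed with \(n/2\) turns in \(a\). I would adjust by replacing \(\psi(s)=2\pi n s\) so that the core winds \(n\) times, using \(2\pi n\equiv-2\pi n\). Reading it in \(\pi_1\) then gives \(a^nt\), up to the convention of which side \(a^n\) sits on; this should be checked carefully. Then \((a^nt)^2=a^nta^nt=a^na^{-n}t^2=t^2\) by the relation, which gives (iii) and (iv).

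For (i) and (ii), the boundary \(\partial B_n^V\) is a single curve on the torus \(\partial V\), which is the mapping torus of the two boundary circles of \(A\) swapped by \(\varphi\). This curve runs twice in the \(s\)-direction. Its class in \(H_1(\partial V)\) is determined by the total \(\theta\)-winding across the two passes, which is \(\psi\) on the first pass plus \(-\psi\) after the flip. These cancel, so every \(\partial B_n^V\) is isotopic in \(\partial V\) to the \(n=0\) curve \(\theta=\mathrm{const}\) traversed twice in \(s\). That is one common slope, and it represents \(t^2\) because it is homotopic to the boundary of a Möbius band with core \(t\) (the \(n=0\) case).

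The main obstacle is this slope bookkeeping on \(\partial V\). One has to verify that the \(\theta\)-windings of the two sheets really cancel with correct signs, rather than adding to a different slope for each \(n\). One also has to fix the orientation and basepoint conventions so that the core is exactly \(a^nt\) and not \(ta^n\); the latter is harmless anyway, since \(ta^n=a^{-n}t\).
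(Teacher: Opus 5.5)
Your final construction, the helicoidal band \(\{\theta=2\pi ns\}\times[0,1]_r\), is correct, but it is not the band the paper uses.

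The paper keeps the arc fixed in \(s\) and lets it spiral across the annulus. The arc \(\alpha_n(r)=(r,nr\bmod 1)\) satisfies \(\varphi(\alpha_n(r))=\alpha_n(1-r)\), so \(B_n^V\) is just the suspension of a \(\varphi\)-invariant arc. This has two consequences:
\begin{enumerate}
\item The band is automatically smooth across the seam.
\item Since \(\alpha_n(0)=(0,0)\) and \(\alpha_n(1)=(1,0)\) for every \(n\), the curves \(\partial B_n^V\) are literally one and the same curve.
\end{enumerate}
The classes \(a^nt\) and \(t^2\) are then read off from the deck transformation \(a^nt\). It preserves the lifted strip \(\{(r,nr,s)\}\subset\widetilde V\) and acts by \((r,s)\mapsto(1-r,s+1)\).

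In your version the \(n\)-dependence sits in the \(s\)-direction instead. The core class \(a^nt\) is immediate from the winding, but the boundaries differ for different \(n\). You need the cancellation of \(\theta\)-windings on the two passes, which you state correctly, to see that they are isotopic in the torus \(\partial V\). That is enough for (i) as stated. Part (ii) follows either from the \(n=0\) case or from \((a^nt)^2=t^2\).

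However, the later constructions glue along a single annulus \(A_V\) independent of \(n\). With your bands you would first have to isotope them in a collar of \(\partial V\) to a common boundary curve; the paper's choice gives this for free. Proving asphericity by retracting onto the Klein bottle, rather than writing down \(\widetilde V\), is fine.

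Two points in your write-up need correcting.

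First, the closing condition is misstated. The identification \((r,\theta,1)\sim(1-r,-\theta,0)\) closes the strip \(\{\theta=\psi(s)\}\) exactly when \(\psi(0)+\psi(1)\equiv0\pmod{2\pi}\). Your check ``\(\pi n\equiv-\pi n\)'' verifies \(2\psi(1)\equiv0\), which is a different condition. In fact \(\psi(s)=\pi ns\) does not close for odd \(n\): the two ends of the strip land on the arcs \(\theta=0\) and \(\theta=\pi\) in \(A\times\{0\}\). For even \(n\) it closes, but its core is \(a^{n/2}t\). The choice \(\psi(s)=2\pi ns\) works because \(\psi(0)+\psi(1)=2\pi n\equiv0\), not because \(2\pi n\equiv-2\pi n\).

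Second, with linear \(\psi\) the band is creased along the seam. The differential \(d\varphi\) carries the tangent \((0,\psi'(1),1)\) to \((0,-\psi'(1),1)\), so a smooth band needs \(\psi'(0)=-\psi'(1)\). Take \(\psi(s)=2\pi n\,\beta(s)\), with \(\beta\) a smooth step that is constant near \(s=0\) and \(s=1\), or smooth the corner.
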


\begin{proof}
The diffeomorphism \(\varphi\) preserves the orientation of the annulus,
because it reverses both the \(r\)-coordinate and the \(\theta\)-coordinate.
Hence its mapping torus \(V\) is orientable.  Equivalently, \(V\) is the
\(I\)-bundle over
\[
   K\!b=S^1_\theta\times[0,1]_s/(\theta,1)\sim(-\theta,0),
\]
whose fiber coordinate is \(r\) and whose transition function is
\(r\mapsto1-r\).  The base \(K\!b\) is the Klein bottle, and the fiber
transition reverses orientation exactly when the base transition does; hence
the total space is the orientable twisted \(I\)-bundle over the Klein
bottle.

Choose the basepoint \((1/2,0,0)\).  Let \(a\) be the angular loop in the
annulus fiber, and let \(t\) be the mapping-torus loop at the fixed point
\((1/2,0)\) of \(\varphi\).  Since \(\varphi_*(a)=a^{-1}\), Van Kampen gives
\[
   \pi_1V
   =
   \langle a,t\mid tat^{-1}=a^{-1}\rangle .
\]
The universal cover is
\(
   \widetilde V=[0,1]_r\times\R_x\times\R_s,
\)
with deck transformations
\[
   a(r,x,s)=(r,x+1,s),
   \qquad
   t(r,x,s)=(1-r,-x,s+1).
\]
This universal cover is contractible, so \(V\) is aspherical.

For \(n\in\Z\), define an arc
\[
   \alpha_n:[0,1]\longrightarrow A,
   \qquad
   \alpha_n(r)=(r,nr\bmod 1).
\]
It is embedded because the first coordinate is \(r\).  Also
\[
   \varphi(\alpha_n(r))
   =
   (1-r,-nr)
   =
   (1-r,n(1-r))
   =
   \alpha_n(1-r)
   \quad\text{in }[0,1]\times S^1.
\]
Therefore \(\alpha_n\times[0,1]\subset A\times[0,1]\) descends to a properly
embedded surface
\(
   B_n^V\subset V.
\)
Its domain is
\[
   [0,1]_r\times[0,1]_s/(r,1)\sim(1-r,0),
\]
which is a M\"obius band.  This proves the embeddedness of
\(B_n^V\).

The boundary is independent of \(n\).  Indeed,
\(
   \alpha_n(0)=(0,0)\) and \( \alpha_n(1)=(1,0)
\)
for every \(n\).  Hence \(\partial B_n^V\) is the mapping-torus curve
obtained from the two boundary points \((0,0)\) and \((1,0)\), and this
curve does not depend on \(n\).

We now compute the boundary and core elements.  Let
\[
   \widetilde B_n
   =
   \{(r,nr,s):0\le r\le1,\ s\in\R\}
   \subset \widetilde V.
\]
Put \(g_n=a^n t\).  Then
\[
   g_n(r,nr,s)
   =
   a^n(1-r,-nr,s+1)
   =
   (1-r,n(1-r),s+1),
\]
so \(g_n\) preserves \(\widetilde B_n\).  On the strip coordinates, it acts
by
\[
   (r,s)\longmapsto(1-r,s+1).
\]
The quotient by this action is precisely the M\"obius band \(B_n^V\).

The core line
\[
   \left\{ \left(\frac12,\frac n2,s\right):s\in\R\right\}
   \subset\widetilde B_n
\]
is carried to itself by \(g_n=a^n t\), so the core of \(B_n^V\) represents
\(a^n t\).

The boundary line \(r=0\), \(x=0\), is carried to the other boundary line
\(r=1\), \(x=n\), by \(g_n\), and is carried back to itself by \(g_n^2\).
Since
\(
   g_n^2=(a^n t)^2
   =
   a^n(t a^n t^{-1})t^2
   =
   a^n a^{-n}t^2
   =
   t^2,
\)
the boundary of \(B_n^V\) represents \(t^2\).  This also proves
\((a^n t)^2=t^2\).
\end{proof}

\begin{theorem}
\label{thm:fixed-klein-bottle-infinite-family}
Let \(g\ge2\), and let
\(
   c\subset\Sigma_g
\)
be an oriented embedded essential simple closed curve.  Put
\(H=\pi_1\Sigma_g\), and let \(c\in H\) denote the based element represented
by the curve \(c\), using the based convention fixed in
Convention~\ref{conv:basepoints-markings} and
Definition~\ref{def:standard-square-root-handle}.

There is a compact orientable aspherical smooth three-manifold
\(M_{g,c}^{K}\), containing
\(
   \Sigma=\Sigma_g\times\{0\}
\)
as a closed \(\pi_1\)-injective surface, such that
\[
   \pi_1M_{g,c}^{K}
   \cong
   H*_{\langle c=t^2\rangle}
   \langle a,t\mid tat^{-1}=a^{-1}\rangle .
\]
Here the edge group is identified with \(\langle c\rangle\le H\) and with
\(\langle t^2\rangle\le \langle a,t\mid tat^{-1}=a^{-1}\rangle\).

Moreover,
\(
   M_{g,c}^{K}\setminus\operatorname{Int}\nu\Sigma
\)
contains embedded M\"obius bands
\(
   B_n\), \(n\in\Z,
\)
such that
\(
   \partial B_n=c^+\subset\partial\nu\Sigma
\)
is a fixed normal push-off of \(c\), and the core element of \(B_n\) is
\(
   t_n=a^n t.
\)
Thus
\[
   t_n^2=c\in H,
   \qquad
   t_n\notin H.
\]
For \(n\ge1\), write
\(
   \mathcal D_n=Ht_nH=H(a^n t)H.
\)
The double-cosets \(\mathcal D_n\), \(n\ge1\), are pairwise distinct,
nontrivial, and self-dual:
\(
   \mathcal D_n=\mathcal D_n^{-1}.
\)
The group \(\pi_1M_{g,c}^{K}\) is torsion-free, and
\(
   N_{\pi_1M_{g,c}^{K}}(H)=H.
\)

Consequently, for
\[
   X_{g,c}^{K}
   =
   (M_{g,c}^{K}\times[-1,1])\#(S^2\times S^2),
\]
there is a base \(\rho\)-marked embedded surface
\(
   F_0:\Sigma_g\hookrightarrow X_{g,c}^{K}
\), and there are \(\rho\)-marked embedded surfaces
\[
   F_n:\Sigma_g\hookrightarrow X_{g,c}^{K},
   \qquad n\ge1,
\]
where \(F_n\) is obtained from the square-root neighbourhood associated to \(B_n\).
The surfaces \(F_n\), \(n\ge0\), are all homotopic, satisfy
\(
   F_{n*}\pi_1\Sigma_g=H,
\)
have a common framed embedded dual sphere, and have complement
\(\pi_1\)-isomorphisms
\[
   \pi_1(X_{g,c}^{K}\setminus\nu F_n)
   \xrightarrow{\cong}
   \pi_1X_{g,c}^{K}.
\]
For every \(i\ge1\),
\(
   Q^{\mathcal D_i}_\rho(X_{g,c}^{K},F_0)
   \cong
   \Ftwo\langle u_{\mathcal D_i}\rangle,
\)
and, for all \(i,n\ge1\),
\[
   FQ^{\mathcal D_i}_\rho(F_0,F_n)
   =
   \begin{cases}
   u_{\mathcal D_i},& i=n,\\
   0,& i\ne n.
   \end{cases}
\]
The embedded images of
\(
   F_0,F_1,F_2,\ldots
\)
are pairwise not smoothly image-concordant.
\end{theorem}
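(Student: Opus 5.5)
The plan is to construct \(M_{g,c}^K\) by gluing and then reduce everything to Theorem~\ref{thm:general-mobius-square-root-consequence} with index set \(A=\{n\ge1\}\).

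\textbf{Construction.} Start from \(P=\Sigma_g\times[-1,1]\) and the Klein-bottle \(I\)-bundle \(V\) of Lemma~\ref{lem:klein-bottle-I-bundle}. Take an annulus \(\nu(c^+)\subset\Sigma_g\times\{1\}\) around a push-off of \(c\), and an annulus neighbourhood \(E\subset\partial V\) of the common boundary slope of the bands \(B_n^V\). Glue \(V\) to \(P\) by identifying these two annuli so that the slope \(\partial B_n^V\) is matched with \(c^+\). The orientations can be chosen so that the result \(M_{g,c}^K\) is orientable.

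\textbf{Group and asphericity.} The gluing annulus is \(\pi_1\)-injective on both sides: \(c\) is essential in \(\Sigma_g\), and \(t^2\) has infinite order in \(K\). Van Kampen then gives the stated amalgam \(H*_{\langle c=t^2\rangle}K\). Both \(P\) and \(V\) are aspherical and the gluing is along an incompressible annulus, so \(M_{g,c}^K\) is aspherical. Its fundamental group is torsion-free because both factors are torsion-free and every finite subgroup of an amalgam is conjugate into a factor.

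The bands are \(B_n=B_n^V\) extended by the collar from \(c^+\) to \(\Sigma\times\{0\}\). Their cores are \(t_n=a^nt\), and \(t_n^2=t^2=c\) by Lemma~\ref{lem:klein-bottle-I-bundle}. We have \(t_n\notin H\) by normal forms in the amalgam, since \(a^nt\notin\langle t^2\rangle\). Self-duality of \(\mathcal D_n\) is Lemma~\ref{lem:algebraic-square-root-criterion}.

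\textbf{Distinctness.} Apply Lemma~\ref{lem:quotient-double-coset-test} to the homomorphism \(q:\pi\to K/\langle\!\langle t^2\rangle\!\rangle=\langle a,t\mid t^2,\,tat^{-1}=a^{-1}\rangle\cong\Z\rtimes\Z/2\) that kills \(H\). Under this map \(q(a^nt)\) are distinct reflections for distinct \(n\), so the double cosets \(\mathcal D_n\) are pairwise distinct. (I would first check that \(a\) has infinite order in this quotient; it does, since the quotient is the infinite dihedral group.)

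\textbf{Normalizer.} Use Lemma~\ref{lem:bass-serre-normalizer-criterion} on the Bass--Serre tree of the amalgam, with \(v\) the \(H\)-vertex. Edge stabilizers are conjugates of \(\langle c\rangle\), which is cyclic. Since \(H\) is noncyclic, no edge stabilizer contains a conjugate of \(H\). Hence \(N_\pi(H)=N_H(H)=H\).

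\textbf{Conclusion.} Choose the annular patch around \(c\) disjoint from a small disk carrying the basepoint and the stabilization tube. All hypotheses of Theorem~\ref{thm:general-mobius-square-root-consequence} then hold, and it yields items (i)--(vi) and pairwise image nonconcordance, with \(F_n\) coming from \(B_n\).

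\textbf{Expected obstacle.} The main difficulty is the geometric step: verifying that the gluing really makes the extended bands \(B_n\) embedded with common boundary \(c^+\), that they lie in the complement of \(\nu\Sigma\), and that the orientations give the correct identification of \(t^2\) with \(c\) (not \(c^{-1}\)). If the sign comes out wrong, I would reverse the orientation of \(c\) in the gluing.
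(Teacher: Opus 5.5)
Your proposal is correct and follows essentially the same route as the paper. The shared steps are: the gluing of the Klein-bottle $I$-bundle $V$ to $\Sigma_g\times[-1,1]$ along an annulus around $c\times\{1\}$; the collar extension of $B_n^V$, which should stop at level $\varepsilon$ on $\partial\nu\Sigma$ rather than reach $\Sigma\times\{0\}$; the $D_\infty$ quotient killing $H$; the Bass--Serre normalizer argument; and the reduction to Theorem~\ref{thm:general-mobius-square-root-consequence}. The differences are cosmetic: you get $t_n\notin H$ from $H\cap K=\langle t^2\rangle$ in the amalgam rather than from $\psi(t_n)=r^ns\neq1$, and your sign worry disappears because an annulus admits an orientation-reversing self-diffeomorphism preserving the core direction, so the gluing can always send $t^2$ to $c$ itself.
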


\begin{proof}
We divide the proof into the three-dimensional construction, the group theory calculations, and the four-dimensional application.

\medskip
\noindent\emph{Construction of \(M_{g,c}^{K}\).}
Let
\[
   P=\Sigma_g\times[-1,1],
   \qquad
   \Sigma=\Sigma_g\times\{0\}\subset P.
\]
Choose an annular neighbourhood
\(
   A_c\subset\Sigma_g\times\{1\}
\)
of \(c\times\{1\}\).  Let \(V\) be the annulus mapping torus from
Lemma~\ref{lem:klein-bottle-I-bundle}.  Let
\(
   A_V\subset\partial V
\)
be an annular neighbourhood of the boundary slope \(\partial B_n^V\).  This
annulus is independent of \(n\), and its core represents \(t^2\).

Choose an orientation-reversing gluing diffeomorphism
\(
   A_V\to A_c
\)
which identifies the oriented core \(t^2\) with \(c\).  Define
\(
   M_{g,c}^{K}=P\cup_{A_c=A_V}V,
\)
smoothing corners after the gluing.  The resulting three-manifold is compact,
smooth, and orientable. The Van Kampen theorem gives
\[
   \pi_1M_{g,c}^{K}
   \cong
   H*_{\langle c=t^2\rangle}
   \langle a,t\mid tat^{-1}=a^{-1}\rangle .
\]
Bass--Serre normal form for the free product amalgam gives injectivity of the vertex
groups \cite{SerreTrees}.  In particular, the inclusion
\(
   \pi_1\Sigma=\pi_1(\Sigma_g\times\{0\})=H
   \longrightarrow
   \pi_1M_{g,c}^{K}
\)
is injective.  Hence \(\Sigma\subset M_{g,c}^{K}\) is \(\pi_1\)-injective.

The manifold \(M_{g,c}^{K}\) is aspherical.  The pieces \(P\) and \(V\) are
aspherical, and the gluing annulus is aspherical.  The two edge inclusions
are \(\pi_1\)-injective: the inclusion into \(P\) is generated by the
essential curve \(c\), and the inclusion into \(V\) is generated by the
infinite-order element \(t^2\).  The universal cover of
\(M_{g,c}^{K}\) is the Bass--Serre tree of spaces with vertex spaces the
universal covers of \(P\) and \(V\), and edge spaces universal covers of
annuli.  All vertex and edge spaces are contractible, and the underlying
nerve is a tree.  Thus the universal cover is contractible.

\medskip
\noindent\emph{The embedded M\"obius bands in the complement of \(\Sigma\).}
Choose
\(
   \nu\Sigma=\Sigma_g\times[-\varepsilon,\varepsilon]\subset P
\)
with \(0<\varepsilon<1\).  For each \(n\in\Z\), the band
\(B_n^V\subset V\) has boundary the core of \(A_V\), which is identified with
\(c\times\{1\}\subset A_c\).  Attach to \(B_n^V\) the collar annulus
\(
   c\times[\varepsilon,1]\subset\Sigma_g\times[\varepsilon,1].
\)
After smoothing the corner along \(c\times\{1\}\), the union is an embedded
M\"obius band
\(
   B_n\subset
   M_{g,c}^{K}\setminus\operatorname{Int}\nu\Sigma
\)
with
\(
   \partial B_n=c\times\{\varepsilon\}=c^+.
\)
The core of \(B_n\) may be chosen in the original \(B_n^V\) piece, so by
Lemma~\ref{lem:klein-bottle-I-bundle} it represents
\(
   t_n=a^n t.
\)
Also
\(
   t_n^2=(a^n t)^2=t^2=c\in H.
\)
The bands \(B_n\) share the boundary curve \(c^+\) and need not be
pairwise disjoint; each four-dimensional construction below uses only one
band at a time.

\medskip
\noindent\emph{Torsion-freeness.}
Let
\(
   K=\langle a,t\mid tat^{-1}=a^{-1}\rangle .
\)
The group \(K\) is torsion-free.  Indeed, the homomorphism
\[
   K\longrightarrow \Z,
   \qquad
   a\longmapsto0,\quad t\longmapsto1,
\]
has kernel \(\langle a\rangle\cong\Z\).  A finite-order element maps
trivially to \(\Z\), hence lies in \(\langle a\rangle\), and is therefore
trivial.  The surface group \(H\) is also torsion-free.

Now let \(\pi=\pi_1M_{g,c}^{K}\).  The group \(\pi\) acts on its
Bass--Serre tree.  A finite-order element of a group acting on a tree fixes
a vertex, hence is conjugate into a vertex group \cite{SerreTrees}.  Since both vertex groups
\(H\) and \(K\) are torsion-free, \(\pi\) is torsion-free.

\medskip
\noindent\emph{The normalizer.}
Let \(T\) be the Bass--Serre tree of
\(
   \pi=
   H*_{\langle c=t^2\rangle}K,
\)
and let \(v_H\) be the vertex fixed by \(H\).  The edge stabilizers are
conjugate to the cyclic group
\(
   \langle c\rangle=\langle t^2\rangle.
\)
Since \(H=\pi_1\Sigma_g\) is noncyclic for \(g\ge2\), no edge stabilizer
contains a conjugate of \(H\).  Applying
Lemma~\ref{lem:bass-serre-normalizer-criterion} with \(G_v=H\), we obtain
\(
   N_\pi(H)=H.
\)

\medskip
\noindent\emph{The double-coset labels.}
For \(n\ge1\), the relation \(t_n^2=c\in H\) implies
\(
   t_n^{-1}=t_n c^{-1}.
\)
Hence
\[
   Ht_n^{-1}H=Ht_n c^{-1}H=Ht_nH.
\]
Thus \(\mathcal D_n=Ht_nH\) is self-dual.  Define
\[
   \psi:\pi\longrightarrow
   D_\infty
   =
   \langle r,s\mid s^2=1,\ srs^{-1}=r^{-1}\rangle
\]
by
\[
   \psi(H)=1,\qquad
   \psi(a)=r,\qquad
   \psi(t)=s.
\]
This respects \(tat^{-1}=a^{-1}\), because \(srs^{-1}=r^{-1}\), and it
respects the amalgamating relation \(c=t^2\), because both \(c\) and \(t^2\)
map to \(1\).

For every \(n\ge1\),
\(
   \psi(t_n)=\psi(a^n t)=r^n s\ne1.
\)
Since \(\psi(H)=1\), this proves \(t_n\notin H\).  Therefore
\(\mathcal D_n\ne H\).

If \(n,m\ge1\) and
\(
   H(a^n t)H=H(a^m t)H,
\)
then there are \(h_1,h_2\in H\) such that
\(
   a^n t=h_1(a^m t)h_2.
\)
Applying \(\psi\) gives
\(
   r^n s=r^m s
\)
in \(D_\infty\), and hence \(r^n=r^m\).  Since \(r\) has infinite order in
\(D_\infty\), we get \(n=m\).  Thus the double-cosets
\(
   \mathcal D_n=H(a^n t)H\), \(n\ge1,
\)
are pairwise distinct.

\medskip
\noindent\emph{Application of the general square-root consequence.}
The family \(\{B_n\}_{n\ge1}\) is a M\"obius-band square-root family in
the sense of Definition~\ref{def:mobius-square-root-family}.  The quotient
\(\psi:\pi\to D_\infty\) and
Lemma~\ref{lem:quotient-double-coset-test} show that the labels
\(
   \mathcal D_n=H(a^nt)H\), \(n\ge1,
\)
are pairwise distinct.  The torsion-freeness and normalizer calculations
above give the remaining hypotheses of
Theorem~\ref{thm:general-mobius-square-root-consequence}.  Applying that
theorem to
\(
   X_{g,c}^{K}
\)
gives the asserted surfaces, the common framed dual sphere, the complement
\(\pi_1\)-isomorphisms, the displayed obstruction formula, and the
image-nonconcordance conclusion.
\end{proof}

\subsection{A closed graph-manifold family}
\label{subsec:closed-graph-manifold-family}

We now give a closed graph-manifold version of the square-root construction.
The local Klein-bottle \(I\)-bundle piece is the same one used in
Subsection~\ref{subsec:klein-bottle-infinite-square-roots}, but it is placed
inside a closed graph manifold carrying an active Dehn twist along a
nonseparating JSJ torus.  The quotient detecting the labels is slightly
different from the compact case: the Klein-bottle generator \(a\) maps to an
even power of the rotation in \(D_\infty\).  This still detects all labels.

Fix \(g\ge2\), and choose an oriented nonseparating simple closed curve
\(c\subset\Sigma_g\).  Let
\[
   A=S^1_x\times[-1,1]_y\subset\Sigma_g
\]
be an annular neighbourhood of \(c=S^1_x\times\{0\}\), and put
\(
   R=\overline{\Sigma_g\setminus A}.
\)
Then \(R\) has genus \(g-1\) and two boundary components
\(d_+\sqcup d_-\).  Let
\(
   W_R=R\times S^1_u.
\)
Its boundary consists of two tori
\(
   T^R_\pm=d_\pm\times S^1_u,
\)
and the Seifert fiber slope of \(W_R\) is the \(u\)-slope.

Let \(V\) be the orientable twisted \(I\)-bundle over the Klein bottle, as in
Lemma~\ref{lem:klein-bottle-I-bundle}.  Thus
\[
   \pi_1V=K=\langle a,t\mid tat^{-1}=a^{-1}\rangle.
\]
We use the Seifert fibration of \(V\) over \(D^2(2,2)\) whose regular fiber
is represented by the central element
\(
   h=t^2.
\)
The Seifert fibration can be seen directly from the universal-cover model
\[
   \widetilde V=[0,1]_r\times\mathbb R_x\times\mathbb R_s,
   \qquad
   a(r,x,s)=(r,x+1,s),\quad
   t(r,x,s)=(1-r,-x,s+1).
\]
The central element \(h=t^2\) acts by
\(
   h(r,x,s)=(r,x,s+2),
\)
so the images of the vertical lines
\(\{(r,x)\}\times\mathbb R_s\) are the regular Seifert fibers and represent
\(h=t^2\).  The induced action on the transverse annulus
\([0,1]_r\times S^1_x\) is generated by
\(
   (r,x)\longmapsto(1-r,-x).
\)
Its quotient is a disk with two fixed points, represented by
\((1/2,0)\) and \((1/2,1/2)\), and each fixed point has stabilizer of order
two.  Hence the base orbifold is \(D^2(2,2)\), the disk with two cone points
of order \(2\).

Choose the boundary transverse generator \(a\) so that the boundary torus
subgroup of \(V\) is \(\langle a,h\rangle\).  Replacing \(a\) by \(a^{-1}\),
if necessary, only reindexes the bands of Lemma~\ref{lem:klein-bottle-I-bundle}.

Choose two disjoint regular fibers \(F_+\) and \(F_-\) on \(\partial V\).
They cut \(\partial V\) into two closed vertical annuli, denoted \(A_+\) and
\(A_-\).  The boundary of each \(A_\pm\) is \(F_+\sqcup F_-\).  Let
\(
   P_A=A\times[-1,1]_z.
\)
Glue \(A\times\{1\}\) fiberwise to \(A_+\), and glue
\(A\times\{-1\}\) fiberwise to \(A_-\), matching every circle
\(S^1_x\times\{y\}\) with a regular fiber of \(V\).  The gluing is chosen so
that the two boundary circles
\[
   S^1_x\times\{1\}\times\{1\},
   \qquad
   S^1_x\times\{1\}\times\{-1\}
\]
are attached to the same fiber \(F_+\), while the two boundary circles with
\(y=-1\) are attached to \(F_-\).  After smoothing corners, denote the
resulting compact manifold by \(W_K\).

The Seifert fibration on \(V\) and the product fibration of \(P_A\) by the
\(S^1_x\)-circles glue to a Seifert fibration of \(W_K\).  The base orbifold
is obtained from \(D^2(2,2)\) by attaching a one-handle to the boundary along
the two boundary arcs corresponding to \(A_+\) and \(A_-\).  Hence the base
orbifold is an annulus with two cone points of order two.  The regular fiber
of \(W_K\) is represented by \(h=t^2\).  The boundary of \(W_K\) consists of
two tori, denoted \(T^K_+\) and \(T^K_-\).  The torus \(T^K_+\) is obtained
from the side annulus
\[
   S^1_x\times\{1\}\times[-1,1]_z\subset P_A,
\]
and \(T^K_-\) is obtained similarly from the side annulus with \(y=-1\).  Let
\(m_\pm\subset T^K_\pm\) denote the regular fiber slope.

\begin{lemma}
\label{lem:WK-boundary-subgroups}
With the choices above,
\[
   \pi_1W_K\cong
   \langle a,t,w\mid tat^{-1}=a^{-1},\ [w,t^2]=1\rangle .
\]
Writing \(h=t^2\), and using the basepoint and orientation convention fixed
in the proof, the two boundary torus subgroups are
\(
   \pi_1T^K_+=\langle h,w\rangle
\)
and
\(
   \pi_1T^K_-=\langle h,a^{-1}w^{-1}\rangle .
\)
\end{lemma}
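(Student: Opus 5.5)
The plan is to compute \(\pi_1W_K\) by Van Kampen, regarding \(W_K\) as \(V\) with the thickened annulus \(P_A\cong A\times[-1,1]\) attached along \(A\times\{\pm1\}\). Since \(P_A\) deformation retracts to \(A\simeq S^1\), this attachment is homotopically the same as gluing a cylinder \(S^1\times I\) whose two ends are identified with regular fibers in \(A_+\) and \(A_-\), both isotopic in \(\partial V\) to the fiber representing \(h=t^2\). Such a gluing is an HNN-type extension of \(K=\pi_1V\). We attach the two ends one at a time.

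\emph{Step 1: the fundamental group.} Gluing the end \(A\times\{1\}\) to \(A_+\) adds no generators or relations, because \(P_A\) collapses onto that end. Gluing the second end \(A\times\{-1\}\) to \(A_-\) introduces a new stable letter \(w\), represented by the arc \(\{(x_0,y_0)\}\times[-1,1]_z\) closed up through a fixed arc in \(\partial V\) from \(A_-\) back to \(A_+\). It also introduces the relation \(w h w^{-1}=h'\), where \(h'\) is the fiber class of \(A_-\), transported back to the basepoint along that arc. We choose the closing arc inside the vertical annulus \(A_+\cup A_-\) of \(\partial V\), so that \(h'=h\). This gives \(\langle a,t,w\mid tat^{-1}=a^{-1},\ [w,t^2]=1\rangle\). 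Here \(h=t^2\) is central in \(K\), since \(tt^2t^{-1}=t^2\) and \(a t^2 a^{-1}=t\,(t^{-1}at)\,t\,a^{-1}=t a^{-1}t a^{-1}\); the last expression equals \(t^2\) by the relation \(tat^{-1}=a^{-1}\), which rewrites as \(a^{-1}t=ta\).

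\emph{Step 2: the boundary tori.} The torus \(T^K_+\) is the union of the side annulus \(S^1_x\times\{1\}\times[-1,1]_z\) with the fiber \(F_+\), where both ends of that annulus are attached. Its \(\pi_1\) is therefore generated by the fiber \(h\) and a loop crossing the side annulus in the \(z\)-direction and returning through \(F_+\). With the closing arc chosen through \(F_+\), this loop is exactly \(w\), giving \(\pi_1T^K_+=\langle h,w\rangle\). For \(T^K_-\) the analogous transverse loop runs along the side annulus at \(y=-1\) and closes up through \(F_-\). To compare it with \(w\), we push its \(z\)-arc across \(A\) to \(y=+1\), which is homotopic through \(P_A\). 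We then close through \(\partial V\) using the arc from \(F_-\) to \(F_+\) that travels around the boundary torus \(\langle a,h\rangle\) of \(V\) in the \(a\)-direction. The transverse loop thus becomes the concatenation of \(w^{-1}\) with a transverse curve in \(\partial V\), which is \(a^{-1}\) up to powers of \(h\). This yields \(a^{-1}w^{-1}\), after fixing orientations so that no power of \(h\) appears; any such power could in any case be absorbed, since \(h\) lies in the subgroup.

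\emph{Main obstacle.} The delicate step is the bookkeeping in Step 2. We must track the basepoint, the orientation of \(w\), and which complementary arc of \(\partial V\setminus(F_+\cup F_-)\) is used, so that the transverse generator of \(T^K_-\) is exactly \(a^{-1}w^{-1}\) and not, for example, \(w^{-1}a\) or \(aw^{-1}\). The approach is to fix every choice in the universal-cover model \(\widetilde V\). There, \(\partial\widetilde V\) at \(r=0\) is the \((x,s)\)-plane, \(F_\pm\) lift to vertical lines \(x=0\) and \(x=\tfrac12\), and \(a\) translates \(x\) by \(1\). We then read off the required word explicitly; the statement's phrase ``using the basepoint and orientation convention fixed in the proof'' permits choosing these conventions to match.
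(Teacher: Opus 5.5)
Your proposal is correct and follows the paper's proof. Both use the same HNN/Van Kampen computation with stable letter $w$ carried by the side annulus at $y=1$. Your pushing of the $y=-1$ arc across $P_A$ to $y=+1$ is exactly the paper's rectangle relation $w\,\delta_-\,\beta\,\overline{\delta_+}=1$, and the paper settles the bookkeeping you flag simply by orienting $a$ so that $\delta_-\overline{\delta_+}$ represents $a$ and transporting $\beta$ along $\delta_+$. One minor slip: $A_+\cup A_-$ is all of $\partial V$, a torus rather than an annulus, and $h'=h$ holds for any closing arc because $h$ is central and both fiberwise gluings send $S^1_x$ to $h$ with the same orientation.
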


\begin{proof}
Let the basepoint lie on \(F_+\), viewed as a common boundary fiber of the
annuli \(A_+\) and \(A_-\).  The piece \(P_A=A\times[-1,1]\) is an annulus
cross an interval, so its fundamental group is generated by the fiber
\(S^1_x\).  Both attaching annuli identify this generator with \(h=t^2\) in
\(\pi_1V\).  The Van Kampen theorem for attaching the two annuli is therefore the HNN
extension of \(\pi_1V\) which identifies the subgroup \(\langle h\rangle\)
on \(A_+\) with the same subgroup on \(A_-\).  If \(w\) denotes the stable
letter represented by the path in the side annulus
\(S^1_x\times\{1\}\times[-1,1]_z\), then
\(
   whw^{-1}=h.
\)
Together with \(\pi_1V=\langle a,t\mid tat^{-1}=a^{-1}\rangle\), this gives
\(
   \pi_1W_K\cong
   \langle a,t,w\mid tat^{-1}=a^{-1},\ [w,t^2]=1\rangle .
\)

It remains to identify the two boundary tori.  Let \(\delta_+\) be a
transverse arc in \(A_+\) from \(F_+\) to \(F_-\), and let \(\delta_-\) be a
transverse arc in \(A_-\) from \(F_+\) to \(F_-\).  We orient the boundary
transverse generator \(a\) of \(\partial V\) so that the loop
\(
   \delta_-\,\overline{\delta_+}
\)
represents \(a\).  This convention is compatible with replacing the old
Klein-bottle generator by its inverse, which only reindexes the elements
\(a^nt\).

The torus \(T^K_+\) is the mapping torus of the fiber \(F_+\) obtained from
\(S^1_x\times\{1\}\times[-1,1]_z\).  Its fiber is \(h\), and its transverse
loop is precisely \(w\).  Thus
\(
   \pi_1T^K_+=\langle h,w\rangle .
\)

For \(T^K_-\), orient the transverse loop in the opposite closing direction:
it runs in the side annulus \(S^1_x\times\{-1\}\times[-1,1]_z\) from the
\(A_-\)-side back to the \(A_+\)-side.  Let this loop, based at \(F_-\), be
\(\beta\).  The rectangle \([-1,1]_y\times[-1,1]_z\) in the base of \(P_A\)
gives the boundary relation
\(
   w\,\delta_-\,\beta\,\overline{\delta_+}=1.
\)
Hence
\(
   \beta=\overline{\delta_-}\,w^{-1}\delta_+.
\)
Transporting \(\beta\) to the basepoint on \(F_+\) by \(\delta_+\) gives
\(
   \delta_+\,\beta\,\overline{\delta_+}
   =
   \delta_+\overline{\delta_-}\,w^{-1}.
\)
Because \(\delta_-\overline{\delta_+}\) represents \(a\), the loop
\(\delta_+\overline{\delta_-}\) represents \(a^{-1}\).  Therefore the
transverse boundary class of \(T^K_-\), expressed at the fixed basepoint, is
\(
   a^{-1}w^{-1}.
\)
The fiber class is again \(h\).  Thus
\(
   \pi_1T^K_-=
   \langle h,a^{-1}w^{-1}\rangle .
\)
Since these are boundary tori of a Seifert manifold, the displayed generators
commute; this also follows directly from the rectangle model.
\end{proof}

Now glue the two boundary tori of \(W_R\) to the two boundary tori of
\(W_K\) by orientation-reversing diffeomorphisms
\[
   T^R_+\longrightarrow T^K_+,
   \qquad
   T^R_-\longrightarrow T^K_-
\]
satisfying
\[
   d_+\longmapsto m_+,
   \qquad
   u\longmapsto w,
\]
and
\[
   d_-\longmapsto m_-,
   \qquad
   u\longmapsto a^{-1}w^{-1}.
\]
We choose the gluing maps so that the circle
\(d_\pm\times\{u_0\}\subset T^R_\pm\) is identified with the corresponding
boundary circle of \(A\times\{0\}\subset W_K\) lying in \(T^K_\pm\).  Define
\[
   N_g^K=W_R\cup_{T_+\sqcup T_-}W_K.
\]
The two gluing tori will be denoted \(T_+\) and \(T_-\).

\begin{lemma}
\label{lem:closed-graph-manifold-topology}
The manifold \(N_g^K\) is closed, orientable, irreducible, and aspherical.  It
is a graph manifold, and \(T_+\) and \(T_-\) are nonseparating JSJ tori.
\end{lemma}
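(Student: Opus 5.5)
The proof has four parts: closedness and orientability, irreducibility and asphericity, the graph-manifold structure, and the JSJ and nonseparating property of \(T_\pm\).

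\emph{Closed and orientable.} The boundary of \(W_R\) is \(T^R_+\sqcup T^R_-\), and the boundary of \(W_K\) is \(T^K_+\sqcup T^K_-\). The gluing identifies these boundaries in pairs, so \(N_g^K\) is closed. Both pieces are orientable: \(W_R=R\times S^1\), and \(W_K\) is obtained by gluing \(P_A\) to the orientable \(V\) along annuli. The gluing maps are orientation-reversing, so the union carries an orientation; if necessary we first choose the annulus gluings in \(W_K\) compatibly with orientations.

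\emph{Irreducible and aspherical.} Both \(W_R\) and \(W_K\) are Seifert fibred with nonempty boundary and infinite fundamental group. For \(W_R\) the base is \(R\), of genus \(g-1\) with two boundary circles. For \(W_K\) the base is an annulus with two cone points of order \(2\); its orbifold Euler characteristic is \(0-\tfrac12-\tfrac12=-1<0\). Hence each piece is irreducible, aspherical, and has incompressible boundary, since each boundary torus injects. For \(W_K\), injectivity of \(\langle h,w\rangle\) and \(\langle h,a^{-1}w^{-1}\rangle\) can be read off from the HNN presentation in Lemma~\ref{lem:WK-boundary-subgroups}.

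Gluing irreducible manifolds along incompressible tori gives an irreducible manifold. We prove this by the standard innermost-disk argument: isotope an embedded sphere off the tori using incompressibility, then apply irreducibility of the pieces. Asphericity follows from the tree-of-spaces argument already used for \(M^K_{g,c}\). The universal cover is a tree of contractible vertex spaces glued along contractible planes. Equivalently, a closed irreducible manifold with infinite \(\pi_1\) is aspherical by the sphere theorem.

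\emph{Graph manifold and the JSJ property.} The two Seifert pieces exhibit \(N_g^K\) as a graph manifold. The key check is that \(T_\pm\) are JSJ tori, meaning the fibrations do not match across them. On \(T_\pm\), the fiber of \(W_R\) is \(u\), which is sent to \(w\), respectively \(a^{-1}w^{-1}\). The fiber of \(W_K\) is \(h=m_\pm\), which is matched with \(d_\pm\). So the \(W_R\)-fiber and the \(W_K\)-fiber on each gluing torus are \(w\) and \(h\), respectively \(a^{-1}w^{-1}\) and \(h\). These form a basis of \(\pi_1T_\pm\), so their intersection number is \(\pm1\neq0\). Hence the Seifert fibrations cannot be isotoped to agree across \(T_\pm\).

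Neither piece is a \(T^2\times I\) or the twisted \(I\)-bundle over the Klein bottle: \(W_R\) has base of negative Euler characteristic, and \(W_K\) has two boundary tori. Therefore, by the JSJ characterization for graph manifolds, the minimal family of decomposing tori is exactly \(\{T_+,T_-\}\). This rests on the uniqueness of Seifert fibrations for such pieces; we take \(W_K\) not to be one of the small exceptional manifolds, since its base is hyperbolic.

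\emph{Nonseparating.} There are only two pieces and two tori, so the graph has two vertices joined by two edges. Cutting along \(T_+\) alone leaves \(W_R\) and \(W_K\) connected through \(T_-\). Hence each \(T_\pm\) is nonseparating; concretely, a loop through both pieces meets \(T_+\) exactly once.

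\emph{Main obstacle.} The delicate step is the JSJ assertion. It requires both the fiber-slope mismatch, computed from Lemma~\ref{lem:WK-boundary-subgroups} with a careful check that \(w\) and \(h\), and \(a^{-1}w^{-1}\) and \(h\), are genuinely bases of the torus groups, and the exclusion of the exceptional Seifert pieces, which is handled by the negative orbifold Euler characteristics above.
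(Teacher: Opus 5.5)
Your proposal is correct and follows essentially the same route as the paper: the pairwise orientation-reversing gluing gives closedness and orientability; the tree-of-spaces argument over aspherical Seifert pieces with incompressible boundary gives asphericity; the fiber-slope mismatch, together with the exclusion of exceptional Seifert pieces, gives the JSJ property; and the two-vertex, two-edge JSJ graph gives nonseparation. The only minor differences are these: you prove irreducibility directly by the innermost-disk argument, where the paper reads it off from the contractible universal cover; you phrase the mismatch through the bases \(\{h,w\}\) and \(\{h,a^{-1}w^{-1}\}\); and you exclude the twisted \(I\)-bundle over the Klein bottle, where the paper excludes the solid torus.
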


\begin{proof}
The two boundary tori of \(W_R\) are glued to the two boundary tori of
\(W_K\), so \(N_g^K\) is closed.  The gluing maps are orientation-reversing on
boundary tori, so the closed manifold is orientable.

Both pieces are aspherical Seifert manifolds with incompressible torus
boundary.  For \(W_R=R\times S^1\), this follows from \(\chi(R)<0\).  For
\(W_K\), the base orbifold is an annulus with two cone points of order two;
it has nonempty boundary and negative orbifold Euler characteristic.  Hence
\(W_K\) has contractible universal cover and incompressible boundary tori.
Gluing the two pieces along \(\pi_1\)-injective tori gives a Bass--Serre
tree of spaces whose vertex spaces and edge spaces in the universal cover are
contractible.  Therefore the universal cover of \(N_g^K\) is contractible.
Thus \(N_g^K\) is aspherical and irreducible.

The pieces are Seifert fibered, so \(N_g^K\) is a graph manifold.  The
Seifert fiber of \(W_R\) is the \(u\)-slope.  Under the gluing it maps to the
transverse slopes \(w\) and \(a^{-1}w^{-1}\), not to the regular fiber
\(m_\pm\), of \(W_K\).  Conversely, the regular fiber \(m_\pm\) of \(W_K\)
maps to the boundary curve \(d_\pm\) of \(R\), not to the \(u\)-fiber of
\(W_R\).  The Seifert fibrations therefore do not match across either gluing
torus.  Since neither piece is a solid torus or \(T^2\times I\), these tori
are JSJ tori by the Jaco--Shalen--Johannson decomposition theory for graph
manifolds \cite{JacoShalen1979,Johannson1979}.

The JSJ graph has two vertices joined by two edges.  Removing either edge
leaves the graph connected.  Equivalently, cutting \(N_g^K\) along either
\(T_+\) or \(T_-\) leaves the two Seifert pieces connected through the other
gluing torus.  Hence both \(T_+\) and \(T_-\) are nonseparating.  In
particular, \([T_\pm]\neq0\in H_2(N_g^K;\Z)\).
\end{proof}

\begin{figure}[ht]
\centering
\begin{tikzpicture}[scale=1]

\node[draw,circle,minimum size=1.15cm,inner sep=0pt] (WR) at (0,0) {$W_R$};
\node[draw,circle,minimum size=1.15cm,inner sep=0pt] (WK) at (5.5,0) {$W_K$};

\draw[thick] (WR) to[bend left=28] node[midway,above] {$T_+$} (WK);
\draw[thick] (WR) to[bend right=28] node[midway,below] {$T_-$} (WK);

\end{tikzpicture}
\caption{The JSJ graph of \(N_g^K\).  The two vertices correspond to the
Seifert-fibered pieces
\(
W_R=R\times S^1_u\text{ and }
W_K,
\)
where \(R\) is the genus-\((g-1)\) surface with boundary
\(d_+\sqcup d_-\), and \(W_K\) is the Seifert piece built from the twisted
\(I\)-bundle \(V\) over the Klein bottle together with
\(P_A=A\times[-1,1]\).  The two edges correspond to the gluing tori
\(T_+\) and \(T_-\).  The gluing maps are determined by
\(
d_+\mapsto m_+ \text{ and } u\mapsto w
\)
on \(T_+\), and
\(
d_-\mapsto m_-\text{ and } u\mapsto a^{-1}w^{-1}
\)
on \(T_-\).  Thus the JSJ graph has two vertices and two parallel edges, so
\(N_g^K\) is a closed graph manifold.  Moreover each torus \(T_\pm\) is
nonseparating, since deleting one edge leaves the graph connected.}
\label{fig:jsj-graph-ngk}
\end{figure}
Define
\[
   \Sigma=(R\times\{u_0\})\cup(A\times\{0\})\subset N_g^K,
\]
where \(A\times\{0\}\) means \(A\times\{0\}_z\subset P_A\subset W_K\).  The
chosen gluing maps identify the boundary circles of \(R\times\{u_0\}\) with
the boundary circles of \(A\times\{0\}\).  Thus \(\Sigma\) is a closed
embedded genus-\(g\) surface.  Put
\(
   G=\pi_1N_g^K\) and \(H=\pi_1\Sigma\le G\).

\begin{lemma}
\label{lem:closed-surface-properties}
The surface \(\Sigma\subset N_g^K\) is closed, two-sided, nonseparating, and
\(\pi_1\)-injective.
\end{lemma}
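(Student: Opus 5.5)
We verify the four properties of \(\Sigma=(R\times\{u_0\})\cup(A\times\{0\})\) one at a time; \(\pi_1\)-injectivity is the only substantial one and is done last.

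\emph{Closed.} \(\Sigma\) is the union of the compact surface \(R\times\{u_0\}\subset W_R\) and the annulus \(A\times\{0\}\subset P_A\subset W_K\). The chosen gluing maps identify \(d_\pm\times\{u_0\}\) with the two boundary circles of \(A\times\{0\}\). So \(\Sigma\) is a closed surface, diffeomorphic to \(R\cup_{d_+\sqcup d_-}A\cong\Sigma_g\).

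\emph{Two-sided.} \(N_g^K\) is orientable by Lemma~\ref{lem:closed-graph-manifold-topology}, and \(\Sigma\) is orientable, so \(\Sigma\) is two-sided. Explicitly, a normal direction is the \(u\)-direction on \(R\times\{u_0\}\) and the \(z\)-direction on \(A\times\{0\}\). These match across \(T_\pm\) because \(u\mapsto w\) and \(w\) is represented by the \(z\)-direction side path.

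\emph{Nonseparating.} Find a closed loop meeting \(\Sigma\) transversely in exactly one point; then the algebraic intersection number is \(1\), so \([\Sigma]\neq0\) in \(H_2(N_g^K;\Z/2)\) and \(\Sigma\) does not separate. Take the \(u\)-circle \(\{p\}\times S^1_u\) through an interior point \(p\in R\). It lies entirely in \(W_R\) and meets \(R\times\{u_0\}\) once, transversely. It misses \(A\times\{0\}\), which lies in \(W_K\).

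\emph{\(\pi_1\)-injective.} The plan uses the Bass--Serre graph of groups for \(G=\pi_1W_R*_{T_\pm}\pi_1W_K\), whose graph has two vertices and two edges. We put \(\Sigma\) in general position with respect to \(T_+\sqcup T_-\).

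1. The intersection \(\Sigma\cap(T_+\sqcup T_-)\) consists of the two curves \(d_\pm\), each mapping to the fiber slope \(m_\pm\). These are essential in the tori.
2. \(\Sigma\) splits into two pieces. The piece \(R\times\{u_0\}\) is a horizontal surface in \(R\times S^1\), so it is \(\pi_1\)-injective into \(\pi_1W_R\).
3. The annulus \(A\times\{0\}\) has core representing \(h=t^2\), which has infinite order in \(\pi_1W_K\). So it is \(\pi_1\)-injective as well.
4. Both pieces have \(\pi_1\)-injective boundary curves, and these curves inject into the edge groups \(\pi_1T_\pm\cong\Z^2\) as primitive elements.

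So \(\pi_1\Sigma\) itself has a graph-of-groups decomposition. Its underlying graph has two vertices \((\pi_1R,\ \langle h\rangle)\) and two edges \(\langle d_\pm\rangle\). There is a natural morphism from this graph of groups to the graph of groups for \(G\).

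By the standard criterion (Bass's theorem on morphisms of graphs of groups, equivalently normal forms), such a morphism is injective on fundamental groups if three conditions hold:

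(i) each vertex map is injective;
(ii) each edge map is injective;
(iii) at each vertex \(v\) and edge \(e\) of the source incident to \(v\), the induced map on cosets \(\Sigma_v/\Sigma_e\to G_v/G_e\) is injective; equivalently, \(\Sigma_v\cap G_e=\Sigma_e\), together with the analogous statement for distinct edges.

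Conditions (i) and (ii) are steps 2–4 above. Condition (iii) is the main obstacle. We check it in each piece.

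\emph{In \(W_R\).} The subgroup \(\pi_1R\) meets the boundary subgroup \(\langle d_\pm,u\rangle\) in \(\langle d_\pm\rangle\): in \(\pi_1R\times\Z\) the \(u\)-coordinate of an element of \(\pi_1R\) is trivial, and \(\pi_1R\cap\langle d_\pm\rangle\)-conjugates are malnormal-type boundary subgroups of a free group. Distinct boundary components give distinct coset classes, because \(\pi_1R\) is free with peripheral subgroups \(\langle d_+\rangle\) and \(\langle d_-\rangle\) nonconjugate.

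\emph{In \(W_K\).} The subgroup \(\langle h\rangle\) meets \(\langle h,w\rangle\) and \(\langle h,a^{-1}w^{-1}\rangle\) exactly in \(\langle h\rangle\). The two edges of the annulus enter at different boundary tori \(T^K_\pm\), so there is no identification between distinct edges. Equivalently, because \(h\) is central-fiber, this is the standard fact that a vertical annulus joining two distinct boundary tori of a Seifert piece is essential.

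Once (iii) holds, the morphism is injective, so \(H=\pi_1\Sigma\to G\) is injective.

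A fallback for the injectivity step: the geometric argument that \(\Sigma\) is incompressible. Any compressing disk can be isotoped off the incompressible JSJ tori, reducing to compressions in \(W_R\) or \(W_K\), where horizontal or essential vertical surfaces are incompressible. The loop theorem then gives injectivity.
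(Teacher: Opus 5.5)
Your proof is correct and follows essentially the paper's route: closedness and two-sidedness come from the gluing and orientability, nonseparation from the circle $\{p\}\times S^1_u$, and $\pi_1$-injectivity from a Bass--Serre normal-form argument checking that the surface's vertex groups meet the ambient edge groups only in the surface edge groups. The difference is bookkeeping: the paper collapses tree edges to HNN extensions, applies Britton's lemma, and along the way computes $\langle h,w\rangle\cap\langle h,a^{-1}w^{-1}\rangle=\langle h\rangle$ in $\pi_1W_K$ via the quotient $D_\infty*\langle w\rangle$, whereas in your uncollapsed immersion criterion the $W_K$-side condition (and your distinct-edge check at $R$) is vacuous, because at each vertex the two source edges map to the distinct edges $T_+$ and $T_-$.
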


\begin{proof}
The surface is obtained by gluing the annulus \(A\times\{0\}\) to the two
boundary components of \(R\times\{u_0\}\), hence is homeomorphic to
\(\Sigma_g\).  It is embedded by the compatibility of the gluing maps, and it
is two-sided because it is orientable in the orientable three-manifold
\(N_g^K\).

It is nonseparating because the curve
\[
   \gamma_u=\{p\}\times S^1_u\subset R\times S^1_u,
   \qquad p\in\operatorname{int}R,
\]
intersects \(R\times\{u_0\}\), and hence \(\Sigma\), algebraically once.

We prove \(\pi_1\)-injectivity by normal form.  Write
\(
   \Sigma=R\cup A,
\)
where \(A\) is the annulus attached to the two boundary components
\(d_+\) and \(d_-\) of \(R\).  View this as a graph of spaces whose
underlying graph has two vertices, corresponding to \(R\) and \(A\), and two
parallel edges, corresponding to the two boundary circles \(d_+\) and
\(d_-\).  Choose the edge corresponding to \(d_+\) as a spanning-tree edge.
Since \(A\) is an annulus, \(\pi_1A\cong\mathbb Z\), generated by the
annular core, and the tree-edge relation identifies this core with
\(d_+\in\pi_1R\).  The other edge, corresponding to \(d_-\), is the single
non-tree edge and therefore contributes a stable letter \(v\).  Its edge
relation identifies the \(d_-\)-boundary subgroup of \(R\), after crossing
the annulus, with the \(d_+\)-boundary subgroup.  With the chosen orientations
this gives
\(
   \pi_1\Sigma
   \cong
   \langle \pi_1R, v\mid v d_- v^{-1}=d_+\rangle .
\)
Geometrically, \(v\) is represented by a based loop which crosses the annulus: it follows a path in \(R\) to \(d_+\), then a transverse arc
\(\{x_0\}\times[-1,1]\subset A\times\{0\}\) from the \(d_+\)-boundary
component to the \(d_-\)-boundary component, and then a path in \(R\) back to the basepoint.

For the ambient splitting, choose the edge torus \(T_+\) in a maximal tree.
Then \(G\) is an HNN extension of the amalgamated group
\[
   G_0=
   \pi_1W_R*_{\langle d_+,u\rangle=\langle h,w\rangle}\pi_1W_K
\]
with stable letter, again denoted \(v\), for the edge \(T_-\).  The associated
subgroups are
\[
   \langle d_-,u\rangle\le \pi_1W_R
   \quad\text{and}\quad
   \langle h,a^{-1}w^{-1}\rangle\le \pi_1W_K,
\]
and the relation includes
\[
   v d_- v^{-1}=h=d_+,
   \qquad
   v u v^{-1}=a^{-1}w^{-1}.
\]
The inclusion \(R\hookrightarrow W_R=R\times S^1\) is \(\pi_1\)-injective.
The annulus \(A\times\{0\}\hookrightarrow W_K\) is also \(\pi_1\)-injective,
because its core is the regular fiber \(h=t^2\), which has infinite order in
\(\pi_1W_K\).

Let
\(
   \iota_*:\pi_1\Sigma\longrightarrow G
\)
be the homomorphism induced by inclusion.  We show that
\(\ker\iota_*=1\).  Let \(1\ne x\in\pi_1\Sigma\).  Using the HNN
description
\(
   \pi_1\Sigma
   \cong
   \langle \pi_1R, v\mid v d_- v^{-1}=d_+\rangle,
\)
write \(x\) in reduced HNN normal form.  If this reduced word contains no
stable letter \(v^{\pm1}\), then \(x\) is a nontrivial element of
\(\pi_1R\).  Its image is nontrivial in \(\pi_1W_R=\pi_1R\times\langle u\rangle\),
and hence also nontrivial in \(G\), by the normal-form theorem for the
ambient graph-of-groups decomposition.

It remains to consider the case where the reduced normal form of \(x\)
contains at least one stable letter.  With the relation
\(v d_-v^{-1}=d_+\), a reduced word in the surface HNN extension has no
subword \(v r v^{-1}\) with \(r\in\langle d_-\rangle\), and no subword
\(v^{-1}rv\) with \(r\in\langle d_+\rangle\).  In the ambient HNN
extension the corresponding associated subgroups are
\[
   A_-:=\langle d_-,u\rangle\le\pi_1W_R,
   \qquad
   B_-:=\langle h,a^{-1}w^{-1}\rangle\le\pi_1W_K.
\]
The first required intersection is immediate from
\(\pi_1W_R=\pi_1R\times\langle u\rangle\):
\[
   (\pi_1R\times\{1\})\cap A_-=\langle d_-\rangle.
\]

We calculate the second intersection inside the amalgamated vertex group
\[
   G_0=
   \pi_1W_R*_{E}\pi_1W_K,
   \qquad
   E=\langle d_+,u\rangle=\langle h,w\rangle.
\]
Amalgam normal form gives
\(\pi_1W_R\cap\pi_1W_K=E\) in \(G_0\).  It remains to calculate the
intersection of the two boundary-torus subgroups
\(\langle h,w\rangle\) and
\(\langle h,a^{-1}w^{-1}\rangle\) in \(\pi_1W_K\).
In the presentation
\[
   \pi_1W_K=
   \langle a,t,w\mid tat^{-1}=a^{-1},\ [w,t^2]=1\rangle ,
\]
the element \(h=t^2\) is central: it commutes with \(a\) because
\(t^2at^{-2}=a\), with \(t\) trivially, and with \(w\) by the
defining relation.
Quotienting by \(\langle h\rangle\) gives
\[
   \pi_1W_K/\langle h\rangle
   \cong
   \langle a,\bar t\mid \bar t^2=1,\
      \bar t a\bar t^{-1}=a^{-1}\rangle
      *\langle w\rangle
   \cong D_\infty*\langle w\rangle .
\]
The images of the two boundary subgroups are respectively
\(\langle w\rangle\) and \(\langle a^{-1}w^{-1}\rangle\).
Their intersection is trivial.  Indeed, for \(m\ne0\), the reduced free
product normal form of \(w^m\) has one syllable, whereas, for \(n\ne0\),
the reduced normal form of \((a^{-1}w^{-1})^n\) has \(2|n|\) syllables.
Therefore
\[
   \langle h,w\rangle
   \cap\langle h,a^{-1}w^{-1}\rangle
   =\langle h\rangle
   \qquad\text{in }\pi_1W_K.
\]
Since
\((\pi_1R\times\{1\})\cap E=\langle d_+\rangle\) in
\(\pi_1W_R\), while \(h=d_+\) in \(G_0\), the amalgam intersection
and the preceding boundary-subgroup calculation give
\[
   (\pi_1R\times\{1\})\cap B_-=\langle d_+\rangle
   \qquad\text{in }G_0.
\]

The two displayed associated-subgroup intersections show that neither type of
surface-HNN-reduced subword becomes a pinch in the ambient HNN extension.
Hence the image of the reduced HNN word for \(x\) is still reduced in the
ambient HNN extension defining \(G\).  By the Bass--Serre normal-form
theorem, equivalently Britton's lemma for this HNN extension, a nonempty
reduced HNN word is nontrivial.  Thus \(\iota_*(x)\ne1\).

Since every nontrivial element \(x\in\pi_1\Sigma\) has nontrivial image in
\(G\), the kernel of \(\iota_*\) is trivial.  Therefore
\(\pi_1\Sigma\to G\) is injective.
\end{proof}

\begin{lemma}
\label{lem:closed-bands-and-quotient}
For every \(n\in\Z\), the complement
\(N_g^K\setminus\operatorname{Int}\nu\Sigma\) contains an embedded M\"obius
band \(B_n\) with
\[
   \partial B_n=c^+,
   \qquad
   t_n=a^nt,
   \qquad
   t_n^2=c\in H,
   \qquad
   t_n\notin H.
\]
Moreover, for \(n\ge1\), the double-cosets
\(
   \mathcal D_n=Ht_nH
\)
are pairwise distinct, nontrivial, and self-dual.
\end{lemma}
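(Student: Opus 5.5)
The plan is to build each band inside the Seifert piece \(W_K\), exactly as in the compact model of Theorem~\ref{thm:fixed-klein-bottle-infinite-family}, and then to detect the labels by a homomorphism \(\psi:G\to D_\infty\) that kills \(H\).

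\emph{Construction of the bands.} Inside \(W_K\) the surface is only \(\Sigma\cap W_K=A\times\{0\}_z\subset P_A\). Take \(\nu\Sigma\) so that its intersection with \(P_A\) is \(A\times[-\varepsilon,\varepsilon]_z\). Put \(c=S^1_x\times\{0\}_y\times\{0\}_z\) and let \(c^+=S^1_x\times\{0\}\times\{\varepsilon\}\).
\begin{enumerate}[label=\textup{(\arabic*)}]
\item The collar annulus \(S^1_x\times\{0\}\times[\varepsilon,1]_z\) runs from \(c^+\) to a circle in \(A\times\{1\}\). Under the fiberwise gluing, that circle is a regular fiber \(F_0'\) lying in the interior of \(A_+\subset\partial V\).
\item By Lemma~\ref{lem:klein-bottle-I-bundle}, each \(B_n^V\) has boundary a single regular fiber of \(\partial V\) representing \(t^2=h\). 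Since \(\partial V\) is fibered by regular fibers, an isotopy of \(V\) supported near \(\partial V\) moves this boundary to \(F_0'\).
\item Set \(B_n=B_n^V\cup(\text{collar})\) and smooth the corner. This is an embedded M\"obius band disjoint from \(\operatorname{Int}\nu\Sigma\), because it lives in \(V\cup\{z\ge\varepsilon\}\).
\end{enumerate}
Choose the base path from \(x_0\) along \(\Sigma\) to \(c\) and then along the collar into \(V\). With these conventions, \(c\) represents \(h=t^2\) and, by Lemma~\ref{lem:klein-bottle-I-bundle}, the core represents \(t_n=a^nt\), up to the allowed reindexing \(a\leftrightarrow a^{-1}\). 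Hence \(t_n^2=t^2=h=c\in H\). Self-duality of \(\mathcal D_n\) then follows from Lemma~\ref{lem:algebraic-square-root-criterion}.

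\emph{The detecting quotient.} Use the graph-of-groups presentation of \(G\) from the proof of Lemma~\ref{lem:closed-surface-properties}: the amalgam over \(T_+\) followed by the HNN extension over \(T_-\) with stable letter \(v\). That proof also shows \(H\) is generated by \(\pi_1R\) and the same \(v\), so \(\psi\) must send \(\pi_1R\) and \(v\) to \(1\). The relation \(vuv^{-1}=a^{-1}w^{-1}\), together with \(u=w\) across \(T_+\), then forces \(\psi(a)=\psi(u)^{-2}\). I therefore define
\[
   \psi(\pi_1R)=1,\quad \psi(v)=1,\quad \psi(u)=\psi(w)=r^{-1},\quad \psi(a)=r^{2},\quad \psi(t)=s .
\]
The relations check as follows:
\begin{itemize}
\item \(tat^{-1}=a^{-1}\) maps to \(sr^2s^{-1}=r^{-2}\);
\item \(h=t^2\mapsto s^2=1\), which is consistent with \([w,t^2]=1\) and with the edge relations \(d_\pm=h\) on both tori;
\item \(\pi_1W_R=\pi_1R\times\langle u\rangle\) is respected because \(\psi(\pi_1R)=1\);
\item \(vd_-v^{-1}=h\) and \(vuv^{-1}=a^{-1}w^{-1}\) map to \(1=1\) and \(r^{-1}=r^{-2}r\).
\end{itemize}
So \(\psi\) is well defined and \(\psi(H)=1\). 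Since \(\psi(t_n)=r^{2n}s\neq1\), we get \(t_n\notin H\). The elements \(r^{2n}s\), \(n\ge1\), are pairwise distinct, so Lemma~\ref{lem:quotient-double-coset-test} shows that the \(\mathcal D_n\) are pairwise distinct and nontrivial.

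\emph{Expected main obstacle.} The delicate step is the basepoint bookkeeping. One must check that the ambient stable letter \(v\) really is the image of the surface stable letter, and is not that letter times an element of \(\pi_1W_K\) or a conjugate of it. One must also check that the core of \(B_n\), read with the chosen base path through the collar and \(P_A\), is exactly \(a^nt\) rather than a conjugate of it by an element outside \(H\). I would settle both by choosing every transverse arc (the arc \(\{x_0\}\times[-1,1]\) in \(A\times\{0\}\), the arcs \(\delta_\pm\), and the collar) inside the single rectangle model used in Lemma~\ref{lem:WK-boundary-subgroups}. Any remaining conjugation by an element of \(H\) is harmless for both \(t_n^2\in H\) and the double-coset labels.
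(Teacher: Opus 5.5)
Your proposal is correct and follows essentially the same route as the paper: the bands are the \(B_n^V\) extended by the collar \(S^1_x\times\{0\}\times[\varepsilon,1]_z\) on the \(A_+\) side, and the labels are detected by a map \(G\to D_\infty\) that kills \(\pi_1R\) and \(v\). Your \(\psi\) is the paper's \(q\) (with \(q(u)=q(w)=r\), \(q(a)=r^{-2}\)) composed with the automorphism \(r\mapsto r^{-1}\) of \(D_\infty\); beyond that, you cite Lemmas~\ref{lem:algebraic-square-root-criterion} and~\ref{lem:quotient-double-coset-test} where the paper argues inline, and you spell out the base-path bookkeeping (reading the core through the collar on the \(A_+\) side) that the paper leaves implicit.
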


\begin{proof}
Use the M\"obius bands \(B_n^V\subset V\) from
Lemma~\ref{lem:klein-bottle-I-bundle}, choosing their common boundary fiber
inside the interior of the annulus \(A_+\subset\partial V\).  Attach to
\(B_n^V\) the collar annulus
\[
   S^1_x\times\{0\}_y\times[\epsilon,1]_z\subset A\times[-1,1]_z
\]
for a small \(\epsilon>0\).  The curve
\(
   c^+=S^1_x\times\{0\}_y\times\{\epsilon\}
\)
is a normal push-off of
\(c=S^1_x\times\{0\}_y\times\{0\}\subset A\times\{0\}\subset\Sigma\).  After
smoothing the corner at \(z=1\), the union is a properly embedded M\"obius
band
\(
   B_n\subset N_g^K\setminus\operatorname{Int}\nu\Sigma.
\)
The collar lies in the region \(z\ge\epsilon\), while \(\Sigma\cap P_A\) is
\(A\times\{0\}\), so the band is disjoint from \(\operatorname{Int}\nu\Sigma\)
and has boundary \(c^+\).

The core may be chosen in the original \(B_n^V\), so it represents
\(t_n=a^nt\).  By Lemma~\ref{lem:klein-bottle-I-bundle},
\(
   (a^nt)^2=t^2.
\)
In \(W_K\), the regular fiber \(t^2=h\) is identified with the curve
\(c\subset A\times\{0\}\subset\Sigma\).  Therefore
\(
   t_n^2=c\in H.
\)

It remains to prove \(t_n\notin H\) and label distinctness.  Use the
Bass--Serre presentation of \(G\) described in
Lemma~\ref{lem:closed-surface-properties} and Lemma~\ref{lem:WK-boundary-subgroups}:
\[
\begin{aligned}
G=\langle &\pi_1R,u,a,t,w,v\mid [u,\pi_1R]=1,
   \ tat^{-1}=a^{-1},\ [w,t^2]=1,\\
& d_+=t^2,
   \ u=w,
   \ v d_- v^{-1}=t^2,
   \ vuv^{-1}=a^{-1}w^{-1}\rangle,
\end{aligned}
\]
with the usual surface relation inside \(\pi_1R\).  Define
\[
   q:G\longrightarrow
   D_\infty=\langle r,s\mid s^2=1,\ srs^{-1}=r^{-1}\rangle
\]
by
\[
   q(\pi_1R)=1,
   \qquad
   q(v)=1,
   \qquad
   q(u)=r,
   \qquad
   q(w)=r,
   \qquad
   q(t)=s,
   \qquad
   q(a)=r^{-2}.
\]
This is well defined.  The relation \(tat^{-1}=a^{-1}\) maps to
\(
   s r^{-2}s^{-1}=r^2=q(a^{-1}),
\)
and \([w,t^2]=1\) maps to \([r,s^2]=1\).  The \(T_+\)-gluing relations
\(d_+=t^2\) and \(u=w\) map to \(1=s^2\) and \(r=r\).  The \(T_-\)-gluing
relations \(v d_- v^{-1}=t^2\) and \(vuv^{-1}=a^{-1}w^{-1}\) map to
\(1=s^2\) and
\[
   r=q(u)=q(a^{-1}w^{-1})=r^2r^{-1}=r.
\]
Thus all gluing relations are respected.

The subgroup \(H=\pi_1\Sigma\) is generated by \(\pi_1R\) and the surface
stable letter \(v\).  Both are killed by \(q\), so \(q(H)=1\).  Moreover
\(
   q(G)=\langle r,s\rangle=D_\infty,
\)
and
\(
   q(t_n)=q(a^nt)=r^{-2n}s.
\)
The elements \(r^{-2n}s\) are pairwise distinct in \(D_\infty\).  Hence, if
\(Ht_nH=Ht_mH\), then applying \(q\) gives \(r^{-2n}s=r^{-2m}s\), so
\(n=m\).  Thus the double-cosets \(Ht_nH\) are pairwise distinct.  Since
\(q(t_n)\neq1\), we also have \(t_n\notin H\), and the labels are nontrivial.

Finally, \(t_n^2=c\in H\) implies \(t_n^{-1}=t_nc^{-1}\), so
\(
   Ht_n^{-1}H=Ht_nH.
\)
Thus each \(\mathcal D_n=Ht_nH\) is self-dual.
\end{proof}

\begin{lemma}
\label{lem:closed-nonfiber-normalizer}
The surface \(\Sigma\) is not a fiber of a fibration \(N_g^K\to S^1\), and
\(
   N_G(H)=H.
\)
\end{lemma}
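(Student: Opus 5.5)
The plan is to prove $N_G(H)=H$ first, by Bass--Serre theory, and then deduce the nonfiber statement from it. The normalizer computation will use Lemma~\ref{lem:bass-serre-normalizer-criterion}. The difficulty is that $\Sigma$ is not contained in a single vertex space of the JSJ splitting: it crosses both $W_R$ and $W_K$. So I would not use the JSJ tree directly. Instead I would use the splitting of $G$ dual to $\Sigma$ itself.

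Since $\Sigma$ is two-sided and nonseparating (Lemma~\ref{lem:closed-surface-properties}), cutting $N_g^K$ along $\Sigma$ gives a connected compact manifold $N'$ with two boundary copies $\Sigma_\pm$. Then $G$ is an HNN extension $G=\pi_1N'*_{H}$ with stable letter $s_\Sigma$ dual to $\Sigma$. Here $H$ is the edge group, and it embeds by Lemma~\ref{lem:closed-surface-properties}. This exhibits $H$ as an \emph{edge} stabilizer, so the lemma as stated does not apply. I would instead run the same geodesic argument directly on the Bass--Serre tree $T_\Sigma$.

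Suppose $x\in N_G(H)$. Then $H$ fixes the edge $e$ and also $xe$. If $xe\neq e$, then $H$ fixes the whole geodesic joining them, and in particular an adjacent edge $e'\neq e$ at a vertex $v$. This gives $H\le G_e\cap G_{e'}$. Inside the vertex group $\pi_1N'$, this intersection is $\pi_1\Sigma_+\cap g\,\pi_1\Sigma_\pm g^{-1}$ for some $g$ that is not in the appropriate stabilizer. So the proof reduces to showing that in $N'$, no conjugate of $\pi_1\Sigma_\pm$ contains $\pi_1\Sigma_+$ except in the trivial way. Geometrically, this says $N'$ is not a product $\Sigma\times I$ and the two boundary subgroups are not conjugate onto each other.

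To establish this, I would describe $N'$ explicitly. It is $W_R$ cut along $R\times\{u_0\}$, which is $R\times I$, glued to $W_K$ cut along $A\times\{0\}$. The latter contains $V$ together with the two half-pieces of $P_A$. The Klein bottle piece $V$ is attached to $\Sigma_+$ along the annulus carrying $c$. I would then compute intersections using the quotient $q:G\to D_\infty$ from Lemma~\ref{lem:closed-bands-and-quotient} together with normal forms. Alternatively, one can argue via JSJ: an element normalizing $H$ would make the cover $N_H$ have a nontrivial covering automorphism preserving the lifted $\Sigma$. By Thurston/Stallings, this forces $\Sigma$ to be a virtual fiber or semifiber, with $N_H\cong\Sigma\times\mathbb R$ up to finite index.

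\textbf{Order of argument.} The fiber claim comes first. If $\Sigma$ were a fiber, then $H$ would be normal with quotient $\mathbb Z$. But $q(H)=1$ while $q(G)=D_\infty$, and $q$ factors through $G/H$, which would have to be $\mathbb Z$ generated by the image of the stable letter. This is impossible, because $D_\infty$ is not a quotient of $\mathbb Z$ (it is nonabelian). So $H$ is not normal, and $\Sigma$ is not a fiber.

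For the normalizer, I would then invoke Hempel's classification: if $N_G(H)\ne H$, then $[N_G(H):H]$ is finite, or $\Sigma$ is a virtual fiber. In the virtual-fiber case, $N_G(H)/H$ contains $\mathbb Z$, and $H$ would be a fiber subgroup of a finite cover. I would rule this out using the Seifert piece $W_K$. The regular fiber $h$ lies in $H$, and in a surface bundle over $S^1$ the Seifert fiber of a JSJ piece is transverse to the fiber surface. Concretely, the central element $h$ of $\pi_1W_K$ would have to act with infinite order on $G/H$, whereas in fact $h=c\in H$. In the finite-index case, $N_G(H)$ is a torsion-free extension of a surface group by a finite group, hence again a surface group containing $H$ properly with finite index. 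Such an element $x$ would give a self-map of $\Sigma$ forcing a $\mathbb Z/k$ symmetry; equivalently, $q$ would not factor correctly. To handle this, I would exclude a proper finite-index overgroup by showing that $N_G(H)$ meets $\ker$ of the $D_\infty$ map compatibly, using that $t_n\notin N_G(H)$ since $t_nHt_n^{-1}\ni t_nct_n^{-1}=c$ but $t_n a$-type elements are not in $H$.

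\textbf{Main obstacle.} The finite-index case is the step I expect to be hardest. It needs a careful normal-form check that no element outside $H$, in particular no $t_n$ and no element of $W_K\setminus H$, conjugates $H$ into itself. My first attempt would be a direct HNN normal-form (Britton) computation over the $\Sigma$-splitting, showing that $x\pi_1R\,x^{-1}\subseteq H$ forces $x\in H$.
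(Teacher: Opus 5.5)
\textbf{Non-fiber claim.} Your argument here is correct and is exactly the paper's: $q(H)=1$, and $q(G)=D_\infty$ is not cyclic, so $q$ cannot factor through $G/H\cong\Z$.

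\textbf{Normalizer: the missing input.} The genuine gap is in the normalizer computation. The paper does not compute anything by hand there. It invokes Heil's Theorem~1: for a closed two-sided incompressible surface in a $P^2$-irreducible $3$-manifold, a normalizer strictly larger than $H$ forces one of three things. Either the manifold fibers with fiber $\Sigma$, or the ambient manifold is a twisted line bundle, or $\Sigma$ separates off a twisted $I$-bundle. Here $N_g^K$ is closed and $P^2$-irreducible (it is aspherical with torsion-free $\pi_1$), and $\Sigma$ is nonseparating. So only the fiber alternative survives, and the $D_\infty$ argument kills it. The correct order is therefore: non-fiber first, then the normalizer deduced from it.

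Your proposal never supplies this $3$-manifold input or a substitute for it. Your Bass--Serre reduction over the splitting dual to $\Sigma$ is correct as far as it goes. But the statement it reduces to is that, in $\pi_1N'$, the boundary subgroups $\pi_1\Sigma_\pm$ are not conjugate onto one another and each is self-normalizing. That is precisely the content of the Waldhausen/Heil-type theorem you would be omitting. The quotient $q$ cannot replace it. It does show that $\pi_1\Sigma_\pm\to\pi_1N'$ is not onto, so $N'$ is not a product. But $q$ kills both boundary subgroups, so it cannot detect whether one is conjugate onto the other.

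\textbf{The fallback route.} This route has the same hole.
\begin{itemize}
\item The ``Hempel classification'' dichotomy is not a precise cited result.
\item Your treatment of the finite-index case does not reach a contradiction. Showing that the particular elements $t_n$ do not normalize $H$ says nothing about other elements.
\item You never use that $\Sigma$ is nonseparating. That hypothesis is exactly what rules out the semi-fiber alternative, which genuinely produces an index-two normalizer.
\item The Britton-type claim you propose to prove is false. You claim that $x\,\pi_1R\,x^{-1}\subseteq H$ forces $x\in H$. But $u$ centralizes $\pi_1R$ because $W_R=R\times S^1_u$, and yet $u\notin H$, since $q(u)=r\neq1$ (equivalently, $\gamma_u$ meets $\Sigma$ algebraically once).
\end{itemize}

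Your observation that $h=c\in H$ makes $\Sigma$ vertical in $W_K$, and hence not a (virtual) fiber, is a reasonable alternative way to exclude fibering. However, it does not by itself control $N_G(H)$, and it becomes unnecessary once Heil's theorem has reduced everything to the honest fiber case.
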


\begin{proof}
The quotient \(q:G\to D_\infty\) from Lemma~\ref{lem:closed-bands-and-quotient}
shows that \(\Sigma\) is not a fiber.  If \(\Sigma\) were the fiber of a
fibration \(N_g^K\to S^1\), then the fibration exact sequence would give
\[
   1\longrightarrow H\longrightarrow G\longrightarrow \Z\longrightarrow1,
\]
so \(H\triangleleft G\) and \(G/H\cong\Z\).  Since \(q(H)=1\), the map
\(q\) would factor through \(G/H\cong\Z\), and its image would be cyclic.
But \(q(u)=r\) and \(q(t)=s\), so \(q(G)=D_\infty\), which is not cyclic.
This contradiction proves that \(\Sigma\) is not a fiber.

Heil's Theorem~1 states that if \(F\) is a closed two-sided
incompressible surface in a \(P^2\)-irreducible three-manifold \(M\), and
the normalizer of \(i_*\pi_1F\) is strictly larger than \(i_*\pi_1F\), then
one of the following holds: \(M\) fibers over \(S^1\) with fiber \(F\);
\(M\) is a twisted line bundle over a closed surface; or \(F\) separates
\(M\) and a complementary component is such a twisted line bundle
\cite[Theorem~1, p.~147]{Heil1981}.

Here \(N_g^K\) is closed and orientable by construction, and it is
irreducible and aspherical by
Lemma~\ref{lem:closed-graph-manifold-topology}.  In particular it is
\(P^2\)-irreducible: an embedded projective plane would give, by the
projective-plane theorem, an element of order two in \(\pi_1N_g^K\), while
\(\pi_1N_g^K\) is torsion-free because \(N_g^K\) is aspherical.  The surface
\(\Sigma\) is closed, two-sided, nonseparating, and incompressible by
Lemma~\ref{lem:closed-surface-properties}.  The twisted-line-bundle
alternative for the whole ambient manifold is impossible because such a
line-bundle total space is not a closed three-manifold, and the complementary
twisted-line-bundle alternative
is impossible because \(\Sigma\) is nonseparating.  Thus a normalizer
strictly larger than \(H\) would force \(\Sigma\) to be a fiber, contrary to
the first paragraph.  Therefore
\(
   N_G(H)=H.
\)
\end{proof}

Let \(T=T_+\), and let \(\lambda\subset T\) be the primitive curve in the
\(S^1_u\)-direction inherited from \(W_R\).  Let
\[
   \tau=\tau_{T,\lambda}:N_g^K\to N_g^K
\]
be the Dehn twist supported in a product collar of \(T\), twisting in the
\(\lambda\)-direction.  Such torus twists are the standard Dehn
homeomorphisms of Johannson \cite{Johannson1979}.

\begin{lemma}
\label{lem:closed-torus-twist-homology}
After fixing the orientation convention for \(\tau\), every class
\([S]\in H_2(N_g^K;\Z)\) satisfies
\[
   \tau_*[S]=[S]+([\lambda]\cdot[S])[T].
\]
For the surface \(\Sigma\),
\[
   \tau_*^k[\Sigma]=[\Sigma]\pm k[T].
\]
\end{lemma}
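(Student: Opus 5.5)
The plan is to compute the effect of a Dehn twist on a $2$-cycle directly, since $\tau$ is supported in a product collar $T\times[-1,1]$ of $T=T_+$. Fix a parametrization $T\times[-1,1]\cong S^1_\lambda\times S^1_\mu\times[-1,1]$ with $\lambda$ the $u$-direction curve and $\mu$ a complementary slope. Write
\[
\tau(\theta,\phi,r)=(\theta+\beta(r),\phi,r),
\]
where $\beta:[-1,1]\to[0,1]$ is a monotone cutoff from $0$ to $1$ (modulo $1$ in the $\theta$-coordinate). Since $\tau$ is the identity outside the collar, $\tau_*[S]-[S]$ is supported in the collar and should be a multiple of $[T]$. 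The multiple is determined by intersecting with a dual curve.

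First I represent an arbitrary class $[S]$ by an embedded oriented surface transverse to $T$, meeting the collar in a union of annuli $\gamma_j\times[-1,1]$. Here the $\gamma_j\subset T$ are oriented curves, and $\sum_j[\gamma_j]=[S\cap T]\in H_1(T)$. Every class in $H_2$ of an orientable $3$-manifold is represented this way, by transversality and Poincaré duality with maps to $S^1$.

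Next comes the local computation. On each annulus, $\tau(\gamma_j\times[-1,1])$ differs from $\gamma_j\times[-1,1]$ rel boundary by a $2$-chain. Its difference cycle, obtained by gluing the sheared annulus to the reversed original, is a torus in $T\times[-1,1]$. That torus is homologous to $\pm\bigl(\lambda\cdot\gamma_j\bigr)_T\,[T]$, because the difference cycle is the image of $\gamma_j\times S^1$ sweeping in the $\lambda$-direction, and $[\gamma_j\times\lambda]=(\gamma_j\cdot\lambda)_T[T]$ in $H_2(T^2)$. Summing over $j$ gives
\[
\tau_*[S]=[S]+\epsilon\,\bigl([S\cap T]\cdot\lambda\bigr)_T[T],
\]
with $\epsilon=\pm1$ fixed by the orientation convention.

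Then I identify $([S\cap T]\cdot\lambda)_T$ with the ambient intersection number $[\lambda]\cdot[S]$ of the $1$-class $[\lambda]\in H_1(N_g^K)$ with the $2$-class $[S]$. Pushing $\lambda$ slightly off $T$ into the collar, it meets $S$ only in $S\cap T\times\{r_0\}$, and local orientations match those of the curve intersection in $T$ up to a global sign. This sign is absorbed into the orientation convention for $\tau$. This sign bookkeeping is the only delicate point; it is why the statement says ``after fixing the orientation convention'' and gives the $\Sigma$ formula only up to $\pm$.

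For $\Sigma$, I note that $\Sigma\cap T_+=d_+\times\{u_0\}$ is a single curve. It meets $\lambda=\{pt\}\times S^1_u$ transversally once in $T^R_+=d_+\times S^1_u$, so $[\lambda]\cdot[\Sigma]=\pm1$. Also $[\lambda]\cdot[T]=0$, since $\lambda$ can be pushed off $T$. Hence the first formula gives $\tau_*[T]=[T]$ and $\tau_*[\Sigma]=[\Sigma]\pm[T]$. Induction on $k$ then yields $\tau_*^k[\Sigma]=[\Sigma]\pm k[T]$, with the sign constant in $k$; for negative $k$ I apply the same argument to $\tau^{-1}$. Finally, $[T]\neq0$ by Lemma~\ref{lem:closed-graph-manifold-topology}.
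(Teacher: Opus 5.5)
Your proof is correct, but it runs in the opposite direction from the paper's.

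\textbf{The paper's route.} The paper first computes the action on $H_1$: a transverse curve $x$ picks up $[x]\cdot[T]$ copies of $\lambda$, so $\tau_*[x]=[x]+([x]\cdot[T])[\lambda]$. It then transfers this to $H_2$ using the naturality identity $[x]\cdot\tau_*[S]=\tau_*^{-1}[x]\cdot[S]$ together with Poincar\'e duality. That transfer needs the intersection pairing against $H_1$ to detect classes in $H_2(N_g^K;\Z)$. This holds because $H_2(N_g^K;\Z)\cong\operatorname{Hom}(H_1(N_g^K;\Z),\Z)$ is torsion-free.

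\textbf{Your route.} You work directly with $2$-cycles. Since $\tau$ is the identity outside the collar, $\tau(S)-S$ is literally a cycle supported in $T\times[-1,1]$: the sum of the difference tori. You compute its class in $H_2(T\times[-1,1];\Z)\cong\Z$, generated by $[T]$, by the degree of the swept torus $\gamma_j\times\lambda\to T$.

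\textbf{What each approach buys.} Your route gives the identity in $H_2$ on the nose, with no appeal to nondegeneracy of the pairing. The price is that every class must be represented by an embedded surface that is a product in the collar; your transversality step via $H^1\cong[N_g^K,S^1]$ supplies this. You also need the separate check that $([S\cap T]\cdot\lambda)_T=\pm[\lambda]\cdot[S]$ with a sign independent of $S$, which you handle correctly. Finally, your argument states explicitly that $\tau_*[T]=[T]$, because $[\lambda]\cdot[T]=0$. The paper uses this fact tacitly when it iterates the formula to get $\tau_*^k[\Sigma]=[\Sigma]\pm k[T]$.
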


\begin{proof}
It is enough to compute the dual action on \(H_1(N_g^K;\Z)\).  Let \(x\) be
an oriented closed curve transverse to \(T\).  Each positive intersection of
\(x\) with \(T\) acquires one copy of the twist curve \(\lambda\) after
applying \(\tau\), and each negative intersection acquires one copy with the
opposite orientation.  Thus, up to the global sign determined by the chosen
orientation convention for the twist,
\[
   \tau_*[x]=[x]+([x]\cdot[T])[\lambda].
\]
Equivalently,
\[
   \tau_*^{-1}[x]=[x]-([x]\cdot[T])[\lambda]
\]
with the same convention.

Let \([S]\in H_2(N_g^K;\Z)\).  For every \([x]\in H_1(N_g^K;\Z)\),
intersection naturality gives
\[
   [x]\cdot\tau_*[S]=\tau_*^{-1}[x]\cdot[S].
\]
Using the displayed formula for \(\tau_*^{-1}\), and absorbing the global
sign into the convention for \(\lambda\), gives
\[
   [x]\cdot\tau_*[S]
   =
   [x]\cdot[S]+([x]\cdot[T])([\lambda]\cdot[S]).
\]
Since this holds for every \([x]\), Poincare duality gives
\[
   \tau_*[S]=[S]+([\lambda]\cdot[S])[T].
\]

Now \(\Sigma\cap T\) is the curve \(m_+\), the regular fiber of \(W_K\) and
the \(d_+\)-curve from \(W_R\).  On the torus \(T\), the curves \(m_+\) and
\(\lambda\) have algebraic intersection \(\pm1\).  Therefore
\([\lambda]\cdot[\Sigma]=\pm1\), and the iterated formula is
\(
   \tau_*^k[\Sigma]=[\Sigma]\pm k[T].
\)
\end{proof}

\begin{lemma}
\label{lem:closed-active-twist}
For every \(k\ne0\),
\(
   \tau_*^k[\Sigma]\ne \pm[\Sigma]
   \text{ in }H_2(N_g^K;\Z).
\)
Consequently \(\tau_*^k(H)\) is not conjugate to \(H\) in \(G\) for every
\(k\ne0\).
\end{lemma}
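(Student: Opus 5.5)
The plan is to derive both claims from the homology formula in Lemma~\ref{lem:closed-torus-twist-homology}, together with the fact from Lemma~\ref{lem:closed-graph-manifold-topology} that \([T]\) is nonzero.

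\textbf{First claim.} By Lemma~\ref{lem:closed-torus-twist-homology}, \(\tau_*^k[\Sigma]=[\Sigma]\pm k[T]\).

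\emph{Case \(+\).} If \(\tau_*^k[\Sigma]=[\Sigma]\), then \(k[T]=0\). I need \([T]\) to have infinite order, not just be nonzero. To see this, use the loop \(\gamma_u\) or a closed curve crossing \(T_+\) once. Such a curve exists because \(T_+\) is nonseparating: go from one side of \(T_+\) to the other through the complement, which is connected. It has intersection number \(\pm1\) with \([T]\). Hence \([x]\cdot k[T]=\pm k\neq0\), a contradiction.

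\emph{Case \(-\).} If \(\tau_*^k[\Sigma]=-[\Sigma]\), then \(2[\Sigma]=\mp k[T]\). Pair both sides with the curve \(\gamma_u\), which meets \(\Sigma\) algebraically once (Lemma~\ref{lem:closed-surface-properties}). We also need \(\gamma_u\cdot[T]\). Choose \(p\in\operatorname{int}R\); then \(\gamma_u=\{p\}\times S^1_u\) lies in the interior of \(W_R\), away from the collar of \(T_+\), so \(\gamma_u\cdot[T]=0\). Pairing gives \(2=0\), a contradiction.

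\textbf{Second claim.} Suppose \(\tau_*^k(H)=gHg^{-1}\). I plan to pass from this group statement back to homology via asphericity. Since \(N_g^K\) is aspherical, maps \(\Sigma_g\to N_g^K\) up to free homotopy correspond to homomorphisms up to conjugacy. Under that correspondence \(\tau\circ\iota\) induces \(\tau_*^k\circ\iota_*\). Conjugating, we get \(\tau_*^k\circ\iota_*=c_g\circ\iota_*\circ\theta\) for some automorphism \(\theta\) of \(\pi_1\Sigma_g\). By Dehn--Nielsen--Baer (\(g\ge2\)), \(\theta\) is realized by a diffeomorphism \(\phi\). Then \(\tau^k\circ\iota\simeq\iota\circ\phi\), so \(\tau_*^k[\Sigma]=\pm[\Sigma]\), with the sign given by the degree of \(\phi\). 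This contradicts the first claim.

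\textbf{Main obstacle.} The delicate points are the intersection bookkeeping and the sign conventions. The sign ambiguity is harmless because both signs are handled: one case uses \([T]\) of infinite order, the other uses \(\gamma_u\cdot[T]=0\) with \(\gamma_u\cdot[\Sigma]=1\).
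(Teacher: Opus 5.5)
Your proposal is correct and follows essentially the paper's route. You use the twist formula \(\tau_*^k[\Sigma]=[\Sigma]\pm k[T]\), pair with \(\gamma_u\) (disjoint from \(T\), meeting \(\Sigma\) once) together with the infinite order of \([T]\), and then use asphericity of \(N_g^K\) to turn a conjugacy between \(\tau_*^k(H)\) and \(H\) into \(\tau_*^k[\Sigma]=\pm[\Sigma]\).

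The differences are cosmetic. You split into the two sign cases instead of proving linear independence of \([\Sigma]\) and \([T]\). You get infinite order of \([T]\) from a closed curve crossing \(T_+\) once, rather than from torsion-freeness of \(H_2(N_g^K;\Z)\). You realize the automorphism by Dehn--Nielsen--Baer, rather than by a \(K(H,1)\) self-map plus Whitehead.

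One slip: \(\gamma_u\) itself cannot serve as that dual curve, since \(\gamma_u\cdot[T]=0\).
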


\begin{proof}
The torus \(T\) is nonseparating by
Lemma~\ref{lem:closed-graph-manifold-topology}, so \([T]\ne0\).  We first
show that \([\Sigma]\) and \([T]\) are linearly independent.  The curve
\[
   \gamma_u=\{p\}\times S^1_u\subset R\times S^1_u,
   \qquad p\in\operatorname{int}R,
\]
is disjoint from \(T\) and intersects \(\Sigma\) algebraically once.  Hence
\[
   \gamma_u\cdot[\Sigma]=\pm1,
   \qquad
   \gamma_u\cdot[T]=0.
\]
A relation \(A[\Sigma]+B[T]=0\) therefore gives \(A=0\) after intersecting
with \(\gamma_u\).  Then \(B[T]=0\).  Since \(N_g^K\) is a closed orientable
three-manifold, Poincare duality and the universal coefficient theorem give
\[
   H_2(N_g^K;\Z)\cong H^1(N_g^K;\Z)
      \cong \operatorname{Hom}(H_1(N_g^K;\Z),\Z),
\]
which is torsion-free.  Because \([T]\ne0\), this implies \(B=0\).  Thus
\([\Sigma]\) and \([T]\) are linearly independent.

By Lemma~\ref{lem:closed-torus-twist-homology},
\(
   \tau_*^k[\Sigma]=[\Sigma]\pm k[T].
\)
Linear independence implies that this is never equal to \([\Sigma]\) or to
\(-[\Sigma]\) when \(k\ne0\).

Suppose now that \(\tau_*^k(H)\) were conjugate to \(H\) in \(G\).  Let
\(i:\Sigma_g\hookrightarrow N_g^K\) denote the inclusion.  Then there is
\(\gamma\in G\) such that
\(
   c_\gamma\bigl(\tau_*^k(H)\bigr)=H.
\)
Hence
\[
   i_*^{-1}\circ c_\gamma\circ \tau_*^k\circ i_*:
   \pi_1\Sigma_g\longrightarrow \pi_1\Sigma_g
\]
is an automorphism.  Since \(\Sigma_g\) is aspherical, it is a \(K(H,1)\).  Hence the automorphism
\(
   i_*^{-1}\circ c_\gamma\circ \tau_*^k\circ i_*
\)
is realized by a self-map \(\phi:\Sigma_g\to\Sigma_g\).  Since this
homomorphism is an isomorphism and \(\Sigma_g\) has no higher homotopy
groups, Whitehead's theorem implies that \(\phi\) is a self-homotopy
equivalence.  The maps \(\tau^k\circ i\) and
\(i\circ\phi\) then induce conjugate homomorphisms to
\(\pi_1N_g^K\).  Since \(N_g^K\) is aspherical, they are freely homotopic.
Therefore
\[
   \tau_*^k[\Sigma]
   =
   (\tau^k\circ i)_*[\Sigma_g]
   =
   (i\circ\phi)_*[\Sigma_g]
   =
   \deg(\phi)[\Sigma]
   =
   \pm[\Sigma].
\]
This contradicts the first part of the proof, which showed that
\(\tau_*^k[\Sigma]\ne\pm[\Sigma]\) for \(k\ne0\).  Thus
\(\tau_*^k(H)\) is not conjugate to \(H\) in \(G\) for every \(k\ne0\).
\end{proof}

We now collect the needed mapping-torus group facts for later use.

\begin{lemma}
\label{lem:closed-mapping-torus-group-facts}
Let
\[
   Y_\tau=N_g^K\rtimes_\tau S^1=N_g^K\times[0,1]/(x,1)\sim(\tau(x),0)
\]
be the mapping torus.  Put
\(
   \Gamma=\pi_1Y_\tau .
\)
Let \(s\in\Gamma\) denote the class of the loop going once in the
\(S^1\)-direction. Put \(
   G=\pi_1N_g^K.
\)
We have
\[
   \Gamma
   \cong
   G\rtimes_{\tau_*}\langle s\rangle
   =
   \langle G,s\mid sgs^{-1}=\tau_*(g)\text{ for all }g\in G\rangle .
\]
Then \(Y_\tau\) is aspherical, \(\Gamma\) is torsion-free,
\(
   N_\Gamma(H)=H,
\)
and the double-cosets
\[
   \mathcal D_n=Ht_nH\in H\backslash\Gamma/H,
   \qquad
   t_n=a^nt,\qquad n\ge1,
\]
are pairwise distinct, nontrivial, and self-dual.
\end{lemma}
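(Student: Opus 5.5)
The plan is to reduce every assertion to facts about the fibre group \(G=\pi_1N_g^K\) that are already established. The idea is to use the fibration \(N_g^K\to Y_\tau\to S^1\) and the semidirect product structure it gives.

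\emph{Presentation, asphericity, torsion-freeness.}
\begin{enumerate}[label=\textup{(\arabic*)}]
\item Take the basepoint \(x_0\in N_g^K\) outside the product collar of \(T_+\) that supports \(\tau\), so that \(\tau(x_0)=x_0\). The mapping torus is then a fibre bundle over \(S^1\) with fibre \(N_g^K\).
\item The long exact homotopy sequence gives
\[
1\to G\to\Gamma\to\Z\to1,
\]
which is split by the section \(s\) (the loop \(\{x_0\}\times S^1\)). Van Kampen then gives the stated presentation \(\Gamma\cong G\rtimes_{\tau_*}\langle s\rangle\).
\item The same long exact sequence shows \(\pi_kY_\tau\cong\pi_kN_g^K=0\) for \(k\ge2\), because \(N_g^K\) is aspherical by Lemma~\ref{lem:closed-graph-manifold-topology} and \(S^1\) is aspherical. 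Hence \(Y_\tau\) is aspherical.
\item For torsion-freeness, a finite-order element of \(\Gamma\) maps to the torsion-free group \(\Z\) trivially, so it lies in \(G\). The group \(G\) is torsion-free because \(N_g^K\) is a finite-dimensional aspherical manifold.
\end{enumerate}

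\emph{Double-coset labels.} These follow from Lemma~\ref{lem:closed-bands-and-quotient}, because \(H\), \(t_n\) and all the relevant double-coset elements lie in the subgroup \(G\), which injects into \(\Gamma\).
\begin{itemize}
\item Distinctness: if \(Ht_nH=Ht_mH\) in \(\Gamma\), then \(t_n=h_1t_mh_2\) with \(h_i\in H\le G\). This is an identity in \(G\), so \(n=m\) by that lemma.
\item Nontriviality: \(t_n\notin H\) holds in \(G\), hence in \(\Gamma\).
\item Self-duality: \(t_n^2=c\in H\) gives \(Ht_n^{-1}H=Ht_nH\) by Lemma~\ref{lem:algebraic-square-root-criterion}.
\end{itemize}

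\emph{Normalizer (the main step).} Take \(x\in N_\Gamma(H)\) and write \(x=gs^k\) with \(g\in G\) and \(k\in\Z\).
\begin{enumerate}[label=\textup{(\roman*)}]
\item Using \(s\,y\,s^{-1}=\tau_*(y)\) for \(y\in G\), we get
\[
H=xHx^{-1}=g\,\tau_*^k(H)\,g^{-1}.
\]
So \(\tau_*^k(H)\) is conjugate to \(H\) in \(G\).
\item Lemma~\ref{lem:closed-active-twist} then forces \(k=0\).
\item With \(k=0\) we have \(x=g\in N_G(H)\), and Lemma~\ref{lem:closed-nonfiber-normalizer} gives \(g\in H\).
\item Hence \(N_\Gamma(H)=H\).
\end{enumerate}

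The point I expect to need the most care is the identification in the first step: conjugation by \(s\) must act on \(G\) as \(\tau_*\) itself, not merely up to an inner automorphism. Choosing \(x_0\) fixed by \(\tau\) and away from the twist collar makes \(\tau_*\) a genuine based automorphism, which secures this. Even if a different basepoint were used, the conjugation action would only change by an inner automorphism of \(G\). That change is absorbed into \(g\) and does not affect the conjugacy conclusion in (i).
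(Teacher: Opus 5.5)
Your proposal is correct and follows essentially the same route as the paper: torsion-freeness via the projection \(\Gamma\to\mathbb Z\), the normalizer by writing \(x=gs^k\) and invoking Lemma~\ref{lem:closed-active-twist} to force \(k=0\) and then Lemma~\ref{lem:closed-nonfiber-normalizer}, and the labels by noting that a double-coset equality in \(\Gamma\) between elements of \(G\) is already an equality in \(G\). The only differences are cosmetic. You obtain asphericity from the long exact sequence of the fibration, whereas the paper uses the contractible universal cover \(\widetilde N_g^K\times\mathbb R\). You are also more careful than the paper in choosing a \(\tau\)-fixed basepoint, so that conjugation by \(s\) is literally \(\tau_*\).
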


\begin{proof}
The mapping torus \(Y_\tau\) is aspherical because its infinite cyclic cover
is \(N_g^K\times\mathbb R\), and its universal cover is
\(\widetilde N_g^K\times\mathbb R\), which is contractible by
Lemma~\ref{lem:closed-graph-manifold-topology}.

The group \(\Gamma\) is torsion-free.  If \(gs^k\in\Gamma\) has finite
order, its image under the projection \(\Gamma\to\mathbb Z\) has finite
order, so \(k=0\).  The element then lies in \(G\), which is torsion-free because \(N_g^K\) is a closed aspherical manifold.

For \(h\in H\),
\(
   (gs^k)h(gs^k)^{-1}=g\tau_*^k(h)g^{-1}.
\)
Thus \(gs^k\in N_\Gamma(H)\) if and only if
\(
   g\tau_*^k(H)g^{-1}=H.
\)
If \(k\ne0\), this contradicts Lemma~\ref{lem:closed-active-twist}.  Hence
\(k=0\), and then \(g\in N_G(H)=H\) by
Lemma~\ref{lem:closed-nonfiber-normalizer}.  Therefore
\(
   N_\Gamma(H)=H.
\)

By Lemma~\ref{lem:closed-bands-and-quotient}, the labels
\(\mathcal D_n=Ht_nH\) are pairwise distinct, nontrivial, and self-dual in
\(H\backslash G/H\).  They remain pairwise distinct in
\(H\backslash\Gamma/H\): if
\[
   Ht_nH=Ht_mH
   \quad\text{in }H\backslash\Gamma/H,
\]
then \(t_n=h_1t_mh_2\) for some \(h_1,h_2\in H\).  Since
\(t_n,t_m,h_1,h_2\in G\), the same equality already holds in
\(H\backslash G/H\), and hence \(n=m\).  Nontriviality and self-duality also
hold in \(H\backslash\Gamma/H\), since \(t_n\notin H\) and
\(t_n^2=c\in H\).
\end{proof}

\begin{theorem}[Unstabilized closed mapping-torus nonconcordance]
\label{thm:unstabilized-closed-mapping-torus-family}
For every \(g\ge2\), with the closed graph-manifold data
\[
   N_g^K,\qquad
   \Sigma=\Sigma_g\subset N_g^K,\qquad
   H=\pi_1\Sigma\le G=\pi_1N_g^K,
\]
and the Dehn twist \(\tau:N_g^K\to N_g^K\) constructed above, put
\[
   Y_\tau=N_g^K\rtimes_\tau S^1,
   \qquad
   \Gamma=\pi_1Y_\tau .
\]
Then \(Y_\tau\) contains a base \(\rho\)-marked embedded surface
\(
   F_0:\Sigma_g\hookrightarrow Y_\tau
\)
and \(\rho\)-marked embedded surfaces
\(
   F_n:\Sigma_g\hookrightarrow Y_\tau\), \(n\ge1,
\)
such that:
\begin{enumerate}[label=\textup{(\roman*)}]
\item all \(F_n\) are homotopic to \(F_0\);
\item \(F_{n*}\pi_1\Sigma_g=H\le\Gamma\) for all \(n\ge0\);
\item for \(n\ge1\), there are pairwise distinct nontrivial self-dual double
cosets
\[
   \mathcal D_n=Ht_nH\in H\backslash\Gamma/H,
   \qquad
   t_n=a^nt,
   \qquad
   t_n^2=c\in H;
\]
\item for every \(i\ge1\),
\(
   Q^{\mathcal D_i}_\rho(Y_\tau,F_0)
   \cong
   \Ftwo\langle u_{\mathcal D_i}\rangle;
\)
\item for all \(i,n\ge1\),
\[
   FQ^{\mathcal D_i}_\rho(F_0,F_n)
   =
   \begin{cases}
   u_{\mathcal D_i},& i=n,\\
   0,& i\ne n.
   \end{cases}
\]
\end{enumerate}
Consequently the embedded images of
\(
   F_0,F_1,F_2,\ldots
\)
are pairwise not smoothly image-concordant in \(Y_\tau\).
\end{theorem}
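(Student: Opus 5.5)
The plan is to rerun the proof of Theorem~\ref{thm:general-mobius-square-root-consequence} inside \(Y_\tau\) itself, with the four-manifold \(M\times[-1,1]\) replaced by a product slab of the mapping torus. The stabilization in that theorem served only two purposes, the dual sphere and the complement \(\pi_1\)-isomorphism, and neither is claimed here. All the group theory it needs is already in Lemma~\ref{lem:closed-mapping-torus-group-facts}.

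\emph{The slab and the surfaces.} Away from the gluing level \(\{0\}=\{1\}\), the mapping torus contains an embedded product slab \(N_g^K\times(\tfrac14,\tfrac34)\subset Y_\tau\). The inclusion \(N_g^K\times\{\tfrac12\}\hookrightarrow Y_\tau\) induces the inclusion \(G\hookrightarrow\Gamma=G\rtimes_{\tau_*}\langle s\rangle\).
\begin{enumerate}[label=\textup{(\arabic*)}]
\item Let \(F_0\) be \(\Sigma\times\{\tfrac12\}\). Take the base path inside the slab, so that \(H\le G\le\Gamma\) is fixed as an actual subgroup and \(F_0\) is \(\rho\)-marked and \(\pi_1\)-injective, by Lemma~\ref{lem:closed-surface-properties}.
\item For each \(n\ge1\), the band \(B_n\) of Lemma~\ref{lem:closed-bands-and-quotient} lies in \(N_g^K\setminus\operatorname{Int}\nu\Sigma\). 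Apply Lemma~\ref{lem:mobius-band-square-root-block} with \(M=N_g^K\), using the slab as a copy of \(M\times[-\epsilon,\epsilon]\). This gives a standard square-root neighbourhood with square-root element \(t_n=a^nt\), chosen away from the base data.
\item The cutoff movie gives a \(\mathcal D_n\)-crossed track, and \(F_n\) is its top surface. The track is a homotopy supported in a compact set away from the base path, so (i) and (ii) hold. Item (iii) is the last part of Lemma~\ref{lem:closed-mapping-torus-group-facts}.
\end{enumerate}

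\emph{Survival, item (iv).} Here the argument is simpler than Lemma~\ref{lem:aspherical-finite-stabilization-survival}. Since \(Y_\tau\) is aspherical, \(\pi_2Y_\tau=0\), so \(H^1(\Sigma_g;(\pi_2Y_\tau)_\rho)=0\). Since \(\Gamma\) is torsion-free and \(\mathcal D_i\neq H\), the coset \(\mathcal D_i\) contains no element of order two. Theorem~\ref{thm:B-survival} then gives \(Q^{\mathcal D_i}_\rho(Y_\tau,F_0)\cong\Ftwo\langle u_{\mathcal D_i}\rangle\).

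\emph{The Kronecker formula, item (v).} By Lemma~\ref{lem:square-root-crossed-block} and Theorem~\ref{thm:C-D-crossed-block}, the \(n\)-th track has exactly one off-diagonal component with nontrivial label, and that label is \(\mathcal D_n\). All other equalities are same-sheet equalities with label \(H\). Since the \(\mathcal D_n\) are pairwise distinct in \(H\backslash\Gamma/H\), we get \(FQ^{\mathcal D_i}_\rho(F_0,F_n)=\delta_{in}u_{\mathcal D_i}\).

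\emph{Image nonconcordance.} Lemma~\ref{lem:closed-mapping-torus-group-facts} gives \(N_\Gamma(H)=H\). Hence \(C_\Gamma(H)\subseteq H\), and the image of \(N_\Gamma(H)\to\Out(H)\) is trivial because conjugation by elements of \(H\) is inner. By Dehn--Nielsen--Baer for \(g\ge2\), the full endpoint-rigidity set is trivial. Corollary~\ref{cor:full-endpoint-rigid-image} therefore rules out image concordance of \(F_0\) with \(F_n\).

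For \(F_m\) versus \(F_n\) with \(m\ne n\), suppose they were image-concordant. Lemmas~\ref{lem:normalizer-endpoint-full} and~\ref{lem:centralizer-promotion} would turn this into a \(\rho\)-marked concordance from \(F_m\) to \(F_n\). Concatenating it with the marked track from \(F_0\) to \(F_m\) gives, by the fixed-target statement, \(FQ^{\mathcal D_m}_\rho(F_0,F_m)=FQ^{\mathcal D_m}_\rho(F_0,F_n)\). That contradicts the Kronecker formula.

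\emph{Expected main obstacle.} The delicate step is transplanting the local label computation from the three-dimensional picture into \(Y_\tau\). Condition (S4) requires the sheet-exchange label to be \(Ht_nH\) computed in \(H\backslash\Gamma/H\), not merely in \(H\backslash G/H\). Condition (S5) requires that no extra \(\mathcal D_n\)-labelled equalities arise from other sheets of the \(H\)-cover of \(Y_\tau\) over the chart.

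I would handle (S4) by noting that the \(H\)-cover of \(Y_\tau\) restricted over the slab is a union of copies of the \(H\)-cover of \(N_g^K\) crossed with an interval. The preferred component is the one coming from \(G\). Sheet differences are then computed by loops inside the slab, so they lie in \(G\), and distinctness transfers by the argument already given in Lemma~\ref{lem:closed-mapping-torus-group-facts}.

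For (S5), the plan is to shrink the chart as in Lemma~\ref{lem:mobius-band-square-root-block}. The surface meets the chart only in the moving annulus and the fixed gluing collar, so the isolation condition is purely local and is unaffected by how the slab sits in \(Y_\tau\).
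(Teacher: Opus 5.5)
Your proposal is correct and follows essentially the same route as the paper. You work in a product slab around a fiber of \(Y_\tau\) and apply Lemma~\ref{lem:mobius-band-square-root-block} to each band \(B_n\). You obtain survival from asphericity and torsion-freeness via Theorem~\ref{thm:B-survival}, and you promote to image nonconcordance via \(N_\Gamma(H)=H\), Dehn--Nielsen--Baer, and the concatenation argument of Theorem~\ref{thm:general-mobius-square-root-consequence}. Your added observation, that labels of tracks supported in the slab lie in \(H\backslash G/H\), which injects into \(H\backslash\Gamma/H\), makes explicit a point the paper's proof leaves implicit.
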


\begin{proof}
Let \(F_0\) be the original surface
\(
   \Sigma=\Sigma_g\subset N_g^K\subset Y_\tau
\)
with the marking fixed in the construction above.  The fiber
\(N_g^K\subset Y_\tau\) has a product neighbourhood
\(
   N_g^K\times(-\epsilon,\epsilon)\subset Y_\tau .
\)
For each \(n\ge1\), choose \(0<\delta<\epsilon\) and work inside the closed
subproduct
\[
   N_g^K\times[-\delta,\delta]
   \subset
   N_g^K\times(-\epsilon,\epsilon)
   \subset Y_\tau .
\]
After rescaling \([-\delta,\delta]\) to \([-1,1]\),
Lemma~\ref{lem:mobius-band-square-root-block} applies to the M\"obius band
\(
   B_n\subset N_g^K\setminus\operatorname{Int}\nu\Sigma .
\)
Thus this closed subproduct contains a standard square-root neighbourhood for
\(c\) along \(F_0\), with square-root element
\[
   t_n=a^nt,
   \qquad
   t_n^2=c\in H .
\]
The construction is compactly supported in the interior of
\(N_g^K\times[-\delta,\delta]\), so it defines a local track after inclusion
into \(Y_\tau\).  The support is chosen away from the basepoint and base
path, hence the track is \(\rho\)-marked.

Put
\(
   \mathcal D_n=Ht_nH\in H\backslash\Gamma/H .
\)
By Lemma~\ref{lem:closed-mapping-torus-group-facts}, the labels
\(\mathcal D_n\), \(n\ge1\), are pairwise distinct, nontrivial, and
self-dual in \(H\backslash\Gamma/H\), and \(\Gamma\) is torsion-free.
Together with Lemma~\ref{lem:square-root-crossed-block}, the track associated
to \(B_n\) is a \(\mathcal D_n\)-crossed track with raw value
\(u_{\mathcal D_n}\).

Let \(F_n\) be the top surface of this track.  Then \(F_n\) is embedded,
homotopic to \(F_0\), and has marked image subgroup \(H\).

Since \(Y_\tau\) is aspherical by
Lemma~\ref{lem:closed-mapping-torus-group-facts}, we have
\(
   \pi_2Y_\tau=0.
\)
Therefore
\(
   H^1(\Sigma_g;(\pi_2Y_\tau)_\rho)=0.
\)
Because \(\Gamma\) is torsion-free, no label \(\mathcal D_i\) contains an
element of order two.  The survival theorem, Theorem~\ref{thm:B-survival},
gives
\(
   Q^{\mathcal D_i}_\rho(Y_\tau,F_0)
   \cong
   \Ftwo\langle u_{\mathcal D_i}\rangle
\)
for every \(i\ge1\).  Also \(\pi_3Y_\tau=0\), since \(Y_\tau\) is
aspherical; this is consistent with the absence of local
\(\pi_3Y_\tau\)-insertion classes, although the formal survival input is the
order-two exclusion and the vanishing of
\(H^1(\Sigma_g;(\pi_2Y_\tau)_\rho)\).

The distinguishing formula is the same local calculation as before.  The
track from \(F_0\) to \(F_n\) uses only the square-root neighbourhood associated to
\(B_n\).  By Lemma~\ref{lem:mobius-band-square-root-block} and
Lemma~\ref{lem:square-root-crossed-block}, this neighbourhood contributes exactly
one local component with label \(\mathcal D_n\), and no component with any
other nontrivial label.  Since the labels are pairwise distinct,
\[
   FQ^{\mathcal D_i}_\rho(F_0,F_n)
   =
   \begin{cases}
   u_{\mathcal D_i},& i=n,\\
   0,& i\ne n.
   \end{cases}
\]

It remains to promote the marked obstruction values to image nonconcordance.
By Lemma~\ref{lem:closed-mapping-torus-group-facts}, \(N_\Gamma(H)=H\).
Hence \(C_\Gamma(H)\subseteq H\), and the image of
\(
   N_\Gamma(H)\longrightarrow\Out(H)
\)
is trivial.  Since \(g\ge2\), the extended Dehn--Nielsen--Baer theorem
\cite[Theorem~8.1]{FarbMargalit2012} identifies
\(\Mod(\Sigma_g)\) with \(\Out(\pi_1\Sigma_g)\).  Thus the full
endpoint-rigidity set in Corollary~\ref{cor:full-endpoint-rigid-image} is
trivial.

The argument from the proof of
Theorem~\ref{thm:general-mobius-square-root-consequence} now applies, with
\(\pi\) replaced by \(\Gamma\).  The displayed formula gives a nonzero
\(\mathcal D_n\)-coordinate for \(F_n\), and the \(\mathcal D_m\)-coordinate
separates \(F_m\) from \(F_n\) when \(m\ne n\).  Hence the embedded images of
\(F_0,F_1,F_2,\ldots\) are pairwise not smoothly image-concordant in
\(Y_\tau\).
\end{proof}

The next theorem adds one \(S^2\times S^2\)-summand to the same local
construction in order to supply a common framed dual sphere and to kill the
meridian in the complements.

\begin{theorem}[Stabilized closed graph-manifold mapping-torus family]
\label{thm:closed-graph-manifold-mapping-torus-family}
For every \(g\ge2\), there exist a closed orientable aspherical graph
manifold \(N_g^K\), a closed two-sided nonseparating \(\pi_1\)-injective
surface
\[
   \Sigma=\Sigma_g\subset N_g^K,
   \qquad
   H=\pi_1\Sigma\le G=\pi_1N_g^K,
\]
and a Dehn twist \(\tau:N_g^K\to N_g^K\) along a nonseparating JSJ torus,
such that, for
\[
   Y_\tau=N_g^K\rtimes_\tau S^1,
   \qquad
   X_\tau=Y_\tau\#(S^2\times S^2),
\]
the four-manifold \(X_\tau\) contains a base \(\rho\)-marked surface
\(F_0:\Sigma_g\hookrightarrow X_\tau\) and \(\rho\)-marked surfaces
\(
   F_n:\Sigma_g\hookrightarrow X_\tau\), \(n\ge1,
\)
all homotopic to \(F_0\), such that:
\begin{enumerate}[label=\textup{(\roman*)}]
\item \(F_{n*}\pi_1\Sigma_g=H\) for all \(n\ge0\);
\item the surfaces have a common framed embedded dual sphere;
\item the complement maps
\(
   \pi_1(X_\tau\setminus\nu F_n)\longrightarrow \pi_1X_\tau
\)
are isomorphisms for all \(n\ge0\);
\item there are pairwise distinct nontrivial self-dual double-cosets
\[
   \mathcal D_n=Ht_nH\in H\backslash\Gamma/H
      =H\backslash\pi_1X_\tau/H,
   \qquad
   t_n=a^nt,
   \qquad
   t_n^2=c\in H,
   \qquad n\ge1;
\]
\item for every \(i\ge1\),
\(
   Q^{\mathcal D_i}_\rho(X_\tau,F_0)
   \cong
   \Ftwo\langle u_{\mathcal D_i}\rangle;
\)
\item for all \(i,n\ge1\),
\[
   FQ^{\mathcal D_i}_\rho(F_0,F_n)
   =
   \begin{cases}
   u_{\mathcal D_i},& i=n,\\
   0,& i\ne n.
   \end{cases}
\]
\end{enumerate}
Consequently the embedded images of \(F_0,F_1,F_2,\ldots\) are pairwise not
smoothly image-concordant.
\end{theorem}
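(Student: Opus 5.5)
The plan is to run the proof of Theorem~\ref{thm:general-mobius-square-root-consequence} verbatim, with the compact product \(M\times[-1,1]\) replaced by a product collar of the fiber \(N_g^K\) in \(Y_\tau\), and with the group-theoretic inputs supplied by Lemma~\ref{lem:closed-mapping-torus-group-facts}. First I fix the base surface. I take \(\Sigma\subset N_g^K\times\{0\}\subset Y_\tau\) as in Theorem~\ref{thm:unstabilized-closed-mapping-torus-family}, and form \(X_\tau=Y_\tau\#(S^2\times S^2)\) by a connected sum in a small ball away from the surface, the base path, and the M\"obius-band neighbourhoods. I then define \(F_0\) as \(\Sigma\) tubed to \(S_1=S^2\times\{\mathrm{pt}\}\) along an arc avoiding the marking disk and the annular patch around \(c\). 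Since \(\pi_1(S^2\times S^2)=1\), we get \(\pi_1X_\tau=\Gamma\). The marked image subgroup is unchanged, because the tube is attached inside a disk, so (i) holds for \(F_0\). The sphere \(D_{\mathrm{dual}}=\{\mathrm{pt}\}\times S^2\) meets \(F_0\) once and is framed.

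For \(n\ge1\), I apply Lemma~\ref{lem:mobius-band-square-root-block} to the band \(B_n\) of Lemma~\ref{lem:closed-bands-and-quotient}, working inside the collar \(N_g^K\times[-\delta,\delta]\) exactly as in the unstabilized proof. Lemma~\ref{lem:square-root-crossed-block} then gives a \(\mathcal D_n\)-crossed track supported away from the tube and the summand, and \(F_n\) is its top surface. Homotopy to \(F_0\), the marking, (i), and the fact that \(D_{\mathrm{dual}}\) is a common dual sphere all follow because the support is disjoint from these data. Lemma~\ref{lem:dual-sphere-kills-meridian} then gives (iii) for every \(n\ge0\).

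For (iv), I use Lemma~\ref{lem:closed-mapping-torus-group-facts} together with \(\pi_1X_\tau=\Gamma\). For (v), I apply Lemma~\ref{lem:aspherical-finite-stabilization-survival} with \(Z=Y_\tau\), which is aspherical with torsion-free fundamental group, and \(q=1\).

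The main point to check is (vi). Here the labelled double locus of the \(\mathcal D_n\)-track is exactly the single circle \(Z\) of Proposition~\ref{prop:cutoff-crossed-movie}. By (S5), the nonmoving part of the surface contributes no \(\mathcal D_i\)-labelled pairs, and that includes the tube and the \(S_1\) sheet, since these lie outside \(U\). The labels \(\mathcal D_i\) are pairwise distinct, so the \(\mathcal D_i\)-coordinate is \(u_{\mathcal D_i}\) when \(i=n\) and \(0\) otherwise.

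For the final image-nonconcordance statement, I note that \(N_\Gamma(H)=H\) gives \(C_\Gamma(H)\subseteq H\) and a trivial image in \(\Out(H)\). Dehn--Nielsen--Baer then makes the full endpoint-rigidity set trivial. Corollary~\ref{cor:full-endpoint-rigid-image} therefore separates \(F_0\) from each \(F_n\). For a pair \(F_m,F_n\) with \(m\ne n\), an image concordance would reduce, via Lemma~\ref{lem:normalizer-endpoint-full} and Lemma~\ref{lem:centralizer-promotion}, to a marked concordance. Combined with the fixed-target concatenation formula, this would force \(FQ^{\mathcal D_n}_\rho(F_0,F_m)=FQ^{\mathcal D_n}_\rho(F_0,F_n)\), which contradicts (vi).
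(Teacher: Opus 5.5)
Your proposal is correct and follows essentially the same route as the paper's proof. You stabilize away from the crossed-track supports and marking data and tube to \(S^2\times\{\mathrm{pt}\}\) along a fixed tube. You then get (iii) from Lemma~\ref{lem:dual-sphere-kills-meridian}, (v) from Lemma~\ref{lem:aspherical-finite-stabilization-survival} with \(Z=Y_\tau\), (vi) from the local square-root calculation, and image nonconcordance from \(N_\Gamma(H)=H\) together with Dehn--Nielsen--Baer. The only cosmetic difference is the order of operations: the paper first builds the unstabilized surfaces \(F_n^Y\) of Theorem~\ref{thm:unstabilized-closed-mapping-torus-family}, then tubes each to \(S_1\) and extends the tracks by the stationary tube, whereas you tube first and run the crossed track afterwards; these agree because the supports are disjoint.
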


\begin{proof}
Let
\(
   \Gamma=\pi_1Y_\tau .
\)
By Lemma~\ref{lem:closed-mapping-torus-group-facts}, \(Y_\tau\) is
aspherical, \(\Gamma\) is torsion-free, \(N_\Gamma(H)=H\), and the labels
\[
   \mathcal D_n=Ht_nH\in H\backslash\Gamma/H,
   \qquad
   t_n=a^nt,
   \qquad n\ge1,
\]
are pairwise distinct, nontrivial, and self-dual. Let
\(
   F_n^Y:\Sigma_g\hookrightarrow Y_\tau,
   \qquad n\ge0,
\)
be the unstabilized surfaces constructed in
Theorem~\ref{thm:unstabilized-closed-mapping-torus-family}.  Thus
\(F_n^Y\) is obtained from \(F_0^Y\) by a \(\rho\)-marked
\(\mathcal D_n\)-crossed track supported in the product neighbourhood of the
fiber \(N_g^K\subset Y_\tau\), and the local calculation gives
\[
   FQ^{0,\mathcal D_i}_\rho(F_0^Y,F_n^Y)
   =
   \begin{cases}
   u_{\mathcal D_i},& i=n,\\
   0,& i\ne n.
   \end{cases}
\]

Now form
\(
   X_\tau=Y_\tau\#(S^2\times S^2)
\)
by taking the connected sum away from the fiber product neighbourhood used by
the crossed tracks, and away from the basepoint and base path.  In the
stabilizing summand write
\[
   S_1=S^2\times\{\operatorname{pt}\},
   \qquad
   D_{\mathrm{dual}}=\{\operatorname{pt}\}\times S^2 .
\]
For each \(n\ge0\), define
\(
   F_n=F_n^Y\# S_1
   \subset X_\tau
\)
using the same fixed tube, chosen away from the marking data and from all
crossed-track supports.  Connected sum with \(S_1\) does not change the genus
or the marked surface subgroup, so
\(
   F_{n*}\pi_1\Sigma_g=H
\)
for every \(n\).  The tracks from \(F_0^Y\) to \(F_n^Y\) extend by the
stationary tube and the stationary sphere \(S_1\) to \(\rho\)-marked tracks
from \(F_0\) to \(F_n\) in \(X_\tau\).  Their moving support is unchanged,
so the raw \(\mathcal D_i\)-counts are the same as in the unstabilized
construction:
\[
   FQ^{0,\mathcal D_i}_\rho(F_0,F_n)
   =
   \begin{cases}
   u_{\mathcal D_i},& i=n,\\
   0,& i\ne n.
   \end{cases}
\]
Finally, \(D_{\mathrm{dual}}\) is a framed embedded dual sphere for every
\(F_n\), because the stabilization tube is fixed and each crossed track is
supported away from the stabilizing summand.

For every \(n\), the punctured dual sphere
\(
   D_{\mathrm{dual}}\setminus\operatorname{Int}\nu F_n
\)
is a disk in \(X_\tau\setminus\nu F_n\) bounding a meridian of \(F_n\).  By
Lemma~\ref{lem:dual-sphere-kills-meridian}, the complement map
\(
   \pi_1(X_\tau\setminus\nu F_n)\longrightarrow\pi_1X_\tau
\)
is an isomorphism.

The survival statement follows from
Lemma~\ref{lem:aspherical-finite-stabilization-survival}, applied to
\[
   Z=Y_\tau,
   \qquad
   \Pi=\Gamma .
\]
By Lemma~\ref{lem:closed-mapping-torus-group-facts}, \(Y_\tau\) is
aspherical and \(\Gamma\) is torsion-free.  Hence, for every \(i\ge1\),
\(
   Q^{\mathcal D_i}_\rho(X_\tau,F_0)
   \cong
   \Ftwo\langle u_{\mathcal D_i}\rangle .
\)

The tracks from the unstabilized theorem extend by the stationary tube and
the stationary sphere \(S_1\).  Their moving supports are unchanged and are
disjoint from the stabilizing summand.  Hence the local labelled fiber
products are the same as before, and the displayed Kronecker formula holds:
\[
   FQ^{\mathcal D_i}_\rho(F_0,F_n)
   =
   \begin{cases}
   u_{\mathcal D_i},& i=n,\\
   0,& i\ne n.
   \end{cases}
\]

Finally, \(\pi_1X_\tau=\Gamma\), and
Lemma~\ref{lem:closed-mapping-torus-group-facts} gives \(N_\Gamma(H)=H\).
Thus \(C_\Gamma(H)\subseteq H\), the image of
\(N_\Gamma(H)\to\Out(H)\) is trivial, and, by the same
Dehn--Nielsen--Baer argument used in the proof of
Theorem~\ref{thm:general-mobius-square-root-consequence}, the full
endpoint-rigidity set is trivial.  Therefore the argument from that proof applies with \(\pi\) replaced by \(\Gamma\).  The
displayed Kronecker formula proves that the embedded images of
\(F_0,F_1,F_2,\ldots\) are pairwise not smoothly image-concordant in
\(X_\tau\).

The dual sphere is not needed for the nonconcordance conclusion, but it is
essential for the additional conclusions of this theorem: it supplies the
common framed dual sphere and, after puncturing, kills a meridian in every
complement by Lemma~\ref{lem:dual-sphere-kills-meridian}.
\end{proof}

\begin{remark}[Persistence under further stabilization]
\label{rem:persistence-under-further-stabilization}
The image-nonconcordance conclusions of both
Theorem~\ref{thm:fixed-klein-bottle-infinite-family} and
Theorem~\ref{thm:closed-graph-manifold-mapping-torus-family} persist under
any finite number of further ambient \(S^2\times S^2\)-stabilizations.

Indeed, let \(X\) denote either \(X_{g,c}^{K}\) in the compact
Klein-bottle \(I\)-bundle family or \(X_\tau\) in the closed graph-manifold
mapping-torus family, and write
\(
   \Pi=\pi_1X.
\)
Thus \(\Pi=\pi_1M_{g,c}^{K}\) in the compact case and
\(\Pi=\Gamma=\pi_1Y_\tau\) in the closed case.  For \(k\ge0\), put
\[
   X^{(k)}=X\# k(S^2\times S^2),
\]
with all connected sums taken away from the relevant surfaces, construction
tracks, marking data, and common dual sphere.  Since \(S^2\times S^2\) is
simply connected,
\(
   \pi_1X^{(k)}\cong \pi_1X=\Pi.
\)
Hence the surface subgroup \(H\), the double-coset labels
\(
   \mathcal D_n=Ht_nH\in H\backslash\Pi/H,
\)
torsion-freeness of \(\Pi\), and the endpoint-rigidity hypothesis
\(N_\Pi(H)=H\) are unchanged.  The local tracks are also
unchanged, since their supports lie in the original summand.

The common framed dual sphere remains a common framed dual sphere in
\(X^{(k)}\).  Moreover, the complement \(\pi_1\)-isomorphisms persist:
adding simply connected summands away from the surfaces does not change the
fundamental group of either the ambient manifold or the surface complement,
and the punctured dual sphere still kills a meridian.

The survival statement also persists.  Indeed, each further stabilization is
still a finite stabilization of the same aspherical ambient \(Z\), and the
group \(\Pi\), subgroup \(H\), and labels \(\mathcal D_n\) are unchanged.
Thus Lemma~\ref{lem:aspherical-finite-stabilization-survival} applies to
\(X^{(k)}\), and gives
\(
   Q^{\mathcal D_i}_\rho(X^{(k)},F_0)
   \cong
   \Ftwo\langle u_{\mathcal D_i}\rangle
\)
for every \(i\ge1\).  The local tracks are unchanged, so the same Kronecker
formula holds.  Hence the embedded images of \(F_0,F_1,F_2,\ldots\) remain
pairwise not smoothly image-concordant after every finite further
\(S^2\times S^2\)-stabilization.
\end{remark}

\subsection{The orientation-double-cover family}
\label{sec:orientation-family}
We finish with the normal index-two specialization coming from the orientation
double cover of a closed nonorientable surface.  In this case the relevant
self-dual label is the nontrivial deck-transformation label.

Let \(N_{g+1}\) be the closed nonorientable surface of genus \(g+1\), and
let
\(
p:\Sigma_g\to N_{g+1}
\)
be its orientation double cover.  Put
\[
\pi=\pi_1(N_{g+1}),
\qquad
H=p_*\pi_1\Sigma_g=\ker w_1(N_{g+1}).
\]
Then \(H\triangleleft\pi\) and \(\pi/H\cong\Z/2\).  Let \(\tau\) denote the nontrivial deck transformation of the regular
\(H\)-cover \(X_H\to X_g\), equivalently the unique nontrivial index-two
label.  Let \(L\to N_{g+1}\) be the real line bundle with
\(w_1(L)=w_1(N_{g+1})\), set
\(
Y_g=D(L\oplus\varepsilon^1),
\)
and define
\[
X_g=Y_g\#(S^2\times S^2).
\]
The total space \(Y_g\) is oriented because
\( 
w_1(TY_g)=p_Y^*(w_1(N_{g+1})+w_1(L))=0,
\)
where \(p_Y:Y_g\to N_{g+1}\) is the bundle projection.  Since \(Y_g\)
deformation retracts to \(N_{g+1}\),
\(
\pi_1X_g\cong\pi.
\)

Let
\(
T=S_r(L)\subset\operatorname{Int}Y_g.
\)
The sphere bundle \(S(L)\to N_{g+1}\) is the orientation double cover, so
\(T\cong\Sigma_g\) and \(\pi_1T\to\pi_1Y_g\) has image \(H\).  Define
\(
F_0=T\#(S^2\times\{\mathrm{pt}\})
\)
inside \(X_g\), with the tube chosen away from the fixed basepoint,
base-path, and marking data.  The marking \(\rho\) is induced by the
orientation-cover identification \(\pi_1\Sigma_g\cong H\).  Let
\(
D_0=\{\mathrm{pt}\}\times S^2
\)
in the \(S^2\times S^2\)-summand.  Then \(D_0\) is a framed embedded dual
sphere and
\(
[F_0]\cdot[D_0]=1.
\)

\begin{lemma}
\label{lem:orientation-complement}
The complement map
\(
\pi_1(X_g\setminus\nu F_0)\to\pi_1X_g
\)
is an isomorphism.
\end{lemma}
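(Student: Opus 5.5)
The plan is to apply Lemma~\ref{lem:dual-sphere-kills-meridian} directly, exactly as in the proofs of Theorems~\ref{thm:general-mobius-square-root-consequence} and~\ref{thm:closed-graph-manifold-mapping-torus-family}. That lemma needs two inputs: that \(D_0\) is a framed dual sphere for \(F_0\), and that \(D_0\setminus\operatorname{Int}\nu F_0\) is a disk in \(X_g\setminus\nu F_0\) bounding a meridian of \(F_0\).

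First I will check the geometric intersection. The surface \(F_0\) is \(T\#(S^2\times\{\mathrm{pt}\})\), formed with a tube that lies away from the \(S^2\times S^2\)-summand's second factor. The sphere \(D_0=\{\mathrm{pt}\}\times S^2\) lies in the stabilizing summand. It meets \(S^2\times\{\mathrm{pt}\}\) transversally in exactly one point. I choose that point away from the disk removed for the connected-sum tube, and the tube away from \(D_0\). Then \(D_0\) is disjoint from \(T\) (which lives in \(Y_g\)) and from the tube, so \(D_0\cap F_0\) is a single transverse point. The sphere has trivial normal bundle, so it is framed; this is already recorded before the statement.

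Next, I take \(\nu F_0\) small enough that \(D_0\cap\nu F_0\) is a single normal disk fiber of \(\nu F_0\) at the intersection point. Then \(D_0\setminus\operatorname{Int}\nu F_0\) is a sphere minus an open disk, hence a disk. It is contained in \(X_g\setminus\nu F_0\), and its boundary is the boundary of a normal disk fiber, which is a meridian \(\mu\) of \(F_0\).

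Finally, Lemma~\ref{lem:dual-sphere-kills-meridian} gives that \(\pi_1(X_g\setminus\nu F_0)\to\pi_1X_g\) is an isomorphism. Its proof is the van Kampen argument: \(\pi_1X_g\cong\pi_1C/\langle\!\langle\mu\rangle\!\rangle\), and \(\mu\) is already trivial in \(\pi_1C\).

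I expect no real obstacle. The only point needing care is that van Kampen applies to \(X_g=C\cup\nu F_0\) with a single meridian normally generating the kernel. This holds because \(F_0\) is connected, so \(\partial\nu F_0\to\nu F_0\) is a circle bundle with connected fiber. The nonorientability of \(N_{g+1}\) plays no role here, since only the local product structure near the intersection point is used.
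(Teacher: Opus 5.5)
Your proposal is correct and matches the paper's proof: the paper likewise observes that \(D_0\setminus\operatorname{Int}\nu F_0\) is a disk in \(X_g\setminus\nu F_0\) bounding a meridian of \(F_0\), and then invokes Lemma~\ref{lem:dual-sphere-kills-meridian}. Your additional check that \(D_0\) meets \(F_0\) in a single transverse point, with the tube chosen away from \(D_0\), only makes explicit what the paper takes from the construction.
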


\begin{proof}
The punctured dual sphere
\(
D_0\setminus\operatorname{Int}\nu F_0
\)
is a disk in \(X_g\setminus\nu F_0\) bounding a meridian of \(F_0\).
Lemma~\ref{lem:dual-sphere-kills-meridian} gives the complement
\(\pi_1\)-isomorphism.
\end{proof}

\begin{lemma}
\label{lem:orientation-survival}
For the orientation-double-cover family,
\(
\operatorname{im}\mu_\tau=0\) and
\(H^1(\Sigma_g;(\pi_2X_g)_\rho)=0.
\)
Consequently
\(
Q^\tau_\rho(X_g,F_0)\cong\Ftwo\langle u_\tau\rangle.
\)
\end{lemma}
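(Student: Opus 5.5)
The plan is to verify the two hypotheses of Theorem~\ref{thm:B-survival}, specialized to the index-two normal case through Corollary~\ref{cor:index-two}. Here the label is $D=\pi\setminus H$, which corresponds to the deck transformation $\tau$. The target quotient then collapses to $\Ftwo\langle u_\tau\rangle$.

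\emph{First hypothesis: the order-two exclusion.} The group $\pi=\pi_1N_{g+1}$ is the fundamental group of a closed aspherical surface, since $g+1\ge 3\ge 1$ and $N_{g+1}\ne\mathbb{RP}^2$. Hence $\pi$ is torsion-free, and the coset $\pi\setminus H$ contains no element of order two. Lemma~\ref{lem:muD-zero} then gives $\operatorname{im}\mu_\tau=0$. Note that this needs only torsion-freeness of $\pi$, not $\pi_3X_g=0$. This is important, because $\pi_3X_g$ is nonzero after stabilization.

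\emph{Second hypothesis: the cohomological vanishing.} I would follow the argument of Lemma~\ref{lem:aspherical-finite-stabilization-survival}, with $Z=Y_g$ and $q=1$. The manifold $Y_g$ deformation retracts onto $N_{g+1}$, which is aspherical, so $Y_g$ is aspherical with torsion-free group $\pi$. The statement of that lemma asks for a smooth aspherical $Z$, and $Y_g$ is compact with boundary. I will check that the proof uses only $\pi_2Z=0$ and the connected-sum description of $\pi_2$. With that, $\pi_2X_g\cong\mathbb Z[\pi]^2$ as a left $\mathbb Z[\pi]$-module. Restricting to $H$ gives
\[
(\pi_2X_g)_\rho\cong\bigoplus_{H\backslash\pi}\mathbb Z[H]^2,
\]
which here is just two cosets. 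Then $H^1(\Sigma_g;(\pi_2X_g)_\rho)$ is a finite sum of copies of $H^1(H;\mathbb Z[H])$. This group vanishes because $H\cong\pi_1\Sigma_g$ is a $PD_2$-group, so $H^1(H;\mathbb Z[H])\cong H_1(H;\mathbb Z)$-dual-style computation gives $H^{2-1}$ dual to $H_1$ of a point, that is, it is concentrated in degree $2$. The finite sum poses no commutation issue.

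\emph{Main obstacle.} The key risk is the identification of $\pi_2X_g$ as a free module, specifically that the $S^2\times S^2$ summand contributes exactly $\mathbb Z[\pi]^2$. I would justify this by passing to the universal cover: $\widetilde X_g$ is $\widetilde Y_g$ with $\pi$-many copies of punctured $S^2\times S^2$ attached. Since $\widetilde Y_g$ is contractible, Hurewicz gives $\pi_2X_g=H_2(\widetilde X_g)\cong\mathbb Z[\pi]\otimes H_2(S^2\times S^2)$. The boundary of $Y_g$ is irrelevant here, because the sum is taken in the interior.

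Combining the two hypotheses, Theorem~\ref{thm:B-survival} yields $R_{\Theta,\tau}=0$, and therefore $Q^\tau_\rho(X_g,F_0)\cong\Ftwo\langle u_\tau\rangle$.
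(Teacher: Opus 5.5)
Your proposal is correct and follows essentially the same route as the paper. Torsion-freeness of $\pi_1N_{g+1}$, together with Corollary~\ref{cor:index-two} and Lemma~\ref{lem:muD-zero}, gives $\operatorname{im}\mu_\tau=0$; the argument of Lemma~\ref{lem:aspherical-finite-stabilization-survival} with $Z=Y_g$ (which the paper simply invokes directly) gives the $H^1$-vanishing, so Theorem~\ref{thm:B-survival} applies.

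There are two harmless slips. First, the family allows $g=1$, so only $g+1\ge2$ holds; in that case $N_2$ is the Klein bottle, which is still aspherical with torsion-free group. Second, the duality step should read $H^1(H;\mathbb Z[H])\cong H_1(H;\mathbb Z[H])=H_1(\widetilde\Sigma_g;\mathbb Z)=0$; it should not involve $H_1(H;\mathbb Z)$, which is $\mathbb Z^{2g}$.
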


\begin{proof}
The group \(\pi_1(N_{g+1})\) is torsion-free.  For \(g=1\) this follows
from the Klein-bottle normal form; for \(g\ge2\) the universal cover is
\(\mathbb H^2\) and deck transformations act freely.  Therefore no element
of \(\pi\setminus H\) has order two.  By the index-two specialization in
Corollary~\ref{cor:index-two} and Lemma~\ref{lem:muD-zero},
\(
\operatorname{im}\mu_\tau=0.
\)

Now apply Lemma~\ref{lem:aspherical-finite-stabilization-survival} with
\[
   Z=Y_g,\qquad
   \Pi=\pi_1Y_g=\pi,\qquad
   X=X_g,
\]
and with \(D=\pi\setminus H\), the unique nontrivial index-two label.  The
space \(Y_g\) deformation retracts to the aspherical surface \(N_{g+1}\), so
\(Y_g\) is aspherical, and the preceding paragraph shows that \(\pi\) is
torsion-free.  Hence
\(
   H^1(\Sigma_g;(\pi_2X_g)_\rho)=0.
\)
Under the index-two dictionary of Corollary~\ref{cor:index-two}, the label
\(D=\pi\setminus H\) is the deck label \(\tau\).  Therefore
\(
   Q^\tau_\rho(X_g,F_0)\cong\Ftwo\langle u_\tau\rangle .
\)
\end{proof}

\begin{figure}[ht]
\centering
\begin{tikzpicture}[
   scale=1,
   line cap=round,
   line join=round,
   every node/.style={font=\small},
   edge/.style={line width=0.45pt},
   main/.style={line width=0.65pt},
   guide/.style={line width=0.45pt,dashed},
   strip/.style={fill=black!8,draw=none},
   arrow/.style={-{Stealth[length=5pt,width=4pt]},line width=0.45pt}
]

\coordinate (A) at (0,-1.8);
\coordinate (B) at (6,-1.8);
\coordinate (C) at (6,1.8);
\coordinate (D) at (0,1.8);

\def\sp{0.62}
\def\sm{-0.32}

\fill[strip] (0,\sm) rectangle (6,\sp);

\draw[main] (A) rectangle (C);

\draw[guide] (0,0) -- (6,0);
\node[left] at (0,0) {\(s=0\)};

\draw[edge] (0,\sp) -- (6,\sp);
\draw[edge] (0,\sm) -- (6,\sm);
\node[left] at (0,\sp) {\(s_+=2\epsilon\)};
\node[left] at (0,\sm) {\(s_-=-\epsilon\)};

\node at (3,{(\sp+\sm)/2 + 0.16}) {\(\operatorname{pr}_{(x,s)}(W)\)};

\draw[arrow] (0.55,2.15) -- (1.55,2.15);
\node[above] at (1.05,2.15) {\(x\)};

\node[below] at (0,-1.8) {\(x=0\)};
\node[below] at (6,-1.8) {\(x=1\)};

\draw[arrow] (6.75,0.88) -- (6.75,-0.88);
\node[right,align=left] at (6.88,0)
   {\(\tau\) sends\\ \(s\mapsto -s\)};

\end{tikzpicture}
\caption{The cut-open annular chart in
Lemma~\ref{lem:orientation-crossed-block}.  The upstairs annulus
\(\widetilde U\cong S^1_x\times[-3,3]_s\) is cut open to
\([0,1]_x\times[-3,3]_s\); the boundary edges \(x=0\) and \(x=1\) are
re-identified when the annulus is recovered.  The Whitney rectangle \(W\)
projects to the strip between the two \(s\)-levels
\(s_+=2\epsilon\) and \(s_-=-\epsilon\).  The deck transformation acts by
\(\tau(x,s,u,v)=(x+\tfrac12,-s,-u,v)\).  The skew choice
\(s_++s_->0\), together with the reflected disk-coordinate path, is used to
show that \(W\cap\tau W=\varnothing\).}
\label{fig:orientation-crossed-block-chart}
\end{figure}

\begin{lemma}
\label{lem:orientation-crossed-block}
The surface \(F_0\subset X_g\) admits a \(D\)-crossed track for the
unique nontrivial index-two label, equivalently for the deck label \(\tau\).
The construction is supported away from \(D_0\) and the marking data, and it
produces a \(\rho\)-marked embedded surface \(F_1\), homotopic to \(F_0\),
with track \(G\) satisfying
\(
FQ^{0,\tau}_\rho(G)=u_\tau.
\)
Consequently
\(
FQ^\tau_\rho(F_0,F_1)=u_\tau\neq0.
\)
\end{lemma}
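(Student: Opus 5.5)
The plan is to reduce to Lemma~\ref{lem:square-root-crossed-block}: build a standard square-root neighbourhood (Definition~\ref{def:standard-square-root-handle}) for a suitable curve on $T$ inside $Y_g$, away from the stabilizing tube, $D_0$ and the marking data. Then quote that lemma and Lemma~\ref{lem:orientation-survival}.

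\emph{Algebraic input.} Choose a two-sided-in-$Y_g$ but orientation-reversing simple closed curve $\gamma\subset N_{g+1}$. Its class $t\in\pi$ satisfies $w_1(t)\neq0$, so $t\notin H$. Its square $c=t^2$ lies in $H$ and is represented by the preimage $p^{-1}(\gamma)\subset T$, a single essential circle that double covers $\gamma$. By Lemma~\ref{lem:algebraic-square-root-criterion}, $D=HtH=\pi\setminus H$ is the unique nontrivial label, and under Corollary~\ref{cor:index-two} it corresponds to $\tau$.

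\emph{Local chart.} Let $B_\gamma\subset N_{g+1}$ be a Möbius-band neighbourhood of $\gamma$, chosen away from the basepoint, the base path and the projection of the tube. Over $B_\gamma$, $L$ is the Möbius line bundle and $\varepsilon^1$ is trivial. Pull back along the orientation double cover $\widetilde B_\gamma\cong S^1_x\times[-3,3]_s\to B_\gamma$, whose deck map is $(x,s)\mapsto(x+\tfrac12,-s)$. There $L$ becomes trivial with fibre coordinate $\ell$, the deck map sends $\ell\mapsto-\ell$, and $\varepsilon^1$ has coordinate $v$. Set $u=\ell-r$ near the sheet $\ell=r$. This gives coordinates $S^1_x\times[-3,3]_s\times D^2_{(u,v)}$, after shrinking, on the component of the $H$-cover over $U=D(L\oplus\varepsilon^1)|_{B_\gamma}$. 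In these coordinates $q$ is the quotient by $\iota(x,s,u,v)=(x+\tfrac12,-s,-u,v)$, after recentring so that the two sheets $\ell=\pm r$ become $u=\pm r$ in the model. So (S2) holds.

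\emph{Checking (S1)--(S5).}
\begin{itemize}
\item (S3): $T\cap U=S_r(L)|_{B_\gamma}$ is exactly the annulus $A$ around $p^{-1}(\gamma)$. Its preferred lift is the sheet $\{u=r\}$, and the other sheet is its $\iota$-partner.
\item (S4): the exchange of these two sheets is realized by running once around $\gamma$, so its label is $HtH=D$.
\item (S5): $T$ meets $U$ only in $A$, and the tube and $S^2\times\{\mathrm{pt}\}$ lie outside $U$. Hence the only possible equalities involving the moving annulus are same-sheet equalities and the prescribed $\iota$-equalities.
\end{itemize}
Lemma~\ref{lem:square-root-crossed-block} then yields a $D$-crossed track $G$ with $FQ^{0,\tau}_\rho(G)=u_\tau$. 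The top surface $F_1$ is embedded and homotopic to $F_0$, and it is $\rho$-marked because the support avoids the base data and $D_0$. Lemma~\ref{lem:orientation-survival} gives $Q^\tau_\rho(X_g,F_0)\cong\Ftwo\langle u_\tau\rangle$, so $FQ^\tau_\rho(F_0,F_1)=u_\tau\neq0$.

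\emph{Main obstacle.} The delicate point is that, unlike the Möbius-band case of Lemma~\ref{lem:mobius-band-square-root-block}, the $\iota$-partner sheet is not in the complement of the surface: it is the surface itself viewed from the other lift. So the Whitney disk of Proposition~\ref{prop:cutoff-crossed-movie} must be checked to project to an embedded disk disjoint from its $\tau$-image and from the rest of $T$.

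To handle this I would use the skew Whitney rectangle suggested by the figure. Take the strip $s_-=-\epsilon\le s\le s_+=2\epsilon$ with $s_++s_-\neq0$, together with a reflected path in the $(u,v)$-disk. Then verify directly that $W\cap\tau W=\varnothing$: $\tau$ maps the strip to $[-2\epsilon,\epsilon]$, and on the overlap the $u$-coordinates are opposite and separated. This gives the framed embedded Whitney move in the quotient, and confirms that no extra $\tau$-labelled component appears.
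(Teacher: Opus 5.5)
Your proposal is correct and is essentially the paper's proof. It uses the same chart over an orientation-reversing curve, where the deck map acts by $\tau(x,s,u,v)=(x+\tfrac12,-s,-u,v)$ and the two lifted sheets sit at $(\pm r,0)$, the same cutoff movie whose ordered deck fiber product is a single circle, and the same skew rectangle with $s_++s_->0$. The only difference is packaging: you verify (S1)--(S5) and cite Lemma~\ref{lem:square-root-crossed-block}, whereas the paper recomputes $Z_\tau$ inline in the chart.

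Two small corrections. First, the normal coordinate should simply be $u=\ell$; your intermediate choice $u=\ell-r$ turns $\tau$ into $u\mapsto -u-2r$, not $u\mapsto-u$. Second, the embeddedness of the Whitney disk, which you flag as the main obstacle, is already supplied by the graph Whitney disk $W_\eta$ of Proposition~\ref{prop:cutoff-crossed-movie}, so the skew rectangle is only a supplementary check, as it is in the paper.
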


\begin{proof}
Choose a simple closed curve \(\ell\subset N_{g+1}\) whose regular
neighbourhood \(U\) is a M\"obius band.  Equivalently,
\(w_1([\ell])=1\).  Let
\(
   \eta=[\ell]\in\pi=\pi_1(N_{g+1}).
\)
Then \(\eta\notin H=\ker w_1(N_{g+1})\). Choose a M\"obius-band neighbourhood \(U\) away from the base path
and marking data.  Its orientation double cover is an annulus
\(
\widetilde U\cong S^1_x\times[-3,3]_s
\)
with deck transformation
\[
(x,s)\mapsto(x+1/2,-s).
\]
The pullback of \(L\oplus\varepsilon^1\) to the local orientation double
cover \(\widetilde U\to U\) is trivial.  Thus the corresponding component of
the \(H\)-cover of \(Y_g=D(L\oplus\varepsilon^1)\) has a local product
neighbourhood
\[
   \mathcal B\cong S^1_x\times[-3,3]_s\times D^2_{(u,v)} .
\]
In this trivialization the nontrivial deck transformation acts on the
annulus by
\[
   (x,s)\longmapsto(x+1/2,-s),
\]
acts by \(-1\) on the pulled-back \(L\)-coordinate, and acts trivially on the
\(\varepsilon^1\)-coordinate.  Hence its local form is
\[
   \tau(x,s,u,v)=(x+1/2,-s,-u,v).
\]
The two relevant lifted surface sheets lie at \((r,0)\) and \((-r,0)\) in
the disk factor. The middle circle \(S^1_x\times\{0\}\subset\widetilde U\) maps two-to-one
onto the core \(\ell\subset U\), since the deck involution restricts to
\(x\mapsto x+1/2\) on this circle.  Thus, after orienting the lifted core
compatibly with \(\ell\), the lifted core represents
\(
   \eta^2\in H.
\)
With the opposite orientation it represents \(\eta^{-2}\).

Choose a transverse interval in the annulus
\(\widetilde U=S^1_x\times[-3,3]_s\), disjoint from the small region where
the local movie will be supported, and cut \(\widetilde U\) open along this
interval.  After removing a small collar of the cut, the remaining part is a
rectangular chart
\(
   \mathcal R\cong[0,1]_x\times[-3,3]_s .
\)
Together with the disk factor, this gives
\[
   \mathcal R\times D^2_{(u,v)}
   \cong
   [0,1]_x\times[-3,3]_s\times D^2_{(u,v)}.
\]
Choose two nearby \(s\)-levels in the cut-open annulus,
\[
   s_+=2\epsilon,
   \qquad
   s_-=-\epsilon,
   \qquad 0<\epsilon\ll1,
\]
so that \(s_++s_->0\).  These are the levels
\[
   [0,1]_x\times\{s_+\},
   \qquad
   [0,1]_x\times\{s_-\}
\]
in the rectangular annular factor.  The Whitney rectangle below will run
between these two \(s\)-levels while its other coordinate moves in the disk
factor. When the cut-open annulus is glued back to \(\widetilde U\), the two boundary
edges
\[
   \{0\}\times[-3,3]_s
   \quad\text{and}\quad
   \{1\}\times[-3,3]_s
\]
are identified.  Passing from the \(x=1\) edge to the \(x=0\) edge therefore
traverses the lifted core circle once.  With the orientation convention
chosen above, this loop represents \(\eta^{-2}\in H\).

Let
\[
   R:D^2_{(u,v)}\longrightarrow D^2_{(u,v)},
   \qquad
   R(u,v)=(-u,v),
\]
be the reflection induced by the deck transformation on the disk factor.
Choose an embedded arc
\(
   \beta:[0,1]\longrightarrow D^2_{(u,v)}
\)
from \((r,0)\) to \((-r,0)\), chosen so that
\[
   R(\beta(\vartheta))=\beta(1-\vartheta)
   \qquad\text{for all }\vartheta\in[0,1].
\]
For example, after shrinking \(r\), one may take the straight diameter
\(\beta(\vartheta)=((1-2\vartheta)r,0)\).  Let
\(
   \sigma:[0,1]\longrightarrow[-3,3]
\)
be the linear path from \(s_+\) to \(s_-\).  Define
\[
   W:[0,1]_x\times[0,1]_\vartheta
      \longrightarrow
      [0,1]_x\times[-3,3]_s\times D^2_{(u,v)}
\]
by
\(
   W(x,\vartheta)=(x,\sigma(\vartheta),\beta(\vartheta)).
\)
This is embedded, since the \(x\)-coordinate is retained and the path
\(\vartheta\mapsto(\sigma(\vartheta),\beta(\vartheta))\) is embedded.

We claim that \(W\) is disjoint from its \(\tau\)-translate.  Suppose
\(
   W(x,\vartheta)=\tau W(x',\vartheta').
\)
Equality in the disk factor gives
\(
   \beta(\vartheta)=R(\beta(\vartheta'))=\beta(1-\vartheta').
\)
Since \(\beta\) is embedded, this implies
\(
   \vartheta'=1-\vartheta.
\)
Equality in the \(s\)-coordinate gives
\(
   \sigma(\vartheta)=-\sigma(\vartheta')
   =
   -\sigma(1-\vartheta).
\)
But \(\sigma\) is linear from \(s_+\) to \(s_-\), so
\(
   \sigma(\vartheta)+\sigma(1-\vartheta)=s_++s_-.
\)
This is positive by the choice of \(s_+\) and \(s_-\).  Hence the displayed
equality is impossible.  Therefore
\(
   W\cap\tau W=\varnothing.
\)

The edge \(\vartheta=0\) lies on the lifted sheet with disk coordinate
\((r,0)\), and the edge \(\vartheta=1\) lies on the lifted sheet with disk
coordinate \((-r,0)\).  The side edges \(x=0\) and \(x=1\) are placed along
the two short guide arcs of the local finger move, inside the two sides of
the cut-open annular chart.  Thus the projection of \(W\) to the quotient is
an embedded Whitney disk pairing the two double points created by the local
finger move.  The product coordinates provide the Whitney framing.

We now choose an explicit cutoff representative of the corresponding
finger-plus-Whitney movie and compute its ordered deck fiber product in this
chart.  Put
\[
   A=S^1_x\times[-3,3]_s,
   \qquad
   J(x,s)=\left(x+\frac12,-s\right),
\]
so that, with the reflection \(R\) defined above,
\(\tau(a,z)=(Ja,Rz)\) on \(A\times D^2\).  Choose constants
\[
   0<\alpha<\frac14,
   \qquad
   0<\delta<\frac14,
   \qquad
   0<\varepsilon_0\ll r,
\]
and define
\[
   T(x)=\frac12+\alpha\cos(4\pi x),
   \qquad
   S(x)=\delta\sin(2\pi x).
\]
Let \(\chi:[-3,3]\to[0,1]\) be a smooth even function with
\(\chi=1\) on \([-1,1]\) and \(\chi=0\) for \(|s|\ge2\).  For
\(0\le\lambda\le1\), set
\[
\begin{aligned}
   \Delta_\lambda(x,s)
   &=(1-\chi(s))(2r,0)
     +\chi(s)\varepsilon_0
       \bigl(T(x)-\lambda,\ s-S(x)\bigr),\\
   n_\lambda(x,s)&=\frac12\Delta_\lambda(x,s),\\
   \widehat f_\lambda(x,s)&=(x,s,n_\lambda(x,s))
      \in A\times D^2_{(u,v)}.
\end{aligned}
\]
After decreasing \(\varepsilon_0\) and \(r\), all graphs lie in the disk
factor.  The identities
\[
   T\left(x+\frac12\right)=T(x),
   \qquad
   S\left(x+\frac12\right)=-S(x),
   \qquad
   \chi(-s)=\chi(s)
\]
give
\[
   \Delta_\lambda(Ja)=-R\Delta_\lambda(a),
   \qquad
   n_\lambda(a)-R n_\lambda(Ja)=\Delta_\lambda(a).
\]
The actual local track is obtained by concatenating a deck-free setup
isotopy from the initial preferred sheet
\(A\times\{(r,0)\}\) to the graph of \(n_0\), the graph family
\(\widehat f_\lambda\), and a final product collar at the graph of
\(n_1\).  The setup isotopy is given explicitly by the graphs of
\[
   n^{\operatorname{in}}_\mu
   =\frac12\Delta^{\operatorname{in}}_\mu,
   \qquad
   \Delta^{\operatorname{in}}_\mu
   =(1-\mu)(2r,0)+\mu\Delta_0,
   \qquad 0\le\mu\le1.
\]
A stationary interval at any regular level between the birth and death may
be inserted without changing the component calculation.

For the main movie interval, define the local ordered deck fiber product by
\[
   P^{\operatorname{loc}}_\tau
   =\{(a,b,\lambda)\in A\times A\times I:
      \widehat f_\lambda(a)=\tau\widehat f_\lambda(b)\}.
\]
Equality of the annular coordinates forces \(b=Ja\).  By the displayed
equivariance identities, equality of the normal coordinates is then equivalent to
\(\Delta_\lambda(a)=0\).  If \(\chi(s)=0\), then
\(\Delta_\lambda(x,s)=(2r,0)\ne0\).  If
\(1<|s|<2\) and \(\chi(s)>0\), then
\[
   |s-S(x)|\ge |s|-\delta>0,
\]
so the second coordinate of \(\Delta_\lambda\) is nonzero.  Thus no zero
occurs in the transition region.  In the core \(|s|\le1\),
\[
   \Delta_\lambda(x,s)
   =\varepsilon_0\bigl(T(x)-\lambda,\ s-S(x)\bigr),
\]
and therefore
\[
   \Delta_\lambda(x,s)=0
   \quad\Longleftrightarrow\quad
   s=S(x),\qquad \lambda=T(x).
\]
Consequently the complete ordered fiber product in the main movie is
\[
   Z_\tau=
   \left\{
   \left((x,S(x)),J(x,S(x)),T(x)\right):x\in S^1
   \right\}.
\]
It is one embedded circle.  Replacing \(x\) by \(x+\frac12\) exchanges the
two ordered points and remains on the same circle.

There are no fiber-product points in the setup isotopy.  In the core, the
first coordinate of \(\Delta^{\operatorname{in}}_\mu\) is
\[
   (1-\mu)2r+\mu\varepsilon_0T(x)>0;
\]
in the transition region either this first coordinate is \(2r\), or the
second coordinate is nonzero; and in the outer collar the vector is
\((2r,0)\).  The final graph is also deck-free: in the core
\(T(x)-1<0\), in the transition region the second coordinate is nonzero,
and in the outer collar the first coordinate is \(2r\).  The defining map
near \(Z_\tau\) is
\[
   (x,s,\lambda)\longmapsto
   \varepsilon_0\bigl(T(x)-\lambda,\ s-S(x)\bigr),
\]
whose derivatives in the \(\lambda\)- and \(s\)-directions have rank two.
Thus the ordered fiber product is a smooth one-manifold.  Smoothing the
concatenation junctions inside the deck-free setup and final collars creates
no additional component.

The graph movie also has the required finger and framed Whitney stages. The minima of \(T\) at
\(x=\frac14,\frac34\) are exchanged by \(J\), and its maxima at
\(x=0,\frac12\) are exchanged by \(J\).  Hence downstairs there is one
finger birth and one Whitney death.  Just before the death, put
\(\lambda_+=\frac12+\alpha\) and
\(\lambda=\lambda_+-\eta\), where \(0<\eta\ll1\), and let
\(\pm y_\eta\) be the two solutions near \(x=0\) of
\(T(y)=\lambda\).  The lifted Whitney disk is
\[
\begin{aligned}
   W^{\operatorname{graph}}_\eta:
   [-y_\eta,y_\eta]\times[0,1]&\longrightarrow\mathcal B,\\
   W^{\operatorname{graph}}_\eta(y,\vartheta)
   &=\left(
      y,S(y),
      (1-2\vartheta)\frac{\varepsilon_0}{2}
         (T(y)-\lambda),0
      \right).
\end{aligned}
\]
After the standard corner smoothing, this is an embedded disk.  Its deck
translate lies near \(x=\frac12\), while the disk lies near \(x=0\), so
the two are disjoint, and the product coordinates give the Whitney framing.
It has the same deck-disjoint framed Whitney geometry as the rectangle
constructed above, which records the quotient-chart placement shown in
Figure~\ref{fig:orientation-crossed-block-chart}.

The support is disjoint from \(D_0\), the basepoint, the base path, and all
marking data.  The graph family is the preferred lifted sheet, and its deck
partner is obtained by \(\tau\); hence every point of
\(Z_\tau\) has the unique nontrivial index-two double-coset label.  There
are no same-sheet components because every lifted sheet is a graph, and no
mixed component with the unchanged surface occurs by the choice of support.
Thus \(Z_\tau\) is the entire nontrivial labelled ordered double locus.  The
count is mod two, so no orientation sign is retained, while the product
framing ensures that the top surface is embedded.  Hence the track is a
crossed track for the unique nontrivial index-two label and
\(
   FQ^{0,\tau}_\rho(G)=u_\tau .
\)

Since Lemma~\ref{lem:orientation-survival} identifies the target with
\(
   Q^\tau_\rho(X_g,F_0)\cong \Ftwo\langle u_\tau\rangle,
\)
the absolute obstruction class is
\(
   FQ^\tau_\rho(F_0,F_1)=u_\tau\ne0.
\)
\end{proof}

\begin{lemma}
\label{lem:orientation-F1-complement}
For the surface \(F_1\) produced in
Lemma~\ref{lem:orientation-crossed-block},
\(
F_{1*}\pi_1\Sigma_g=H,
\)
the sphere \(D_0\) is a common framed embedded dual sphere for \(F_0\) and
\(F_1\), and
\(
\pi_1(X_g\setminus\nu F_1)\to\pi_1X_g
\)
is an isomorphism.
\end{lemma}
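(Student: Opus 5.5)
The plan is to read off all three claims from the support properties of the crossed track in Lemma~\ref{lem:orientation-crossed-block}. That track is compactly supported in the local chart \(\mathcal B\) over a M\"obius-band neighbourhood \(U\subset N_{g+1}\). This chart is chosen away from the basepoint, the base path, the marking data, the stabilizing tube and the \(S^2\times S^2\)-summand. So \(F_1\) agrees with \(F_0\) outside a compact set disjoint from all of these, and the track \(G\) is a \(\rho\)-marked track from \(F_0\) to \(F_1\).

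\emph{Image subgroup.} Since \(G\) is \(\rho\)-marked, its top endpoint carries the unchanged base path. The preferred lift is transported along the track, so by Definition~\ref{def:tracks-concordance} and Lemma~\ref{lem:marking-label-compatibility} the induced based homomorphism of \(F_1\) is again \(i_H\circ\rho\). To see this explicitly, take a based loop \(\alpha\subset\Sigma_g\). The cylinder \(\alpha\times I\) under \(G\) gives the moving-basepoint identity, and the basepoint is stationary because the support avoids \(F_0(z_0)\). Hence \([\lambda F_1(\alpha)\overline\lambda]=[\lambda F_0(\alpha)\overline\lambda]=\rho(\alpha)\), so \(F_{1*}\pi_1\Sigma_g=H\).

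\emph{Dual sphere.} The sphere \(D_0=\{\mathrm{pt}\}\times S^2\) lies in the stabilizing summand and meets \(F_0\) transversely in one point, namely on the sphere \(S^2\times\{\mathrm{pt}\}\) joined to \(T\) by the tube. The track is stationary on the tube and on that sphere, and its moving part stays inside \(\mathcal B\), which is disjoint from the summand. Therefore \(F_1\cap D_0=F_0\cap D_0\) is a single transverse point, \(D_0\) is unchanged, and its framing (trivial normal bundle, self-intersection zero) is unchanged. So \(D_0\) is a common framed embedded dual sphere.

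\emph{Complement.} Puncturing \(D_0\) at the single intersection point gives a disk in \(X_g\setminus\nu F_1\) bounding a meridian of \(F_1\). Lemma~\ref{lem:dual-sphere-kills-meridian} then gives the \(\pi_1\)-isomorphism.

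The only point that needs care is the embeddedness of \(F_1\), which is needed for \(\nu F_1\) and the meridian to make sense. It is supplied by Lemma~\ref{lem:orientation-crossed-block}: the final graph is deck-free, and the local graph has no same-sheet self-intersections. A further check is that \(D_0\) is disjoint from the whole moving support and not just from the endpoints, but the conclusions above only need the top surface \(F_1\).
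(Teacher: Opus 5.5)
Your proposal is correct and follows the paper's proof. The \(\rho\)-marking of the crossed track gives \(F_{1*}\pi_1\Sigma_g=H\); disjointness of the moving support from the stabilizing summand keeps \(D_0\) a common framed dual sphere; and Lemma~\ref{lem:dual-sphere-kills-meridian} applied to the punctured \(D_0\) gives the complement isomorphism. Your explicit moving-basepoint check and your remark on the embeddedness of \(F_1\) just spell out steps the paper leaves implicit.
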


\begin{proof}
The track in Lemma~\ref{lem:orientation-crossed-block} is
\(\rho\)-marked, so the top surface has the same marked subgroup
\(F_{1*}\pi_1\Sigma_g=H\).  The construction is supported away from
\(D_0\), so \(D_0\) remains a framed embedded dual sphere for \(F_1\).
The punctured dual sphere \(D_0\setminus\operatorname{Int}\nu F_1\) is a disk
in \(X_g\setminus\nu F_1\) whose boundary is a meridian of \(F_1\).
Lemma~\ref{lem:dual-sphere-kills-meridian} gives the complement
\(\pi_1\)-isomorphism.
\end{proof}

\begin{lemma}
\label{lem:orientation-endpoints}
The orientation-double-cover family satisfies the endpoint-rigidity
hypotheses of Theorem~\ref{thm:D-image}.
\end{lemma}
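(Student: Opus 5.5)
The plan is to check the four hypotheses of Theorem~\ref{thm:D-image} for the pair \(F_0,F_1\) one at a time. Hypothesis (I1) is already Lemma~\ref{lem:orientation-crossed-block}, which gives \(FQ^\tau_\rho(F_0,F_1)=u_\tau\neq0\) in \(Q^\tau_\rho(X_g,F_0)\cong\Ftwo\langle u_\tau\rangle\). That leaves the centralizer condition (I2), the orientation-preserving endpoint rigidity (I3), and the homological condition (I4). Throughout, \(\pi=\pi_1X_g\cong\pi_1N_{g+1}\), and \(H\triangleleft\pi\) has index two.

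For (I4), use Corollary~\ref{cor:primitive-dual-exclusion}. The manifold \(Y_g=D(L\oplus\varepsilon^1)\) is a compact disk bundle with boundary, so \(X_g\) is compact with boundary. The dual sphere \(D_0\) is closed, and its class maps to a relative class in \(H_2(X_g,\partial X_g;\Z)\). We have \([F_0]\cdot[D_0]=1\) because \(F_0\) contains \(S^2\times\{\mathrm{pt}\}\) tubed to \(T\), and \(T\) is disjoint from the \(S^2\times S^2\) summand. The corollary then gives \([F_0]\neq-[F_0]\).

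For (I2), take \(\gamma\in C_\pi(H)\). For \(g\ge2\), \(\pi\) acts freely and properly on \(\mathbb H^2\), and \(H\) contains hyperbolic elements with distinct axes. Since \(\gamma\) commutes with each of them, it fixes all of their endpoints on \(\partial\mathbb H^2\), so \(\gamma=1\in H\). For \(g=1\), compute directly in \(\langle a,t\mid tat^{-1}=a^{-1}\rangle\) with \(H=\langle a,t^2\rangle\). Every element outside \(H\) has the form \(a^nt^{2k+1}\), and it conjugates \(a\) to \(a^{-1}\neq a\). Hence \(C_\pi(H)\subseteq H\) in both cases.

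For (I3), observe that \(N_\pi(H)=\pi\), since \(H\) is normal. Elements of \(H\) act on \(H\) by inner automorphisms, so the image of \(\pi\to\Out(H)\) is \(\{1,[\iota_*]\}\). Here \(\iota_*\) is conjugation by any \(\eta\notin H\), which under \(\rho\) is the outer class induced by the nontrivial deck involution \(\iota\) of \(p:\Sigma_g\to N_{g+1}\). Suppose \([\phi]\in\Mod^+(\Sigma_g)\) satisfies \([\rho\phi_*\rho^{-1}]\in\{1,[\iota_*]\}\).
\begin{itemize}
\item If the class is \(1\), then \(\phi\) is homotopic to the identity, by Dehn--Nielsen--Baer \cite[Theorem~8.1]{FarbMargalit2012} for \(g\ge2\) (and by \(\Mod(T^2)\cong GL_2\Z\) for \(g=1\)). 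So \([\phi]=1\).
\item If the class is \([\iota_*]\), then \(\phi\) is homotopic to \(\iota\), so the two maps have equal degree. But \(\iota\) reverses orientation, since it is the deck involution of an orientation double cover, while \(\phi\) preserves it. This is a contradiction.
\end{itemize}
Hence the endpoint-rigidity set is \(\{1\}\). The main point to handle carefully is this last step: the nontrivial outer class coming from the normalizer is genuinely realized, and it is excluded only because it is orientation-reversing. This is exactly why Theorem~\ref{thm:D-image} is used in this case rather than Corollary~\ref{cor:full-endpoint-rigid-image}; the orientation-reversing reparametrizations are handled separately by (I4).
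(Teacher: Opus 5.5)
Your proof is correct. For (I3) and (I4) it follows the paper's structure. Normality gives $N_\pi(H)=\pi$ with outer image $\{1,[\delta_*]\}$, the deck class is excluded because the deck involution reverses orientation, and (I4) follows from $[F_0]\cdot[D_0]=1$ via Corollary~\ref{cor:primitive-dual-exclusion}. Your degree argument excludes $[\delta_*]$ uniformly in $g$, which is slightly cleaner than the paper's split into a determinant argument for $g=1$ and the Dehn--Nielsen--Baer index-two subgroup for $g\ge2$.

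The genuine difference is the centralizer condition (I2) for $g\ge2$. The paper argues algebraically: $Z(H)=1$, and if $c\in C_\pi(H)\setminus H$ then $c^2\in H\cap C_\pi(H)=Z(H)=1$, which contradicts torsion-freeness of $\pi_1(N_{g+1})$. You instead use the action of $\pi$ on $\mathbb H^2$. A centralizing element fixes the axis endpoints of two non-commuting hyperbolic elements of $H$, hence at least three points of $\partial\mathbb H^2$, hence acts trivially. This gives the stronger conclusion $C_\pi(H)=1$, but it relies on two routine facts you left implicit:
\begin{enumerate}
\item $\gamma$ cannot swap the two endpoints of an axis, since $\gamma h\gamma^{-1}=h$ forces $\gamma(h^+)=h^+$.
\item $\pi$ contains orientation-reversing isometries, and such an isometry fixes exactly two boundary points. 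This is what makes three fixed points force the identity even when $\gamma$ reverses orientation.
\end{enumerate}
The paper's argument avoids the geometry entirely. It applies verbatim to any torsion-free index-two overgroup of a centerless group.
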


\begin{proof}
We first verify the centralizer condition.  For \(g=1\), the group is the
Klein-bottle group
\[
   \pi=\langle a,t\mid tat^{-1}=a^{-1}\rangle,
   \qquad
   H=\langle a,t^2\rangle .
\]
Every element has normal form \(a^k t^\ell\).  Such an element centralizes
\(a\) only if \(\ell\) is even, and then it lies in
\(\langle a,t^2\rangle=H\).  Hence \(C_\pi(H)\subseteq H\). For \(g\ge2\), the centre of \(H\cong\pi_1\Sigma_g\) is trivial.  If
\(c\in C_\pi(H)\cap H\), then \(c=1\).  If \(c\in C_\pi(H)\setminus H\),
then \(c^2\in H\), since \(\pi/H\cong\mathbb Z/2\), and \(c^2\) still
centralizes \(H\).  Thus \(c^2\in Z(H)=1\), contradicting the
torsion-freeness of \(\pi_1(N_{g+1})\).  Therefore \(C_\pi(H)\subseteq H\)
also for \(g\ge2\).

Next we verify endpoint rigidity.  Since \(H\triangleleft\pi\), we have
\(N_\pi(H)=\pi\).  Elements of \(H\) act on \(H\) by inner automorphisms,
while any element of \(\pi\setminus H\) acts by the deck transformation
\(
   \delta:\Sigma_g\to\Sigma_g
\)
of the orientation double cover.  Hence
\[
   \operatorname{im}\bigl(N_\pi(H)\to\Out(H)\bigr)
   =
   \{1,[\delta_*]\}.
\]
The deck transformation \(\delta\) is orientation-reversing.  For \(g=1\),
this means that \(\delta_*\) has determinant \(-1\) on
\(H_1(T^2;\mathbb Z)\), while orientation-preserving mapping classes act with
determinant \(+1\).  For \(g\ge2\), the Dehn--Nielsen--Baer theorem
identifies orientation-preserving mapping classes with the index-two subgroup
of \(\Out(\pi_1\Sigma_g)\) preserving the orientation class
\cite[Chapter~8]{FarbMargalit2012}.  Thus no orientation-preserving mapping
class induces \([\delta_*]\).  Consequently the only orientation-preserving
mapping class whose outer action lies in
\(
   \operatorname{im}\bigl(N_\pi(H)\to\Out(H)\bigr)
\)
is the identity.

Finally,
\(
   [F_0]\cdot[D_0]=1
\)
implies \([F_0]\ne-[F_0]\) by
Corollary~\ref{cor:primitive-dual-exclusion}.  Hence all endpoint-rigidity
hypotheses of Theorem~\ref{thm:D-image} are satisfied.
\end{proof}

\begin{theorem}[Orientation-double-cover image nonconcordance]
\label{thm:G-orientation-family}
For every \(g\ge1\), the manifold
\[
X_g=D(L\oplus\varepsilon^1)\#(S^2\times S^2)
\]
contains homotopic embedded genus-\(g\) surfaces
\(
F_0,F_1:\Sigma_g\hookrightarrow X_g
\)
such that:
\begin{enumerate}[label=\textup{(\roman*)}]
\item
\(
\pi_1X_g\cong\pi_1(N_{g+1});
\)
\item
\(
F_{i*}\pi_1\Sigma_g=H\neq1
\qquad (i=0,1);
\)
\item
\(
\pi_1(X_g\setminus\nu F_i)\to\pi_1X_g
\)
is an isomorphism for \(i=0,1\);
\item \(F_0\) and \(F_1\) have a common framed embedded dual sphere;
\item the embedded images \(F_0(\Sigma_g)\) and \(F_1(\Sigma_g)\) are not
smoothly image-concordant.
\end{enumerate}
Equivalently, there is no diffeomorphism
\(
\phi:\Sigma_g\to\Sigma_g
\)
such that \(F_0\) is parametrized concordant to \(F_1\circ\phi\).
\end{theorem}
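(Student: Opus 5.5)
The proof will assemble the lemmas of this subsection and apply Theorem~\ref{thm:D-image}. Take \(F_0=T\#(S^2\times\{\mathrm{pt}\})\) with the marking \(\rho\) from the orientation-cover identification \(\pi_1\Sigma_g\cong H\). Take \(F_1\) to be the top surface of the crossed track \(G\) from Lemma~\ref{lem:orientation-crossed-block}. This \(G\) is a \(\rho\)-marked generic homotopy track, so \(F_1\) is homotopic to \(F_0\).

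Item (i) holds because \(Y_g\) deformation retracts to \(N_{g+1}\) and \(S^2\times S^2\) is simply connected. For item (ii), the marked subgroup is \(H\) for \(F_0\) by construction and for \(F_1\) by Lemma~\ref{lem:orientation-F1-complement}. Moreover \(H\neq1\), since \(H\) has index two in the infinite group \(\pi_1N_{g+1}\) and so is infinite. Item (iii) follows from Lemma~\ref{lem:orientation-complement} for \(F_0\) and Lemma~\ref{lem:orientation-F1-complement} for \(F_1\). Item (iv) holds because \(D_0\) is a framed dual sphere for \(F_0\) and, by Lemma~\ref{lem:orientation-F1-complement}, remains one for \(F_1\), since the track is supported away from \(D_0\).

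For item (v), the plan is to verify hypotheses (I1)--(I4) of Theorem~\ref{thm:D-image} with \(D=\pi\setminus H\). This \(D\) is the deck label \(\tau\) under Corollary~\ref{cor:index-two}, and it is nontrivial and self-dual. Hypothesis (I1) holds because Lemma~\ref{lem:orientation-survival} identifies \(Q^\tau_\rho(X_g,F_0)\cong\Ftwo\langle u_\tau\rangle\), and Lemma~\ref{lem:orientation-crossed-block} gives \(FQ^\tau_\rho(F_0,F_1)=u_\tau\neq0\). Hypotheses (I2)--(I4) are exactly the content of Lemma~\ref{lem:orientation-endpoints}: \(C_\pi(H)\subseteq H\); the only outer class from the normalizer besides \(1\) is the orientation-reversing deck class \([\delta_*]\), so oriented endpoint rigidity holds; and \([F_0]\neq-[F_0]\) follows via Corollary~\ref{cor:primitive-dual-exclusion}.

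One subtlety is that Corollary~\ref{cor:primitive-dual-exclusion} is stated for compact \(X\) with boundary. \(X_g\) is compact with boundary, so the corollary applies. In any case the closed class \([D_0]\) has \([F_0]\cdot[D_0]=1\), which already rules out \(2[F_0]=0\).

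Theorem~\ref{thm:D-image} then says the images are not smoothly image-concordant. The proof of that theorem treats orientation-reversing \(\phi\) through Lemma~\ref{lem:orientation-reversing-exclusion}, using \([F_1]=[F_0]\) from the homotopy, and treats orientation-preserving \(\phi\) through endpoint rigidity followed by Lemma~\ref{lem:centralizer-promotion}. This gives the stated equivalent form: no \(\phi\in\Diff(\Sigma_g)\) makes \(F_0\) parametrized concordant to \(F_1\circ\phi\).

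The only genuinely delicate input is the nonvanishing in (I1), namely that the local crossed movie contributes exactly one \(\tau\)-labelled component and that the indeterminacy vanishes. Both are already supplied by Lemmas~\ref{lem:orientation-survival} and~\ref{lem:orientation-crossed-block}. So the main care is bookkeeping: the same tube and marking data must be used throughout, so that the track from \(F_0\) to \(F_1\) is genuinely \(\rho\)-marked and the homology-class argument applies to \(F_1\).
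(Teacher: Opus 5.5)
Your proposal is correct and follows the paper's proof essentially verbatim. The paper likewise takes \(F_1\) from Lemma~\ref{lem:orientation-crossed-block} and reads off the marked subgroup, common dual sphere, and complement isomorphisms from Lemmas~\ref{lem:orientation-complement} and~\ref{lem:orientation-F1-complement}; it then obtains image nonconcordance by feeding Lemmas~\ref{lem:orientation-survival}, \ref{lem:orientation-crossed-block}, and~\ref{lem:orientation-endpoints} into Theorem~\ref{thm:D-image}, and your extra remarks on items (i)--(ii) and on the boundary hypothesis of Corollary~\ref{cor:primitive-dual-exclusion} are accurate details the paper leaves implicit.
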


\begin{proof}
The construction of \(X_g\), \(F_0\), \(H\), and \(D_0\) was given at the
start of the section.  Lemma~\ref{lem:orientation-complement} gives the
complement \(\pi_1\)-isomorphism for \(F_0\).  Lemma~\ref{lem:orientation-survival}
gives
\(
Q^\tau_\rho(X_g,F_0)\cong\Ftwo\langle u_\tau\rangle.
\)
Lemma~\ref{lem:orientation-crossed-block} produces \(F_1\) with
\(
FQ^\tau_\rho(F_0,F_1)=u_\tau\neq0.
\)
Lemma~\ref{lem:orientation-F1-complement} gives the marked subgroup,
common dual sphere, and complement \(\pi_1\)-isomorphism for \(F_1\).
Lemma~\ref{lem:orientation-endpoints} verifies the endpoint-rigidity
hypotheses of Theorem~\ref{thm:D-image}.  Therefore
Theorem~\ref{thm:D-image} gives the stated image nonconcordance.
\end{proof}

\section{Questions and further directions}\label{sec:questions}
We close by recording some natural questions suggested by the construction.

\begin{question}\label{q:realization}
Which algebraically self-dual double-cosets admit \(D\)-crossed local
constructions without changing the ambient manifold?
\end{question}

M\"obius bands provide one geometric source of such labels.  This leads to
the fixed-ambient problem of determining when a complement contains an
isolated square-root neighbourhood, or another compactly supported
finger-plus-Whitney movie, whose \(P_D\)-fiber product has exactly one component.

\begin{question}\label{q:non-self-dual}
What is the correct analogue of the obstruction for a non-self-dual pair
\(D\neq D^{-1}\)?
\end{question}

The construction here uses a single-coordinate mod-two target for a
self-dual double-coset. For a non-self-dual label, the target should
remember both \(D\) and \(D^{-1}\), either by a paired-label construction or
by an oriented refinement of the count.  Such a refinement would also
require corresponding versions of the local insertion and mapping-space
indeterminacies.

\begin{question}\label{q:image-orbit}
Is there a computable image-level quotient of the marked obstruction beyond
the endpoint-rigid setting?
\end{question}

An image-level quotient should incorporate both the normalizer action on
\(H\backslash\pi/H\) and endpoint-monodromy contributions from tracks joining
\(F\) to \(F\circ\phi\).  Making this quotient explicit would extend the
endpoint-rigid applications considered here.

\begin{question}\label{q:vanishing}
Under what additional hypotheses can vanishing of the obstruction be used in
a classification theorem?
\end{question}

The obstruction detects nonconcordance when the resulting class is nonzero.
A natural next step is to study whether, in the presence of extra geometric
control such as a common dual sphere, \(\pi_1\)-controlled complements, or
restricted mapping-space indeterminacy, the vanishing case can be combined
with Whitney-move or light-bulb methods to classify homotopic
\(\pi_1\)-injective surfaces.

\end{document}